\newtheorem{theorem}{Theorem}[section]
\newtheorem{proposition}[theorem]{Proposition}
\newtheorem{definition}[theorem]{Definition}
\newtheorem{lemma}[theorem]{Lemma}
\newtheorem{notation}[theorem]{Notation}
\newtheorem{example}[theorem]{Example}
\newtheorem{remark}[theorem]{Remark}
\newtheorem{thm}[theorem]{Theorem}
\newtheorem*{thm*}{Theorem}
\newtheorem*{dig*}{Digression}
\newtheorem{cor}[theorem]{Corollary}
\newtheorem{lem}[theorem]{Lemma}
\newtheorem{rem}[theorem]{Remark}
\newtheorem{prop*}{Proposition}
\newtheorem{prop}[theorem]{Proposition}
\newtheorem{defn}[theorem]{Definition}
\newtheorem*{examp*}{Example}
\newtheorem*{examples*}{Examples}
\newtheorem*{remark*}{Remark}
\newtheorem{Note}[theorem]{Note}
\newtheorem*{defn*}{Definition}
\newcommand{\Ref}[1]{(\ref{#1})}
\newcommand{\Comp}{\mathbb C}
\newcommand{\Real}{\mathbb R}
\newcommand{\Rati}{\mathbb Q}
\newcommand{\Net}{\mathbb N}
\newcommand{\Fld}{\mathbb K}
\def\N{\mathbb N}
\def\Q{\mathbb Q}
\def\Z{\mathbb Z}
\def\R{\mathbb R}
\def\T{\mathbb T}
\def\supp{\operatorname{supp}}
\newcommand{\one}{\mathbb{1}}
\newcommand{\zero}{\mathbb{0}}
\newcommand{\Trop}{\mathbb T}
\newcommand{\trop}[1]{\mathcal{#1}}
\newcommand{\Amb}{\trop{A}}
\newcommand{\tA}{\trop{A}}
\newcommand{\tG}{\trop{G}}
\newcommand{\tI}{\trop{I}}
\newcommand{\tM}{\trop{M}}
\newcommand{\tP}{\trop{P}}
\newcommand{\tT}{\trop{T}}
\newcommand{\tZ}{\trop{Z}}
\newcommand{\To}{\longrightarrow }
\newcommand{\tUniS}{-\infty}
\newcommand{\uuu}[1]{{#1}^\nu}
\newcommand{\Om}{\Omega}
\newcommand{\al}{\alpha}
\newcommand{\bt}{\beta}
\newcommand{\gm}{\gamma}
\newcommand{\dl}{\delta}
\newcommand{\lm}{\lambda}
\newcommand{\Lm}{\Lambda}
\def\la{\lambda}
\def\La{\Lambda}
\newcommand{\nVal}{Val}
\newcommand{\Var}{V}
\newcommand{\TrS}{\oplus}
\newcommand{\TrP}{\odot}
\newcommand{\OP}{\left(}
\newcommand{\CP}{\right)}
    \newenvironment{proof}{
    \smallskip
    \noindent\emph{Proof.}}{\hfill\(\Box\)
    \bigskip
    } \fi
\newcommand{\vvMat}[9]{\OP \begin{array}{ccc}
  #1 & #2 & #3\\
  #4 & #5 & #6\\
  #7 & #8 & #9\\
\end{array}\CP}
\newcommand{\bfem}[1]{\textbf{\emph{#1}}}
\newcommand{\ifdef}[3]{\ifthenelse{\equal{#1}{true}}{#2}{#3}}
\numberwithin{equation}{subsection}
\numberwithin{equation}{section}
\def\tgm{\overline{\gm}}
\def\oD{\overline{D}}
\def\oC{\overline{C}}
\def\lgm{\overleftarrow{\gm}}
\def\rgm{\overrightarrow{\gm}}
\def\sF{{F^{\times}}}
\def\zReal{\Real_{-\infty}}
\def\tTz{\tT_\zero}
\def\nug{>_{\nu}}
\def\nuge{\ge_{\nu}}
\def\nul{<_{\nu}}
\def\nule{\le_{\nu}}
\def\trF{f_{_\Trop}}
\def\reF{f_{_\Real}}
\def\kF{f_{_\Fld}}
\def\nVal{\operatorname{val}}
\def\Cor{\operatorname{Cor}}
\def\tng{{\operatorname{tan}}}
\def\gst{{\operatorname{ghost}}}
\def\ply{{\operatorname{poly}}}
\def\nucong{\cong_\nu}
\def\RGnu{(R,\tGz,\nu)}
\def\FGnu{(F,\tGz,\nu)}
\def\TMGnu{(T(M),\tGz,\nu)}
\def\pipeGS{{\underset{\operatorname{\, gs }}{\mid}}}
\def\lmod{\mathrel   \pipeGS \joinrel\joinrel \joinrel =}
\def\lmodg{\lmod}
\def\lmodgla{\lmod}
\def\lmodgLa{\lmod}
\newcommand{\dual}[1]{{#1}^\wedge}
\newcommand{\tsqrt}[1]{\sqrt[{\operatorname{trop}}]{#1}}
\newcommand{\etype}[1]{\renewcommand{\labelenumi}{(#1{enumi})}}
\def\eroman{\etype{\roman}}
\def\ealph{\etype{\alph}}
\def\Var{X}
\def\Skip{\vskip 1.5mm}
\def\pSkip{\vskip 1.5mm \noindent}
\def\cltG{\cl{\tG}}
\def\cltM{\cl{\tM}}
\def\tGz{ \tG_{\zero}}
\def\epsc{\preceq_{\operatorname{comp}}}
\def\Fun{{\operatorname{Fun}}}
\def\sFun{{\Fun}^\times}
\def\FunSGz{\Fun (S,\tGz)}
\def\FunGz{\Fun (F^{(n)},\tGz)}
\def\CFun{\operatorname{CFun}}
\def\FunR{\Fun (R^{(n)},R)}
\def\FunF{\Fun (F^{(n)},F)}
\def\sFunF{\sFun (F^{(n)},F)}
\def\CFunR{\CFun (R^{(n)},R)}
\def\sCFunF{\CFun ((\sF)^{(n)},F)}
\newcommand{\tGinf}{\tG_{\tUniS}}
\newcommand{\cltGz}{\cl{\tG}_{\zero}}
\newcommand{\cl}[1]{\bar{{#1}}}
\def\eBin{binomial}
\def\clF{\cl{F}}
\def\clR{\cl{R}}
\def\clM{\cl{\tT }}
\def\inf{-\infty}
\def\val{value function}
\newcommand{\per}[1]{|#1 |}
\def\inf{-\infty}
\def\a{\alpha}
\def\sig{\sigma}
\def\eqR{\overset{_e}{\sim}}
\def\eqRnu{\overset{_{e, \nu}}{\sim}}
\def\neqR{\overset{_e}{\nsim}}
\def\rDiv{\, |_{_e} \: }
\def\one{\mathbb{1}}
\def\zero{\mathbb{0}}
\def\rone{{\one_R}}
\def\rzero{{\zero_R}}
\def\dzero{{\zero_{\dual{R}}}}
\def\fone{{\one_F}}
\def\fzero{{\zero_F}}
\def\singular{ordinary}
\def\individual{ordinary}
\def\essn{{\operatorname{es}}}
\def\ef{f^{\essn}}
\def\eg{g^{\essn}}
\def\eh{h^{\essn}}
\def\eq{q^{\essn}}
\def\vf{f^{\tng}}
\def\gf{f^{\gst}}
\def\iif{f^{\operatorname{in}}}
\def\ig{g^{\operatorname{in}}}
\def\iq{q^{\operatorname{in}}}
\def\ra{a}
\def\bfa{\textbf{\ra}}
\def\rb{b}
\def\bfb{\textbf{\rb}}
\def\rc{\textsl{c}}
\def\bfc{\textbf{\rc}}
\def\bfi{ \textbf{i}}
\def\bfj{\textbf{j}}
\def\bfk{\textbf{k}}
\def\bfh{\textbf{h}}
\def\TrT{\odot} 
\def\Rnu{\mathbb R^\nu}
\def\pl{\lfloor} \def\pr{\rfloor}
\begin{document}

\title[Supertropical algebra] {Supertropical algebra }
\author[Z. Izhakian]{Zur Izhakian}\thanks{The first author was supported by the
Chateaubriand scientific post-doctorate fellowships, Ministry of
Science, French Government, 2007-2008.}
\thanks{\textbf{Acknowledgement} The authors would like to thank Professors J. Bernstein, D. Haile, M. Knebusch, D. Speyer, S. Shnider, and B. Sturmfels  for
helpful suggestions concerning earlier versions.}
\address{Department of Mathematics, Bar-Ilan University, Ramat-Gan 52900,
Israel} \address{ \vskip -6mm CNRS et Universit´e Denis Diderot
(Paris 7), 175, rue du Chevaleret 75013 Paris, France}
\email{zzur@math.biu.ac.il, zzur@post.tau.ac.il}
\author[L. Rowen]{Louis Rowen}
\address{Department of Mathematics, Bar-Ilan University, Ramat-Gan 52900,
Israel} \email{rowen@macs.biu.ac.il}

\subjclass[2000]{Primary 11C08, 13B22, 16D25 ; Secondary 16Y60 }

\date{\today}


\keywords{Polynomials, tropical geometry, tropical algebra,
semirings, supertropical algebra, semirings, supertropical
semirings, ideals, ghost ideals, prime ideals, Nullstellensatz}


\begin{abstract} We develop the algebraic polynomial theory for
``supertropical algebra,''  as initiated earlier over the real
numbers by the first author. The main innovation there was the
introduction of ``ghost elements,'' which also play the key role
in our structure theory. Here, we work  somewhat more generally
 over an ordered monoid, and develop a theory which
contains the analogs of several basic theorems of classical
commutative algebra. This structure enables one to develop a
Zariski-type algebraic geometric approach to tropical geometry,
viewing tropical varieties as sets of roots of (supertropical)
polynomials, leading to an analog of  the Hilbert Nullstellensatz.

Particular attention is paid to
 factorization of polynomials. In one indeterminate, any polynomial can be factored into linear and quadratic
 factors, and unique factorization holds in a certain sense. On the
 other hand,
 the failure of
  unique
 factorization   in several indeterminates
is explained by geometric phenomena described in the paper.
\end{abstract}

\maketitle






\section{Introduction}
\numberwithin{equation}{section}

One of the goals of algebra is to find the ``correct'' algebraic
structure with which to frame some mathematical theory. The
underlying motivation of this paper is to provide a direct
algebraic approach to the rapidly developing theory of tropical
mathematics. Tropical geometry has been the subject of intensive
recent research, including some remarkable applications in various
areas of mathematics, such as combinatorics, polynomials (Newton's
polytopes), linear algebra, and algebraic geometry; cf.~\cite{IMS}
and~\cite{SturmfelsSolving}. Before bringing in our structure, let
us review briefly how one passes from ``classical'' algebraic
geometry to tropical geometry.

For any complex affine variety $W = \{ ( z_1, \dots, z_n): z_i \in
\Comp\} \subset \Comp^{(n)},$ and any small $t$, one could define
its \textbf{amoeba}, cf. \cite{Gelfand94},
$$\tA(W) = \{ (\log_t| z_1|, \dots, \log_t|  z_n|):
( z_1, \dots, z_n) \in W \} \subset \zReal^{(n)},$$ where $\zReal
:= \Real \cup \{ -\infty \}$. Note that $\log_t| z_1 z_2 | =
\log_t| z_1| +\log_t| z_2 |$, and the limiting case $t \to 0$
degenerates to a polyhedral complex, i.e., \textbf{non-Archimedean
amoeba}, in~$\zReal^{(n)}$ where now $\zReal$ is given the
structure of the \textbf{max-plus} algebra, for which the new
addition is defined as the maximum, multiplication is taken to be
the original addition in $\Real$, and the zero element is
$-\infty$. Passing from the original algebraic variety to this
``tropical variety'' preserves various geometric invariants
involving intersections, and has been used to simplify proofs of
deep results from algebraic geometry.
 As developed in
\cite{AHK1998,Akian2005,Butkovic2003,Cuninghame79,gaubert-2004-2,Gaubert199,MikhalkinEnumerative,pin98,Shustin2005,Speyer4218,Speyer2003,SpeyerSturmfels2004},
the  max-plus algebra (or dually, the min-plus algebra) lies at
the foundation of ``tropical algebra'' and ``tropical geometry.''
A survey can be found in~ \cite{Litvinov2005}, and
\cite{Gathmann:0601322} provides a fine explanation of how one
arrives at tropical geometry defined over the max-plus algebra.

Although many ideas of tropical geometry can be found in the
pq-webs of~\cite{AHK1998}, researchers in tropical geometry have
focused on definitions of tropical varieties arising from complex
analysis and symplectic geometry.  In the simplicial geometric
approach of \cite{MIK07Intro}, a finite polyhedral complex is said
to be of \textbf{pure dimension} $k$ if each of its faces of
dimension $< k$ is contained in a $k$-dimensional face -- called a
\textbf{top-dimensional} face. A \textbf{$k$-dimensional tropical
variety} $\Var\subset \Real^{(n)}$ is a finite rational polyhedral
complex of pure dimension $k$ whose top-dimensional faces $\delta$
are equipped with positive integral \textbf{weights} $m(\delta)$
such that, for each face $\sigma$  of codimension $1$ in $\Var$,
the following condition is satisfied, called the \textbf{balancing
condition}:
\begin{equation}\label{eq:balanceCond} \sum_{\sigma\subset\delta}m(\delta)n_\sigma(\delta)=0\
,\end{equation}
where $\delta$ runs over all $k$-dimensional faces of $\Var$
containing $\sigma$, and $n_\sigma(\delta)$ is the primitive unit
 vector normal to $\sigma$ lying in the cone centered at $\sigma$
and directed by $\delta$. Accordingly, a tropical hypersurface,
i.e., an $(n-1)$-dimensional tropical variety in $\Real^{(n)}$,
must have (topological) dimension $n-1$.

An alternative approach, more algebraic in nature,  is to
 consider tropical polynomials as piecewise linear functions
$\reF : \Real^{(n)} \to \Real$; then  the \textbf{corner locus},
denoted $\Cor({\reF})$, is defined as the domain of
non-differentiability of  $\reF$, or, in other words, the set of
points on which the evaluation of $\reF$\ is attained by at least
two of its monomials. Yet, this notion has no pure algebraic
framework over the max-plus algebra $\zReal$, and our structure
aims for such a framework.

 There is a direct passage
from (classical) affine algebraic geometry to tropical geometry,
in which algebraic varieties are transformed to polyhedral
complexes. Namely, the max-plus algebra appears as the  target of
a non-Archimedean valuation $ \nVal : \Fld \to \zReal$ of   the
field $\Fld$ of locally convergent \emph{Puiseux series} of the
form $ p(t) = \sum_{\tau \in T} c_{\tau} t ^{\tau},$ where $c_\tau
\in \Comp$ and  $T \subset \Rati$ is bounded from below, where
\begin{equation*}\label{eq:valPowerSeries}
  \nVal(p(t))\  := \
\left\{%
\begin{array}{ll}
    - \min \{\tau \in R \ : \; c_{\tau} \neq 0 \}, &  p(t) \in
    \Fld^\times,
    \\[1mm]
    -\infty, & p(t) = 0. \\
\end{array}%
\right.
\end{equation*}

Given a polynomial $\kF = \sum_{\bfi \in \Om} p_{\bfi} \lm_1^{i_1}
\cdots \lm_n^{i_n}$, for $p_{\bfi} = p_{\bfi}(t)$,  over $\Fld$
with zero set $\tZ(\kF) \subset \Fld^{(n)}$,  the non-Archimedean
\textbf{amoeba} $\widetilde \tA(\kF) \subset \zReal^{(n)}$ is now
defined be the closure  $\overline{\nVal(\tZ(\kF))}$ of
$\nVal(\tZ(\kF))$, where the valuation is taken coordinate-wise.

\begin{theorem}[Kapranov, \cite{IMS}]\label{thm:Kapranov}  $\widetilde \Amb(\kF)$
is contained in the corner locus of the tropical function
\begin{equation}\label{eq:Kapranov}
\reF(\bfa)  = \max_{\bfi \in \Om}( \langle \bfi,\bfa \rangle +
\nVal(p_{\bfi})), \qquad \bfa = (a_1, \dots, a_n)\in \Real^{(n)},
\end{equation}
where $\langle \, \cdot , \cdot \, \rangle$ stands for the
standard scalar product. (Note that the term $a_1^{i_1}\cdots
a_n^{i_n}$ is evaluated as~$\langle \bfi,\bfa \rangle$ in the
max-plus algebra.) Equality holds when $\nVal$ is onto.
\end{theorem}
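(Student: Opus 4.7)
The plan is to separate the two assertions. For the containment $\widetilde{\tA}(\kF)\subseteq \Cor(\reF)$, I would fix a zero $\bfz=(z_1,\dots,z_n)\in \tZ(\kF)$ and set $\bfa=(\nVal(z_1),\dots,\nVal(z_n))$. Since $\nVal(p_{\bfi}(t)\, z_1^{i_1}\cdots z_n^{i_n}) = \nVal(p_{\bfi})+\langle \bfi,\bfa\rangle$, the non-Archimedean property of $\nVal$ on the Puiseux series field gives
\[
\nVal(\kF(\bfz)) \ \le\ \max_{\bfi\in\Om}\bigl(\nVal(p_{\bfi})+\langle \bfi,\bfa\rangle\bigr) \ =\ \reF(\bfa),
\]
with strict inequality only if the maximum on the right is attained by more than one $\bfi$, because otherwise the dominant monomial would control the valuation of $\kF(\bfz)$. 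Since $\kF(\bfz)=0$, i.e.\ $\nVal(\kF(\bfz))=-\infty$, the maximum defining $\reF(\bfa)$ must be attained at least twice, which is precisely the condition $\bfa\in\Cor(\reF)$. Passing to the closure yields $\widetilde{\tA}(\kF)\subseteq\Cor(\reF)$.

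For the reverse inclusion under the hypothesis that $\nVal$ is onto, I would take $\bfa\in\Cor(\reF)$ and produce zeros of $\kF$ whose valuations approach $\bfa$. Using surjectivity, pick $c_j\in\Fld$ with $\nVal(c_j)=a_j$ and pass from $\kF(\lm_1,\dots,\lm_n)$ to $\hf(\lm_1,\dots,\lm_n):=\kF(c_1\lm_1,\dots,c_n\lm_n)$; this rescales the coefficient of the $\bfi$-monomial by $\prod c_j^{i_j}$ and translates the corner locus so that $\bfa$ becomes the origin. The assumption $\bfa\in\Cor(\reF)$ becomes the statement that the maximum of $\nVal$ on the coefficients of $\hf$ is attained by at least two indices. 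After normalizing by an element of equal valuation, one can arrange that this common maximal valuation is $0$, so that the reduction $\overline{\hf}$ modulo the maximal ideal of the valuation ring is a nonzero polynomial over the residue field $\Comp$ whose Newton-polytope origin is a nontrivial vertex.

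The remaining step is to produce a genuine Puiseux series zero of $\hf$ with coordinates of valuation $0$. I would do this by induction on $n$: choose a generic linear change of coordinates making one variable, say $\lm_n$, play the role of an ``unknown'' over the fraction field generated by the others, thereby reducing to the one-variable case. In that case $\hf$ is a polynomial in $\lm_n$ whose Newton polygon, by the balancing obtained above, contains a slope $0$ segment of positive length, and the classical Newton--Puiseux / Hensel argument over the algebraically closed field $\Fld$ furnishes a root of valuation $0$. Rescaling back gives a zero of $\kF$ with coordinate valuations arbitrarily close to $\bfa$, and taking closure yields $\Cor(\reF)\subseteq\widetilde{\tA}(\kF)$.

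The main obstacle is this last lifting step: the easy direction is just the ultrametric inequality, but producing an actual Puiseux series zero with prescribed coordinate valuations requires either Hensel's lemma in a henselian valued field or an explicit Newton--Puiseux construction, together with a genericity argument (or direct combinatorial analysis of the Newton polytope) to reduce to one variable without losing the doubly-attained-maximum condition. I would isolate this as a separate lemma; once it is in place the rest of the argument is essentially a bookkeeping exercise with valuations.
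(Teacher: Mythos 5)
This theorem is stated in the paper without proof, attributed to Kapranov and cited from \cite{IMS}, so there is no internal argument to compare against. Your easy direction is correct: multiplicativity of $\nVal$, the ultrametric inequality, and the observation that a vanishing sum with a unique dominant term would have finite valuation together force the maximum to be attained at least twice; add the implicit restriction to zeros of $\kF$ with nonvanishing coordinates (so that the valuation vector is in $\Real^{(n)}$), and the closure step finishes it.

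For the converse your outline -- rescale to bring $\bfa$ to the origin, normalize so the extremal coefficient valuation is zero, reduce modulo the maximal ideal, lift a root -- is the right shape, but the reduction to one variable via a ``generic linear change of coordinates'' is not correct as stated, not merely incomplete. An additive linear change $\lm\mapsto A\lm$ does not preserve the monomial support of $\kF$ and so does not isolate a single unknown in any useful way, while a monomial (i.e.\ $\mathrm{GL}_n(\mathbb{Z})$) change is a discrete family over which ``generic'' has no content. The step that actually works is to specialize $\lm_1,\dots,\lm_{n-1}$ to units whose residues are chosen generically in the infinite residue field $\Comp$, verify that a generic such specialization leaves two coefficients of valuation zero with distinct $\lm_n$-exponents, and only then apply Newton--Puiseux (equivalently, Hensel in the henselian Puiseux series field) to produce the last coordinate. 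You correctly flag the lifting step as the main obstacle and propose to isolate it as a lemma, which is honest; but the genericity argument in that lemma carries essentially all of the weight of the hard direction, and the ``change of coordinates'' you propose is not a valid substitute for it.
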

\noindent  Kapranov's  theorem implies not only that every
non-Archimedean amoeba  is a corner locus of a tropical
polynomial, but also that any corner locus of a tropical
polynomial
$\reF$
is a non-Archimedean amoeba. In \cite{RST},  tropical varieties
were defined as   non-Archimedean amoebas
$\overline{\nVal(\tZ(I))}$, where $I \triangleleft \Fld[\lm_1,
\dots, \lm_n]$. (However, there exist balanced polyhedral
complexes of codimension $> 1$ that cannot be described as
non-Archimedean amoebas.)

$ $From a categorical perspective, one would like to study these
tropical varieties directly, in terms of the underlying algebraic
structure. \begin{defn} A \textbf{semiring} $(R,+,\cdot \, ,
\rzero, \rone)$, is a set $R$ endowed with binary operations $+$
and $\cdot \, $ and distinguished elements $ \rzero$ and $\rone)$,
 such that  $(R,\cdot \, ,\rone)$ and $(R,+,\rzero)$ are
monoids satisfying distributivity of multiplication over addition
on both sides, and such that $\rzero \cdot r = r \cdot \rzero =
\rzero$ for every $r\in R$. \end{defn} Semirings have attracted
interest because of their impact on computer science, and we use
\cite{golan92} as a general reference. Occasionally we need the
more general notion of a \textbf{semiring without zero}, which
satisfies all the axioms of semiring except those involving the
element $\rzero.$ For $R$ any semiring without zero, we obtain a
semiring by formally adjoining the element $\rzero$ and
stipulating that $a + \rzero = \rzero + a = a$ and $a \cdot \rzero
= \rzero \cdot a = \rzero$ for each $a\in R$.

 The
algebraic structure of the max-plus algebra is that of a semiring
without zero, which becomes a semiring when we formally adjoin the
element $-\infty$. The complications in utilizing the max-plus
algebra as the underlying structure in tropical geometry crop up
almost immediately. Unfortunately, the max-plus algebra has no
additive inverse (even after one adjoins $-\infty$), and thus its
algebraic structure as a semiring is handicapped.

Consequently, the direct algebraic-geometric development of the
category of tropical varieties has lagged behind. For example, one
could define the algebraic set of a polynomial $f$  to be the corner
locus of the
 function $\zReal^{(n)} \to \zReal$ determined by $f$. This formulation is more cumbersome than
the classical formulation in algebraic geometry that $f(\bfa)=0$,
$\bfa = (a_1,\dots,a_n)$, and its awkwardness becomes apparent the
moment one starts to work with algebraic sets. The alternative
definition used in \cite{RST} for the algebraic set of $f$, namely
the set of points on which $f$ is not differentiable, works well
from the perspective of differential geometry, but is even more
difficult to apply in various algebraic situations. Consequently,
much current research relies heavily on passing back and forth
frequently from ``classical'' algebraic geometry to tropical
geometry.

Furthermore, although  any non-Archimedean valuation $\nVal$
satisfies $\nVal(pq) =  \nVal(p) + \nVal(q)$ as well as
$$\nVal(p+q) = \max \{\nVal(p), \nVal(q)\} \quad \text{if} \quad \nVal(p) \ne
\nVal(q),$$ one does not know $\nVal(p+q)$ in the case that
$\nVal(p) = \nVal(q).$ Thus, from the point of view of Kapranov's
Theorem, not only is the max-plus algebra a difficult structure to
study, but in some sense it may not even be the right structure.

The first author \cite{zur05TropicalAlgebra} had addressed these
issues by introducing
 \textbf{extended tropical arithmetic} $\mathbb T$, the disjoint union of two copies of $\mathbb R,$
 denoted respectively as $\mathbb R$ and~$\mathbb R^\nu =
\{ a^\nu : a \in \mathbb R\},$ together with a formal element
$-\infty$. One defines the map $\nu:  \mathbb T \to \zReal^\nu$
to be the identity on $\zReal^\nu := \Rnu \cup \{\inf\},$ and to
satisfy $\nu (a) = a^\nu$ for each $a \in \mathbb R$. (As presently defined,  $\nu$  is 1:1.)

  $\mathbb T$ is also endowed with the two operations
$\TrS$ and $\TrP$, satisfying the following axioms (using the
generic notation that $a,b\in \R,$ $x,y \in \mathbb T$):
\begin{enumerate}
\Skip \item $-\infty \oplus x = x \oplus -\infty = x;$
\Skip \item $x \TrS y = \max \{x,y\}$ unless $\nu(x) = \nu(y);$
\Skip \item $a\TrS a = a^\nu \TrS a^\nu = a  \TrS a^\nu = a^\nu
\TrS a = a^\nu;$
\Skip
\item $\inf \TrP x = x \TrP \inf =
\inf;$
\Skip
\item $a \TrP b = a+b$ for all $a,b \in \R;$
\Skip
\item $ a^\nu \TrP b
= a \TrP b ^\nu = a^\nu \TrP b^\nu = (a+b)^\nu$.
\end{enumerate} $(\T, \TrS, \TrP, -\infty, 0)$  is seen in \cite{zur05TropicalAlgebra} to
have the structure of a (non-idempotent) commutative  semiring.
The verification is a special case of Lemma~\ref{semiring} below.
Our motivating example is $\T $ with this notation, which we call
\textbf{logarithmic notation}, where the zero element
$\zero_\Trop$ is~$\inf$.

\begin{defn} A semiring
homomorphism $\nu : R \to R$ is \textbf{idempotent} if $\nu^2 =
\nu.$\end{defn}

 Note that $\R^\nu$ (with the max-plus operations) is
a sub-semiring without zero of $\T$ isomorphic to the usual
max-plus algebra, and the map $\nu: \T \to \zReal^\nu $ is an
idempotent semiring  homomorphism.  Moreover, $\zReal^\nu$ is a
semiring  ideal of $\Trop$.   In this sense, $\T$ is a ``cover''
of the max-plus algebra (and its role is similar to that of a
covering space). In applying $\T$ to tropical geometry, one
focuses on the first copy of $\R$, which we call the set of
\textbf{tangible elements}, while elements of $\zReal$  are called
\textbf{ghost elements}; $ \zReal^\nu$ is called the \textbf{ghost
ideal}.

The lack of additive inverses is bypassed by
 identifying all ghost elements in some sense as ``zero''; this leads
to a much more malleable structure theory, which is also
compatible with tropical geometry. The intuition here is that the
second component $\R ^\nu$ is a ``shadow'' of the tangible
component $\Real$, with respect to which  a ghost element $a^\nu$
could be interpreted as the interval from $-\infty$ to $a$, in the
sense that there is an uncertainty and one does not know which
element in this interval to choose. Thus, its elements often act
as ``noise,'' especially with regard to multiplication, and one is
led to treat this ghost component the same way that one would
customary treat the zero element in commutative algebra.

It is surprising how well the use of the ghost ideal enables one
to overcome the shortcomings of the general structure theory of
semirings. Also, as we shall see in this paper, non-tangible
elements also have their own special properties of independent
interest.

Polynomials over $\Trop$  are defined as formal sums
$$\bigoplus _{i \ge 0} \ \a _i \!\TrT\! \la ^i$$ where almost all
$\a _i = \zero_\Trop;$ addition (denoted $\TrS$) and
multiplication (denoted $\TrP$) of polynomials are defined in the
usual manner.
 In order to simplify the notation, we write
polynomials in the usual notation, understanding that $+$ now
means $\oplus$, and $\cdot$ now means $\TrT;$   for example, over
$\Trop$,  the computation
$$(\la \oplus 7)\TrT(\la \oplus 3) = (\la \TrT \la) \oplus (7 \! \oplus\!
3)\! \TrT \!\la \oplus (7\! \TrT\! 3) = (\la \TrT \la) \oplus
7\!\TrT \! \la \oplus 10 $$ is rewritten as
$$(\la +7) (\la +3) = \la ^2 + 7 \la +10.$$

Note that the polynomial semiring $\Trop [\la]$ is not a max-plus
algebra since, for example,
$$(\la +2) + (2\la +1) = 2\la +2.$$  We shall cope with this
difficulty shortly.

In this paper we generalize the structure of $\T $ to the more
abstract setting of a \textbf{supertropical semiring} $R = \RGnu$,
in which  $\tGz := \tG \cup \{ \rzero \}$ is an ideal, called the
\textbf{ghost ideal}, and $\nu: R \to \tGz$ is an idempotent
semiring homomorphism.  The ``supertropical'' structure defined in
\S\ref{Supsem} gives an axiomatic description of the extended
tropical arithmetic  $\mathbb T$. Our overall  objective is to
cover the max-plus algebra by an algebraic structure that we call
the \textbf{supertropical semiring}, which has a more reasonable
structure theory, and in whose language many basic concepts of
tropical geometry can be described more intrinsically. The main
structures for us are the \textbf{supertropical domain}
(Definition~\ref{dom}) in which $\tT = R\setminus \tGz$ is a
monoid comprising the \textbf{tangible elements} (which provide
the link to tropical geometry), and especially the special case of
a \textbf{supertropical semifield}
(Definition~\ref{def:semifield}) in which  $\tG$ is an ordered
Abelian group.

A few words about our terminology {\it supertropical} and its
interpretation. Usually ``super'' in mathematics means graded by
the additive group $(\mathbb Z _2,+).$ However, here our structure
is ``graded'' by the multiplicative monoid $(\mathbb Z _2,\cdot)$
(viewing the tangibles as the 1-component and the ghosts as the
0-component), since the product of elements of degree $i$ and $j$
is an element of degree~$ij$. Our focus is on the tangible
elements, which provide the link the usual tropical theory.
Nevertheless, at times it is useful to view the supertropical
semiring as a ``cover'' of the max-plus algebra, via the ghost map
$\nu$.

As noted earlier, the structure of a polynomial semiring over a
supertropical semiring is no longer supertropical, so, in order to
study polynomials over a supertropical semiring, we introduce a
somewhat weaker algebraic structure, that of a \textbf{semiring
with ghosts}, which also enables us to handle matrices. Viewing
the algebraic theory from this perspective, one can carry over
much of the classical theory of commutative algebra and linear
algebra.

The \textbf{roots} of a polynomial $f \in R[\la _1, \dots, \la
_n]$ over a supertropical semiring $R$ are defined as those
$n$-tuples $\bfa = (a_1, \dots, a_n) \in R^{(n)}$ such that
$f(\bfa)$ is a ghost element. (We call them roots even when $n>1$,
since the more customary terminology ``zeroes'' seems misleading
in this context.) The geometric object of interest to us is the
set of tangible roots of a supertropical polynomial, denoted as
$\tZ_{\tng} (f)$. This definition encompasses other formulations
in tropical geometry, as we see in \S\ref{compare}, and  is
considerably neater than the customary definition of tropical root
described above; especially when one needs to add and multiply
polynomials.

This definition  permits us to describe tropical varieties  as in
classical algebraic geometry. The tropical variety $\widetilde
\Amb(\kF)$ arising from the original algebraic variety $W =
\tZ(\kF)$ should be written as the set of roots of $\trF\in
\Trop[\lm_1, \dots, \lm_n]$, suitably interpreted in our new
structure, as to be made explicit in \S\ref{rootsofpol} below.
This approach provides a clear-cut categorical framework for a
direct algebraic study of tropical varieties, without constantly
referring back to classical algebraic geometry, much in the spirit
that one can study the category of Lie algebras without always
referring back to Lie groups.
 Our approach also yields the extra dividend of providing new, previously inaccessible, examples in tropical
geometry,  such as subvarieties having the same dimension as the
original variety (as exemplified in Figure \ref{fig:2dimset}; also cf.~Example~\ref{tangex0}).

\begin{figure}
\begin{minipage}{0.9\textwidth}
\begin{picture}(10,140)(0,0)
\includegraphics[width=5.5 in]{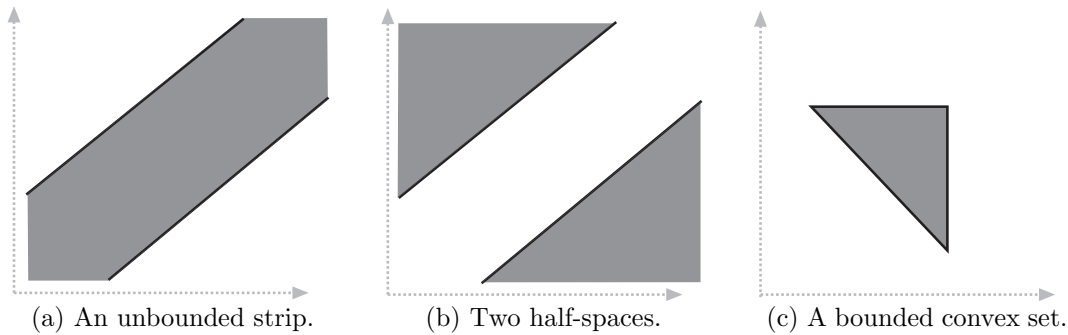}
\end{picture}
\end{minipage}
\begin{minipage}{0.25\textwidth}
\center{(a) An unbounded  strip. }

\end{minipage}\hfil
\begin{minipage}{0.22\textwidth}
\center{(b) Two half-spaces.}

\end{minipage} \hfil
\begin{minipage}{0.25\textwidth}
\center{(c) A  bounded convex set.}

\end{minipage}
\caption{\label{fig:2dimset} Illustrations of supertropical
varieties in 2-dimensional space .}
\end{figure}


 Our main result in this paper is a tropical version of the
Hilbert Nullstellensatz (Theorem~\ref{Null2}). This part of the
theory is rather delicate, because the connection between algebra
and geometry is more subtle than in the classical case -- here,
radical semiring ideals correspond to components of the
complements of root sets.

One needs to study factorization of polynomials to facilitate the
computation of roots, but this is a delicate matter. Much of the
difficulty in   factorizing of polynomials arises from the fact
that polynomials that look quite different may behave as the same
function from $R^{(n)}$ to~$R$. Thus, strictly speaking, we should
study the natural image of the polynomial semiring in the semiring
of functions from $R^{(n)}$ to~$R$. This leads to equivalence
classes of polynomials which we call $e$-\textbf{equivalent}, and
representatives of a specific form, which we call \textbf{full}
polynomials.

Let $\Net$ denote the positive natural numbers. It is not
difficult to show when the supertropical semifield $F$ is
$\Net$-divisible, that every polynomial that is not a monomial has
a tangible root.

Since factorizations of polynomials respect the roots, we consider
factorization of polynomials (up to $e$-equivalence). In the case
of one indeterminate, one already has the analog
\cite{Cuninghame79} of the fundamental theorem of algebra, that
every tangible polynomial can be factored (as a function) uniquely
as a product of linear tangible polynomials, stated in the context
of supertropical algebras as Propositions~\ref{fundam} and
\ref{prop:emptyzeroset}. In general, we have a full description of
factorization of a polynomial $f$ in one indeterminate (as a
product of linear and quadratic factors) in terms of the tangible
roots of $f$; cf.~Theorems~\ref{factor1} and ~\ref{fullfact2} and
Proposition~\ref{mingho}.

Although something like unique factorization holds in one
indeterminate, it fails miserably in several indeterminates.
However, its failure should be interpreted geometrically as the ability to partition a tropical variety in different ways as a union of irreducible subvarieties. All non-unique factorizations that we know are consequences of such geometric ambiguities.
From a more positive viewpoint,
every polynomial divides a product of binomials
(Theorem~\ref{permprime}), which has the geometric consequence
that every algebraic set is embedded naturally into a finite union
of hyperplanes; also, there is a way to obtain the minimal
such product, as illustrated in Example~\ref{exm:2}. This latter
result is best understood in terms of Laurent polynomials (whose
root sets match those of polynomials), since we can extend the
natural algebraic duality between the max-plus algebra and the
min-plus algebra (given by sending an element to its inverse) to
the Laurent polynomial semiring without zero,
 thereby yielding a geometric duality.

 One bonus of viewing polynomials (and Laurent
polynomials) as functions is the surprising result reminiscent of
the Frobenius automorphism (Corollary~\ref{Prop:powOfPols}):
$$\bigg(\sum f_i\bigg)^n = \sum f_i^n$$ for any natural number $n$.

Tangible polynomials provide the (affine) varieties familiar from
tropical geometry;   yet, nontangible polynomials yield new and
interesting examples of varieties. Consequently, our theorems
about polynomials often are stated for arbitrary supertropical
polynomials, even though the formulations and proofs may  be
shorter in the tangible case. Also, there is a polyhedron which
yields the correspondence of supertropical polynomials  with
Newton polytopes (Proposition~\ref{convex}); this is analogous to
the \textbf{grid} in~\cite{Akian2005}.

 Although the algebraic definitions given in this paper can be generalized even
further,  permitting different ``layers'' of ghosts, we feel that
the rich theory described above justifies the presentation of the
structure theory at the current level of generality. This theory
also is useful in describing matrices and solutions to equations.
In   subsequent papers including \cite{IzhakianRowen2008Matrices},
we
 develop the matrix theory, including the description of nonsingular matrices in terms
of the tropical determinant (which is really the permanent), and a
supertropical version of the Hamilton-Cayley theorem. Resultants
of supertropical polynomials are studied in
\cite{IzhakianRowen2009Resulatants}.


\section{Valued monoids}

In  \S\ref{Supsem}, we define our main algebraic structure:
Supertropical domains, and supertropical semifields. Since their
definitions could seem technical at first, we motivate them with a
preliminary structure that provides our major example, as well as
a transition to the supertropical
 theory.

 A
 monoid $M$ is
\textbf{ordered} if $M$ as a set has a total order $\le$ such that
$$\text{ $ab \le ac$ and $ba \le ca$  \quad  for all $b \le c$ and $a$ in
$M$}.$$ Given any  ordered monoid $(\tG, +)$, one may adjoin the
formal element $\inf$ to $\tG$ by declaring $$\inf < g, \qquad
\forall g \in \tG,$$ and define $(\inf) + g= g+ ( \inf)  = -\infty
$, $\forall g \in \tG .$  We denote this new ordered monoid
$\tGinf := \tG \cup \{ \inf \}$, declaring $\inf + \inf = \inf$.
  Of course $\tGinf$ is
not a group, even if $\tG$ is a group.

Recall that a monoid homomorphism   from $(M,\cdot)$ to $(\tG,+)$
is a function $$\varphi: M \To \tG$$ such that $\varphi(\one_M) =
0_\tG$ (the neutral element of $\tG$) and $\varphi(a b) =
\varphi(a)+ \varphi(b)$, $\forall a,b \in M$.

\begin{defn}\label{def:valuedMonoid} A monoid $(M,\cdot \;)$ is \textbf{valued with
respect to an ordered cancellative monoid~$(\tG,+)$} if there is a  monoid
homomorphism $v : M \to \tG$. We notate this set-up as the
\textbf{triple} $(M,\tG,v)$; $v$ is called the \textbf{\val} for
$(M,\tG,v)$.

 Given any triple  $(M,\tG,v)$, where $\tG = (\tG,+)$, we
define the \textbf{extended semiring} $T(M,\tG, v)$  to be the
triple $(T(M),\tG_{\inf}, \nu)$ where $T(M)$ is the disjoint union
$M \cup \tGinf,$ whose value function $$\nu : T(M) \To \tGinf$$
extends the original \val\ $ v$ by putting $\nu(g) = g$, $\forall
g\in \tGinf.$ Furthermore, $T(M)$ is made into a semiring, where
multiplication is defined by incorporating the given monoid
operations of $M$ and $\tGinf$, and also defining \Skip
\begin{enumerate}
\item $(\inf)\odot x = x\odot (\inf) = \inf$ for all $x \in T(M)$;
\Skip \item  $a \odot g = v(a) +g$ and $g\odot a = g+v(a) $ for
all $a \in M, \ g \in \tGinf$; \Skip
\end{enumerate}
addition $\TrS$ on $T(M)$ is defined as follows, for  $x,y \in
T(M)$:

\begin{equation}\label{addition1}  \ x \TrS y =
\begin {cases} x  \qquad \ \text{if} \quad \nu(x) > \nu(y);\\
y  \qquad \ \text{if} \quad \nu(x) < \nu(y);\\
\nu(x)    \quad \text{if} \quad \nu(x) =
\nu(y).\\
\end{cases}\end{equation}
\end{defn}

\begin{lem}\label{semiring} $T(M)$ is a semiring.\end{lem}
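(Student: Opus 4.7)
The plan is to reduce every axiom to one structural observation: the extended value function $\nu : T(M) \to \tGinf$ is a multiplicative monoid homomorphism, so that the addition \eqref{addition1} is controlled entirely by comparing $\nu$-values. With this in hand the verification splits into a short tour of the cases in $T(M)^3$, most of which collapse to computations inside the ordered monoid $\tGinf$.

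First I would record multiplicativity of $\nu$ by inspecting the four cases for $x \odot y$ (both in $M$; one in each; both in $\tGinf$), together with the absorbing case $\inf \odot x = x \odot \inf = \inf$: when $x,y \in M$ one uses that $v$ is a monoid homomorphism; the mixed cases use the defining rule $a \odot g = v(a) + g$; the case $x,y \in \tGinf$ is the given monoid operation on $(\tGinf,+)$. Associativity and two-sided unitality of $\odot$ then reduce to associativity in $(M,\cdot)$ and in $(\tGinf,+)$ together with $v(ab) = v(a)+v(b)$; for instance, for a mixed triple $(a,g,b)$ with $a,b \in M$ and $g \in \tGinf$, both bracketings evaluate to $v(a)+g+v(b)$ in $\tGinf$. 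The element $\one_M$ is a two-sided identity because $v(\one_M) = 0_\tG$, and $\inf$ is absorbing by definition.

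Next I would verify the additive monoid structure. Commutativity of $\TrS$ is immediate from the symmetry of \eqref{addition1}. The element $\inf$ is neutral since it is the strict minimum of $\tGinf$: whenever $x \ne \inf$ the first clause of \eqref{addition1} gives $x \TrS \inf = x$, while $\inf \TrS \inf = \inf$ is covered by the third. For associativity of $(x \TrS y) \TrS z$, I would split into cases according to the total order on $\nu(x),\nu(y),\nu(z)$: if all three are distinct, both bracketings select the argument of largest $\nu$-value; if two agree and the third is strictly smaller, both bracketings return the common value in $\tGinf$; if two agree and the third is strictly larger, both bracketings return the third argument; and if all three coincide, both bracketings return the common value, using that $\nu$ is the identity on $\tGinf$.

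Finally, for left distributivity $x \odot (y \TrS z) = (x \odot y) \TrS (x \odot z)$, the crucial fact is that $(\tG,+)$ is ordered and cancellative, so the translation $g \mapsto \nu(x) + g$ is strictly order-preserving on $\tGinf$. Hence $\nu(x \odot y)$ and $\nu(x \odot z)$ stand in the same trichotomous relation as $\nu(y)$ and $\nu(z)$, and the selection rule \eqref{addition1} commutes with left multiplication by $x$. When $\nu(y) \ne \nu(z)$ both sides equal $x \odot y$ or $x \odot z$ according to which $\nu$-value is larger; when $\nu(y) = \nu(z)$, the right-hand side equals $\nu(x \odot y) = \nu(x) + \nu(y)$, and the left-hand side is $x \odot \nu(y)$, which also equals $\nu(x) + \nu(y)$ upon checking the sub-cases $x \in M$, $x \in \tGinf$, $x = \inf$. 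Right distributivity is symmetric. The main obstacle is the bookkeeping across the disjoint strata of $T(M)^3$; once one consistently pushes each case down to its image in $(\tGinf,+)$ via $\nu$, no individual verification is difficult.
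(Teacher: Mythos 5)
Your argument is correct and follows essentially the paper's approach: both reduce every semiring axiom to a comparison of $\nu$-values in $\tGinf$, and both invoke cancellativity of the ordered monoid $\tG$ at exactly the same place, namely to conclude that multiplying by $x$ preserves the trichotomous relation between $\nu(y)$ and $\nu(z)$, which reduces distributivity to the sub-case $\nu(y)=\nu(z)$, settled by direct computation as you do. One small cleanup: the assertion that $g \mapsto \nu(x)+g$ is strictly order-preserving on $\tGinf$ fails when $x = \inf$ (that translation collapses everything to $\inf$); the paper disposes of the case ``some entry is $\inf$'' at the outset, and you should do the same so that your trichotomy claim is literally true on the cases that remain, rather than relying on the $x=\inf$ sub-case happening to give the right answer inside both branches.
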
 \begin{proof}  The operation $\TrS$ is clearly commutative; to check that
$\TrP$ is distributive over $\TrS$, one wants to verify that
\begin{equation}\label{dist1}  x \TrP (y \TrS z) = (x \TrP y) \TrS (x\TrP z).\end{equation} This is clear
if one of the entries is $\inf$. If $\nu(y) \ne \nu(z),$ then by hypothesis $\nu(x \TrP y) \ne \nu(x \TrP z)$,
so again \eqref{dist1} holds.

 So
assume that $\nu(y) = \nu(z).$ Then $$x \TrP (y \TrS z) = x \TrP
\nu(y) = \nu(x)+ \nu(y) =\nu(x \TrP y) = (x\TrP y) \TrS (x \TrP
z),$$ as desired. Associativity of addition is checked in a
similar fashion. Associativity of multiplication is
clear.\end{proof}

Note that the zero element of $T(M)$ is $\inf$, whereas the
original unit element $\one_M$ of~$M$ is also the unit element of
$T(M)$, in view of the following verifications:
$$\one_M \TrP (\inf) = (\inf) \TrP \one_M = \inf, \qquad \one_M
\TrP a = a \TrP \one_M = a$$
for all $a\in M$, by definition, whereas, for all $g \in \tGinf,$
$$\one_M \TrP g = v(\one_M)+g = (-\infty) +g = g.$$

\begin{rem}\label{add1} The monoid $(T(M), \TrP)$ is valued in $\tGinf$, with
respect to the \val\ $\nu$. Indeed, let us check that   $$\nu
(x\odot y) = \nu(x) + \nu(y), \quad \forall x,y \in T(M).$$ If $x$
and $y$ are both in $M$ or in $\tGinf$, then this is true by
definition, so suppose $x\in M$ and $y \in \tGinf.$ Then
$$\nu(x\odot y) = \nu(v(x)+y) = \nu(v(x)) +\nu(y)= v(x) + \nu(y)= \nu(x) + \nu(y).$$
\end{rem}

\begin{example} \label{Exmp:valuedMonoid} Here are some examples of extended semirings (of valued
monoids).
\begin{enumerate} \eroman
    \item
 For any ordered monoid $(\tG,+ \ )$, we  have the
 extended semiring  $D(\tG):= T(\tG,\tG, 1_{\tG})$, where $M = \tG$ and $1_\tG$ is the identity map. The semiring $D(\R)$ is
 the extended tropical arithmetic, in the
sense of
  \cite{zur05TropicalAlgebra}.  Note that in $D(\tG)$, $\nu$ restricts to a 1:1 map $\nu
_{M} : M  \to \tG.$ \pSkip
  \item Recall that $\R^\times$ denotes $\R\setminus \{0 \},$ and
  ${\R}^+$ denotes the positive real numbers.
 The group  $(\R^\times,\cdot \; )$, with
its absolute value, yields the triple $(\R^\times,{\R}^+,
|\phantom{w}|)$. We often refer back to this example for intuition
in the case that $v$ is not 1:1.   Likewise, for any ordered field
$F$, we have the valued monoid $(F^\times,{F}^+, |\phantom{w}|).$
\pSkip
\item  If $F$ is a field with valuation $v : F \to \tG,$ then
 $T(F^\times, \tG,v)  $ is an extended semiring. (In particular, we could
 take $F$ to be the field of Puiseux series.)
 \pSkip
\item Algebraic groups over ordered fields, or over fields with
valuation, can be valued by
 means of the determinant.

 \end{enumerate}
 \end{example}

\begin{rem}\label{add2} Given a triple $(M,\tG,v)$, one can
also define the \textbf{dual extended semiring}
$\dual{T}(M,\tG,v)$, where addition is defined by reversing the
order in the formula \eqref{addition1}; namely $x\oplus y$ equals
$y$ if $\nu(x) > \nu(y),$ and equals $x$ if $\nu(x) < \nu(y).$ As
before, we formally adjoin the element $-\infty$. This duality
will be explained algebraically in \S\ref{duality}.
 \end{rem}


\section{Supertropical semirings}\label{Supsem}

 We start  this section by laying out the basic algebraic notion
of a supertropical domain, showing how it is just a reformulation
of a valued monoid. Then, having made the transition to semirings,
we can then bring in related semirings such as the
 semiring of functions of \S\ref{sec:func} and the polynomial
semiring. In this paper, we assume throughout that all of our
semirings are commutative (under multiplication as well as
addition), although in \cite{IzhakianRowen2008Matrices} we need to
drop this assumption in order to deal with matrices.

 Having already constructed our main object $R=T(M)$, together
with the operations $\TrS$ and $\TrP$, let us first describe it
more intrinsically in the language of semirings.

\begin{Note}\label{ignore}   In line with the customary algebraic notation for semirings, the zero
element $\rzero$ of $R$   replaces
 what we originally called
$-\infty$. Under this notation, we write $\tG_\zero $ instead of
$\tGinf.$ Likewise, multiplication in $R$ is taken to subsume the
original monoid operation of $\tG$, so  $\nu (\rone) $ is the
neutral element of $\tG.$

 But
 in order to emphasize the tropical aspect,
 we often revert to
what we have called {\it logarithmic notation} when discussing our
motivating example $R = T(\R);$ in these instances we retain the
usage of $-\infty$ for the zero element and $0$ for the
multiplicative unit.
\end{Note}

\subsection{Semirings with a designated ghost ideal}

All of our structures fit into the framework of a semiring $R$
with a designated ideal $\tGz := \tG \cup \{ \rzero \},$ called
throughout the \textbf{ghost ideal}. Recall from \cite{golan92}
that an \textbf{ideal}  of a semiring $R$, denoted $A\triangleleft
R$, is a submonoid~$A$ of the monoid $(R,+)$ such that $ra$ and $
ar \in A$ for all $r \in R$ and $a\in A$.

In the following definition we consider $\tGz$ as a semiring in its own right, with neutral element  $\nu (\rone) $.

\begin{defn}\label{def:superTropical0} 
A \textbf{semiring with ghosts}   $\RGnu$ is a semiring $R$ (with
zero element   denoted as  $\rzero$) together with a semiring
ideal $\tGz $, called the \textbf{ghost ideal}, and
 an idempotent semiring homomorphism
$\nu : R  \to \tGz,$  called the \textbf{ghost map}, satisfying
\begin{equation}\label{eq:ghostMap}  a + a = \nu (a), \quad
\forall a\in R.
\end{equation}
\end{defn}

 From now on we formulate tropical concepts in
the language of supertropical semirings, in order to draw from the
structure theory of semirings (together with its parallels in ring
theory).

\begin{rem}  The notion of ideal is  standard in semiring theory.
   \cite[Proposition 9.10]{golan92} shows that an ideal $A$ of
$R$ is a kernel of a suitable homomorphism iff $A$ is
\textbf{subtractive}, which means that for every $a,b \in R$ such
that $a \in A$ and $a+b \in A,$ we must have $b\in A.$ Whereas one
often goes on to define a congruence and a quotient structure,
cf.~\cite[p. 68]{golan92}, this approach is not relevant to our
theory here, specifically for $\tGz$. Indeed, for any element
$a\in R$, we have both $2a = a+a \in \tGz$ and $2a+a \in \tGz,$ so
from this point of view, there is only one coset of $\tGz$, which
is all of $R$. The ghost ideal $\tGz$ is far from subtractive, and
we must abandon this aspect of classical semiring theory; the main
feature of this research is an alternative structure theory
utilizing the ghost ideal in a fundamental role.
\end{rem}

We are finally ready for the main definition of this paper.

\begin{defn}\label{def:superTropical} 
A \textbf{supertropical semiring} is a semiring with ghosts
$\RGnu,$ satisfying the extra properties, where we write $a^{\nu
}$ for $\nu(a)$:
\begin{enumerate} \ealph
 \item (Bipotence) $a+b  \in \{a,b\},\ \forall a,b \in R \; \text{such that } \; a^{\nu }
\ne b^{\nu };$ \pSkip
 \item (Supertropicality) $a+b   =  a^{\nu } \quad \text{if}\quad a^{\nu } =
 b^{\nu}$.
\end{enumerate}
\end{defn}

Note that Equation \eqref{eq:ghostMap}, a special case of
supertropicality,  implies that the ghost map $\nu$ is given by
$\nu(a) = a+a$.

\begin{rem} \label{tropprop}$ $
\begin{enumerate} \eroman
    \item It follows from Equation~\Ref{eq:ghostMap} that $ (ab)^{ \nu}
= ab + ab = (a+a)b = (a^\nu) b,$ and likewise $ (ab) ^{ \nu}  = a
(b^\nu).$ Thus, $(ab)^\nu  = (a^\nu b)^\nu  = a^\nu b^\nu.$ In particular, $b^\nu = 1^\nu b.$
\pSkip \item If $a^\nu = \rzero,$ then $a = a+ \rzero = \rzero^\nu
= \rzero.$ It follows that $\nu(R\setminus \{ \rzero \}) \subseteq
\tG.$
\pSkip \item The fact that $\tGz$ is an ideal of the supertropical
semiring $R$ is a formal consequence of the properties of the map
$\nu$.
 Indeed, if $a \in R$
and $b\in \tGz,$ then $ab = a (b^{ \nu})  = (ab) ^{ \nu}   \in
\tGz,$ and likewise $ba \in \tGz.$ ($\tGz$ is closed under
addition, by bipotence.) \pSkip \item If $a+b = \rzero$, then $a =
b = \rzero$. Indeed, if $a^ \nu \ne b^\nu ,$ then $a+b \in \{ a,
b\}$; let us assume that $a+b = a.$ Then $a=a+b = \zero,$ so $a+b
= b,$ and $a= b= \zero$.

We have shown that   $a^ \nu =  b^\nu $; but then $a^\nu =
a+b =  \rzero, $ implying $a = \rzero,$ by (ii).
\end{enumerate}
\end{rem}

Strictly speaking, one could have $\tGz = R,$ with $\nu$ the
identity map. In this case $R$ is an additively idempotent
semiring, such as the usual max-plus algebra. However, we view
this case as degenerate, and are much more interested in the case
where $\tGz$ is a proper ideal of $R$.

\begin{rem}[Universal characteristic] Supertropicality implies that $\tGz \supseteq \{nr: r \in R\}$, (where $nr = r + \cdots + r$ repeated $n$ times), for every
natural number $n
>1;$ more precisely, $a +a = a+a+a = \cdots =  a^\nu \in \tGz$,
$\forall a \in R$. Thus, $R$ might be expected simultaneously to
have properties of rings of every positive characteristic.
\end{rem}

This leads to a surprising fact.

\begin{prop}\label{lem:powOfPol0}
If $R$ is a supertropical semiring and $a,b \in R  $, then
 \begin{equation}\label{pow1}(a + b)^m  = a^m +  b^m, \quad \forall  m \in \Net.\end{equation}
\end{prop}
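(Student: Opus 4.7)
The plan is a direct case analysis on $a+b$, driven by Definition~\ref{def:superTropical}. The axioms say $a+b$ is either $a$, $b$, or the common ghost $a^\nu$ (when $a^\nu = b^\nu$); in each case I compute $(a+b)^m$ from the value of $a+b$, independently compute $a^m + b^m$ by comparing $\nu$-values, and match the two.

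Case I: $a^\nu = b^\nu$. Supertropicality gives $a+b = a^\nu$, so
\[
(a+b)^m \;=\; (a^\nu)^m \;=\; (a^m)^\nu,
\]
the last equality using that $\nu$ is a semiring homomorphism (Remark~\ref{tropprop}(i)). The same homomorphism property forces $(a^m)^\nu = (a^\nu)^m = (b^\nu)^m = (b^m)^\nu$, so a second application of supertropicality gives $a^m + b^m = (a^m)^\nu$. Both sides coincide.

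Case II: $a^\nu \ne b^\nu$. Without loss of generality $a^\nu > b^\nu$ in the total order on $\tGz$, defined by $g \le h$ iff $g+h=h$ (well-defined on $\tGz$ because $\nu$ restricts to the identity there, so bipotence inside $\tGz$ reduces $+$ to max). Bipotence in $R$ gives $a+b \in \{a,b\}$, and applying $\nu$ yields $\nu(a+b) = a^\nu + b^\nu = a^\nu$, which rules out $a+b = b$; hence $a+b = a$ and $(a+b)^m = a^m$. On the other side, $(a^m)^\nu = (a^\nu)^m$ and $(b^m)^\nu = (b^\nu)^m$, so to conclude via bipotence that $a^m + b^m = a^m$ it suffices to check $(a^\nu)^m > (b^\nu)^m$. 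This is the delicate step: weak monotonicity $(a^\nu)^m \ge (b^\nu)^m$ follows instantly from the ordered-monoid axioms by iterating $a^\nu \ge b^\nu$, but promoting it to a strict inequality requires cancellativity of $\tG$, which is built into the underlying valued-monoid structure of Definition~\ref{def:valuedMonoid}. With strictness in hand, bipotence selects $a^m + b^m = a^m$, matching $(a+b)^m$ and closing Case II.
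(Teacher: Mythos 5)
Your case analysis on the value of $a+b$ (dominance versus ghost) is a genuinely different route from the paper's proof, which instead expands $(a+b)^m$ by the binomial theorem (the middle terms acquiring the ghost coefficient $\rone^\nu$) and argues that $a^m$ is the dominant term. Case I of your proof is clean and matches the paper's in content. Case II, however, has a gap in the final step: you justify the strict inequality $(a^\nu)^m > (b^\nu)^m$ by appealing to cancellativity of $\tG$, citing Definition~\ref{def:valuedMonoid}. But the proposition is stated for an arbitrary supertropical \emph{semiring} (Definition~\ref{def:superTropical}), and neither that definition nor Definition~\ref{def:superTropical0} imposes cancellativity on $\tG$. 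The extended semirings $T(M)$ of Definition~\ref{def:valuedMonoid} are supertropical domains, a strictly narrower class, and it is precisely for supertropical domains --- using the extra hypothesis that $\tT$ is a multiplicative monoid --- that the paper later derives $\nu$-cancellativity (Remark~\ref{cancell}). In an abstract supertropical semiring one can have $a^\nu > b^\nu$ yet $(a^\nu)^m = (b^\nu)^m$, and then bipotence alone does not settle $a^m + b^m$.

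The fix stays entirely within your framework and costs one line of distributivity. From $a+b = a$, multiply by $a^{m-1}$ to get $a^m + a^{m-1}b = a^m$. This forces one of two things: either $a^m \nug a^{m-1}b$, in which case the chain $a^m \nug a^{m-1}b \nuge a^{m-2}b^2 \nuge \cdots \nuge b^m$ gives $a^m \nug b^m$ and bipotence yields $a^m + b^m = a^m$; or $a^m \nucong a^{m-1}b$, in which case the displayed equality forces $a^m$ to be a ghost, and then $a^m + b^m$ is either $a^m$ (if $a^m \nug b^m$) or $(a^m)^\nu = a^m$ (if $a^m \nucong b^m$) --- so $a^m + b^m = a^m$ in every subcase. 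For what it is worth, the paper's own unjustified claim that $a^m \nug a^i b^j$ whenever $j\geq 1$ is open to the same objection, so the gap is shared; your more explicit phrasing simply makes it visible and therefore fixable.
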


\begin{proof} We need to show that the only terms needed to compute $(a+b)^m $
are $a^m$ and $b^m$. Write
 \begin{equation*}(a + b)^m   = a^m  +
  \rone^\nu a^{m-1} b + \cdots +  \rone^\nu a^{1} b^{m-1} + b^m;\end{equation*}
then
\eqref{pow1} is clear if $a\nucong b$, since each side of
Equation \Ref{pow1} is then $(a^m)^\nu$, so we may assume that $a
\nug b $. Then  $ a^m \nug a^i b^j$ whenever $i+j = m.$ This means
that the single dominating term in the expansion of $(a + b)^m $
is $ a^m$; i.e.,
$$(a + b)^m  =a^m = a^m + b^m,$$ as
desired.
\end{proof}

\begin{rem} A suggestive way of viewing this proposition is to note that for any $m$
there is a semiring endomorphism $R\to R$ given by $a \mapsto
a^m,$ strongly reminiscent of the Frobenius automorphism in
classical algebra. This plays an important role in our theory.
\end{rem}

\subsection{Supertropical domains and semifields}

Suppose $\RGnu$ is a supertropical semiring.
\begin{defn}\label{dom} A \textbf{supertropical domain} $\RGnu$ is a supertropical semiring
 for which the following extra properties hold:

\begin{enumerate} \ealph
 \item  $\tT :=  R \setminus
\tGz$ is a (multiplicative) Abelian monoid; i.e., is closed under multiplication. \pSkip
  \item  The   restriction $\nu _{\tT} $ of $\nu$ to $\tT $ is onto; in other words, every element of $\tG$
has the form $a^\nu$ for some $a\in \tT $.

   \end{enumerate}
\end{defn}

Note that the ghost ideal $\tGz$ of a supertropical domain $R=
\RGnu$ determines both $\tT = R \setminus \tGz$ and $\tG = \tGz
\setminus \{ \rzero \}.$

\begin{rem} One can  obtain   condition (b) by replacing $R$ by
$\tT \cup \nu(\tT)\cup \{\rzero\}.$
\end{rem}

We call  $\tT$ the set of \textbf{tangible elements}; these
comprise one of our main focuses, since they lead us back to
tropical geometry. Ironically, for supertropical semirings in
general, the tangible elements are more complicated to define than
the ghost elements. In an arbitrary semiring with ghosts, the
definition of $\tT$ is much subtler, but we do not consider that
issue in this paper.
 Two elements of $R$ have the same \textbf{parity} if they are both
ghosts or both tangible.

\begin{notation}\label{nott:nu}
Viewing $\tGz$ as an ordered monoid, we write $a
 \nug b$ (resp.~ $a\nuge b$) to denote $a^\nu > b^\nu$ (resp.~ $a^\nu \ge b^\nu$ );
 we write $a \nucong b$ to denote that $a^\nu = b^\nu.$ We say $a$
 is $\nu$-maximal in $S \subset R$ if $a \nuge s$ for all $s \in
 S$.
\end{notation}

A major question in algebra is when two elements are equal.
Normally in a ring one determines whether $a=b$ by checking if
$a-b = 0.$ This simple procedure is no longer available in general
semirings, but in our supertropical setting we note  for tangible
$a,b \in R$ that $a \nucong  b$  iff $a+b \in \tG.$ This point of
view provides the motivation for the supertropical theory. We also define an
equivalence $\equiv$ by the rule $a\equiv b$ iff $a,b$ have the same parity with $a \nucong b$.
This means that either $a=b$ or $a,b \in \tT$ with $a \nucong b.$ (Thus, when $\nu$ is 1:1,
this reduces to equality.) Equivalent elements
are interchangeable in the sense that if $a \equiv b$ then $ac\equiv bc$ and $a+c \equiv b+c$ for all elements $c$.

Much of the theory can be carried out for $\tT$ not necessarily
Abelian, but this assumption is useful when we consider
factorization of polynomials (and is even more crucial for
studying matrices later on).

The mild condition that $\tT $ is a multiplicative monoid has some impressive
consequences.

\begin{rem}\label{cancell} Suppose that $R$ is a supertropical
domain.
\begin{enumerate} \eroman
    \item  $R$ is $\nu$-cancellative, in the sense that $ca \nucong cb $
for $c \ne \rzero$ implies $a \nucong   b .$ Indeed, since
$\nu_{\tT}$ is onto, we may assume that $a,b,c \in \tT.$ But then
$c(a+b) = ca+cb \in \tG$ by supertropicality, which contradicts
the fact that $\tT$ is a monoid unless $a+b \in \tG$; i.e., $a
\nucong b .$

In particular, the monoid $\tG$ is cancellative. \pSkip

    \item  If $ca \nucong db $ and $c \nucong d  \ne \rzero$, then $a \nucong   b .$ Indeed,
    $(ca)^\nu = (db)^\nu = d^\nu b^\nu = c^\nu b^\nu,$ so we are done by (i). \pSkip

 \item  If $ca \nug   cb,$
then $a\nug  b.$ Indeed, again we may assume that $a,b,c \in \tT.$
If $a \nucong   b,$ then $a+b \in \tG$, implying $ ca+cb =c(a+b)
\in \tG$,   contradicting the fact that $ca+cb = ca
 \in \tT$.
\pSkip

\item  The same argument shows that $R$ has cancellation over
$\Net,$ in the sense that $a^n \nucong b^n$ implies $a \nucong b.$
Indeed, again we may assume that $a,b \in \tT.$ But
Proposition~\ref{pow1} implies that $(a+b)^n = a^n + b^n \in \tG,$
implying $a+b \in \tG;$ i.e., $a \nucong b$. \pSkip

\item  It follows from (i) and (iv) (by applying $\nu$) that the
monoid $\tG$ is cancellative and also has cancellation over
$\Net$.\pSkip

\item $R$ is a commutative semiring. Indeed, any two elements of
$\tT$ commute, by definition; hence, $$a(b^\nu) = (ab)^\nu =
(ba)^\nu = (b^\nu) a; \qquad a^\nu b^\nu = (ab)^\nu = (ba)^\nu =
b^\nu a^\nu.$$
\end{enumerate}
\end{rem}

Let us tie supertropical domains to
the preliminary notions of the previous section.

\begin{rem}\label{ord1} Any valued Abelian monoid $(M,\tG,v)$ (as in Definition~\ref{def:valuedMonoid})  gives rise to the
supertropical domain $\TMGnu;$ cf.~Remark~\ref{add1}.

  Conversely, given a supertropical domain $\RGnu,$
we can recover the valued monoid $M = \tT = R\setminus \tGz$, and
$v = \nu|_{\tT} : \tT  \to \tG$ provides the \val. We view $\tGz$
as a cancellative ordered monoid ({\it but now with its operation being
written as multiplication}), under the following order:
$$g \geq h \text{ in }\tGz \quad \text{
iff }\quad g+h = g \text{ in }R.$$
To verify that ``\,$\ge$'' defines an order, note that the
properties of identity and antisymmetry are immediate; to check
transitivity, we note that if $g_1 \geq g_2$ and $g_2 \geq g_3$,
then the following equalities hold in $R$:
$$g_1 + g_3 = (g_1+g_2)+ g_3 = g_1 + (g_2+g_3) = g_1 + g_2 =
g_1.$$

We want to identify $(T(\tT ),  \tGz, \nu)$ with $\RGnu.$
Definition~\ref{def:superTropical0} gives us
Definition~\ref{def:valuedMonoid}, and Remark~\ref{add1} is seen
case by case, assuming $v(a)>v(b)$:
\begin{enumerate} \ealph
    \item
  $ a^{\nu }+  b^{\nu } = a^{\nu },$ so
 $$  (a+b)^\nu  =
a^{\nu } + b^{\nu } =  a^{\nu}  .$$
 Also $a+b=a,$ for if $a+b = b$, then $v(a) = a^{\nu }
=(a+b)^\nu  = b^{\nu }= v(b),$ contrary to assumption.
 \pSkip

    \item
Likewise, $a + b^{\nu } = a$, for if $a+ b^{\nu } = b^{\nu }, $
the same argument would show that
$$v(a) = a^\nu + b^{\nu } = \nu (a+b) = v(b),$$ contrary to
assumption.
\end{enumerate}
\end{rem}

The following computation is useful in later sections.
\begin{lem}\label{compaid} $ $
\begin{enumerate}\eroman \item If  $b ^2 = a  c$ in   a supertropical
semiring, then $a+c = a+b+c$. \pSkip

\item More generally (but over a supertropical domain), if $bc
\nucong ad$, then $ac +bd = (a+b)(c+d).$
\end{enumerate}
\end{lem}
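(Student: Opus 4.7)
Both parts reduce to comparing $\nu$-values in the totally ordered ghost ideal $\tGz$ (ordered by $g\ge h$ iff $g+h=g$), where bipotence and supertropicality then determine every sum. Throughout I rely on the fact that the ordered monoid $\tG$ is cancellative, so that multiplication in $\tG$ is \emph{strictly} order-preserving (cf.~Remark~\ref{cancell}).

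For part~(i), apply $\nu$ to $b^2=ac$ to get $(b^\nu)^2 = a^\nu c^\nu$ in $\tGz$. Without loss of generality $a\nuge c$, so $a^\nu c^\nu \nule (a^\nu)^2$; strict monotonicity of squaring then forces $b^\nu \nule a^\nu$, i.e., $a\nuge b$. Since $a$ thus $\nu$-dominates both $b$ and $c$, a short case split on strict vs.~equal $\nu$-values shows $a+b+c=a+c$ in every case, the equality subcases being handled by supertropicality (which collapses a sum of $\nu$-equivalent elements to the common ghost).

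For part~(ii), expand
\[ (a+b)(c+d)\;=\;ac+ad+bc+bd, \]
so it suffices to show $ad+bc$ is absorbed once added to $ac+bd$. By the hypothesis $bc\nucong ad$, supertropicality collapses $ad+bc$ to $(ad)^\nu$. Multiplying $a^\nu d^\nu = b^\nu c^\nu$ by itself yields $\bigl((ad)^\nu\bigr)^2 = a^\nu b^\nu c^\nu d^\nu = (ac)^\nu(bd)^\nu$, so $(ad)^\nu$ is a \emph{geometric mean} of $(ac)^\nu$ and $(bd)^\nu$ in the cancellative ordered monoid $\tG$; hence it lies between them. Assuming WLOG $(ac)^\nu \nuge (bd)^\nu$, we obtain $(ac)^\nu \nuge (ad)^\nu \nuge (bd)^\nu$, and bipotence then absorbs $(ad)^\nu$: namely $ac+(ad)^\nu+bd = ac+bd$, giving the desired identity.

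The main subtlety lies in the borderline cases. In part~(ii), if $(ad)^\nu$ coincides with $(ac)^\nu$ (or with $(bd)^\nu$), $\Net$-cancellation in $\tG$ (Remark~\ref{cancell}(iv)) forces all four of $(ac)^\nu,(ad)^\nu,(bc)^\nu,(bd)^\nu$ to agree, and both sides of the claimed identity collapse to the common ghost element; the analogous borderline $a\nucong b$ or $a\nucong c$ in part~(i) is dealt with by the same collapse-to-ghost mechanism. This is the only nontrivial technical point; everything else is a direct application of bipotence and the cancellation remark.
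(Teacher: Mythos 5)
Your proof is correct in substance, and for part~(ii) it takes a genuinely different route from the paper's. The paper argues directly: after expanding $(a+b)(c+d)=ac+ad+bc+bd$, one is done unless $ad$ $\nu$-dominates both $ac$ and $bd$, at which point cancellation (Remark~\ref{cancell}) forces $a\nucong b$ and $c\nucong d$, and both sides collapse to $a^\nu c^\nu$. Your geometric-mean observation $\bigl((ad)^\nu\bigr)^2=(ac)^\nu(bd)^\nu$ is an elegant reorganization of the same cancellation ingredients; it isolates in a single line the reason why $(ad)^\nu$ must sit between $(ac)^\nu$ and $(bd)^\nu$, whereas the paper extracts this through two separate applications of cancellation. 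For part~(i), the paper goes the contrapositive way (dispose of $a\nug b$ or $c\nug b$, then show $a\nule b$ and $c\nule b$ forces $a\nucong b\nucong c$), while you derive $a\nuge b$ directly from square-root monotonicity; these are two readings of the same inequality.

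Two small things to tighten. In your case split for~(i), the subcase $a\nucong b$ with $a\nug c$ is not obviously harmless: there $a+b+c=a^\nu$ but $a+c=a$, which differ unless $a$ is a ghost. You must rule this subcase out, by noting that $a\nucong b$ turns $(b^\nu)^2=a^\nu c^\nu$ into $(a^\nu)^2=a^\nu c^\nu$, whence $a\nucong c$ by cancellation; your ``collapse-to-ghost'' remark gestures at this but does not quite pin it down. Second, the borderline analysis in~(ii) only needs ordinary $\nu$-cancellation (Remark~\ref{cancell}(i)), not $\Net$-cancellation: from $(ad)^\nu=(ac)^\nu$ and $\bigl((ad)^\nu\bigr)^2=(ac)^\nu(bd)^\nu$ one cancels a factor of $(ac)^\nu$ directly. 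Neither point changes the overall shape of the argument.
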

\begin{proof} (i)  If $a \nug b$ or $c\nug   b$, then there is nothing to prove, so we may assume that
$a \le_\nu b$ and  $c  \le_\nu  b$. But then if $a <_\nu b$ or  $c
<_\nu  b$ we would have $ac <_\nu b^2$ by
Remark~\ref{cancell}(ii), contrary to hypothesis. Hence, $a
\nucong b \nucong c,$ implying
$$a+b+c  = a ^\nu = a+c .$$

(ii) By symmetry we may assume that $ad \ge_\nu bc$; we are done unless $ad \ge_\nu ac$ and $ad \ge_\nu bd,$ implying by
Remark~\ref{cancell} that $d  \nucong c$ and $a  \nucong b;$ hence
$$ac +bd = (ac)^\nu = a^\nu c^\nu = (a+b)(c+d).$$
\end{proof}

\begin{defn}\label{def:semifield}
A supertropical domain $\RGnu$ is called a \textbf{supertropical
semifield} if $\tT$ is a (multiplicative) Abelian group, i.e., if
every tangible element is invertible.
\end{defn}

Supertropical semifields play
 a basic role in our theory, analogous to the role of fields in
 linear algebra and algebraic geometry.

\begin{rem}
For any supertropical semifield $\RGnu$, $\tG$ is a group with
neutral element $\rone^\nu$ . Indeed, if $a^\nu \in \tG$ for $a
\in \tT ,$ then taking $b\in \tT $ such that $a b = \rone,$ we
have $a^\nu b^\nu = \rone^\nu;$ thus $a^\nu$ is invertible in
~$\tG$, as desired.

\end{rem}

  We usually designate a supertropical semifield as $F= \FGnu$,
 still denoting the ghost ideal as $\tGz.$

\begin{example} The   extended semirings of
Example~\ref{Exmp:valuedMonoid} all are supertropical
semifields.\end{example}

\subsection{Supertropical duality}\label{duality}
 For any supertropical domain $R = \RGnu,$ the set $R_+ = R \setminus
\{ \rzero \}$ is a semiring without zero, and one can define the
\textbf{dual semiring without zero} $\dual{R}_+$ to have the same
underlying set as $R_+$ with the same tangibles and ghosts, the
same ghost map $\nu$, and the same multiplication, but with
addition defined  by putting $a+b = b$ in $\dual{R}$ iff $a+b = a$
in~$R_+$, and $a+b = a^\nu$ in $\dual{R}$ whenever $a\nucong b$.
(This is well-defined in view of supertropicality.)

Formally adjoining a zero element $\dzero$ to $\dual{R}_+$ yields
a semiring which we call
  the
\textbf{supertropical dual} ~$\dual{R}$. The zero element $\rzero$
of $R$ has been treated separately since if we formally included
$\rzero$  in $\dual{R}$,  it would behave like $\infty$ rather
than like the zero element.
 $\dual{R}$  is a supertropical domain, seen by
combining Remarks~\ref{add1} and~\ref{add2}, and in fact is the
supertropical domain that matches the min-plus algebra in
\cite{SpeyerSturmfels2004}.


\begin{lem}\label{add4}
When $R$ is a supertropical semifield, so is its supertropical
dual $\dual{R}$, and moreover there is a semiring isomorphism
$\Phi: R \to \dual{R}$ given by $$\rzero \mapsto \dzero , \qquad
 a\mapsto a^{-1}, \quad a^\nu\mapsto (a^{-1})^\nu, \quad \text{ for all } a\in \tT.$$\end{lem}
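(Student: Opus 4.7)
The plan is first to see that $\dual{R}$ is a supertropical semifield, and then to verify that the candidate map $\Phi$ is a bijective semiring homomorphism. For the first part, $\dual R$ has the same underlying set, the same multiplication, the same tangible subset $\tT$, and the same ghost map $\nu$ as $R$; only the addition is reversed. Since $\tT$ is a multiplicative Abelian group in $R$, it remains so in $\dual R$, and the axioms of a supertropical domain for $\dual R$ already follow from combining Remarks~\ref{add1} and \ref{add2}. Hence $\dual R$ is a supertropical semifield.

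Next I would record the one fact that drives the whole argument: \emph{inversion reverses the order on the group $\tG$.} Indeed, if $a \nug b$ with $a,b\in\tT$, i.e.\ $a^\nu > b^\nu$ in the ordered group $\tG$, then multiplying by $(a^{-1})^\nu (b^{-1})^\nu$ (which exists by the semifield hypothesis) gives $(b^{-1})^\nu > (a^{-1})^\nu$. This is precisely the reversal that matches the formal reversal of addition used to pass from $R$ to $\dual R$.

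Multiplicativity of $\Phi$ is routine: for $a,b\in\tT$ one has $\Phi(ab)=(ab)^{-1}=a^{-1}b^{-1}=\Phi(a)\Phi(b)$, and the mixed cases involving ghosts follow from $(a^\nu)^{-1}=(a^{-1})^\nu$ together with Remark~\ref{tropprop}(i). Also $\Phi(\rone)=\rone$ and $\Phi(\rzero)=\dzero$, which behaves correctly since $\dzero$ is absorbing for multiplication in $\dual R$. Bijectivity is immediate, since the prescription $\dzero \mapsto \rzero$, $a\mapsto a^{-1}$, $a^\nu \mapsto (a^{-1})^\nu$ gives a two-sided inverse.

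The main step, and the one that uses the key observation above, is additivity: I would split into cases on the $\nu$-comparison of $a$ and $b$ (reducing the case where one of them is $\rzero$ by the definition $\rzero + x = x$ and $\dzero + \Phi(x) = \Phi(x)$). If $a\nucong b$, then by supertropicality $a+b=a^\nu$ in $R$, while $(a^{-1})^\nu=(b^{-1})^\nu$ gives $\Phi(a)+\Phi(b)=(a^{-1})^\nu$ in $\dual R$ as well, and $\Phi(a^\nu)=(a^{-1})^\nu$ matches. If $a\nug b$, then $a+b=a$ in $R$, whereas by the order-reversal $(a^{-1})^\nu < (b^{-1})^\nu$, so the definition of $\dual R$-addition (which selects the $\nu$-smaller summand) gives $\Phi(a)+\Phi(b)=a^{-1}=\Phi(a+b)$; the case $b\nug a$ is symmetric, and the mixed tangible/ghost subcases are identical since $\Phi$ commutes with $\nu$. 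The only real obstacle is bookkeeping the order reversal carefully, but once that observation is isolated, each case reduces to a one-line check.
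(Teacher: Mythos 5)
Your proof is correct and follows essentially the same route as the paper's: both hinge on the single observation that inversion reverses the $\nu$-order on $\tG$, so that $a \nug b$ in $R$ becomes $a^{-1} \nul b^{-1}$, matching the reversed addition of $\dual R$. You carry out the case analysis (including $a\nucong b$ and the ghost cases) a bit more explicitly, and you also spell out that $\dual R$ is a supertropical semifield, which the paper disposes of in the remarks preceding the lemma rather than inside the proof, but the underlying argument is identical.
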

 \begin{proof} Take $a,b \in \tT.$
If $a+b =a$, then $a \nuge b,$ so $a^{-1} \nule b^{-1},$ implying
$$\Phi(a+b) = \Phi(a)  = a^{-1} = a^{-1}+b^{-1} = \Phi(a)+ \Phi(b)
$$ in $\dual{R}.$ The same argument works with nonzero ghosts. For
$a  \in \tT,$ $b =\rzero,$ we have
$$\Phi(a+b)=a^{-1} = a^{-1} + \dzero  = \Phi(a) + \Phi(b) .$$
\end{proof}

This duality provides the reversals of polyhedral complexes in
tropical geometry, as described
in~\cite{IzhakianRowen2008Completion}.

\subsection{The divisible closure of a supertropical domain}

  Given any ordered
monoid  $(\tM,+)$ with cancellation over $\Net,$ one can form
 an $\mathbb
N$-divisible ordered monoid
$$\cltM = \left\{ \frac{a}m : a \in \tM, \ m \in \Net
\right\},$$ called the \textbf{divisible closure} of $\tM$; here $ \frac{a}m \le \frac{b}n$
iff $na \le mb.$ . The canonical map $\tM \to
\cltM$ given by $a \mapsto \frac{a}1 $ is 1:1. When
$\tM$ is a group,  $\cltM$ is then a
group which can be viewed as containing $\tM.$

We want to perform the same procedure for a supertropical domain
$R$, but now proceed using the semiring notation. We first note by
Remark \ref{cancell} that the ghost set $\tGz$ has cancellation
over $\Net.$ Viewing the ghost ideal $\tG$ as an ordered monoid as
in Remark~\ref{ord1} (with respect to multiplication), we form its
divisible closure $\cltG$, which we notate
$$\cltG = \left\{ \root m \of  a : a \in \tG, \ m \in \Net\right\}
.$$
 We formally define the $\Net$-localization
$$\clR = \bigg\{ \root m \of  a  : \ a \in R, \  m \in
\Net\bigg\};$$ here $ \root m \of  a  =  \root {m'} \of  b$ when
$a^{m'n} = b^{mn}$ for some $n\in \Net.$ (In the tropical
examples, using logarithmic notation, one would write
  $\frac am$  instead of $\root m \of a$.)

 Multiplication is
defined by
$$\root m \of  a  \root {m'} \of  b = \root {m  m'} \of  {a^{m'}b^m},$$ and
addition by
$$\root m \of  a + \root {m'} \of  b = \root {m  m'} \of {a^{m'}+b^m}.$$   We
extend $\nu : R \to \tG$ to a map $\clR \to \cltG$ by putting $\nu
(\root m \of  a ) =\root m \of {a^\nu}$, and call  $\clR$ the
\textbf{divisible closure} of~$R$.

 We say that $R$ is
\textbf{divisibly closed} if $\clR = R.$
For example,  $D(\Q)$ is
divisibly closed.

\begin{prop}\label{divcl1}  If $(R, \tG, \nu)$ is  a  supertropical domain, then $(\clR, \cltG, \nu )$ is also a supertropical domain, and there is a semiring homomorphism $R \to \clR$ given by $a \mapsto a$ (identifying  $ \root 1 \of  a$ with $a$) which is 1:1 on equivalence classes with respect to our equivalence relation $\equiv$. When $R$ is a supertropical
semifield, $\clR $ is also  a divisibly closed supertropical
 semifield.
 \end{prop}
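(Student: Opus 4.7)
The plan is to verify in sequence: (i) the proposed operations on $\clR$ are well-defined on equivalence classes, (ii) $(\clR, \cltG, \nu)$ satisfies the axioms of a supertropical domain, (iii) the canonical inclusion is a semiring homomorphism that is $1$:$1$ on $\equiv$-classes, and (iv) $\clR$ is a divisibly closed supertropical semifield when $R$ is.

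First I would show that the relation $\root m \of a = \root{m'} \of b$ iff $a^{m'n} = b^{mn}$ for some $n \in \Net$ is a bona fide equivalence (passing to its transitive closure if necessary), leaning on $\Net$-cancellation from Remark~\ref{cancell}. The decisive technical step is that the prescribed sum and product formulas descend to equivalence classes, and for this the Frobenius-type identity $(x+y)^N = x^N + y^N$ from Proposition~\ref{lem:powOfPol0} is the crucial tool: it lets one raise sums to arbitrarily high powers without generating cross-terms, so that swapping a representative $a$ for an $N$-th-power-equivalent $a'$ has its effect absorbed under the outer root. The semiring axioms then transfer from $R$ to $\clR$ by bringing any pair of elements to a common denominator via $\root m \of a = \root{mm'} \of{a^{m'}}$ and reducing each identity to the corresponding identity in $R$.

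Next I would define $\cltGz := \{\root m \of a : a \in \tGz\}$ as the ghost ideal, which is well-defined on equivalence classes since $\tT$ is closed under products (so $a \in \tT$ iff $a^N \in \tT$). Extending $\nu$ by $\nu(\root m \of a) := \root m \of{a^\nu}$ yields an idempotent semiring homomorphism onto $\cltGz$. Bipotence and supertropicality for $\clR$ reduce to those for $R$ after bringing two elements to common denominators $\root{mm'} \of{a^{m'}}$ and $\root{mm'} \of{b^{m}}$ and invoking the corresponding axiom in $R$. The tangible set $\clR \setminus \cltGz$ is multiplicatively closed by the product formula, and $\nu$ restricted to it remains onto $\cltG$ because $\nu|_\tT$ was onto $\tG$ in $R$.

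For the canonical map $R \to \clR$, the homomorphism property is immediate from the product and sum formulas. For injectivity on $\equiv$-classes, suppose $a^n = b^n$ in $R$; then Proposition~\ref{lem:powOfPol0} gives $(a+b)^n = a^n + b^n = a^n + a^n = (a^n)^\nu \in \tGz$, so $a + b \in \tGz$ (since $\tT$ is a monoid), forcing either $a, b \in \tT$ with $a \nucong b$, or $a = b = \rzero$, or both $a, b$ ghost with $a = b$ by $\Net$-cancellation in $\tG$; in each case $a \equiv b$. For the final assertions: when $R$ is a semifield, the product formula shows that $\root m \of a \cdot \root m \of{a^{-1}} = \rone$, so every tangible element of $\clR$ is invertible; and divisible closedness follows since $\root{mk} \of a$ is a $k$-th root of $\root m \of a$ for every $k \in \Net$, again by direct computation with the product formula. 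The main obstacle throughout is the well-definedness of addition under the equivalence, which rests squarely on Proposition~\ref{lem:powOfPol0}.
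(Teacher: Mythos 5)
Your approach matches the paper's: verify well-definedness of the operations (with the Frobenius identity Proposition~\ref{lem:powOfPol0} doing the real work for addition), transfer the semiring and supertropical-domain axioms from $R$ by bringing representatives to a common denominator, and finally treat injectivity and the semifield case. The paper's own proof is in fact terser than yours — it verifies well-definedness only for multiplication and then invokes Proposition~\ref{lem:powOfPol0} in a single line to cover the rest — so your explicit flagging of the Frobenius identity as the crucial tool for absorbing the cross-terms in addition is a genuine improvement in clarity.

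The one point to fix is the hypothesis in the injectivity step. You begin from $a^n = b^n$ (i.e.\ $\root1\of a = \root1\of b$ in $\clR$), but the claim "$1$:$1$ on $\equiv$-classes" requires the weaker hypothesis $\root1\of a \equiv \root1\of b$, which unwinds to $a^n \nucong b^n$ for some $n$ together with same parity, not strict equality. Your computation still goes through from the correct hypothesis: supertropicality already gives $a^n + b^n = (a^n)^\nu \in \tGz$ as soon as $a^n \nucong b^n$, so $(a+b)^n \in \tGz$, hence $a+b \in \tGz$ because $\tT$ is a monoid, and then $a \nucong b$ with matching parity. The paper shortcuts this by citing Remark~\ref{cancell}(iv) directly, which is the same content you are re-deriving.
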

 \begin{proof} The operations are clearly well-defined . For example, if
 $ \root m \of  a = \root {m'} \of  {a'}$ and    $ \root {n} \of  b = \root {n'} \of  {b'}$, then for some numbers $k.\ell$ we have
 $a^{m'k} = {a'}^{mk}$ and   $b^{n'\ell} = {b'}^{n\ell  }$, so $$(ab)^{m'n'k\ell} =(a'b')^{mnk\ell} ,$$
 implying  $ \root {mn} \of {a b} = \root {m'n'} \of  {a'b'}.$
 Clearly $\clR $ is the disjoint union of $\clM = \{\root m \of  a
: a \in \tT ,$ $m \in \Net\}$ and $\cltGz$, and
Proposition~\ref{lem:powOfPol0} shows that $\clR$ is a divisibly
closed supertropical domain.

It remains to show that if $ \root 1 \of  a\equiv \root 1 \of  b$ then $a\equiv b.$ But by definition $a^n \equiv  b^n$ for some $n$, implying $a \nucong b$
  by Remark~\ref{cancell}(iv), and clearly $a,b$ have the same parity, so $a\equiv b$.
\end{proof}

The reason for passing to the divisible closure is to enrich the
 structure by means of the following observation:

 \begin{rem}\label{divcl} If $R$ is divisibly closed, then $a^{m/n}$ is defined
 in $R$ for any $a\in R$ and any rational number~ $\frac m n$.
 \end{rem}

 \begin{rem}\label{loc1}  By the same token as in Proposition~\ref{divcl1},  in view of Remark~\ref{cancell}, one can formally localize
 the ghost elements of a supertropical domain to obtain a supertropical semifield, under which process  equivalence classes are preserved. This trick enables one to extend many of the results about
 supertropical semifields to supertropical domains.
 \end{rem}

\subsection{The semiring of continuous functions}\label{sec:func}

It is useful to introduce the following topology on $R$, obtained
from the order topology on $\tG$:

\begin{defn}\label{ordertop}  Suppose $\RGnu$ is a supertropical domain.
 Viewing $\tG$ as an ordered monoid, as in Remark~\ref{ord1},
we define the $\nu$-\textbf{topology} on $R$, whose open sets have
a base comprised of the \textbf{open intervals}
$$W _{\alpha, \beta} = \{ a \in R: \alpha < a^\nu <
\beta\}; \qquad W _{\a, \beta; \tT} = \{ a \in \tT: \a < a^\nu <
\beta\}, \quad \alpha, \beta \in \tG_\zero.$$ We also define
$\left[\alpha, \beta \right]= \{ a \in \tT: \a \le a^\nu \le
\beta\}.$  In other words, $\left[\alpha, \beta \right]$ is the
intersection of $\tT$ with the closure of $ W _{\a, \beta; \tT}$,
and we call it a  \textbf{tangible closed  interval}.
\end{defn}

 \begin{rem}\label{divcl1}
If $R$ is divisibly closed, then $\tT$ is \textbf{dense} in $R$ in the
sense that each nonempty open interval contains a tangible
element. (Indeed, if $\alpha, \beta \in \tT$ with $\alpha <_\nu
\beta ,$ then $\alpha^{1/2}\beta^{1/2} \in W _{\alpha, \beta},$ in
view of Remark~\ref{divcl}.)
\end{rem}

Here is an important semiring construction,  given in
\cite{golan99}.

\begin{defn} Given any set $S$ and semiring $R$, we  define the
semiring $\Fun (S,R)$ of functions from $S$ to~ $R$, under
pointwise addition and multiplication.  The \textbf{zero function}
$\zero_\Fun$ is given by $\zero_\Fun (\bfa) = \zero_R$ for all
$\bfa \in S.$ \end{defn}

\begin{rem}\label{subs} The map $f \mapsto f(\bfa)$ is a semiring homomorphism $\Fun
(S,R)\to R,$ for any fixed $\bfa\in S.$\end{rem}

\begin{rem}\label{ghost00} When $(R,\tGz, \nu)$ is a semiring with ghosts,
then the semiring $\Fun (S,R)$ also is viewed as a semiring with
ghosts, where a function $f \in \Fun (S,R)$ is said to be
\textbf{ghost} if
$$f(\bfa) \in \tGz \qquad \text{for every } \bfa
\in S.$$ The ghost ideal $\FunSGz$ of $\Fun (S,R)$ is the set of
ghost functions, and the ghost map $\nu$ is defined by $f \mapsto
f^\nu$, where by definition
$$f^\nu(\bfa) := f(\bfa)^\nu, \quad \forall \bfa \in S.$$
\end{rem}



\begin{prop}\label{lem:powOfPol}
If $f,g\in \Fun (S,R)  $, then $(f + g)^m =  f^m + g^m$ for any
positive $m \in \Net$.
\end{prop}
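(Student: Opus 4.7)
The plan is to reduce the identity to its pointwise analogue and then invoke Proposition~\ref{lem:powOfPol0}. Since $\Fun(S,R)$ is equipped with pointwise addition and multiplication, evaluation at a fixed point is a semiring homomorphism (Remark~\ref{subs}), so we can transfer the identity from $\Fun(S,R)$ down to $R$, where it has already been established.

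More concretely, fix an arbitrary $\bfa \in S$. By the definition of pointwise operations we have $(f+g)^m(\bfa) = (f(\bfa) + g(\bfa))^m$ and $(f^m + g^m)(\bfa) = f(\bfa)^m + g(\bfa)^m$. Since $f(\bfa), g(\bfa) \in R$ and $R$ is a supertropical semiring, Proposition~\ref{lem:powOfPol0} gives
\[
(f(\bfa) + g(\bfa))^m \;=\; f(\bfa)^m + g(\bfa)^m.
\]
Thus $(f+g)^m$ and $f^m + g^m$ agree at every point of $S$, and so they are equal as elements of $\Fun(S,R)$.

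There is essentially no obstacle: the content of the proposition lives entirely inside $R$ (where it is Proposition~\ref{lem:powOfPol0}), and the passage to $\Fun(S,R)$ is a formal consequence of pointwise evaluation. The only mild subtlety worth mentioning is that $\Fun(S,R)$ need not itself be a supertropical semiring (bipotence may fail, since two non-$\nu$-equivalent functions can have values that are $\nu$-equivalent at some points and not at others), so one cannot apply Proposition~\ref{lem:powOfPol0} directly in $\Fun(S,R)$; instead one must descend to $R$ pointwise, as above.
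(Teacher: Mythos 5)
Your argument is exactly the paper's: evaluate both sides at an arbitrary $\bfa \in S$, use pointwise operations to reduce to an identity in $R$, and apply Proposition~\ref{lem:powOfPol0}. The closing remark about why one cannot apply Proposition~\ref{lem:powOfPol0} directly to $\Fun(S,R)$ (bipotence fails there) is a nice, correct clarification but not strictly needed.
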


\begin{proof} In view of Proposition~\ref{lem:powOfPol0},
 \begin{equation}(f + g)^m (\bfa) = (f (\bfa)+ g (\bfa))^m = f (\bfa)^m + g (\bfa)^m
 = f^m (\bfa)+  g^m(\bfa)\end{equation} for each   $\bfa \in
S$.
\end{proof}

\begin{cor}\label{Prop:powOfPols}
If $f_1, \dots, f_k \in \Fun (S,R) $, then
$$\bigg(\sum _{i=1}^k f_i \bigg)^m =  \sum _{i=1}^k f_i^m ,$$
for any  positive $m\in \Net$.
\end{cor}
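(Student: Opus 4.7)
The plan is to prove this by straightforward induction on $k$, leveraging Proposition~\ref{lem:powOfPol} as the base case. The statement for $k=2$ is exactly Proposition~\ref{lem:powOfPol}, so I would take $k=2$ as the base of the induction and then reduce the general case to the binary one.

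For the inductive step, assuming the result holds for $k-1$ summands, I would write
\[
\bigg(\sum_{i=1}^{k} f_i\bigg)^m = \bigg(\bigg(\sum_{i=1}^{k-1} f_i\bigg) + f_k\bigg)^m.
\]
Applying Proposition~\ref{lem:powOfPol} with $f = \sum_{i=1}^{k-1} f_i$ and $g = f_k$ gives
\[
\bigg(\sum_{i=1}^{k} f_i\bigg)^m = \bigg(\sum_{i=1}^{k-1} f_i\bigg)^m + f_k^m,
\]
and invoking the induction hypothesis on the first summand yields $\sum_{i=1}^{k-1} f_i^m + f_k^m = \sum_{i=1}^{k} f_i^m$.

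There is essentially no obstacle here: the work was done in Proposition~\ref{lem:powOfPol0}, which exploited bipotence together with supertropicality to collapse the binomial expansion, and in Proposition~\ref{lem:powOfPol}, which lifted that pointwise identity to the function semiring. Since $\Fun(S,R)$ is itself a (commutative) semiring under pointwise operations, the binary identity propagates to any finite sum by the standard associativity-plus-induction argument, and no new supertropical input is needed at this stage.
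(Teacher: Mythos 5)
Your proof is correct and is precisely the argument the paper leaves implicit: the corollary is stated without proof because it follows from Proposition~\ref{lem:powOfPol} by the standard induction on the number of summands, which is exactly what you carry out.
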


We also have duality:
\begin{rem}\label{add5}
The isomorphism $\Phi: F \to \dual{F}$ of Remark~\ref{add4}
extends to an isomorphism
$$\Phi_{\Fun}: \Fun (S,F) \To \Fun (S,\dual{F}) $$ given by $f \mapsto  f^\wedge,$
where $ f^\wedge(\bold a) = \Phi(f(\bold a))$. (Indeed, for
$f(\bold a), g(\bold a) \in \tT,$
$$(f^\wedge + g^\wedge)(\bold a)  = f(\bold a )^{-1} + g(\bold a
)^{-1} = (f + g)^\wedge (\bold a),$$ where ``+'' is taken in the
appropriate semiring; the other verifications are analogous.)
\end{rem}

Here is the case of special interest for us.  We write $R^{(n)}$
for the Cartesian power $R^{(n)}$ of $n$ copies of~$R$. The
semiring with ghosts $$\FunR := (\FunR, \FunGz,\nu_{\Fun})$$    is
{\textbf{not}} a supertropical semiring for $n>1$, since bipotence
fails. In analogy with Remark~\ref{subs}, we have:

\begin{prop}\label{spec1} Given $a \in R$, define  $$\Phi_a : \FunR
\to \Fun (R^{(n-1)},R)$$ by sending $f \mapsto f_a$, where
$$f_a(a_1, \dots, a_{n-1}) = f(a_1, \dots, a_{n-1},a).$$
Then $\Phi_a $ is a semiring homomorphism.
\end{prop}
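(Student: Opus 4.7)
The plan is to verify the semiring homomorphism axioms pointwise, exploiting the fact that both the addition and multiplication on $\FunR$ (and on $\Fun(R^{(n-1)},R)$) are defined coordinate-wise, so that evaluation at any fixed point is itself a semiring homomorphism (as recorded in Remark~\ref{subs}).

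Concretely, fix $a \in R$ and pick an arbitrary $(a_1,\dots,a_{n-1}) \in R^{(n-1)}$. First I would check preservation of addition: by the definition of pointwise addition,
\[
(f+g)_a(a_1,\dots,a_{n-1}) = (f+g)(a_1,\dots,a_{n-1},a) = f(a_1,\dots,a_{n-1},a) + g(a_1,\dots,a_{n-1},a),
\]
which equals $f_a(a_1,\dots,a_{n-1}) + g_a(a_1,\dots,a_{n-1}) = (f_a+g_a)(a_1,\dots,a_{n-1})$. Since this holds at every point of $R^{(n-1)}$, we get $(f+g)_a = f_a + g_a$, i.e., $\Phi_a(f+g) = \Phi_a(f) + \Phi_a(g)$. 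The identical manipulation with multiplication in place of addition gives $\Phi_a(fg) = \Phi_a(f)\Phi_a(g)$.

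Next I would verify that $\Phi_a$ sends the distinguished elements to the distinguished elements. For the zero function $\zero_\Fun \in \FunR$, we have $(\zero_\Fun)_a(a_1,\dots,a_{n-1}) = \zero_\Fun(a_1,\dots,a_{n-1},a) = \rzero$, so $\Phi_a(\zero_\Fun)$ is the zero function on $R^{(n-1)}$; the constant function $\one_\Fun$ is handled identically.

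There is no genuine obstacle here—the statement is a formal consequence of the definition of pointwise operations, and the argument would read the same in any variety of algebras. The only thing worth flagging is that $\Phi_a$ is automatically compatible with the ghost structure as well: if $f \in \FunGz$, then $f(a_1,\dots,a_{n-1},a)\in \tGz$ for every tuple, so $f_a \in \Fun(R^{(n-1)},\tGz)$, which means $\Phi_a$ also restricts to a map of ghost ideals (and in particular commutes with $\nu$ in the sense of Remark~\ref{ghost00}), although this is not strictly demanded by the statement.
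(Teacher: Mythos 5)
Your argument is essentially identical to the paper's: both verify the homomorphism axioms pointwise using the coordinate-wise definition of the operations, and both note that multiplication is handled the same way as addition and that $\zero_\Fun$ is preserved. The extra observations about the unit and the compatibility with the ghost ideal are correct but not needed for the statement.
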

\begin{proof} Write $\bfa = (a_1, \dots, a_{n-1})$ and $\bfa(a) =  (a_1, \dots, a_{n-1},a)$. Then $$\Phi_a (f+g)(\bfa) =
(f+g)(\bfa(a)) = f(\bfa(a)) +g(\bfa(a)) = \Phi_a (f) (\bfa)+\Phi_a
(g)(\bfa),$$ yielding $\Phi_a (f+g)= \Phi_a (f)  +\Phi_a (g);$ the
verification for multiplication is analogous, and  $\Phi_a
(\rzero) = \rzero$.
\end{proof}

Later on, we consider $$\ker \Phi_a = \{ f \in \FunR: f(a_1,
\dots, a_{n-1},a) = \rzero : \forall a_i \in R\}.$$ This is a
rather restrictive view of the kernel, and is to be weakened in
subsequent research.

 Let us bring in  the $\nu$-topology.

\begin{definition} $\CFunR$ is the  sub-semiring with ghosts, comprised of functions in the semiring
$\FunR$ which are continuous with respect to the $\nu$-topology of
Definition \ref{ordertop}.
\end{definition}

 $\operatorname{CFun}(R^{(n)},R )$ plays a very important role in
 this paper.

 \begin{defn}\label{box} Given  $\bfa = (a_1, \dots, a_n) \in R^{(n)}$ and
 $\bt_1, \dots, \bt_n \in \tT$ with each $\bt_i \nug \rone,$ the \textbf{closed} $\bfa$-\textbf{box}
 is defined as the  product of closed tangible intervals (cf.~Defintion~\ref{ordertop})$$
 \left[\frac {a_1}{\bt_1},\, \bt_1a_1 \right]\times   \left[\frac {a_2}{\bt_2},\,
 \bt_2a_2 \right]\times \dots \times   \left[\frac {a_n}{\bt_n },\,
 \bt_na_n\right] \subset \tT^{(n)}.$$\end{defn}

\begin{defn}\label{arch} The semifield $F$ is \textbf{archimedean}, if for every $a
\nug \fone$ and $b$ in $F$, there is suitable~$m$ such that $a^m
\nug b.$\end{defn}

 This guarantees that $F$ (and thus $\tT$) has ``large enough''
and ``small enough'' elements.

\begin{rem} If $\RGnu$ is a supertropical domain, then $R^{(n)}$ is
endowed
 with the usual product topology (obtained from the $\nu$-topology on
 $R$). When  $\RGnu$ is an archimedean supertropical semifield, the closed boxes comprise a sub-base for the closed sets of the relative
  topology on $\tT^{(n)}.$
 \end{rem}

\subsection{Radical ideals and prime ideals of semirings}

\begin{defn}  Suppose $A \subset R $.
 The \textbf{radical} $\sqrt{A}$ is defined as $\{ a \in R : a^k\in A$ for some $k \}$. An
 ideal~$A$ of $R$ is \textbf{radical} if
$A = \sqrt{A}.$
\end{defn}

\begin{rem}\label{surprise1} If $A$ is an ideal of a commutative semiring $R$, then
$\sqrt A\triangleleft R$, by the usual ring-theoretic argument.
More surprisingly, if $R$ is a commutative supertropical
\emph{semiring} and $A$ is a sub-semiring of $R$, then $\sqrt A$
is also a sub-semiring of $R$, by Proposition \ref{lem:powOfPol0};
by the same reasoning, if $W$ is a sub-semiring of
$\operatorname{Fun}(R^{(n)},R )$, then $\sqrt W$ is also a
sub-semiring of $\operatorname{Fun}(R^{(n)},R )$, by
Corollary~\ref{Prop:powOfPols}.
\end{rem}

The following definition is also lifted from ring theory.

\begin{defn}  An ideal $P$ of a semiring $R$ is \textbf{prime} if
it satisfies the following condition:
 $$\text{ $AB\subseteq P$  for $A,B
\triangleleft R$ implies $A\subseteq P$ or $B\subseteq P.$}$$
\end{defn}

\begin{prop}\label{prime5} Every radical
  ideal $A$ of a commutative semiring $R$
 is the intersection of prime  ideals. \end{prop}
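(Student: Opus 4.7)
My plan is to adapt the standard ring-theoretic argument, since the key constructions (sum and product of ideals, Zorn's lemma on a poset of ideals) only use addition and multiplication, not additive inverses, and hence transfer to semirings without incident.

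First I would record the easy inclusion: any radical ideal $A$ is contained in every prime ideal $P\supseteq A$, so $A\subseteq\bigcap_{P\supseteq A}P$, where $P$ runs over primes containing $A$. The work is in the reverse inclusion: given $a\notin A$, I would exhibit a prime ideal $P$ with $A\subseteq P$ and $a\notin P$.

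To build $P$, set $S=\{a^n:n\in\Net\}$. Since $A$ is radical and $a\notin A$, we have $S\cap A=\emptyset$. Consider the family
\[
\mathcal F=\{I\triangleleft R : A\subseteq I,\ I\cap S=\emptyset\},
\]
which is nonempty (it contains $A$) and inductively ordered under inclusion, since the union of a chain of ideals disjoint from $S$ is still an ideal disjoint from $S$ (ideals of semirings are closed under arbitrary unions of chains, as closure under $+$ and under multiplication by $R$ is preserved). By Zorn's lemma, pick a maximal element $P\in\mathcal F$. By construction $A\subseteq P$ and $a\notin P$.

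The main (and only nontrivial) step is to verify that $P$ is prime. Suppose instead that $B,C\triangleleft R$ with $BC\subseteq P$, but $B\not\subseteq P$ and $C\not\subseteq P$. Then the ideals $P+B$ and $P+C$ (where $P+B=\{p+b:p\in P,b\in B\}$, an ideal of $R$ since both summands are additively closed submonoids stable under multiplication by $R$) strictly contain $P$. By maximality of $P$ in $\mathcal F$, each meets $S$: say $a^m\in P+B$ and $a^k\in P+C$. Then
\[
a^{m+k}=a^m\cdot a^k\in (P+B)(P+C)\subseteq P+PC+BP+BC\subseteq P,
\]
because every summand on the right lies either in $P$ (as $P$ is an ideal absorbing $R$) or in $BC\subseteq P$. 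This contradicts $P\cap S=\emptyset$, so $P$ is prime. The primes obtained this way, one for each $a\notin A$, thus intersect inside $A$, completing the proof.

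I do not expect a genuine obstacle: the only point to double-check is that the semiring-theoretic sum $P+B$ is actually an ideal and that products of ideals distribute over sums in the weak sense $(P+B)(P+C)\subseteq P+PC+BP+BC$, both of which follow directly from distributivity in $R$.
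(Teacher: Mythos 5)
Your proof is correct and follows essentially the same strategy as the paper's: for each $b\notin A$, use Zorn's lemma to produce an ideal maximal among those containing $A$ and missing all powers of $b$, then show maximality forces primality by multiplying two witnesses $a^m\in P+B$ and $a^k\in P+C$. You work directly with the ideal-theoretic definition of prime (via $P+B$ and $P+C$) where the paper uses the equivalent element-wise check with $P+Ra_1$ and $P+Ra_2$, but this is a cosmetic difference, and your explicit inclusion of the constraint $A\subseteq I$ in the Zorn poset is a welcome clarification of a point the paper leaves implicit.
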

\begin{proof} We copy the standard argument from commutative algebra.
For any element $b\notin A,$ take an ideal $P$ maximal with
respect to $b^k \notin P$, for each $k \in \N.$ Then $P$ is a
prime ideal, since if $a_1a_2\in P$ with $a_1,a_2 \notin P$, then,
for $i = 1,2$ the ideal $P+ Ra_i$   properly contains $P$, and
thus
  contains a power $b^{k_i}$ of $b;$ Letting $k = k_1 + k_2$ we see that
  $(P+Ra_1)(P+Ra_2)\subseteq P$ contains $b^k$, contradiction.
\end{proof}

\subsection{Ghost-closed ideals}

 \begin{defn}\label{tropid} A \textbf{ghost-closed ideal} of a semiring $R =\RGnu$
 with ghosts is a semiring
 ideal containing the ghost ideal $\tGz$.\end{defn}

Clearly, a supertropical domain $(R,\tGz, \nu)$ is a supertropical
semifield iff it has no proper ghost-closed ideals other than
$\tGz$. This is one reason why we focus on ghost-closed ideals.

\begin{example}\label{tropid1}
The ghost ideal $\FunGz$ is itself a radical ideal of the
supertropical   semiring $\FunR$.
  Indeed, if $f^m(\bold a) \in \tGz,$ then  $f(\bold a) \in \tGz.$   By
Proposition~\ref{prime5}, $\FunGz$  is the intersection of prime
ideals, each of which clearly is ghost-closed. \pSkip
\end{example}


\begin{defn} The ghost-closed ideal $\langle S \rangle$
\textbf{(classically) generated} by a set $S$ is the intersection
of all ghost-closed ideals containing $S$ (or, in other words, the
ideal generated by $S$ and $\tG$).\end{defn}

There is a weaker version of this definition, called ``tropical
generation,'' which although more appropriate to the tropical
theory is  more technical; in this paper we focus on classical
generation in order to obtain more precise information about the
ideals in question.

\subsection{Supertropical divisibility and the supertropical radical}

We say that $a=b + \, \text{ghost}$ in a semiring~$R$ with ghosts
when $a = b + c$ for some ghost element $c \in \tGz;$ in this
case, we write $a\lmodg b.$ This relation arises naturally in many
supertropical contexts, including the following.

\begin{defn}\label{superdiv} In any semiring $R$, an element $b\in R$ \textbf{divides} $a\in R$
if $a=qb$ for some
    $q\in R$.   For $R$ a   semiring with ghosts, an element $b\in R$ \textbf{supertropically divides} $a\in R$
if $a \lmodg qb$ for some
    $q\in R$.
\end{defn}

\begin{defn}  Suppose $A \subset R $.
 The \textbf{supertropical radical} $\tsqrt{A}$ is defined as
 the set $$\{ a \in R : \text{ some power}  \  a^k \   \text{is supertropically divisible by
 an element of } A \}.$$
  An ideal $A$ of $R$ is \textbf{supertropically radical} if
$A = \tsqrt{A}.$
\end{defn}

\begin{rem}\label{surprise1} If $A$ is an ideal of a commutative semiring $R$ with ghosts, then
$\tsqrt A = \sqrt{A+\tGz} \triangleleft R$.  It follows at once
that every supertropically radical ideal of a commutative semiring
$R$ with ghosts is the intersection of ghost-closed prime ideals
of $R$ (and vice versa).
\end{rem}

By the same sort of argument, in analogy with
Remark~\ref{surprise1}, if $R$ is a commutative supertropical
semiring and $A$ is a sub-semiring of $\FunR$, then $\tsqrt A$
is also a sub-semiring of $\FunR$.

\section{Polynomials}

\begin{defn}\label{def:poynomials} Given any semiring $\RGnu$ with ghosts, we define the semiring $(R\pl \la\pr , \tG \pl
\la \pr, \nu)$ of \textbf{polynomials} $$\left\{ \sum _{i \in \N }
\a _i \la ^i: \quad \a_i \in R, \quad \text{almost all }\a _i =
\rzero,\right\}$$ where we define polynomial addition and
multiplication in the familiar way:
$$\left(\sum _i \a _i\la
^i\right)\left(\sum_j \beta _j \la ^j\right) = \sum _k \left(\sum
_{i+j=k} \a _i \beta _{j}\right) \la ^k.$$
\end{defn}

We have denoted the semiring of polynomials as $R\pl \la\pr$
rather than by the familiar notation $R[\la]$. The reason is that,
as we shall see, different polynomials can take on identically the
same values as functions, and we want to reserve the notation
$R[\la]$ for the equivalence classes of polynomials (with respect
to taking on the same values as functions). But before discussing
this issue, let us develop some more notions.

 We write a polynomial  $f = \sum_{i=0}^t \a _i \la ^i$ as a sum of \textbf{monomials} $\a_i\la^i$,
where $\a _t \ne \rzero$ and $\a _i = \rzero$ for all $i>t$, and
define its \textbf{degree}, $\deg f$, to be $t$.  By analogy, we
sometimes write $\la ^\nu$ for $\rone ^\nu \la.$ A~polynomial is
\textbf{monic} if its leading coefficient is $\rone.$ A polynomial $f$ is
said to be \textbf{tangible} if its coefficients are all tangible.
 We identify $\a_0\la^0$ with $\a_0$, for each $\a_0\in R.$ Thus, we
may view $R \subset R\pl \la\pr .$ Often we use logarithmic
notation for the coefficients of polynomials over $\Trop$;
  $\la$ then means $0 \la +(-\infty).$

Since the polynomial semiring was defined over an arbitrary
  semiring, we can define inductively $R\pl \la_1,
\dots, \la _n\pr  =R\pl \la_1, \dots, \la _{n-1}\pr \pl \la _n\pr
.$ Often we write $  \Lm $ for $ \{\la_1, \dots, \la _n\} $.

\begin{defn}
In particular, we define the polynomial semiring $R\pl \Lm\pr =
R\pl \la_1, \dots, \la _n\pr $ in $n$~indeterminates over a
supertropical semiring $R$. Any such polynomial can be written
uniquely as a sum
$$f = \sum _{i_1, \dots, i_n}\a_{i_1, \dots, i_n}\la _1^{i_1}\cdots \la _n^{i_n},$$ which we
denote more concisely as $\sum _\bfi\a _\bfi \Lm ^\bfi,$ where
$\bfi$ denotes the $n$-tuple $(i_1, \dots, i_n)$ and $\Lm ^\bfi$
stands for $\la _1^{i_1}\cdots \la _n^{i_n}$. We  write $\deg_k \a
_\bfi \Lm ^\bfi = i_k$ for $1 \le k \le n$. The \textbf{support}
of $f$ is
$$\supp(f) = \{ \bfi : \a_\bfi \ne \rzero \}.$$

A \textbf{binomial} is the sum of two monomials.
\end{defn}

Binomials play the key role in this theory, because, as we shall
see, tangible roots often can be defined in terms of binomials.

We sometimes write $f(\la_1, \dots, \la _n
)$ for a polynomial $f \in R\pl \Lambda\pr,$ indicating that it
involves the $n$ indeterminates  $\la_1, \dots, \la _n.$

\begin{rem} If $F$ is a supertropical semifield, then $\{ f \in F\pl\Lm \pr: f$ is not a tangible constant$\}$ is the unique
maximal ideal of $F\pl\la_1, \dots, \la _n\pr,$ comprised of all
the noninvertible elements, and it is a ghost-closed prime ideal.
\end{rem}

\subsection{The polynomial semiring (as functions)}

A more concise way of viewing polynomials is inside the larger
semiring $\operatorname{CFun}(R^{(n)},R )$ of \S~\ref{sec:func}.

\begin{rem}\label{fun2} There is a natural
semiring homomorphism
$$\Psi : R\pl \Lambda \pr \to
\operatorname{CFun}(R^{(n)},R ),$$ obtained by viewing any
polynomial $f \in R \pl \Lambda \pr$ as the (continuous) function
sending $(a_1, \dots, a _n) \mapsto f(a_1, \dots, a _n).$

In classical commutative algebra, when $R$ contains an infinite
field, $\Psi$ is 1:1, by the easy part of the fundamental theorem
of algebra. Thus, one always can ``make'' $\Psi$ 1:1 by enlarging
$R$ suitably. But in our supertropical setting, different tropical
polynomials could always represent the same function, i.e., take
on the same values at each element of $R$.

For example, take elements $\a, \bt$ in a supertropical semifield
$R$, for which $\bt  \nug \a^2.$ The polynomials $\la^2 + \a \la +
\bt$ and $\la ^2 + \bt$ define the same function. Indeed,
otherwise there is $a\in R$ such that $\a a$ has $\nu$-value at
least both that of $a^2$ and $\bt.$ But $a^2  \le  _\nu \a a $
implies $a  \le  _\nu \a,$ and thus
$$\bt   \le _\nu \a a \le _\nu  \a ^2 ,$$
contrary to hypothesis. This argument did not depend on any other
properties of $R$, and thus shows that $\Psi$ is not 1:1 over any
semifield containing $R$, as opposed to the classical situation.

\end{rem}

$ $From now on, we work with
$$R[ \Lambda ] :=  \Psi ( R\pl \Lambda \pr);$$ i.e.,
in $\operatorname{CFun}(R^{(n)},R )$, whose ghost ideal (as
observed in \S\ref{sec:func}) is $\operatorname{CFun}(R^{(n)},\tGz
)$. Thus, $R[ \Lambda ]$ can be viewed as a semiring with ghost
ideal consisting of all polynomials which as functions take on
only ghost values.

\subsection{Equivalence of polynomials, and essential polynomials}

 As noted above, the semiring of polynomials over a supertropical
semifield is not supertropical, and, even worse, the tangible
 polynomials are not closed under multiplication; for
example $(\la +3)^2 = \la ^2 + 3^\nu \la + 6,$ which has a ghost
term. (Recall that our examples are computed in logarithmic
notation.)
Accordingly,  we need another definition to enable us to consider
polynomials over $R$ in $\CFunR,$ i.e., as continuous functions
from $R^{(n)}$ to $R$.

\begin{defn}\label{def:R-equivalent}
 Two polynomials $f,g\in R\pl \Lambda\pr $  are $\bf{e-}$\textbf{equivalent},
 denoted as $f \eqR g$,  if $f(\bfa)  = g(\bfa)$ for any
 tuple
  $\bfa = (a_1,\dots,a_n) \in
 R^{(n)}$. (In other words, polynomials $f$ and $g$ are $e$-equivalent iff
$\Psi(f) = \Psi(g)$.)

 Two polynomials $f,g\in R\pl \Lambda\pr $  are \textbf{weakly} $\bf{(\nu,e)-}$\textbf{equivalent},
 denoted $f\eqRnu g,$ if they identically take on $\nu$-equivalent values,
 i.e., $f^\nu \eqR g^\nu$, or, explicitly,
 $f(\bfa) \nucong  g(\bfa)$ for any $\bfa = (a_1,\dots,a_n) \in R^{(n)}$.
 Weakly $\bf{(\nu,e)-}$equivalent polynomials $f,g\in R\pl \Lambda\pr $ are $\bf{(\nu,e)-}$\textbf{equivalent} if
 $f(\bfa)$ and  $g(\bfa)$ have the same parity, for all $\bfa \in
 R^{(n)}.$
 \end{defn}

 \begin{Note} $ $
\begin{enumerate} \eroman
 \item  Polynomials of different degree over a supertropical
 semifield cannot identically take on $\nu$-equivalent values. Thus,
 $(\nu,e)$-equivalent polynomials (and, a fortiori, $e$-equivalent polynomials) have the same degree. \pSkip

   \item
 The difference (for tangible  polynomials) between $e$-equivalent
 and $(\nu,e)$-equivalent  only arises
 when the restriction $\nu_{\tT} $ of $\nu$ to $\tT $ is not 1:1. Since $\nu _{\tT} $ is 1:1 in the ``standard'' tropical example~$D(\tG)$, this distinction only exists in  the more
 unusual examples, such as $(R^\times, R^+, \nu)$ where
 $\nu$ is the absolute value; here $\la +2$ and $\la + (-2)$ are
$(\nu,e)$-equivalent but not $e$-equivalent.   We may resort to
this example when $(\nu,e)$-equivalence
 comes up, but  we focus on $e$-equivalence whenever possible, indicating how the
 theory simplifies when $\nu_{\tT} $ is 1:1. \pSkip

\item One can  reduce an arbitrary supertropical domain $R$ to the
case when $\nu_{\tT} $ is 1:1. Namely, $\nucong$ restricts to an
equivalence $\sim$ on $\tTz  = R \setminus \tG$;  Then $(\tTz/\!\!
\sim ) \, \cup \,\tG$ becomes a supertropical domain
 under the natural
operations of the equivalence classes (and this
 can be identified
with $D(\tG)$).
\end{enumerate}
\end{Note}

\begin{example}\label{exmp:eq} The following facts hold for all $a,b \in
\tT $, $a \neq b$: \pSkip
\begin{enumerate} \eroman
    \item
 $\lm + a \neqR \lm + \uuu{a}$,  although $\lm + a \eqRnu \lm +
\uuu{a};$ \pSkip
\item  $\lm + a \neqR \lm + b;
 $ \pSkip
\item $(\lm + a)^2 \eqR \lm^2 + a^2$ (a special case of
Proposition \ref{lem:powOfPol}).
\end{enumerate}
\end{example}

Let us introduce a natural representative for each e-equivalence
class.

\begin{defn}  The function $f\in
\FunR$ \textbf{dominates} $g$ if $ f(\bfa) \nuge g(\bfa)$ for all
$\bfa \in R^{(n)};$   $f\in \FunR$ \textbf{strictly dominates} $g$
if $ f(\bfa)\nug g(\bfa)$ for all $\bfa  \in R^{(n)}.$

(Thus, when $f$ dominates $g$, $f(\bfa) + g(\bfa) \in \{f(\bfa),
f(\bfa)^{\nu}\}$ for all $\bfa  \in R^{(n)};$ when $f$ strictly
dominates~$g$, $f  + g= f.$ )
\end{defn}

\begin{definition}\label{def:essentialPart} Suppose $f = \sum \a _\bfi \Lambda ^\bfi,$
$h = \a_\bfj \Lambda ^\bfj$ is a monomial of $f$, and write $f_h =
\sum _{\bfi \ne \bfj}\a _\bfi \Lambda ^\bfi.$ The monomial $h$ is
\textbf{inessential in} $f$ iff $f_h$ dominates $h$. The
\textbf{essential part} $\ef$ of a polynomial
 $f = \sum \a _\bfi \Lm^\bfi$ is the sum of those monomials
$\a_\bfj \Lm ^\bfj$ that are essential, while its inessential part
$\iif$ consists of the sum of all inessential monomials $\a_\bfi
\Lm ^\bfi$. The polynomial $f$ is said to be an \textbf{essential
polynomial} when $f = \ef$.
\end{definition}

 The following
equivalent formulation indicates the direction we wish to take:

\begin{rem} $ $
\begin{enumerate} \eroman
    \item A monomial $h$ is essential in a polynomial $f$ iff
$h(\bfa)\nug f_h(\bfa)$ for some $\bfa$ and thus for all $\bfa'$
in an open set $W_\bfa$ of $\bfa$. \pSkip
    \item

Any monomial $h$ of $\ef$ is essential in $\ef$. Indeed, by
definition, $ f_h(\bfa)+ h(\bfa)  \nug f_h(\bfa) $ for some
$\bfa\in R^{(n)},$ implying $ h(\bfa) \nug  f_h(\bfa) $. A
fortiori, this implies $ h(\bfa) \nug  \ef_h(\bfa) $.
\end{enumerate}

\end{rem}

We want the essential part of a polynomial $f$ to be
$e$-equivalent to $f$. Towards this end, we turn to the divisible
closure.

\begin{rem}\label{arch1} Any archimedean supertropical  semifield $F$  ( in the sense of Definition~\ref{arch})
satisfies the following property:
\\
 For any nonconstant monomials $g_1,g_2,h_1,
\dots, h_m$ and $\bfa\in F^{(n)}$ with
$$g_2(\bold a) \ \nucong \  g_1(\bold a) \ \nug \
 h_i(\bold a), \quad 1 \le i \le m,$$ and any open set $W_\bfa$ of $F^{(n)}$ containing $\bfa,$ there exists $\bold
a'\in W_\bfa$ with $$g_2(\bold {a'}) \ \nug \  g_1(\bold {a'}) \
\nug \
  h_i(\bold {a'}), \quad 1 \le i \le m.$$\end{rem}

\begin{lem} Suppose the supertropical
  semifield $F$ is  archimedean. For any monomials $g_1,\dots,
g_\ell$, $h_1, \dots, h_m$ and $\bold a \in F^{(n)}$ with
$$g_1(\bfa) \ \nucong \ g_2(\bfa) \ \nucong \ \dots  \ \nucong \ g_\ell(\bfa) \
\nug  h_i(\bfa), \quad 1 \le i \le m,$$ there exists  $\bfa' \in
F^{(n)}$ and $1<j \le \ell$ such that $$g_j(\bfa') \ \nug \
g_i(\bfa')  \quad \forall i \ne j; \qquad g_j(\bfa') \ \nug
h_i(\bfa'), \quad 1 \le i \le m.$$
\end{lem}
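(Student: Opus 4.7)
\noindent\emph{Plan.} I would proceed by induction on $\ell \ge 2$. For the base case $\ell = 2$, the hypotheses match those of Remark~\ref{arch1} verbatim, and that remark directly produces $\bfa' \in F^{(n)}$ with $g_2(\bfa') \nug g_1(\bfa') \nug h_i(\bfa')$ for all $i$; taking $j = 2$ completes the case.

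For the inductive step $\ell \ge 3$, my first move is to apply Remark~\ref{arch1} to the single pair $(g_\ell, g_1)$ together with $h_1, \dots, h_m$, obtaining $\bfa_1 \in F^{(n)}$ with
$$g_\ell(\bfa_1) \nug g_1(\bfa_1) \nug h_i(\bfa_1), \qquad 1 \le i \le m.$$
Let $S \subseteq \{1, \dots, \ell\}$ be the set of indices $k$ for which $g_k(\bfa_1)$ is $\nu$-maximal among $\{g_1(\bfa_1), \dots, g_\ell(\bfa_1)\}$. Since $g_\ell(\bfa_1) \nug g_1(\bfa_1)$, one has $1 \notin S$, hence $S \subseteq \{2, \dots, \ell\}$ and $1 \le |S| \le \ell - 1$.

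If $|S| = 1$, say $S = \{j\}$, then $\bfa' = \bfa_1$ works: $g_j$ strictly $\nu$-dominates every other $g_i$ and every $h_i$ at $\bfa_1$, and $j > 1$. If $|S| \ge 2$, I apply the inductive hypothesis to the smaller tied subfamily $\{g_k : k \in S\}$ (of size $|S| < \ell$), with the enlarged auxiliary list $\{h_1, \dots, h_m\} \cup \{g_{k'} : k' \notin S\}$, at the point $\bfa_1$. The required hypothesis holds there, because the $g_k$ with $k \in S$ share a common $\nu$-value at $\bfa_1$ which strictly dominates each $h_i$ (by the choice of $\bfa_1$) and each $g_{k'}$ with $k' \notin S$ (by the definition of $S$). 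The inductive conclusion yields $\bfa' \in F^{(n)}$ and a winning index $j \in S$ such that $g_j(\bfa')$ strictly $\nu$-dominates every other $g_i$ and every element of the enlarged auxiliary list --- in particular every $h_i$. Since $S \subseteq \{2, \dots, \ell\}$, automatically $j > 1$.

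The key tactical point --- and the main obstacle --- is the bookkeeping that guarantees $j > 1$. The crucial move is to single out $g_1$ specifically as the ``loser'' in the initial application of Remark~\ref{arch1}, so that $1$ is forced out of $S$ and any winner produced by the recursion lies in $\{2, \dots, \ell\}$. The secondary point is to enlarge the auxiliary list with the demoted $g_{k'}$ ($k' \notin S$) before recursing; this way the inductive output $\bfa'$ already satisfies all the required strict $\nu$-inequalities simultaneously, and no additional continuity or shrinking argument is needed on top of the one built into Remark~\ref{arch1}.
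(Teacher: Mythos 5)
Your proof is correct and takes essentially the same route as the paper: apply Remark~\ref{arch1} once to force $g_1$ strictly below at least one other $g$, demote the non-maximal $g_i$'s to the auxiliary list, and recurse on the remaining tied $g$'s. The only cosmetic differences are that you pair $g_\ell$ rather than $g_2$ against $g_1$, and you spell out more carefully the base case, the $|S|=1$ termination, and the bookkeeping that $1 \notin S$ forces the eventual winner to satisfy $j>1$ --- details the paper leaves implicit in the phrase ``the same hypothesis as before, but with smaller $\ell$.''
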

\begin{proof} Induction on $\ell$. By Remark \ref{arch1}, we have
$\bfa'\in F^{(n)}$ such that $$g_2(\bfa') \nug g_1(\bfa') \nug
h_i(\bfa'), \quad 1 \le i \le m.$$ Take $j$ such that $g_j(\bfa')$
is $\nu$-maximal, and expand the $h_i$ to include all $g_i$ such
that $g_j(\bfa') \nug  g_i(\bfa').$ Then we have the same
hypothesis as before, but with smaller $\ell$.
\end{proof}
\begin{prop} If $F$ is a archimedean supertropical  semifield, then $\ef \eqR f$
for any $f\in F\pl \Lambda\pr $.
\end{prop}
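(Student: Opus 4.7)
My plan is to fix $\bfa \in F^{(n)}$ and compare how $f(\bfa)$ and $\ef(\bfa)$ are each assembled from the relevant monomials. Let $S$ denote the set of monomials $m$ of $f$ with $m(\bfa) \nucong f(\bfa)$, and let $M$ be their common $\nu$-value; by definition all remaining monomials of $f$ have $\nu$-value strictly less than $M$ at $\bfa$. Bipotence and supertropicality show that $f(\bfa) = m(\bfa)$ if $S = \{m\}$ and $f(\bfa) = M^\nu$ if $|S| \ge 2$. Thus it suffices to prove $|S \cap \ef| = 1$ when $|S| = 1$ and $|S \cap \ef| \ge 2$ when $|S| \ge 2$, after which the same accounting applied to $\ef$ delivers $\ef(\bfa) = f(\bfa)$.

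The case $|S| = 1$ is immediate. Writing $S = \{m\}$, every other monomial of $f$ is strictly $\nu$-dominated by $m$ at $\bfa$, so $f_m(\bfa) \nul m(\bfa)$. By definition, $m$ is then essential, hence $m \in \ef$, and the other essential monomials are still strictly $\nu$-dominated by $m$ at $\bfa$, so $\ef(\bfa) = m(\bfa) = f(\bfa)$.

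The substantive case is $|S| \ge 2$, handled by contradiction. Suppose $|S \cap \ef| \le 1$, so $S$ contains at least one inessential monomial. Enumerate the elements of $S$ as $g_1,\dots,g_\ell$ (with $\ell \ge 2$), choosing $g_1$ to be the unique essential member of $S$ when $|S \cap \ef| = 1$ (and choosing $g_1$ arbitrarily otherwise); let $h_1,\dots,h_m$ list the remaining monomials of $f$. Then $g_1(\bfa) \nucong \cdots \nucong g_\ell(\bfa) \nug h_i(\bfa)$ for each $i$, so the preceding lemma produces $\bfa' \in F^{(n)}$ and an index $1 < j \le \ell$ with $g_j(\bfa')$ strictly $\nu$-above every other $g_i(\bfa')$ and every $h_i(\bfa')$; i.e., $g_j$ is the strict $\nu$-maximum among all monomials of $f$ at $\bfa'$. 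The constraint $j > 1$ forces $g_j \in S \setminus \{g_1\}$, so in either labelling $g_j$ is inessential in $f$. But then $f_{g_j}(\bfa') \nul g_j(\bfa')$, contradicting the inessentiality of $g_j$. Hence $|S \cap \ef| \ge 2$, and summing the essential monomials yields $\ef(\bfa) = M^\nu = f(\bfa)$.

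The main point where care is required is arranging the hypotheses of the lemma so that its built-in restriction $j > 1$ does the work of excluding the essential monomial from being the strict maximum at $\bfa'$; this is precisely what forces the strict maximum to be inessential and produces the contradiction. A secondary technicality, that the underlying Remark~\ref{arch1} is phrased for nonconstant monomials, is harmless: at most one constant monomial can appear in $f$, and the archimedean property of $F$ allows us to make nonconstant monomials of $S$ dominate any such constant, so the lemma still applies as used above.
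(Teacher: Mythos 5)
Your proof is correct, and at its core it rests on the same key lemma (and its archimedean Remark) as the paper's proof: apply the lemma to the monomials that tie for the $\nu$-maximal value at $\bfa$ together with the remaining monomials, produce a point $\bfa'$ at which some $g_j$ with $j>1$ strictly dominates, and conclude $g_j$ must be essential, a contradiction. What you add is careful bookkeeping of parity: by reserving the index $1$ for the (at most one) essential monomial among the tying set $S$, the lemma's built-in restriction $j>1$ guarantees that the strict dominator is inessential even when $|S\cap\ef|=1$ and $|S|\ge 2$; without this step one could a priori have $\ef(\bfa)$ tangible while $f(\bfa)$ is a ghost. The paper's displayed argument only explicitly asserts $\ef(\bfa)\nucong g_1(\bfa)$, which amounts to weak $(\nu,e)$-equivalence; your decomposition into the cases $|S|=1$ and $|S|\ge 2$, and the insistence on getting $|S\cap\ef|\ge 2$ in the second case, is exactly what is needed to upgrade this to the full equality $\ef(\bfa)=f(\bfa)$ that $\eqR$ requires. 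Your closing remark about constant monomials is a reasonable patch and is a technicality equally present in the paper's own appeal to the lemma.
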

\begin{proof} Given any $\bfa \in F^{(n)}$, there is a monomial
$g_1$ such that $f(\bfa) \nucong  g_1(\bfa).$ We need to show that
$\ef(\bfa) \nucong g_1(\bfa).$ This is clear unless $g_1(\bfa)
\nucong g_2(\bfa) \nucong  \cdots \nucong g_\ell(\bfa) \nug
\ef (\bfa)$ for some other monomial(s) $g_2, \dots, g_\ell$ of $f$
that are inessential in $f$. But then, by the lemma, we may find
$\bfa'$ such that $g_j(\bfa')^\nu$ takes on the single largest
$\nu$-value of the monomials of $f$, for some $2 \le j \le \ell,$
contrary to $g_j$ being inessential in $f$.
\end{proof}

Now we have a new way of viewing the  polynomial semiring
$F[\Lambda]$.

\begin{rem}\label{functional} For $F$ archimedean, the  supertropical polynomial semiring $F[\Lambda]$
can be viewed as the collection of   essential polynomials, viewed
as a semiring whereby we perform the usual operations in $F\pl
\Lambda\pr$ and then take the essential part. The ghost ideal is
comprised of those essential polynomials whose coefficients are
all in $\tGz.$

If $f_1$ dominates $f_2$, then obviously $f_1 +g$ dominates
$f_2+g$ and $f_1g$ dominates $f_2g,$ for any polynomial~$g$.
Accordingly, one can discard the inessential monomials at any
stage of the computation, which shows that our new operations of
addition and multiplication in $F[\Lambda]$ remain associative and
distributive.
\end{rem}

We write $f \lmodgLa g$ when $f\eqR g+ h$ for $h \in
\tGz[\Lambda].$ Occasionally we only need $f\eqR g+ h$ as
functions on a given subset $S \subseteq R^{(n)}$; then we say $f
\lmodgLa g$ on $S$.

The following definition is easily seen to be a special case of
Definition~ \ref{ghost00}. Since we are viewing polynomials as
functions, we consider only essential polynomials.

\begin{defn} The \textbf{tangible part} $\vf$ of an essential polynomial $f =
\sum \a _\bfi \La^\bfi$ is defined as the sum of those $\a_\bfi
\La ^\bfi$ for which $\a _\bfi $ is tangible;  the \textbf{ghost
part} $\gf$ of $f$ is the sum of those $\a_\bfi \La ^\bfi$ for
which $\a _\bfi \in \tG$. \end{defn}

 Thus, any essential polynomial $f$ is
written uniquely as the sum of its tangible part $f^\tng$ plus its
ghost part $f^\gst$.  We say that a polynomial is
\textbf{essential-tangible} if its essential part is tangible.

\begin{prop}\label{thm:prodOfIndiviuals2} If  $R = \clR$, then
the   product $q = fg$ of two essential-tangible polynomials $f,g$
in $R \pl \Lambda \pr$
 is essential-tangible.
\end{prop}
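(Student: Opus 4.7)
The plan is to show that every essential monomial of $fg$ must carry a tangible coefficient, by finding a tangible point where both $f$ and $g$ take tangible values. First I would reduce to the case where $f$ and $g$ are themselves tangible polynomials: by the preceding proposition, $f \eqR f^{\essn}$ and $g \eqR g^{\essn}$, so $fg \eqR f^{\essn} g^{\essn}$ as functions. Since essential-tangibility is determined by the function class (an essential monomial of one representative corresponds to one of the other by strict dominance on an open set), it suffices to treat the case that $f, g$ are tangible.

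Write $fg = \sum_{\bfk} \gamma_\bfk \Lambda^\bfk$ with $\gamma_\bfk = \sum_{\bfi + \bfj = \bfk} \a_\bfi \bt_\bfj$. Each summand $\a_\bfi \bt_\bfj$ lies in $\tT$ since $\tT$ is a multiplicative monoid, so by bipotence and supertropicality $\gamma_\bfk$ is tangible iff the $\nu$-maximum among these summands is attained uniquely, and otherwise $\gamma_\bfk \in \tG$. Now fix an essential monomial $\gamma_\bfk \Lambda^\bfk$ of $fg$. By definition of ``essential'' combined with continuity, there is an open set $W \subset R^{(n)}$ on which $\gamma_\bfk \Lambda^\bfk$ strictly dominates every other monomial of $fg$, so $(fg)(\bfa) = \gamma_\bfk \bfa^\bfk$ for all $\bfa \in W$.

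Now I invoke $R = \clR$: by Remark~\ref{divcl1}, $\tT$ is dense in $R$, hence $\tT^{(n)}$ is dense in $R^{(n)}$. Within $\tT^{(n)}$, the corner locus of $f$ (where $f$ takes a ghost value) is a finite union of ``tropical hyperplanes'' defined by equations $\a_{\bfi_1} \bfa^{\bfi_1} \nucong \a_{\bfi_2} \bfa^{\bfi_2}$ with both sides $\nu$-maximal; each such hyperplane is nowhere dense in $\tT^{(n)}$ because $\tG$ is a divisible cancellative ordered monoid, so the equation carves out a proper subset whose complement is dense. The same holds for $g$, so the intersection of $W$ with $\tT^{(n)}$ minus both corner loci is nonempty; pick $\bfa$ in this intersection. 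Then $f(\bfa), g(\bfa) \in \tT$, so $fg(\bfa) = f(\bfa)g(\bfa) \in \tT$, while also $fg(\bfa) = \gamma_\bfk \bfa^\bfk$ with $\bfa^\bfk \in \tT$. If $\gamma_\bfk$ were a ghost, $\gamma_\bfk \bfa^\bfk$ would lie in the ideal $\tGz$, contradicting $fg(\bfa) \in \tT$. Hence $\gamma_\bfk \in \tT$, and $fg$ is essential-tangible. The main obstacle is the density step: one must select $\bfa \in W$ avoiding the corner loci of both $f$ and $g$ simultaneously, which is exactly where the divisibility hypothesis $R = \clR$ is used (without it, the tangibles might be too sparse to penetrate $W$ off the hyperplane arrangement).
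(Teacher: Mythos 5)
Your proof is correct but takes a genuinely different route from the paper. The paper's argument is a short algebraic computation: it observes that a ghost coefficient $\gamma_\bfm$ in the essential part of $q=fg$ could only arise from two $\nu$-equal products $\al_{\bfi}\bt_{\bfj} \nucong \al_{\bfh}\bt_{\bfk}$ (with $\bfi+\bfj=\bfh+\bfk=\bfm$), and then invokes Lemma~\ref{compaid}(ii) -- the ``swapping'' identity $ac+bd=(a+b)(c+d)$ when $bc\nucong ad$ -- to conclude that the monomial $\gamma_\bfm\Lm^\bfm$ is dominated pointwise by the two cross-product monomials $\al_{\bfi}\bt_{\bfk}\Lm^{\bfi+\bfk}+\al_{\bfh}\bt_{\bfj}\Lm^{\bfh+\bfj}$, hence is inessential after all, a contradiction. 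Your argument instead locates a tangible point $\bfa$ in the open set where the chosen essential monomial of $fg$ strictly dominates, simultaneously avoiding the corner loci of $f$ and of $g$; there $f(\bfa)$ and $g(\bfa)$ are tangible, hence so is $(fg)(\bfa)=\gamma_\bfm\bfa^\bfm$, forcing $\gamma_\bfm\in\tT$. This is a valid density/continuity proof and carries a clear geometric picture, but it is heavier machinery than what the paper uses: it needs the $\nu$-topology, density of $\tT$ in $\clR$, nowhere-density of the corner loci, and the fact that removing finitely many closed nowhere-dense sets from a nonempty open set leaves something; the paper's swapping lemma avoids all of that and works coefficient-by-coefficient.

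One point to tighten: your opening reduction ``$f\eqR\ef$ and $g\eqR\eg$'' invokes the earlier proposition that $\ef\eqR f$, but that proposition is proved under the hypothesis that $F$ is an \emph{archimedean} supertropical semifield, which the present statement does not assume (only $R=\clR$). Fortunately the reduction is dispensable: if $f,g$ are merely essential-tangible, an inessential ghost monomial of $f$ can never \emph{strictly} dominate (since $f_h$ dominates $h$ everywhere), so the ghost locus of $f$ on $\tT^{(n)}$ is still contained in the finite union of tropical hyperplanes where two monomials tie, and the rest of your density argument goes through with $f,g$ essential-tangible rather than tangible. Dropping the reduction also removes your appeal to Remark~\ref{rmk:eClass}, which imposes $\nu_\tT$ being 1:1, another condition not stated in the proposition.
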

\begin{proof}
Assume $q = fg$ is the product of two essential-tangible
polynomials
  $ f = \sum \al_\bfi \Lm^\bfi$ and  $ g
= \sum \bt_\bfj \Lm^\bfj$. Write $f = \ef + \iif$, and $g = \eg +
\ig$; clearly, $\iif \ig$, $\iif g$ and $f \ig$ belong to $\iq$,
and $\eq \eqR \ef\eg$. Thus, a ghost monomial $h$ of $\eq$, if it
existed, would be obtained from some two (or more) identical
products
\begin{equation}\label{eq:ghostMono} \al_{\bfi} \Lm^{\bfi} \bt_{\bfj}
\Lm^{\bfj} = \al_{\bfh} \Lm^{\bfh} \bt_{\bfk} \Lm^{\bfk};
\end{equation}
But these are dominated by $ \al_{\bfi} \Lm^{\bfi}\bt_{\bfk}
\Lm^{\bfk} +\al_{\bfh} \Lm^{\bfh} \bt_{\bfj} \Lm^{\bfj} $, in view
of Lemma~\ref{compaid}(ii), so $h$ is inessential.
\end{proof}

\section{Roots of polynomials}\label{rootsofpol}

As in classical algebra, our main interest in polynomials lies in
their roots, which are to be defined in the tropical sense. As
mentioned earlier, in our philosophy, ghost elements are to be
treated like zero.

 \begin{defn} (Compare with \cite{RST}) Suppose $R=\RGnu$ is a supertropical domain.
 An element ${\bfa }\in R^{(n)}$ is a (tropical) \textbf{root}
of a polynomial $f\in R\pl \la_1, \dots,\lm_n \pr$ if $f({\bfa})
\in \tGz$; in this case we also say $f$ \textbf{satisfies} $\bfa$.
 The root $\bfa = (a_1, \dots, a_n)$ is
\textbf{tangible} if each $a _i$ is tangible or $\rzero$; $\bfa =
(a_1, \dots, a_n)$ is \textbf{strictly tangible} if each $a _i$ is
tangible.
\end{defn}
\noindent

For example, any ghost $a^\nu$ is a root of the monomial $\la$,
and $\lm$ has no strictly tangible roots; any tangible constant
$\ne \rzero$ has no roots. On the other hand,  every element of $R$
is
 a root of all ghost polynomials.

 \begin{Note} Of course, a tangible polynomial could take on some non-tangible
 values. For example, the tangible polynomial $f = \la
 +1$ satisfies $f(1) = 1^\nu \in \tG.$ This is precisely the idea behind roots of a tangible polynomial.
  \end{Note}

 Of course, if $f \in R[\la]$ and $f(\bfa) = \rzero$, then $\bfa$ is a root of $f$.
 Although
 this is usually much too special to be of use, it does help us keep track of monomials.
 Note by Remark~\ref{tropprop}(iv) that  $f(a) = \rzero$ iff $a = \rzero$
 and $\la | f.$ (Otherwise, some monomial of $f$ would take a
 nonzero value.) Let us generalize this observation.

 \begin{prop}\label{root10} Define  $\Phi_a : R[\la_1, \dots,
 \la_n]
\to R[\la_1, \dots,
 \la_{n-1}]$ as in Proposition \ref{spec1}, sending $f \mapsto f_a$, where
$$f_a(\lm_1, \dots, \lm_{n-1}) = f(\lm_1, \dots, \lm_{n-1},a).$$ If $f\in \ker\Phi _a,$ then $\la_n$ divides
$f$.\end{prop}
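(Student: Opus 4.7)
I would decompose $f$ with respect to $\lambda_n$. Write
$$f = \sum_{i=0}^{d} g_i(\lambda_1,\dots,\lambda_{n-1})\,\lambda_n^{\,i},$$
with $g_i \in R[\lambda_1,\dots,\lambda_{n-1}]$. Showing that $\lambda_n \mid f$ amounts to showing $g_0 = \rzero$, since then $f = \lambda_n \sum_{i\geq 1} g_i \lambda_n^{\,i-1}$.

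The hypothesis $f\in \ker \Phi_a$ means $f_a(\lambda_1,\dots,\lambda_{n-1}) = \rzero$ as a function on $R^{(n-1)}$, i.e.,
$$\sum_{i=0}^{d} g_i(\bfb)\,a^{\,i} = \rzero \quad \text{for every } \bfb = (b_1,\dots,b_{n-1}) \in R^{(n-1)}.$$
The key algebraic fact I would invoke is Remark~\ref{tropprop}(iv): in a supertropical semiring, $x+y = \rzero$ forces $x = y = \rzero$. Iterating this on the displayed sum gives $g_i(\bfb)\,a^{\,i} = \rzero$ for every $i$ and every $\bfb$. Specializing to $i=0$ and using $a^0 = \rone$ yields $g_0(\bfb) = \rzero$ for all $\bfb \in R^{(n-1)}$.

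Since $R[\lambda_1,\dots,\lambda_{n-1}]$ is identified with its image in $\CFun(R^{(n-1)},R)$ (i.e., polynomials are equated with the functions they define), $g_0 \equiv \rzero$ as a function forces $g_0 = \rzero$ in $R[\lambda_1,\dots,\lambda_{n-1}]$. Consequently $f = \lambda_n \cdot q$ for $q = \sum_{i\geq 1} g_i \lambda_n^{\,i-1} \in R[\lambda_1,\dots,\lambda_n]$, proving $\lambda_n \mid f$.

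The only subtle point — and the one I would flag as the main obstacle — is the step forcing each summand $g_i(\bfb) a^{\,i}$ to vanish separately. This uses precisely the fact (Remark~\ref{tropprop}(iv)) that a sum can equal $\rzero$ only if every summand already equals $\rzero$, a special feature of supertropical semirings that fails for rings. Note also that the argument works uniformly whether $a = \rzero$ or $a \in \tT \cup \tG$: when $a = \rzero$, the equation $g_0(\bfb)\cdot \rone = \rzero$ still extracts $g_0 = \rzero$ directly, so no case split is needed.
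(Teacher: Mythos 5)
Your proof is correct, and it rests on the same key lemma as the paper's, namely Remark~\ref{tropprop}(iv) (a sum is $\rzero$ only if every summand is), but you apply it to a coarser decomposition. The paper writes $f$ as a sum of all its monomials $h_i$, concludes that each $\Phi_a(h_i)=\rzero$, and then reads off $\la_n\mid h_i$; that last inference tacitly uses that $R$ has no zero divisors and quietly subsumes the degenerate case $a\neq\rzero$ (where in fact every $h_i$, hence $f$ itself, must be $\rzero$). You instead group by $\la_n$-degree, writing $f=\sum_i g_i\la_n^{\,i}$, and extract only the single conclusion $g_0\equiv\rzero$, which is precisely what divisibility by $\la_n$ requires. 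This buys a marginally cleaner argument: the conclusion falls out of $a^0=\rone$ alone, with no case split on whether $a=\rzero$ and no need to reason about which monomials carry a positive power of $\la_n$. In substance the two proofs coincide; yours is a bit tighter in how it disposes of the boundary cases.
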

 \begin{proof} We are given $f(a_1, \dots, a_{n-1}, a) = \rzero$ for
 all $a_1, \dots, a_{n-1} \in R.$ But writing $$f (\la_1, \dots, \la_{n}) =
 \sum_i
 h_i$$ as a sum of monomials, we can view
 $$f_a (\la_1, \dots, \la_{n-1}) =   \sum
 h_i(\la_1, \dots, \la_{n-1},a)$$ as a sum of monomials, yielding $$\sum_i h_i (a_1, \dots,
 a_{n-1} ,a) = \rzero, \quad \forall a_j \in R$$ which by Remark~\ref{tropprop}(iv) implies that
 each $h_i (a_1, \dots,
 a_{n-1},a) = \rzero$ for all $a_j;$ i.e., each $\Phi_a(h_i) = \rzero.$
 In other words, $\la_n| h_i$ for each $i$, implying $\la_n | f.$
\end{proof}

\begin{lem}\label{divide1} If $\la_j$ divides a polynomial $f =
\sum h_i,$ where the $h_i$ are monomials, then $\la_j | h_i$ for
each $i$.
\end{lem}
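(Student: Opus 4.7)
The plan is to exploit the fact that the polynomial semiring $R\pl\la_1,\dots,\la_n\pr$ is defined in Definition \ref{def:poynomials} as the free construction of formal sums $\sum_{\bfi}\a_\bfi \Lm^\bfi$, where the monomial $\Lm^\bfi$ are tagged by distinct exponent tuples $\bfi \in \N^{n}$. In particular, any polynomial has a unique representation as a sum of monomials with distinct exponent vectors, and multiplication is given by the usual convolution formula.

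First, I would put $f$ into canonical form $f = \sum_{\bfi}\a_\bfi \Lm^\bfi$ with the $\Lm^\bfi$ pairwise distinct and $\a_\bfi \ne \rzero$; this corresponds to writing $f = \sum h_i$ with the $h_i$ the distinct nonzero monomials of $f$. Since $\la_j \mid f$, there is $g \in R\pl\la_1,\dots,\la_n\pr$ with $f = \la_j g$; write $g = \sum_{\bfk}\bt_\bfk \Lm^{\bfk}$ in canonical form as well. Then, unwinding the convolution,
\[
\la_j g \;=\; \sum_{\bfk}\bt_\bfk\, \Lm^{\bfk + e_j},
\]
where $e_j$ is the $j$-th standard basis vector, so the only monomials occurring in $\la_j g$ have $\la_j$-degree at least $1$.

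Next I would match coefficients of $\Lm^\bfi$ on the two sides of $f = \la_j g$. By the uniqueness of the monomial expansion, the coefficient of $\Lm^\bfi$ on the right is $\bt_{\bfi - e_j}$ when $i_j \ge 1$ and is $\rzero$ when $i_j = 0$. Hence $\a_\bfi = \rzero$ whenever $i_j = 0$, which means that every surviving monomial $h_i = \a_\bfi\Lm^\bfi$ in $f$ has $\la_j$-degree at least $1$; equivalently, $\la_j \mid h_i$ for each $i$, as required.

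I do not anticipate a real obstacle here: the only point to be careful about is that the argument takes place in the \emph{formal} polynomial semiring $R\pl\la_1,\dots,\la_n\pr$ (where monomial representations are unique by construction) rather than in the functional quotient $R[\la_1,\dots,\la_n]$, where $e$-equivalent polynomials might superficially look different. This is consistent with how Lemma \ref{divide1} is invoked in the proof of Proposition \ref{root10}, where divisibility by $\la_n$ is treated on the level of the monomial decomposition of $f$.
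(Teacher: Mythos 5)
Your coefficient comparison is valid in the formal polynomial semiring $R\pl\Lambda\pr$: if $f=\la_j g$ holds there, then the monomials of $f$ are exactly those of $g$ with the $j$-th exponent raised by one, so the conclusion follows essentially from the definition of multiplication. The gap is that, from Remark~\ref{fun2} onwards, the paper works in $R[\Lambda]=\Psi(R\pl\Lambda\pr)\subset\CFunR$, so ``$\la_j$ divides $f$'' means $f\eqR\la_j g$ for some $g$ --- an equality of functions, not of formal sums. Your step ``since $\la_j\mid f$, there is $g$ with $f=\la_j g$'' quietly upgrades this functional equality to a formal one, which is precisely what the lemma is supposed to establish (once every monomial $h_i$ is known to be divisible by $\la_j$, one can formally factor out $\la_j$); as written the argument is circular. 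You flag the formal-versus-functional distinction, but then justify discarding it by saying the lemma is ``invoked in the proof of Proposition~\ref{root10}''; in fact \ref{root10} precedes Lemma~\ref{divide1}, never cites it, and establishes the converse implication. The actual consumer of \ref{divide1}, Proposition~\ref{divide2} inside the proof of Proposition~\ref{reduct1}, operates on equalities of functions in $F[\Lambda]$.

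The paper's proof takes a genuinely different route that sidesteps the issue: it applies the specialization homomorphism $\la_j\mapsto\rzero$ of Proposition~\ref{spec1}, so $\la_j\mid f$ gives $\overline{f}=\rzero$ and hence $\sum\overline{h_i}=\rzero$; then Remark~\ref{tropprop}(iv) (if $a+b=\rzero$ then $a=b=\rzero$, applied pointwise in $\Fun(R^{(n-1)},R)$) forces each $\overline{h_i}=\rzero$, i.e.\ $\la_j\mid h_i$. That one-line specialization is the step you are missing; alternatively, you could first deduce the formal factorization from the functional hypothesis and then run your coefficient comparison, but that deduction is exactly the content of the lemma.
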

\begin{proof} Write $\overline{\phantom{w}}$\, for the specialization $\la_j \mapsto
\rzero$. Then $$\rzero = \overline{f} = \sum \overline{h_i},$$ by
Proposition~\ref{spec1}. Applying Remark~\ref{tropprop}(iv) yields
each $\overline{h_i} = \rzero,$ so $\la_j$ divides each $h_i$.
\end{proof}

\begin{prop}\label{divide2} If $\la_j$ divides $\sum g_i,$ a sum
of polynomials, then $\la_j$ divides each $g_i.$
\end{prop}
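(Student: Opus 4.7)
The plan is to generalize the specialization argument used in Lemma~\ref{divide1}. Write $\overline{\phantom{w}}$ for the substitution $\la_j \mapsto \rzero$, which defines a semiring homomorphism $R\pl \Lambda \pr \to R\pl \la_1,\dots,\widehat{\la_j},\dots,\la_n\pr$ (as in Proposition~\ref{spec1}). The key observation is the equivalence, for any polynomial $p$, that $\la_j \mid p$ iff $\overline{p} = \rzero$: the forward direction is immediate from the homomorphism property, while the converse follows by expanding $p = \sum_k p_k \la_j^k$ with each $p_k$ free of $\la_j$ (so $\overline{p} = p_0$) and observing that $p_0 = \rzero$ gives $p = \la_j \cdot \sum_{k \ge 1} p_k \la_j^{k-1}$. (This equivalence is also essentially Proposition~\ref{root10} applied variable by variable.)

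Given that $\la_j$ divides $\sum g_i$, applying $\overline{\phantom{w}}$ yields
\[
\rzero = \overline{\sum_i g_i} = \sum_i \overline{g_i}.
\]
Now I invoke the coefficient-wise extension of Remark~\ref{tropprop}(iv): in the formal polynomial semiring over the supertropical domain $R$, if a finite sum $\sum_i q_i = \rzero$, then each $q_i = \rzero$. This reduces immediately to Remark~\ref{tropprop}(iv) applied in $R$ itself, monomial by monomial: for each multi-index $\bfh$, the $\Lambda^\bfh$-coefficient of $\sum_i q_i$ is the sum in $R$ of the $\Lambda^\bfh$-coefficients of the $q_i$, and this sum being $\rzero$ in $R$ forces each individual coefficient to be $\rzero$ by induction on the number of summands. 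Applying this to $q_i = \overline{g_i}$ gives $\overline{g_i} = \rzero$ for every $i$, whence $\la_j \mid g_i$ by the equivalence established in the first step.

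The only point that requires any care is the passage from Remark~\ref{tropprop}(iv) (a two-element statement about the supertropical domain $R$) to the many-summand coefficient-wise statement in $R\pl\Lambda\pr$; however this is a routine induction on the number of summands combined with comparison of coefficients, and so presents no real obstacle.
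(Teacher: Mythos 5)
Your proof is correct and uses the same core technique as the paper—specialize $\la_j \mapsto \rzero$ and invoke Remark~\ref{tropprop}(iv) (extended inductively to several summands) to conclude that a zero sum forces zero summands—though where the paper simply reduces to Lemma~\ref{divide1} by decomposing each $g_i$ into its monomials, you argue directly on the $g_i$, absorbing the monomial step into the coefficient-by-coefficient observation that $\sum_i q_i = \rzero$ in the polynomial semiring forces each $q_i = \rzero$. Both routes are sound and the underlying idea is identical; yours just streamlines the bookkeeping by avoiding the detour through the monomial decomposition.
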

\begin{proof} Write each $g_i$ as a sum of monomials; by the
lemma, $\la_j$ divides each of these monomials, and thus divides
each $g_i.$
\end{proof}

\subsection{The fundamental theorem of supertropical algebra}

We return to our general considerations about roots.

\begin{rem} Obviously, any $e$-equivalent polynomials have precisely the same
roots.
\end{rem}

\begin{rem} Any root
$\bfa \in R^{(n)}$ of $\vf$ is a root of $f$. Indeed, $f(\bfa) =
\vf (\bfa) + \text{ghost}\in \tGz.$
\end{rem}

 One classical result, the
\bfem{Fundamental Theorem of Algebra}, has a very easy analog
here. We work over a divisibly closed supertropical semifield $F =
\FGnu;$  i.e., $F= \clF$.

\begin{lem}\label{lem:cl1} Suppose $F = \clF$. Then for
any nonconstant polynomial $f \in F\pl \lm\pr $  and for any
$\ra^\nu \ne \fzero$ in~$\tG$, there exists tangible $ r \in F$
with $f(r) \nucong \ra.$
\end{lem}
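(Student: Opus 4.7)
The plan is to pass to $\nu$-values and reduce the problem to a max-equation in the divisible ordered Abelian group $\tG$. Writing $f = \sum_{i=0}^{t} \a_i\lm^i$ with $t=\deg f\ge 1$, supertropicality together with multiplicativity of $\nu$ give, for every tangible $r\in\tT$,
\begin{equation*}
f(r)^\nu \ =\ \max_{0\le i\le t}\,\a_i^\nu(r^\nu)^i,
\end{equation*}
a function depending only on $r^\nu\in\tG$. Since $\nu|_\tT:\tT\to\tG$ is onto, finding a tangible $r$ with $f(r)\nucong \ra$ is equivalent to finding $x\in\tG$ with $\max_i \a_i^\nu x^i = \ra^\nu$.

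The hypothesis $F=\clF$ makes $\tG$ an $\mathbb{N}$-divisible ordered Abelian group, so for each $i\ge 1$ with $\a_i\ne\fzero$ the element $x_i := (\ra^\nu/\a_i^\nu)^{1/i}\in\tG$ is well defined; at $x=x_i$ the $i$-th monomial contributes exactly $\ra^\nu$. The proof therefore reduces to choosing an index $i$ so that this monomial is simultaneously dominant at $x=x_i$: $\a_j^\nu x_i^j \le \ra^\nu$ for every $j\ne i$.

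I would select $i$ by a Newton-polygon argument: the map $\log x \mapsto \log \max_j \a_j^\nu x^j$ is the upper envelope of affine functions of slopes $0,1,\dots,t$, hence convex, piecewise linear, and nondecreasing. The linear piece active at level $\log \ra^\nu$ identifies the correct $i$, and by convexity of the envelope this automatically yields $\a_j^\nu x_i^j \le \ra^\nu$ for all $j\ne i$. Lifting $x_i$ to a tangible $r \in \tT$ with $r^\nu = x_i$ (possible since $\nu|_\tT$ is onto) then gives $f(r)^\nu=\ra^\nu$, as required.

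The main obstacle is this dominance verification, i.e., translating Newton-polygon convexity into explicit inequalities among the ratios $(\ra^\nu/\a_j^\nu)^{1/j}$ in the ordered group $\tG$; the rest is bookkeeping. A mild subtlety arises when $f$ has a nonzero constant term $\a_0$: the envelope is then bounded below by $\a_0^\nu$, so one needs $\ra^\nu \ge \a_0^\nu$ (an implicit hypothesis in the intended applications of the lemma).
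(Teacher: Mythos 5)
Your Newton-polygon route is sound, but it is heavier machinery than the lemma requires, and it leaves as your ``main obstacle'' exactly the step that the paper disposes of in one line. The paper's proof does not need the convex envelope: for each $k>0$ with $\a_k\ne\fzero$ it picks a tangible $r_k$ with $r_k \nucong \root k \of{\ra/\a_k}$, so $\a_k r_k^k \nucong \ra$, and then simply takes $r$ to be the one with \emph{minimal} $r_k^\nu$. Dominance is then automatic: for every $j\ge 1$ with $\a_j\ne\fzero$, monotonicity of multiplication gives $\a_j r^j \le_\nu \a_j r_j^j \nucong \ra$, while $\a_m r^m \nucong \ra$ for the minimizing index $m$, hence $f(r)\nucong\ra$. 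This is the same choice your envelope argument would ultimately make (the ``active piece at level $\ra^\nu$'' is precisely the index attaining the minimal $r_k^\nu$), but the paper reaches it without invoking the piecewise-linear convexity picture, so the inequalities you worry about translating from the Newton polygon are in fact immediate. Your observation about the constant term is a real subtlety that the paper's proof shares (it also only considers $k>0$): if $\a_0^\nu >_\nu \ra^\nu$ the conclusion fails. The lemma is only invoked in Proposition~\ref{fundam} for a polynomial $g$ with no constant term, so the gap is harmless there, but your flag is fair.
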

\begin{proof}  Write $f = \sum \a_i \la^i.$ For each $k>0$,
there is some tangible $r_k$ such that
$$r_k \nucong  \root k \of {\frac {\ra}{\a_k}};$$ thus,  $ \a_k
 r_k^k \nucong  \ra.$ Take $r$   such that $ r^\nu $
is minimal among these $r_k^\nu$. 
Then $ f(r)  \nucong  \ra.$
\end{proof}

\begin{prop}\label{fundam} Over any divisibly closed supertropical semifield $\FGnu$, every   polynomial $f \in F\pl
\lm\pr $,  which is not a tangible  monomial, has a strictly
tangible root.\end{prop}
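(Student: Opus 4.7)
The statement has two genuinely different cases, so I would split on whether $f$ has a single monomial or several.

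First, the easy case: if $f$ is a ghost monomial, write $f = \a\la^i$ with $\a \in \tG$. Then for $r = \rone$ (which lies in $\tT$ since $F$ is a semifield), $f(\rone) = \a \in \tG \subset \tGz$, so $\rone$ is a strictly tangible root. Thus I may assume $f$ has at least two monomials.

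Write $f = \sum_{i \in \supp(f)} \a_i \la^i$ and let $j = \deg f$, so $\a_j \ne \rzero$. For every $i \in \supp(f)$ with $i < j$ I would like a tangible $r_i$ satisfying $\a_i r_i^i \nucong \a_j r_i^j$, i.e.\ $\nu(r_i)^{j-i} = \nu(\a_i)/\nu(\a_j)$. The right-hand side lies in the group $\tG$, so by divisible closure (Remark~\ref{divcl}) such a tangible $r_i \ne \rzero$ exists in $F$. Next I would choose the index $i^\ast$ for which $\nu(r_{i^\ast})$ is $\nu$-maximal among the $\nu(r_i)$, and set $r := r_{i^\ast}$.

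The key verification is that at this $r$ the monomial $\a_j\la^j$ dominates every monomial of $f$. For any $k \in \supp(f)$ with $k < j$, the choice of $i^\ast$ gives $\nu(r) \nuge \nu(r_k)$, and since $j-k > 0$ and $\tG$ is an ordered cancellative monoid, raising to the $(j-k)$-th power preserves the inequality: $\nu(r)^{j-k} \nuge \nu(r_k)^{j-k} = \nu(\a_k)/\nu(\a_j)$. Multiplying both sides by $\nu(\a_j)\nu(r)^k$ yields $\nu(\a_j r^j) \nuge \nu(\a_k r^k)$. Meanwhile equality $\nu(\a_j r^j) = \nu(\a_{i^\ast} r^{i^\ast})$ holds by construction. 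Thus the $\nu$-maximum among the monomials of $f$ evaluated at $r$ is attained at least twice, and by successive application of bipotence and supertropicality the whole sum $f(r)$ lies in $\tGz$; hence $r$ is a strictly tangible root.

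The main obstacle I anticipate is purely notational: carrying out the manipulations cleanly inside the multiplicatively written ordered monoid $\tGz$ while maintaining track of which $r_i$ corresponds to balancing which pair of monomials. Geometrically this is just the standard ``Newton polygon'' observation that a non-monomial piecewise-linear tropical function has a corner, but I prefer to extract one specific corner (the one coming from the top-degree monomial and the $r_i$ of largest $\nu$-value) rather than invoke the full convex-hull machinery, which avoids auxiliary lemmas.
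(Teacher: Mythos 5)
Your proof is correct, and it reaches the same underlying fact (a tropical polynomial with two or more monomials has a ``corner'') by a slightly different route than the paper. The paper first normalizes so that $\a_0 \ne \rzero$, handles the case $\a_0 \in \tGz$ by dividing out $\la$ and inducting, and then in the remaining case invokes Lemma~\ref{lem:cl1}: balance the tangible constant term $\a_0$ against each nonconstant monomial $\a_k\la^k$ via $r_k^k \nucong \a_0/\a_k$, and take $r$ with $\nu(r)$ \emph{minimal} among the $r_k$, so that $\a_k r^k \nule \a_0$ for all $k$, forcing $f(r) = g(r)+\a_0 \nucong \a_0$ to land in $\tG$. You instead anchor on the \emph{leading} monomial $\a_j\la^j$, balance it against each lower monomial via $r_i^{j-i} \nucong \a_i/\a_j$, and take $r$ with $\nu(r)$ \emph{maximal}, so that $\a_j r^j$ weakly dominates every $\a_k r^k$ while being $\nu$-equal to $\a_{i^\ast}r^{i^\ast}$; the resulting tie at the top forces $f(r)\in\tGz$ by supertropicality. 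Your variant is self-contained: it replaces the paper's preliminary degree reduction and induction on ghost constants by the trivial observation that a ghost monomial is satisfied by $\rone$, and it does not need Lemma~\ref{lem:cl1} as a separate statement. Both arguments use divisibility of $F$ and invertibility in $\tG$ in exactly the same way, so neither is more general; the choice is essentially between anchoring at the bottom of the Newton polygon (paper) or at the top (yours).
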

\begin{proof} Suppose $f(\lm) = \sum _{i=u}^m \a_i \la ^i,$ where
$\a_u \ne \zero_R.$ Replacing $f$ by $\sum _{i=u}^m \a_i \la
^{i-u},$ and renumbering the coefficients, we may assume that
$\a_0 \ne \zero_F.$
 Write $g(\lm) =  \sum _{i=1}^m \a_i \la ^i,$ so $f(\la) = g(\la)
+ \a _0.$ If $\a_0 \in \tGz $, then we could erase $\a_0$ and
divide by $\la,$ and conclude by induction on $\deg f$. Thus, we
may assume that $\a_0 \notin \tGz$. By the lemma, there is some
tangible $r$ such that $g (r) \nucong \a_0,$ implying $ f(r)=
\a_0^\nu + \a _0 \in \tG.$
\end{proof}

This proposition, whose analog for the max-plus algebra was proved
in the sense of polynomial factorization \cite{Cuninghame79}, was
included here to give a quick positive result, but its proper
formulation in this theory is more sophisticated;
cf.~Proposition~\ref{prop:emptyzeroset} below.

\subsection{Different kinds of roots}

Varieties of tropical geometry come up in our theory as tangible
roots of tangible polynomials. However, since we have the ghost
structure at our disposal, we might as well consider roots of
non-tangible polynomials as well, thereby  enriching the geometry
and also adding insight to factorization.

Note that ghost elements are automatically roots when $f$ lacks a
constant term. Thus, our main interest is in tangible roots. A bit
of thought shows that, unlike the classical situation where the
tangible roots of a polynomial in one indeterminate are
topologically isolated, here we can have a continuum of tangible
roots. For example, every number less than 1 is a root of $\la +
1^\nu.$ Thus, we need to investigate roots of polynomials more
carefully.

\begin{rem}\label{rep1} Consider an arbitrary nonzero
polynomial $f = \sum _\bfi \a _\bfi \La ^\bfi \in R \pl \Lambda\pr
$, over a supertropical domain $R = \RGnu$. For any essential
monomial $\a _\bfi \La ^\bfi$ of $f$,  and $\bfa \in R^{(n)},$ let
us write $c_\bfi = \a_\bfi \bfa^\bfi,$ and
$$S(\bfa) = \{ c_\bfi: \bfi  \in \supp (f^\essn)\}.$$
Write $$c(\bfa)^\nu = \max \{ c_\bfi ^\nu:  c_\bfi \in S(\bfa)\},
\qquad  \text{and} \quad  \hat S(\bfa)  = \{ c_\bfi \in S(\bfa) :
c_\bfi ^\nu =  c(\bfa)^\nu\}.$$
In evaluating $f(\bfa)$, we may discard all $c_\bfi$  which are
not $\nu$-maximal. There are two cases:

\begin{description}
     \item[Case I]  $\hat S(\bfa)$ has at least two elements. \pSkip
   \item[Case II]  $\hat S(\bfa)$ has a unique element
$c_\bfj$. 

\pSkip


\end{description}
 In Case I,  we call
     $\bfa$ a \textbf{corner root}. These
      are the familiar roots in tropical geometry, i.e.,
those arising in tropical geometry in the corner locus of
polynomials over the max-plus algebra.  Note that any corner root
$\bfa$ is also a root of the binomial consisting of the sum of any
two monomials $\a_\bfj \Lambda^\bfj$ of~$f$ for which  $c_\bfj\in
\hat S(\bfa)$; this hints at the key role to be assumed by
binomials in tropical geometry.  A tangible  corner root $\bfa$ is
called  \textbf{\singular}  if, under the notation above, $\hat
S(\bfa) \subset \tT$, i.e., if the monomials determining the root
are tangible.

In Case II,     $\bfa$ is a  root  of $f$ iff $c_\bfj \in \tGz;$
we call this a \textbf{cluster root}. This is a new phenomenon
which arises from the supertropical structure, and does not occur
in the familiar theory of corner loci for  tropical geometry based
on the max-plus algebra.

\end{rem}

\begin{example} Consider the polynomial $\la^4 + 3^\nu \la^3 +
5^\nu \la^2 + 6\la + 6$ in $D(\R)\pl \la\pr .$ The tangible roots are 0 and all $a$ such
that $1 \le a \le  3.$ The corner roots are 0, (which is
\singular), and  1, 2, and 3 (which are not \singular). All the
other tangible roots are cluster roots.\end{example}

\begin{rem}\label{cont} Some immediate consequences, for $f \in R \pl
\lm\pr $:
\begin{enumerate} \eroman
    \item If $\ra^\nu \in \tG$ is ``large enough'' (for example,  for $f
= \lm^2 + \al_1\la +\al_0$, if $a \nuge \al_1$ and $\ra \a _1 \nuge
\a _0$), then $\ra^\nu$ is a  root of $f$. \pSkip
\item Likewise, if the leading coefficient of $f$ is ghost and
$\ra ^\nu $ is ``large enough,'' then $\ra$ is a  root of $f$.
\pSkip
\item If $\a _ 0 \in \tG$ and $\ra ^\nu$ is ``small enough,'' then
$\ra$ is a root of $f$.  For example, in $D(\R)\pl \la\pr $, every
$\ra \leq 7$ is a root of $\la^2 + 8\la +15^\nu$. \pSkip
\item More generally, suppose $\ra \in \tT$ is a root of $f = \sum
\a_i \la^i;$ let $c_i= \a_i \ra^i$, and (notation as in
Remark~\ref{rep1}), take
 $$
 \hat S (a)= \{ c_j \; is \;  \nu-maximal\; in \; S(a) \}.$$

If $\hat S(a)$ has only a single element $c_j$, i.e., $\ra$ is a
cluster root, then this $c_j$ must be a ghost, and thus there is
an open set containing $\ra$, all of whose elements of which are
roots of~$f$. (This could be viewed as a form of Krasner's Lemma
from valuation theory.) \pSkip
\item On the other hand, notation as in (iv), if all $c_j \in
S(a)$ are  tangible, then taking $\rb\in  \tT$ ``close" to~$\ra$,
but not equal, will make all the $\{c_i : c_i \in S(b)\}$
distinct, and thus $\rb$ is not a root of $f$.
\pSkip \item  If  $\ra \nucong \rb$ and
$\ra$ is a corner root of $f$, then $\rb$ is also a corner root.
Thus, even when $\ra \notin \tT $,  taking $\rb \in \tT $ for
which $\rb \nucong \ra$ yields
  a tangible root.
 \pSkip
 \item If $\ra \in \tT$ is a root of $f$ and $b \lmodg a$, then $b  $ is a
root of $f$ as well. \pSkip
\end{enumerate}
\end{rem}

A similar situation occurs for polynomials in $n$ indeterminates.
Nevertheless, \singular\ roots also involve extra subtleties, as
seen in Example \ref{tangex0},  differing considerably from the
situation in classical algebraic geometry.

  Remark~\ref{cont} shows that if $\ef$ is tangible, then all of
 the tangible  roots of $f$ are \singular.

\subsection{Laurent polynomials and rational Laurent functions}

Often it is convenient to consider a slightly larger semiring than
the semiring of polynomials. Let $\sF := F \setminus \{ \fzero
\}$, for a supertropical semifield~$F$. As before, we write
$\Lambda = \{ \la_1, \dots, \la _n\} $ and $\Lambda^{-1}$ for  $\{
\la_1^{-1}, \dots, \la _n^{-1}\} $. We want to consider functions
such as $\la_1^{-1}.$ Since these are not defined at $\fzero$, we
must be more careful, and define $\sFunF$ to be the semiring of
functions $(\sF)^{(n)} \to F,$ and $\sCFunF$ to be the
sub-semiring of continuous functions from $\sFunF$. $(\sF )^{(n)}$
could be called the supertropical torus.

\begin{defn}\label{rationalfunc} The semiring $F \pl \Lambda,
\Lambda^{-1} \pr$ of Laurent polynomials is the sub-semiring of
$\sCFunF$ generated by the \textbf{Laurent monomials} $\a_\bfi
\Lambda ^ {\bfi}$, where $\a_\bfi \in F$ and
$$\Lambda ^ {\bfi} = \la_1^{i_1}\cdots \la _n^{i_n}, \quad i_1,
i_2, \dots, i_n \in \Z.$$ \end{defn}

Strictly speaking, first we embed the semiring $F \pl \Lambda \pr$
into $F \pl \Lambda, \Lambda^{-1} \pr$, and then pass to $F [
\Lambda, \Lambda^{-1} ]$ inside $\sCFunF$. Explicitly, we have:

\begin{prop}\label{Laurentembed} There is a canonical 1:1 semiring homomorphism $$\Psi: F[\la_1,
\dots, \la_n] \To F[\la_1,\la_1^{-1}, \dots, \la_n,\la_n^{-1}],$$
given by $f \mapsto f.$ Every element of $F[\la_1,\la_1^{-1},
\dots, \la_n,\la_n^{-1}]$ can thereby be written in the form
$\frac f h,$ where $h$ is a suitable monomial in $F[\la_1, \dots,
\la_n]$.
\end{prop}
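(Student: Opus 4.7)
The plan is to construct $\Psi$ as a restriction of functions, verify the homomorphism property, prove injectivity by a specialization argument, and then exhibit the fractional representation.

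First, any $f \in F[\Lambda]$ is by construction a continuous function $F^{(n)} \to F$ arising from a formal polynomial $\tilde f \in F\pl\Lambda\pr$; I set $\Psi(f)$ to be the restriction of this function to $(\sF)^{(n)}$. Since $\tilde f$ is a finite sum of ordinary monomials $\alpha_\bfi \Lambda^\bfi$ with $\bfi \in \mathbb{Z}_{\ge 0}^n$, this restriction is automatically a (Laurent) polynomial function in $F[\Lambda,\Lambda^{-1}]$. Well-definedness is immediate because $e$-equivalent formal polynomials agree at every point of $F^{(n)}$, hence at every point of the subset $(\sF)^{(n)}$; and since the semiring structures on $\CFunF$ and $\sCFunF$ are pointwise, $\Psi$ respects addition, multiplication, $\rzero$, and $\rone$, so it is a semiring homomorphism.

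For injectivity, I need to show that if $f,g \in F\pl\Lambda\pr$ induce equal functions on $(\sF)^{(n)}$, they induce equal functions on all of $F^{(n)}$. The only new points are those $\bfa = (a_1,\dots,a_n) \in F^{(n)}$ with $J := \{j : a_j = \fzero\} \ne \emptyset$. By Remark~\ref{tropprop}(ii), substituting $a_j = \fzero$ kills every monomial containing $\la_j$, so $f(\bfa) = f^{(J)}((a_k)_{k\notin J})$ where $f^{(J)}$ collects only those monomials of $f$ not involving any $\la_j$ for $j\in J$, and similarly for $g$. I would then replace each $\fzero$ coordinate by a tangible $b_j \in \sF$ whose $\nu$-value is strictly below the $\nu$-value of every monomial surviving at $(a_k)_{k\notin J}$; this is possible because $\tG$ contains arbitrarily small elements in the $\nu$-order and $\nu_\tT$ is onto. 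At the resulting point $\bfa' \in (\sF)^{(n)}$, the originally deleted monomials contribute only strictly dominated terms, so $f(\bfa') = f^{(J)}((a_k)_{k\notin J})$ and likewise for $g$; since $f(\bfa')=g(\bfa')$ by hypothesis, this forces $f(\bfa)=g(\bfa)$.

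For the fractional representation, any $q\in F[\Lambda,\Lambda^{-1}]$ is, by Definition~\ref{rationalfunc}, a finite sum $\sum_k \alpha_k \Lambda^{\bfi_k}$ of Laurent monomials with $\bfi_k \in \mathbb{Z}^n$. Choosing $\bfj \in \mathbb{Z}_{\ge 0}^n$ componentwise large enough that $\bfi_k+\bfj \in \mathbb{Z}_{\ge 0}^n$ for every $k$, the monomial $h := \Lambda^\bfj$ lies in $F[\Lambda]$, and $hq = \sum_k \alpha_k \Lambda^{\bfi_k+\bfj}$ is an ordinary polynomial $f\in F[\Lambda]$. Since $h$ is invertible in $\sCFunF$ (its indeterminates assume only nonzero values on $(\sF)^{(n)}$), one obtains $q = f/h$ as claimed.

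The genuinely delicate step is injectivity: it relies on having tangibles of arbitrarily small $\nu$-value to approximate zero coordinates of $\bfa$. This is a mild richness condition on $\tG$ satisfied by the supertropical semifields of interest, but it is the place where a careless argument could overlook a pathological case such as a trivial value group.
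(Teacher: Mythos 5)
Your proof is correct and in the same spirit as the paper's, but the paper's version of the injectivity argument is much shorter and relies on a topological shortcut: it simply asserts that $(\sF)^{(n)}$ is dense in $F^{(n)}$ in the $\nu$-topology and invokes the general fact that two continuous functions agreeing on a dense set are equal. What you have done is unpack exactly what that density argument amounts to: given $\bfa$ with some coordinates $\fzero$, you replace them by tangibles of sufficiently small $\nu$-value so that the monomials involving those variables are strictly dominated, and read off $f(\bfa)$ from $f(\bfa')$. Your version buys two things: (1) it avoids having to justify that the target $F$ is sufficiently separated in the $\nu$-topology for the ``agree on a dense set $\Rightarrow$ agree everywhere'' lemma to apply (this is genuinely delicate when $\nu_\tT$ is not 1:1, since $\nu$-equivalent tangibles cannot be separated by open intervals), and (2) it isolates the exact hypothesis used — that $\tG$ is unbounded below — which for a nontrivial ordered group of ghosts is automatic, so your ``pathological case'' caveat is reassuring rather than a real restriction. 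One small wrinkle you should tidy up: your choice of $b_j$ ``strictly below the $\nu$-value of every monomial surviving at $(a_k)_{k\notin J}$'' does not parse in the case where no monomial of $f$ survives (i.e., $f(\bfa)=\rzero$); there the argument still goes through, but by a different comparison — if $g(\bfa)\ne\rzero$ then $g(\bfa')$ is bounded below in $\nu$-value by a surviving monomial of $g$, whereas $f(\bfa')$ can be forced strictly below it, contradicting $f(\bfa')=g(\bfa')$. The other two parts (homomorphism property via pointwise operations, and clearing denominators for the fractional representation) match the paper exactly.
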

\begin{proof} Clearly $\Psi$ is a semiring homomorphism, which is
1:1 since $(\sF)^{(n)}$ is dense in $F ^{(n)}$ in the
$\nu$-topology. (Any two continuous functions that agree on a
dense subset are equal.) The last assertion is seen by clearing
denominators. \end{proof}

\begin{rem}\label{cancel5}  If $\bfa \in R^{(n)}$ is a tangible root of  $\al_\bfi \Lm^\bfi f$,
where $\al_\bfi$ is tangible, then $\bfa$ is also a root of
$f$.\end{rem}

 Since  multiplying by a tangible monomial does not
affect the roots of a polynomial, it is convenient to be able to
have monomials invertible, especially when $n\ge 2.$ Thus, $F
[\Lambda, \Lambda^{-1}]$ has the following important advantage
over $F [\Lambda]$:

\begin{rem}  Every tangible Laurent  monomial is invertible.
Consequently, we may always reduce Laurent polynomials to Laurent
polynomials having constant term $\fone$.
 For
example, the roots of~$\la_1 + \la_2$ in $(\sF)^{(2)}$  are the
same as those of $\la_1\la_2^{-1}+\fone$.

In fact, an element of $F [ \Lambda, \Lambda^{-1} ]$ is invertible
in $F [ \Lambda, \Lambda^{-1} ]$ iff it is  a tangible Laurent
monomial. (This is seen by a degree argument: When multiplying
Laurent polynomials that are not Laurent monomials, one gets a
highest-order term and a lowest-order term, and so the product
cannot be a Laurent monomial.)

\end{rem}

Here is an example of a proof made easier by passing to $F [
\Lambda, \Lambda^{-1} ]$.

\begin{prop}\label{prop:emptyzeroset} Suppose $\bar F = F,$ and   $f\in F [ \la_1, \dots,
\la_n ]$ is not a tangible  monomial.  Then $f$ has a tangible
root.
\end{prop}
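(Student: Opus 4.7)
The plan is to prove this by induction on $n$, with the base case $n=1$ being exactly Proposition~\ref{fundam}. For the inductive step ($n > 1$), I will write $f$ as a polynomial in the last variable, $f = \sum_{j=0}^m h_j \la_n^j$, with $h_j \in F[\la_1, \dots, \la_{n-1}]$ and $h_m \neq \fzero$, and divide into three cases.

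First, if $m = 0$, then $f = h_0$ does not depend on $\la_n$, and being a non-tangible-monomial in $n$ variables it is also a non-tangible-monomial in $n-1$ variables; the inductive hypothesis produces a tangible root $\bfa' \in F^{(n-1)}$, and then $(\bfa', \fzero)$ is a tangible root of $f$. Second, if $m \ge 1$ but $h_0 = \fzero$, then $\la_n$ divides $f$, so $f(\bfa', \fzero) = \fzero$ for any $\bfa' \in F^{(n-1)}$, giving a tangible root.

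The substantive case is $m \ge 1$ with $h_0 \ne \fzero$. Here I will specialize at a strictly tangible point $\bfa' \in \tT^{(n-1)}$, for instance $\bfa' = (\fone, \dots, \fone)$, obtaining the one-variable polynomial $p(\la_n) := f(\bfa', \la_n) \in F\pl \la_n \pr$. The key point to verify is that $h_j(\bfa') \ne \fzero$ whenever $h_j \ne \fzero$: each monomial of $h_j$ evaluates to a non-zero element of $F$ at the strictly tangible $\bfa'$ (as a product of non-zero elements in the semifield), and sums of non-zero elements in a supertropical semifield remain non-zero by Remark~\ref{tropprop}(iv) applied iteratively. Consequently $p$ has non-zero constant term $h_0(\bfa')$ and non-zero $\la_n^m$-coefficient $h_m(\bfa')$, so $p$ has positive degree and more than one non-zero coefficient, hence is not a monomial, let alone a tangible one. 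Applying Proposition~\ref{fundam} produces a strictly tangible root $a_n$ of $p$, and $(\bfa', a_n)$ is then a tangible root of $f$.

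The main obstacle I anticipate is justifying that specialization at a strictly tangible point preserves the non-vanishing of each non-zero coefficient $h_j$; once that is in hand, the argument reduces cleanly to the one-variable case handled by Proposition~\ref{fundam}. The Laurent machinery developed just above is not strictly needed for this reduction, though it underlies the density of tangible tuples which makes such specialization feasible.
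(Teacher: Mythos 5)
Your proof is correct, but it takes a genuinely different route from the paper's. The paper passes to the Laurent polynomial semiring $F\pl\Lambda,\Lambda^{-1}\pr$, divides by an essential monomial $f_\bfi$ so that $\fone$ becomes a monomial of the result, writes $f=g+\fone$, and then argues (by the same scaling trick as in Lemma~\ref{lem:cl1}) that there is a strictly tangible tuple $\bfr\in\tT^{(n)}$ at which $g(\bfr)\nucong\fone$, whence $f(\bfr)\in\tG$. Your argument instead proceeds by induction on the number of indeterminates, specializing the first $n-1$ variables at the strictly tangible point $(\fone,\dots,\fone)$ and invoking the $n=1$ case (Proposition~\ref{fundam}). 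Both arguments are valid, and the key lemma you rely on — that a nonzero polynomial stays nonzero under a strictly tangible specialization, because $F$ has no zero divisors (Remark~\ref{tropprop}) — is sound. Two small comparative remarks: the paper's approach gives the sharper conclusion of a \emph{strictly} tangible root (all coordinates in $\tT$), whereas yours can produce roots with a $\fzero$ coordinate in the $m=0$ and $h_0=\fzero$ cases (still a tangible root by the paper's definition, so the statement is met); and you do not actually need density of tangible tuples — your choice of a single point $(\fone,\dots,\fone)$ suffices, so the closing sentence overstates the role of that machinery. The paper's Laurent reduction is the more uniform argument and is the one reused elsewhere in the section, but your inductive specialization is more elementary and equally correct.
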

\begin{proof} The case when $f$ is a ghost monomial is clear.
Take an essential monomial  $f_{\bfi} = \a _{\bfi} \La^{\bfi}$ of $f$.
Passing to $F \pl \Lambda, \Lambda^{-1} \pr$ and dividing by
$f_{\bfi},$ we may assume that $\fone$ is a monomial of $f$ (where
now $f$ is a sum of Laurent monomials).
   Write $f = g  + \fone$.   By an argument analogous to Lemma
\ref{lem:cl1} there exists tangible $r$ for which $g(r) \nucong
\fone$. Thus $r$ is a root of $f$.
\end{proof}

The Laurent polynomial semiring also permits us to focus the
duality of Remark \ref{add5}:

\begin{rem}\label{add6} If $\bfi = (i_1, \dots, i_n),$ write $-\bfi$ for $(-i_1, \dots,-i_n).$
The isomorphism $$\Phi_{\Fun}: \sFunF \To \sFun
((\dual{F})^{(n)},\dual{F})$$ of Remark~\ref{add5} extends to an
isomorphism
$$\Phi_{\ply}: F [\Lambda, \Lambda^{-1}] \To \dual{F} [\Lambda,
\Lambda^{-1}] $$ given by $\sum \a_\bfi \la^\bfi \mapsto \sum
\a_\bfi^{-1}\la^{-\bfi}$.
\end{rem}

\subsection{Convex sets} Suppose $F$ is a divisibly closed supertropical semifield.
In this case, $a^ t$ is defined for each $a\in F$ and $ t \in
\Rati$, by Remark~\ref{divcl}. Given $\bfa = (a_1, \dots, a_n)$,
 we define $\bfa^t = (a_1^t, \dots, a_n^t).$

\begin{defn}\label{def:convexFunction}
Suppose $F$ is divisibly closed. We define the
 \textbf{path} $\gm_{\bfa,\bfb}$ joining points $\bfa$ and $\bfb$ in $F^{(n)}$ to
 be the set $$\gm_{\bfa,\bfb} := \{\bfa ^t {\bfb}^{1-t}: t \in [0,1] \cap \mathbb Q
 \}.$$ A subset  $S \subseteq F^{(n)}$ is  \textbf{convex} if
 whenever $\bfa, \bfb \in S$ then $\gm_{\bfa,\bfb}$ lies in $S$.

 The  \textbf{left ray} (resp.~\textbf{right ray})  joining points $\bfa$ and $\bfb$ in $F^{(n)}$
 is the set $$\lgm_{\bfa,\bfb} := \{\bfa ^t {\bfb}^{1-t}: t \in (-\infty,1] \cap \mathbb Q
 \}$$
 (resp.~$\rgm_{\bfa,\bfb} :=  \{\bfa ^t {\bfb}^{1-t}: t \in [0,\infty) \cap \mathbb Q
 \}$).
  By \textbf{(closed) ray} we mean left ray or right ray. We define open (left, right) rays analogously.
 We define a \textbf{two-sided ray} to be a set of the form $\tgm_{\bfa,\bfb} := \{\bfa ^t {\bfb}^{1-t}: t \in  \mathbb Q
 \}$.

A function $\phi \in \FunF$ is called \textbf{linear} if, for any
$\bfa, \bfb \in F^{(n)},$ $$ \phi(\bfa)^t  \phi(\bfb)^{1-t} =
\phi(\bfa^t \bfb^{1-t}), \quad \forall t \in   \mathbb Q.$$

\end{defn}

\begin{rem}\label{Laurentlin}
Any Laurent monomial $h = \la_1^{i_1}\cdots \la_n^{i_n}$ is
linear; indeed write
$$\begin{array}{lllll}
   h (\bfa ^t {\bfb}^{1-t}) &  = & \al_\bfi (a_1^{t}b_1^{1-t})^{i_1} \cdots (a_n^{t}b_n^{1-t})^{i_n} & & \\
  & = & (\al_\bfi a_1^{i_1}  \cdots a_n^{i_n})^t (\al_\bfi b_1^{i_1}
\cdots b_n^{i_n})^{1-t} & = & h (\bfa) ^t h ({\bfb})^{1-t}.   \\
\end{array}$$
\end{rem}
Consequently, we have:

\begin{lem}\label{convex1} Given $\bfa, \bfb \in F^{(n)}$,
for  all  $\bfc \ne \bfa, \bfb$ in the path $\gm_{\bfa, \bfb}$
joining $\bfa$ and $\bfb$:
\begin{enumerate} \eroman
    \item If Laurent monomials $f_\bfi(\bfa ) \nuge f_\bfj(\bfa )$ and
$f_\bfi(\bfb)\nug f_\bfj(\bfb),$ then $f_\bfi(\bfc )\nug
f_\bfj(\bfc ) $;
 \pSkip

 \item If Laurent monomials $f_\bfi(\bfa ) \nug f_\bfj(\bfa )$ and
$f_\bfi(\bfb)\nuge f_\bfj(\bfb),$ then $f_\bfi(\bfc )\nug
f_\bfj(\bfc ) $;
 \pSkip

    \item If $f_\bfi(\bfa ) \nuge f_\bfj(\bfa )$ and
$f_\bfi(\bfb)\nuge f_\bfj(\bfb),$ then $f_\bfi(\bfc )\nuge
f_\bfj(\bfc ) $.
\end{enumerate}
\end{lem}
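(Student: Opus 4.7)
The plan is to reduce all three parts to a single direct computation, leveraging the linearity of Laurent monomials established in Remark~\ref{Laurentlin}. Since $\bfc$ lies on the path $\gm_{\bfa,\bfb}$ but coincides with neither endpoint, I can write $\bfc = \bfa^t \bfb^{1-t}$ for some $t \in (0,1) \cap \Rati$. Linearity then yields
\[
f_\bfi(\bfc) = f_\bfi(\bfa)^t\, f_\bfi(\bfb)^{1-t}, \qquad f_\bfj(\bfc) = f_\bfj(\bfa)^t\, f_\bfj(\bfb)^{1-t}.
\]
Applying the ghost map $\nu$, which is multiplicative, this transports the problem into the ordered divisible group $\cltG$: I need to compare $(f_\bfi(\bfa)^\nu)^t (f_\bfi(\bfb)^\nu)^{1-t}$ with $(f_\bfj(\bfa)^\nu)^t (f_\bfj(\bfb)^\nu)^{1-t}$.

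The key auxiliary fact I would establish first is that in the ordered $\Net$-divisible group $\cltG$, raising to any positive rational power is order-preserving, and strictly so for strict inequalities. This follows because $g \ge h$ in $\tG$ implies $g^m \ge h^m$ for every positive integer $m$ by induction, and then one takes $n$-th roots, using that the $n$-th power map on $\cltG$ is an order-preserving bijection (its injectivity rests on $\Net$-cancellativity from Remark~\ref{cancell}(v)). The strict version uses the same reasoning: equality after raising to a positive rational power would force equality before, by extracting roots.

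With this power-monotonicity in hand, each of the three parts becomes immediate. For (i), since $t>0$ and $1-t>0$, raising $f_\bfi(\bfa)^\nu \ge f_\bfj(\bfa)^\nu$ to the $t$-th power yields $(f_\bfi(\bfa)^\nu)^t \ge (f_\bfj(\bfa)^\nu)^t$, raising the strict inequality $f_\bfi(\bfb)^\nu > f_\bfj(\bfb)^\nu$ to the $(1-t)$-th power yields a strict inequality, and multiplying these in the ordered group produces $f_\bfi(\bfc) \nug f_\bfj(\bfc)$. Case (ii) is identical after swapping the roles of $t$ and $1-t$, and case (iii) drops to only non-strict multiplications. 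The only genuine obstacle is the preliminary monotonicity of fractional powers, which is routine in the divisible closure; the remainder is bookkeeping.
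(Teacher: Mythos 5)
Your proof is correct and is essentially the argument the paper gives: write $\bfc = \bfa^t\bfb^{1-t}$, use linearity of Laurent monomials (Remark~\ref{Laurentlin}) to get $f_\bfi(\bfc) = f_\bfi(\bfa)^t f_\bfi(\bfb)^{1-t}$, and conclude by monotonicity of rational powers in the ordered divisible monoid. You are more careful than the paper on two points the paper leaves tacit: that $\bfc\ne\bfa,\bfb$ forces $t\in(0,1)$ so that both exponents are strictly positive (which is what makes the strict inequality propagate), and that raising to a positive rational power is (strictly) order-preserving in $\cltG$, which you correctly anchor in $\Net$-cancellativity from Remark~\ref{cancell}.
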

\begin{proof} (i) (and (ii)): Write $ \bfc  = \bfa ^t {\bfb}^{1-t}$ for $0 \le t
\le 1.$ Then
$$f_\bfi(\bfc )= f_\bfi(\bfa ^t {\bfb}^{1-t}) = f_\bfi(\bfa)^t f_\bfi({\bfb})^{1-t}
\nug f_\bfj(\bfa)^t f_\bfj({\bfb})^{1-t} = f_\bfj(\bfc ).$$

The proof for (iii) is analogous.
\end{proof}



\section{Supertropical geometry}

 Having the basic concepts of polynomials under our belt, we are ready to
apply them to tropical geometry. For convenience, we treat affine
geometry; the analogous discussion using homogeneous polynomials
would yield the parallel results for projective geometry. Although
the
 following definitions could be formulated over an arbitrary
 semiring with ghosts, we work throughout over a supertropical
 semifield $F= \FGnu.$

 \subsection{Supertropical root sets}

\begin{defn} For a set  $S \subset F[\La]$, we define  its \textbf{root
set}
$$\tZ(S) = \{ \bfa =  (\ra_1,\dots,\ra_n) \in F^{(n)} \ : \ f(\bfa ) \in \tGz, \  \forall f \in S \} \subseteq F^{(n)}.$$
The \textbf{complement} of $\tZ(S)$ is $F^{(n)} \setminus \tZ
(S)$.

The \textbf{tangible root set}, denoted $\tZ_{\tng} (S)$, is
$\tZ(S) \cap \tTz^{(n)}$. The \textbf{tangible complement} of
$\tZ_{\tng} (S)$ is $\tTz^{(n)} \setminus \tZ_{\tng} (S)$. When $S
= \{ f\}$ consists of only one polynomial, we write
$\tZ_{\tng}(f)$ for $\tZ_{\tng}(S)$, which is called a
\textbf{supertropical hypersurface}; we call $\tZ_{\tng}(f)$ a
\textbf{tangible primitive} when $f$ is a tangible binomial.
\end{defn}

\begin{lem}\label{hyperpl1} If two points lie in a tangible primitive, then the
two-sided ray containing them also lies in this tangible
primitive.\end{lem}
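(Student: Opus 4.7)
The plan is to reduce the statement to the multiplicativity of $\nu$ on Laurent monomials, via the linearity property observed in Remark~\ref{Laurentlin}. Write the tangible binomial as $f = h_1 + h_2$, where $h_1 = \a_\bfi \La^\bfi$ and $h_2 = \a_\bfj \La^\bfj$ are tangible Laurent monomials. For a strictly tangible point $\bfa \in (\sF)^{(n)}$, the values $h_1(\bfa)$ and $h_2(\bfa)$ lie in $\tT$, so by bipotence together with supertropicality, their sum is ghost if and only if $h_1(\bfa) \nucong h_2(\bfa)$. Thus the tangible primitive $\tZ_\tng(f)$, intersected with $(\sF)^{(n)}$, is precisely the locus of points where $h_1$ and $h_2$ take $\nu$-equal values.

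Next, I would invoke Remark~\ref{Laurentlin} for $\bfc = \bfa^t \bfb^{1-t}$, with $t \in \Q$ arbitrary: since $h_1$ and $h_2$ are Laurent monomials, they are linear in the sense of Definition~\ref{def:convexFunction}, so
\[
h_1(\bfc) = h_1(\bfa)^t\, h_1(\bfb)^{1-t}, \qquad h_2(\bfc) = h_2(\bfa)^t\, h_2(\bfb)^{1-t}.
\]
Applying the ghost map $\nu$ and using that $\nu$ is a semiring homomorphism compatible with taking rational powers (which is legitimate in the divisible closure of $F$, cf.~Remark~\ref{divcl}), the assumption that $\bfa, \bfb \in \tZ_\tng(f)$ gives
\[
h_1(\bfc)^\nu = h_1(\bfa)^{t\nu}\, h_1(\bfb)^{(1-t)\nu} = h_2(\bfa)^{t\nu}\, h_2(\bfb)^{(1-t)\nu} = h_2(\bfc)^\nu,
\]
i.e.\ $h_1(\bfc) \nucong h_2(\bfc)$.

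It remains to check that $\bfc$ is itself tangible so that the previous paragraph actually applies (one does not want $h_1(\bfc), h_2(\bfc) \in \tGz$ for trivial reasons). Since $\bfa, \bfb \in \tT^{(n)}$ and the rational-power construction in $\clF$ sends $\tT$ into $\clM$ (the tangibles of the divisible closure, by Proposition~\ref{divcl1}), each coordinate $a_k^t b_k^{1-t}$ of $\bfc$ is tangible; hence $\bfc$ is strictly tangible. Then $f(\bfc) = h_1(\bfc) + h_2(\bfc) \in \tG$ by supertropicality, so $\bfc \in \tZ_\tng(f)$.

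The argument works uniformly for every $t \in \Q$, covering the entire two-sided ray $\tgm_{\bfa,\bfb}$. There is no real obstacle here: everything is driven by the multiplicativity of $\nu$ and the linearity of Laurent monomials. The only minor subtlety is ensuring that rational powers stay inside the tangible set, which is handled by passing to the divisible closure and invoking the decomposition $\clR = \clM \sqcup \cltGz$.
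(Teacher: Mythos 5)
Your argument is correct and follows essentially the same route as the paper's proof: both reduce to the observation that since $f$ is a tangible binomial and the two given points are (strictly) tangible, the monomials $h_1,h_2$ take tangible values there, so $f(\bfa),f(\bfb)\in\tGz$ forces $h_1\nucong h_2$ at both points, and then the linearity of Laurent monomials from Remark~\ref{Laurentlin} transports that $\nu$-equality to every $\bfa^t\bfb^{1-t}$. The extra paragraph you include verifying that $\bfc$ stays tangible is a sensible point to record, but it is not a different method — the paper leaves it implicit because $F$ is taken divisibly closed in that section.
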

\begin{proof} Suppose $f = h_1 + h_2 $ is a sum of monomials, with
$f(\bfa), f(\bfb) \in \tGz.$ Then, in view of
Remark~\ref{Laurentlin}, \begin{equation}\label{hyperpl} h_i(\bfa
^t {\bfb}^{1-t}) = h_i(\bfa) ^t h_i(\bfb)^{1-t}
 .\end{equation}
 since also $h_1(\bfa)$ and  $h_2(\bfa)$ are tangible, we have $h_1(\bfa) \nucong
 h_2(\bfa)$,
 and likewise $h_1(\bfb) \nucong h_2(\bfb)$; thus,
 Equation~\eqref{hyperpl} yields $ h_1(\bfa
^t {\bfb}^{1-t}) + h_2(\bfa ^t \bfb^{1-t}) \in \tGz$ since also $
h_1(\bfb) + h_2(\bfb) \in \tGz$. \end{proof}

(Note that this argument does not work for ghosts: If $h_1(\bfa),
h_1(\bfb)\in \tGz$ with $h_1(\bfa) \ge_\nu h_2(\bfa)$ and
$h_1(\bfb) \ge_\nu h_2(\bfb)$, then clearly $$ h_1(\bfa ^t
{\bfb}^{1-t}) + h_2(\bfa ^t \bfb^{1-t}) = h_1(\bfa) ^t
h_1(\bfb)^{1-t} \in \tGz$$ for $0 \le t \le 1,$ but for small $t$
or large $t,$ one could have $h_2$ dominating, and thus the sum
may be tangible.)

 The tangible root sets are the geometric objects that we would
like to view as supertropical varieties. One advantage of this
approach is that elementary considerations yield the usual
correspondence between varieties and ideals of polynomials, whose
analogous formulation under other definitions (involving domains
of non-differentiability) might fail:

\begin{rem}\label{rootset1}$ $
\begin{enumerate}\eroman
\item $\tZ_{\tng}(S_1)\cap \tZ_{\tng}(S_2)= \tZ_{\tng}(S_1 \cup
S_2)$. Thus, the intersection of tangible root sets is a tangible
root set.  \item $ \tZ_{\tng}(f) \cup \tZ_{\tng}(g)=
\tZ_{\tng}(fg)$. Thus, the union of finitely many supertropical
hypersurfaces is a supertropical hypersurface.
\end{enumerate}
\end{rem}

Nevertheless we continue to use the terminology ``(tangible) root
set'' to avoid confusion with other definitions of tropical
varieties.

\begin{remark}\label{rmk:closeRootSet}  For any $S \subset F[\La]$, the root set $\tZ(S) $  of $S$ is closed in the
    $\nu$-topology; its tangible complement (as well as its  complement)
    is  open in the
    $\nu$-topology.
\end{remark}\noindent Some examples of tangible root sets are given in Examples
~\ref{linearirr}, \ref{exm:1} and \ref{exm:2}.

 For an appetizer, let us start with a sample result, reminiscent
of a ``weak Nullstellensatz.''

\begin{rem} Any finite set $S$ of non-constant  polynomials has  common
 roots. In fact, one can just take ghosts ``large enough''
so that they outweigh the constant terms in the polynomials.   On
the other hand, $S$ could have no common tangible roots; for
example, $\la +2$ and $\la +3$ have no common tangible
root.\end{rem}

\begin{defn}\label{idealofzero}  The
\textbf{ideal} $\tI(Z)$ \textbf{of a  set} $Z \subset F^{(n)}$ is
defined to be
$$ \tI(Z) = \{ f\in F[\la_1, \dots, \la_n] \ : \
f(\bfa) \in \tGz, \forall \bfa \in Z\}.$$
We call $\tI(Z)$ the \textbf{ideal of polynomials satisfying} $Z$.
\end{defn}

\begin{prop}\label{radid} For any set $Z \subset R^{(n)}$, the ideal $\tI(Z)$ is
a ghost-closed radical ideal of $F[\Lambda]$.
\end{prop}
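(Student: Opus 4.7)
The plan is to verify the three claims separately: that $\tI(Z)$ is an ideal, that it contains the ghost elements of $F[\Lambda]$, and that it is its own radical.

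First, I would check that $\tI(Z)$ is an ideal. If $f,g \in \tI(Z)$, then for every $\bfa \in Z$ we have $f(\bfa), g(\bfa) \in \tGz$, and since $\tGz$ is a semiring ideal of $F$, the sum $f(\bfa)+g(\bfa)$ lies in $\tGz$; using that evaluation at $\bfa$ is a semiring homomorphism (Remark~\ref{subs} applied inside $\CFunF$), this gives $(f+g)(\bfa) \in \tGz$. For any $h \in F[\Lambda]$, $(hf)(\bfa) = h(\bfa) f(\bfa)\in \tGz$ by the ideal property of $\tGz$ in $F$. Hence $\tI(Z) \triangleleft F[\Lambda]$.

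For ghost-closedness, I would observe that any polynomial $g$ with all coefficients in $\tGz$ satisfies $g(\bfa)\in \tGz$ for every $\bfa \in F^{(n)}$ (again because $\tGz$ is absorbent under multiplication and closed under addition). Thus $\tGz[\Lambda] \subseteq \tI(Z)$, which is exactly the condition of Definition~\ref{tropid}.

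Finally, for radicality, suppose $f^k \in \tI(Z)$ for some $k \in \Net$, i.e., $f(\bfa)^k = f^k(\bfa) \in \tGz$ for every $\bfa \in Z$. The key point is the contrapositive: if $f(\bfa)$ were tangible, then since $\tT$ is a multiplicative monoid (Definition~\ref{dom}(a)), the power $f(\bfa)^k$ would lie in $\tT$, contradicting $f(\bfa)^k \in \tGz$. Therefore $f(\bfa) \in \tGz$ for all $\bfa \in Z$, so $f \in \tI(Z)$, proving $\sqrt{\tI(Z)} \subseteq \tI(Z)$; the reverse inclusion is automatic. No step here presents a serious obstacle; the only substantive input is that $\tT$ is closed under multiplication in a supertropical domain, which is precisely the feature distinguishing this setting and makes the radical property almost tautological.
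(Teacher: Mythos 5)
Your proof is correct and follows essentially the same route as the paper: you verify closure under addition and absorption under multiplication using that $\tGz$ is a semiring ideal of $F$ and that evaluation at a point is a semiring homomorphism, you note that ghost polynomials take only ghost values, and you establish radicality from the equivalence $f(\bfa)\in\tGz \Leftrightarrow f(\bfa)^k\in\tGz$. The only difference is cosmetic: where the paper merely states that equivalence, you spell out the nontrivial direction via the contrapositive, explicitly invoking that $\tT$ is a multiplicative monoid in a supertropical domain — a useful bit of clarity, but not a different argument.
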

\begin{proof} To check that $\tI(Z)$ is an ideal, note that if $f(\bfa) \in \tGz$ and $g(\bfa) \in \tGz$ for polynomials $f$ and $g$,
then $(f+g)(\bfa)= f(\bfa)+g(\bfa) \in \tGz;$ likewise if $f(\bfa)
\in \tGz$ and $g$ is any other polynomial, then $f(\bfa)g(\bfa)
\in \tGz.$

Furthermore,  $f(\bfa) \in \tGz$ iff  $f(\bfa)^k \in \tGz$ (for
any number $k \ge 1$), implying $f \in \tI(Z)$ iff $f^k \in
\tI(Z)$; hence $\tI(Z)$ is a radical ideal. Finally, every element
is a root of each ghost polynomial, so $\tI(Z)$ contains all the
ghost polynomials.\end{proof}

This leads us to try to identify root sets with the ghost-closed
radical ideals of the polynomial semiring, which we study further
when considering the Nullstellensatz in the next section.

\begin{rem}\label{prime2}
 Any  ghost-closed  ideal $P$ of $F[\lambda]$  contains
$$(\la +\a^\nu )(\la^\nu + \a) = \nu(\la^2  + \a\la +\a^2) \in \tG[\la].$$
 It follows that the ghost ideal $\tGz[\la]$ is not
prime.\end{rem}

 As soon as one tries to dig deeper, one encounters many
potential pitfalls, which we illustrate with a few
 examples in one indeterminate.

\begin{example}\label{counterex} Suppose $F = D(\mathbb R).$ Thus, $\tT  =  \mathbb R.$

\begin{enumerate} \eroman
 \item No  ideal of $F \pl \la \pr$ defined by a set of
 tangible
 roots  contains both $\la ^2 + \la +2$ and $\la
^2 + 3\la +1$, since the latter has tangible roots $3$ and $-2$,
whereas the former only has the tangible root 1. \pSkip

\item Consider the ideal $A$ of polynomials having 2 as a root.
The polynomial $f = \la + 3^\nu\in A$, since $f(2) = 2 + 3^\nu =
3^\nu.$ Also $\la +2 \in A$ and  $f + (\la +2) = \la^\nu + 3^\nu,$
a ghost. On the other hand, by the same token, $f+(\la +3)$ is the
same ghost, although  $\la +3\notin A.$ (Actually,  every real
number $\le 3$ is a tangible root of $f$.) \pSkip

\item Besides being a root of $\la +2,$ the number $2$ is the
maximal tangible root of $\la + 2^\nu,$ and the minimal tangible
root of $\la^\nu +2$.  \ Every element of $F$ is a root of $\la
+2^\nu$ or $\la ^\nu  +2$ (cf.~Remark~\ref{prime2}). \pSkip

\item We would like the ideal of polynomials having $1,2$ as roots
to be generated by $(\la +1)(\la+2) = \la ^2 + 2 \la +3.$ But $\la
+ 3^\nu$ is in this ideal, and its degree is too small! \pSkip

\item $0$ is a root both of $3 \la +3$ and $\la ^2 + 3 \la +3,$
but not of the tangible part of their sum  $\la ^2 + 3^\nu \la
+3^\nu,$ which is $\la ^2.$ \pSkip

\item  The ideal of $F \pl \la \pr$ generated by all $\{\la + \a :
\a \in \R \}$ is not finitely generated in the classical sense.
(For any $S =\{\la + \a _1, \dots, \la + \a _m\},$ just take $\a <
\min \{ \a _1, \dots, \a _m\},$ and $\la +\a$ is not generated by
$S$.) \pSkip
\end{enumerate}
\end{example}

As we continue, we need to pick our way through these various
examples. One also wants to go in the other direction, from root
sets to polynomials. Different polynomials could define the same
root sets, and the same idea used in classical algebraic geometry
is applied here.

\subsection{Tropical varieties versus supertropical
root sets}\label{compare}

Our definition of root set encompasses the corner locus defined in
the introduction. In fact, one can apply this idea to any field
$\Fld$ with non-Archimedean valuation $\nVal: \Fld \to \Trop$.
Namely, given any affine hypersurface defined as the roots of the
polynomial $\kF\in \Fld[\La],$ one can pass to the tangible root
set of the polynomial $\trF$ defined over the supertropical
semifield $D(\nVal(\Fld)).$ (For example, one can take $\Fld$ to
be the field of Puiseux series.)


To incorporate the  ideas of tropical geometry into the
supertropical theory, we recall the special case of tangible
polynomials over the supertropical semifield $\Trop := D(\Real)$,
in which we recall $\tT = \Real$, $\tG = \Real^\nu$,
$\zero_{\Trop} = -\infty$ with addition and multiplication induced
by $\max$ and $+$, respectively.  For any tangible polynomial $f
\in D(\Real)[\lm_1,\dots, \lm_n]$ and point $\bfa \in \tT^{(n)}$,
$f(\bfa)$ is ghost iff the evaluation of $f$ on $\bfa$ is attained
by two monomials having the same dominant value (both tangible by
definition); namely $\bfa$ belongs to the corner locus $\Cor(f)$
of $f$. In other words, the domain of non-differentiability  of
$f$ is comprised precisely of the corner roots of $f$. In this
way, the corner locus of a polynomial is obtained as a tangible
root set, and we are poised to pass to a version of algebraic
geometry over the supertropical semifield.

Let us now describe how the tangible root sets (i.e., corner loci)
provide the {\it balancing condition} (Equation
\eqref{eq:balanceCond}). By Theorem \ref{eq:Kapranov}, any
tropical polynomial $\reF$ can be written as a convex piecewise
linear function function
\begin{equation}\label{eq:polyToFunc2} \reF(\bfa) \ = \ \max_{\bfi
\in\Omega \subset \Net^{(n)}}\{ \langle \bfa ,\bfi \rangle+
\al_\bfi \} ,\quad \bfa \in\zReal^{(n)}\ ,
\end{equation}
and the convex hull~$\Delta(\reF)$ of the set $\Omega$  is called
the \textbf{Newton polytope of~$\reF$}.  This is a lattice
polytope that also has a  subdivision
\begin{equation*}\label{eq:polySub} S({\reF})\ :\
\Delta(\reF)=\Delta_1\cup\dots\cup\Delta_N
\end{equation*}
into convex lattice polytopes, determined by projecting the upper
part of the convex hull of $(i_1, \dots, i_n, \al_\bfi)$ onto
$\Delta(\reF)$. This correspondence yields a duality
  between $\Cor({\reF})$ and $S(\reF)$ which
  inverts inclusion of faces;   the dual of a $k$-dimensional face
  in $\Cor({\reF})$ is an $(n-k)$-dimensional face in $S({\reF})$. In particular, the dual of
  an $(n-1)$-dimensional  face $\delta$  in $\Cor({\reF})$ is a lattice edge of
  $S(\reF)$ whose integral length provides the weight $m(\delta)$
  for $\delta$. The one-dimensional faces of the polytope
  come from binomials $\al_{\bfi} \la^{\bfi}+\al_{\bfj} \la^{\bfj}$ appearing in
  $f$, whose zero sets satisfy $\frac{\al_{\bfj}}{\al_{\bfi}} \la^{\bfj -\bfi} = \one_{\Trop},$
  and thus has normal vector of slope  ${\bfj -\bfi}$.

\begin{example} Consider the polynomial $f = \lm_1^2 \lm_2 + \lm_1 \lm_2^2 + 1 \lm_1 \lm_2 +
0$   in $\zReal[\lm_1, \lm_2]$. The corner locus $\Cor(f)$
consists of three line segments and three rays; these are exactly
the tangible root set $\tZ_{\tng}(f)$ as viewed in $\Trop[\lm_1,
\lm_2]$. (See Figure \ref{fig:curve}(a)).
\end{example}

 Supertropical geometry  permits  a   wider
scope for the definition  of polyhedral complexes, since we also
have non-tangible polynomials at our disposal, which enable us to
describe $n$-dimensional polyhedral complexes within
$n$-dimensional spaces (for example polytopes); a supertropical
hypersurface $\tZ_\tng(f) \subset \Real^{(n)}$ may have
(topological) dimension $n$ when $f$ has an essential ghost
monomial. For example, the tangible root set of $\la + 1^\nu$ in
$F[\la]$ is a ray. Here is a more interesting illustration of
supertropical varieties that previously were not available.

\begin{figure}
\begin{minipage}{0.9\textwidth}
\begin{picture}(10,140)(0,0)
\includegraphics[width=5.5 in]{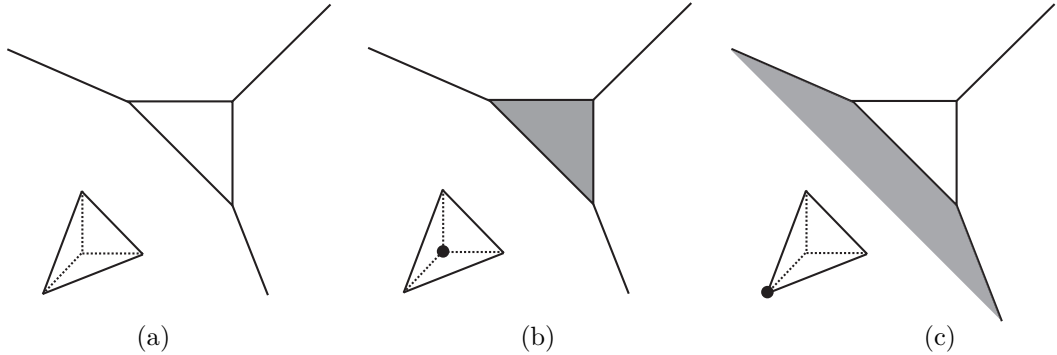}
\end{picture}
\end{minipage}
\begin{minipage}{0.3\textwidth}
\center{(a)}

\end{minipage}\hfil
\begin{minipage}{0.3\textwidth}
\center{(b) }

\end{minipage} \hfil
\begin{minipage}{0.3\textwidth}
\center{(c)}

\end{minipage}
\caption{\label{fig:curve} The tangible root sets (shaded) and the
corresponding Newton polytopes for $f = \lm_1^2 \lm_2 + \lm_1
\lm_2^2 + \al \lm_1 \lm_2 + \bt $ with $\al$ and $\bt$ having
tangible or ghost values. The dashed lines in the left diagrams
stand for the subdivision of $\Delta(f)$, and $\bullet$ marks the
ghost vertices.}
\end{figure}

\begin{example} Consider the essential polynomial $$f = \lm_1^2 \lm_2
+ \lm_1 \lm_2^2 + \al \lm_1 \lm_2 + \bt$$ and the tangible root
set $\tZ_\tng(f)$ of $f$. When $f$ is tangible, $\tZ_\tng(f)$ is
just a standard tropical curve of genus $1$ (see Figure
\ref{fig:curve}(a)). When $\al$ is a ghost, $\tZ_\tng(f)$ is of
dimension $2$ and has genus $0$ (see Figure \ref{fig:curve}(b)).
For $\bt$ ghost, $\tZ_\tng(f)$  has genus $1$ and dimension $2$
(see Figure \ref{fig:curve}(c)).

\end{example}

The Newton polytope $\Delta(f)$ of a supertropical polynomial $f$
is defined in the same manner, where vertices that correspond to
ghost monomials are designated  as ghosts. The duality between the
tangible root set of a polynomial and the subdivision $S(f)$ of
its Newton polytope is preserved; here, a ghost vertex of
$\Delta(f)$ corresponds to an $n$-dimensional face of
$\tZ_\tng(f)$ (see for example Figure \ref{fig:curve}).

 The ghost roots provide a new
dimension to the geometry, as illustrated in the following
examples  of root sets of polynomials in two indeterminates. We
display both the tangible and ghost parts of the root sets, by
taking each axis to represent tangible elements in one direction
(from $-\infty$) and ghost elements in the other direction. In
other words, each axis looks like:

 $$\xymatrix{
     \cdots & & & &  \overset{-\infty}{\cdot}
     \ar@{->}[llll]^{\text{ghosts in decreasing $\nu$-order}}  \ar@{->}[rrrr]_{\text{tangibles in increasing
      $\nu$-order}}  & &  & & \cdots \\
 }$$

$ $

\begin{example}\label{tangex0} $ $~
\begin{enumerate} \eroman

\item  Supertropical hypersurfaces: The tangible roots of  $f =
\la_1 + \la _2 + a$ in $F[\la]$ are

\pSkip  $\qquad \begin{cases} (a,b) \text{ for } b \le_\nu a; \\
(b,a) \text{ for } b \le_\nu a; \\ (b, b) \text{ for } b\ge_\nu a.
\end {cases}$
\pSkip Thus, $\tZ_{\tng} (f)$ is comprised of three rays, all
emanating from $(a,a)$, and its tangible complement has three
 components; cf.~Figure \ref{fig:linrconic}(a). \pSkip

\item
    The tangible complement of the  supertropical
    conic, given by $f= \lm_1^2 + \lm^2 + a^\nu \lm_1 \lm_2 + b$;
    cf.~ Figure~\ref{fig:linrconic}(b), is comprised of three components. This conic is of dimension $2$. \pSkip

    \item Tangible root sets of dimension $2$ in $F^{(2)}$; cf.~
    Figure \ref{fig:2dimset}: \pSkip
    \begin{enumerate}
        \item The unbounded strip $\tZ_{\tng} (\{ \lm_1 + a^\nu \lm_2, \, b^\nu\lm_1 + \lm_2\} )$,
        $b>_\nu  a^{-1}$ ; \pSkip
        \item The supertropical  hypersurface consisting of two half
        spaces,  $\tZ_{\tng} (\{ a^\nu \lm^2_1 + b \lm_1\lm_2 + a^\nu \lm^2_2\} )$,
        $b > _\nu  a$;  \pSkip
        \item A convex bounded tangible root set
        $\tZ_{\tng} (\{ \lm_1 + a^\nu, \, \lm_2 +a^\nu, \, (\lm_1\lm_2)^\nu + a \}
        )$;
    \end{enumerate}
    for  $a,b,c  \in \tT.$
\end{enumerate}
\end{example}

\begin{figure}
\begin{minipage}{0.7\textwidth}
\begin{picture}(10,160)(0,0)
\includegraphics[width=4.5 in]{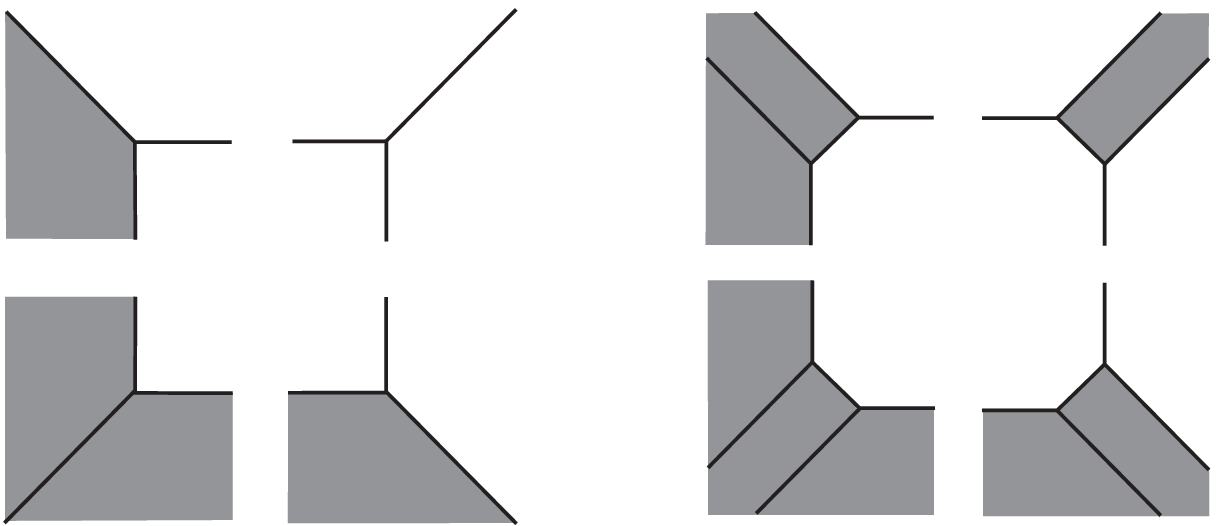}
\put(-10,145){$_{\tT  \times \tT}$} \put(-195,145){$_{\tT \times
\tT}$}

\put(-145,145){$_{\tG  \times \tT}$} \put(-332,145){$_{\tG \times
\tT}$}

\put(-10,-10){$_{\tT  \times \tG}$} \put(-195,-10){$_{\tT \times
\tG}$}

\put(-145,-10){$_{\tG  \times \tG}$} \put(-332,-10){$_{\tG \times
\tG}$}

\end{picture}
\end{minipage}
\begin{minipage}{0.7\textwidth}
\begin{picture}(10,20)(0,0)

\put(60,0){(a)} \put(250,0){(b)}
\end{picture}
\end{minipage}

\caption{\label{fig:linrconic} (a) The root set of the
supertropical tangible line $f = \lm_1 + \lm_2 + a$, $a \in \tT$.
(b) The root set of the supertropical conic  $f = \lm_1^2 +
\lm_2^2 + a^\nu\lm_1 \lm_2 + b$, $a, b \in \tT$.}
\end{figure}

\section{The Nullstellensatz}

One very basic goal (and perhaps the main result of this paper) is
to find an analog of Hilbert's Nullstellensatz, in order to
provide an algebraic foundation for tropical geometry.
Unfortunately, the naive tropical formulation just does not work,
even over a supertropical semifield.

The ``naive tropical Nullstellensatz'' would be that for any
divisibly closed, archimedean supertropical semifield $F$ and any ideal $A$ of
$F[\Lambda] = F[\la _1, \dots, \la_n],$ a polynomial $f$ satisfies
all common roots of $ A$ iff $f \in \sqrt A$. Unfortunately, there
are many counterexamples to this assertion, some of which are
given in Example~\ref{counterex}. This leaves us with a dilemma:
Do we want to hold on the notion of ideal and move our focus away
from root sets, or do we want to stay with  root sets and modify
our definition of ideal in the tropical sense? We deal with the
first approach, since it turns out to be more straightforward and
quite natural. It also turns out that the proofs are most easily
expressed in terms of the Laurent polynomial semiring $F[\Lambda,
\Lambda^{-1}].$

In this discussion, we view a supertropical semifield $F = \FGnu$,
with $\tT _{\zero}= F \setminus \tG,$  endowed with the
$\nu$-topology described in Definition \ref{ordertop}, and assume
that $F$ is divisibly closed and archimedean.

\begin{defn} Given a supertropical semifield $F = \FGnu$  and a polynomial
 $f\in F[\lm_1,\dots, \lm_n],$ we define the set
$$D_f := \{ \bfa = (a_1, \dots, a_n) \in \tT _{\zero}^{(n)} : f(\bfa) \in
 \tTz  \} = \tT _{\zero}^{(n)}\setminus \tZ_\tng(f) .$$

Refining this definition, writing $f = \sum f_\bfi$, a sum of
monomials, define $D_{f,\bfi}$ to be
$$D_{f,\bfi} := \{\bfa  \in \tT _{\zero}^{(n)} : f(\bfa) = f_\bfi (\bfa)
\in \tTz \}.$$ We call the $D_{f,\bfi}$ the \textbf{(tangible)
components} of $f$; we call ${f_\bfi}$ the \textbf{dominant
monomial} of $f$ on the component $D_{f,\bfi}$.

Likewise, we define the \textbf{closed components} of $f$ to be
$$\oD_{f,\bfi} := \{\bfa  \in \tT _{\zero}^{(n)} : f(\bfa) \nucong  f_\bfi
(\bfa) \}.$$
\end{defn}
\noindent Note that $f$ has finitely many components, since $f$ is
a sum of finitely many monomials.

 Components and
closed components are defined the same way for Laurent
polynomials, although here we only consider points in $\tT
 ^{(n)}$ (since a Laurent polynomial need not be defined at
 $\fzero$). Namely,
given  a Laurent polynomial
 $f\in F[\lm_1,\lm_1^{-1},\dots, \lm_n,\lm_n^{-1}],$ we define the set
$$D_f = \{ \bfa = (a_1, \dots, a_n) \in \tT ^{(n)} : f(\bfa) \in \tT  \};$$
thus  $\tT\setminus D_f$ is the set of tangible roots of $f$ in
$\tT ^{(n)}.$

Again, writing $f = \sum f_\bfi$, a sum of Laurent
 monomials, define $D_{f,\bfi}$ to be
$$D_{f,\bfi} = \{\bfa  \in \tT _{\zero}^{(n)} : f(\bfa) = f_\bfi (\bfa)
\in \tT \}.$$ We call the $D_{f,\bfi}$ the \textbf{(tangible)
components} of $f$; we call ${f_\bfi}$ the \textbf{dominant
Laurent monomial} of $f$ on the component $D_{f,\bfi}$.

By definition, $\bfa \in D_{f,\bfi}$ iff $f_\bfi(\bfa)$ is
tangible and $f_\bfi(\bfa) \nug f_\bfj(\bfa)$ for all $\bfj \ne
\bfi;$ hence $$D_f = \bigcup _{\bfi} D_{f,\bfi}.$$ On the other
hand, when $f$ is tangible,  $\bigcup _{\bfi} \oD_{f,\bfi} =
\tTz^{(n)},$ and one obtains a chain complex by taking the
intersections of closed components.

\begin{rem}\label{intersect} $\oD_{f,\bfi_1} \cap
\oD_{f,\bfi_2}\cap \dots \cap \oD_{f,\bfi_k} = \{ \bfa \in \tT
_{\zero}^{(n)} : f(\bfa) \nucong  f_{\bfi_u} (\bfa) , \ 1 \le u
\le k\}$.\end{rem}

\begin{figure}
\begin{minipage}{0.6\textwidth}
\begin{picture}(10,160)(60,60)
\includegraphics[width=4.0 in]{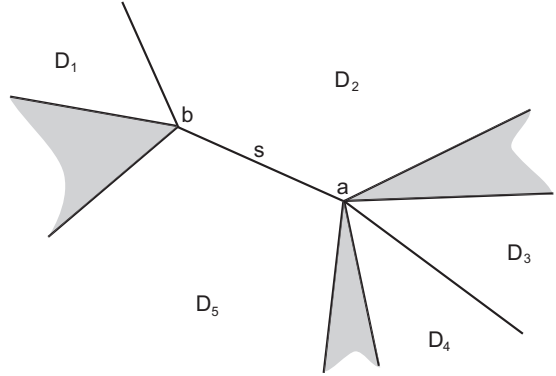}
\end{picture}
\end{minipage}

\caption{\label{fig:kborder} The extremal points $a$ and $b$ are
$4$-border
 and $3$-border, respectively.
 The line segment $s$ is $5$-border, while $s \setminus \{a ,b \}$
 borders only the two components   $D_2$ and~$D_5$.}
\end{figure}

We call any such  nonempty set a \textbf{$k$-border}, and we call $\{
\oD_{f,\bfi_1}, \dots, \oD_{f,\bfi_k}\}$ its \textbf{bordering
components}.  Note that $k$ does not necessarily describe the
codimension, since many components of the same dimension could
meet at a common border. Nevertheless, we can call a $k$-border
\textbf{extremal} if it does not have any other bordering
components. This means in the terminology of
Remark~\ref{intersect} that there is no $\bfa \in
\cap_{u=1}^k\oD_{f,\bfi_u}$ for which $f(\bfa) \nucong
f_{\bfi}(\bfa)$ for some $\bfi \ne \bfi_1, \dots, \bfi_k.$

\begin{rem}\label{finitebord} Since any $k$-border is determined by the monomials of the
components bordering it, there are at most $\binom m k$
$k$-borders, where $m$ is the number of monomials of $f$.
\end{rem}

The \textbf{border} between two components $\oD_{f,\bfi}$ and
$\oD_{f,\bfi'}$ is defined as $$\oD_{f,\bfi} \cap \oD_{f,\bfi'}
\setminus \left\{ \cup _{ \bfj \ne \bfi,\bfi'}
\oD_{f,\bfj}\right\},
$$
in other words,  the 2-border after we remove all 3-borders (which
include all the $k$-borders, $k > 3$). Two components having a
nonempty border are called \textbf{neighbors}. (Note that the
border is then dense in the 2-border.) See example in Figure
\ref{fig:kborder}.

 Keeping
Corollary~\ref{Prop:powOfPols} in mind, we have
\begin{rem} For any $m \in \Net,$ $D_f = D_{f^m}$, and $D_{f,\bfi}
= D_{f^m,m\bfi}$.\end{rem}

 Clearly, the $D_{f,\bfi}$ are open sets (with respect to the $\nu$-topology).
 Furthermore, they are convex, in the sense of Definition \ref{def:convexFunction}:
\begin{proposition} Each   component $D_{f, \bfi}$ is
convex, and every closed component is convex.
\end{proposition}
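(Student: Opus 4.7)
The plan is to deduce both statements directly from Lemma~\ref{convex1}, which packages exactly the domination behavior of monomial values along paths that we need. The key observation is that membership in $D_{f,\bfi}$ (resp. $\oD_{f,\bfi}$) is characterized by strict (resp. non-strict) $\nu$-domination of $f_\bfi$ over all other monomials of $f$, together with a tangibility condition.

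For the open component, I would take $\bfa, \bfb \in D_{f,\bfi}$. By definition, $f_\bfi(\bfa) \nug f_\bfj(\bfa)$ and $f_\bfi(\bfb) \nug f_\bfj(\bfb)$ for every $\bfj \ne \bfi$ in the support of $f$, and moreover $f_\bfi(\bfa), f_\bfi(\bfb) \in \tT$. For any interior point $\bfc = \bfa^t \bfb^{1-t}$ on the path with $t \in (0,1) \cap \mathbb{Q}$, Lemma~\ref{convex1}(i) yields $f_\bfi(\bfc) \nug f_\bfj(\bfc)$ for each $\bfj \ne \bfi$; hence $f_\bfi$ strictly dominates the other monomials at $\bfc$, so by bipotence $f(\bfc)$ equals $f_\bfi(\bfc)$ (or its ghost image, but the strict inequality rules that out). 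It remains to check that $f_\bfi(\bfc) \in \tT$; since $f_\bfi(\bfa)$ is the tangible product $\al_\bfi \bfa^\bfi$ with $\bfa$ having tangible coordinates and $\tT$ a multiplicative monoid, the coefficient $\al_\bfi$ must itself be tangible, and then $\al_\bfi \bfc^\bfi \in \tT$ by the same monoid property together with Remark~\ref{Laurentlin}.

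For the closed component, I would take $\bfa, \bfb \in \oD_{f,\bfi}$, which means $f_\bfi(\bfa) \nuge f_\bfj(\bfa)$ and $f_\bfi(\bfb) \nuge f_\bfj(\bfb)$ for every $\bfj \ne \bfi$. Lemma~\ref{convex1}(iii) then gives $f_\bfi(\bfc) \nuge f_\bfj(\bfc)$ at every point $\bfc$ of the path, which is equivalent to $f(\bfc) \nucong f_\bfi(\bfc)$, so $\bfc \in \oD_{f,\bfi}$. No tangibility condition is imposed on closed components, so this side of the statement is essentially immediate once Lemma~\ref{convex1}(iii) is in hand.

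The main subtlety I anticipate is bookkeeping in the non-Laurent polynomial case when a coordinate of $\bfa$ or $\bfb$ equals $\fzero$: then the corresponding coordinate of $\bfc = \bfa^t \bfb^{1-t}$ remains $\fzero$ for $t \in (0,1)$, and one has to verify that this does not spoil the argument (it cannot, because if $f_\bfi(\bfa) \in \tT$ then the exponent of that variable in $f_\bfi$ must be zero, so $f_\bfi$ is insensitive to that coordinate, and the same strict/non-strict comparison with the remaining monomials—which are either similarly insensitive or else vanish on $\bfc$—continues to hold). In the Laurent setting restricting to $\tT^{(n)}$, this issue disappears entirely, and the proof is a clean two-line application of Lemma~\ref{convex1}.
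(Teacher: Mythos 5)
Your proof is correct and follows the same route as the paper, which simply observes that the proposition follows immediately from Lemma~\ref{convex1} applied to the dominant monomial in the component; you have merely unpacked that one-line argument with the explicit case analysis (strict vs.\ non-strict domination, tangibility of the coefficient, and the zero-coordinate bookkeeping).
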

\begin{proof}
Follows immediately from Lemma~\ref{convex1}, applied to the
dominant monomial in the component.
\end{proof}

\begin{cor} Every $k$-border  is
convex.
\end{cor}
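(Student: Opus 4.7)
The plan is to deduce this immediately from the previous proposition by observing that the intersection of convex sets is convex. By Remark~\ref{intersect}, a $k$-border has the form
\[
B \; = \; \oD_{f,\bfi_1} \cap \oD_{f,\bfi_2}\cap \dots \cap \oD_{f,\bfi_k},
\]
so it suffices to show convexity is preserved under intersection.

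First I would verify the general fact: if $S_1, S_2 \subseteq F^{(n)}$ are convex and $\bfa, \bfb \in S_1 \cap S_2$, then by definition the path $\gm_{\bfa,\bfb} = \{\bfa^t \bfb^{1-t} : t \in [0,1] \cap \mathbb{Q}\}$ lies in $S_1$ and also in $S_2$, hence in $S_1 \cap S_2$; so $S_1 \cap S_2$ is convex. This extends by induction to any finite intersection.

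Next I would apply the preceding proposition, which tells us that each closed component $\oD_{f,\bfi_u}$ is convex. Taking the intersection over $u = 1, \dots, k$ then yields that $B$ is convex, as required.

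There is no real obstacle here; the result is purely formal once the previous proposition is in hand. The only thing worth noting is that $B$ is assumed nonempty (otherwise convexity is vacuous), and that the definition of convexity via rational-parameter paths is stable under intersection precisely because the same $t \in [0,1] \cap \mathbb{Q}$ works in every factor simultaneously.
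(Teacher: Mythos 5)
Your proof is correct and is precisely the argument the paper leaves implicit: a $k$-border is a finite intersection of closed components (Remark~\ref{intersect}), each closed component is convex by the preceding proposition, and convexity is preserved under intersection because the same rational $t$ witnesses membership in every factor simultaneously.
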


In other words, the common boundaries   of the tangible components
are convex. Thus, we can apply convexity arguments in our proofs,
even without any extra topological assumptions on the supertropical
semifield $F$.

\begin{defn} A closed component $\oD_{f,\bfi} $ is
\textbf{2-sided unbounded} if it contains a two-sided ray;
otherwise, it is \textbf{partially bounded}. The closed component
$\oD_{f,\bfi} $ is \textbf{bounded}  if it contains no one-sided
ray.
\end{defn}

\begin{lem} Any extremal border of a partially bounded component $\oD_{f,\bfi}$ must be a
point.\end{lem}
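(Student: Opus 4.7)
The plan is a proof by contradiction. Suppose the extremal border $B = \bigcap_{u=1}^{k} \oD_{f,\bfi_u}$ contains two distinct points $\bfa \ne \bfb$; I will exhibit a two-sided ray sitting inside the partially bounded component $\oD_{f,\bfi} = \oD_{f,\bfi_1}$, the natural candidate being $\tgm_{\bfa,\bfb}$.

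First, I would pass to logarithmic coordinates and set $\phi_\bfj(\bfc) := \log f_{\bfi_1}(\bfc) - \log f_\bfj(\bfc)$ for each monomial $f_\bfj$ of $f$, so that each $\phi_\bfj$ is affine in $\log\bfc$. The equalities $f_{\bfi_u}\nucong f_{\bfi_1}$ for $u=1,\dots,k$ cut out an affine subspace $V$, and the dominance inequalities $\phi_\bfj\ge 0$ for $\bfj\notin\{\bfi_1,\dots,\bfi_k\}$ cut out a closed polyhedron $\bar U$, so that $B = V\cap\bar U$. Extremality of $B$ says precisely that every $\phi_\bfj$ with $\bfj$ outside is \emph{strictly} positive on every point of $B$, i.e.\ $V\cap\bar U = V\cap U$ with $U$ the interior of $\bar U$.

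The heart of the argument is to show that each $\phi_\bfj|_V$ (for $\bfj$ outside) is constant on $V$. Since $\bfd := \log\bfa - \log\bfb$ is a non-zero tangent vector to $V$, the subspace $V$ is positive-dimensional. If some $\phi_\bfj|_V$ were non-constant, then moving in $V$ away from $\bfa$ along a suitable direction would cause some $\phi_{\bfj_0}$ to be the first of the $\phi_\bfj$'s to hit $0$, at a point $\bfc_0\in V$ where all other $\phi_{\bfj'}(\bfc_0)\ge 0$; this $\bfc_0$ then lies in $V\cap\bar U = B$ with the additional dominator $f_{\bfj_0}$, contradicting extremality. Hence each $\phi_\bfj|_V$ is constant, equal to its positive value at $\bfa$, and so $V\subseteq \oD_{f,\bfi}$ and $B=V$.

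To finish, Remark~\ref{Laurentlin} gives $f_\bfj(\bfc_t) = f_\bfj(\bfa)^t f_\bfj(\bfb)^{1-t}$ for any monomial $f_\bfj$ and any $t\in\Rati$, so the equalities defining $V$ transport along the whole ray; the two-sided ray $\tgm_{\bfa,\bfb}$ therefore lies in $V\subseteq \oD_{f,\bfi}$, contradicting partial boundedness. The main obstacle I anticipate is verifying that the first-hit boundary point $\bfc_0$ in the previous paragraph is realized as a genuine element of $F^{(n)}$, rather than a mere limit in the $\nu$-topology; this is where I expect to invoke the divisibly closed and archimedean hypotheses on $F$, choosing the hitting monomial $\bfj_0$ so that the critical exponent along the chosen direction lies in $\Rati$, thereby placing $\bfc_0$ among the rational-exponent combinations of the base points.
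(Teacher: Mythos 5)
Your proposal is correct and follows essentially the same strategy as the paper's own two-line proof, though you arrange the pieces in the opposite order. Both arguments rest on the same two facts: (a) by Remark~\ref{Laurentlin} the Laurent-monomial equalities $f_{\bfi_u}\nucong f_{\bfi_1}$ are preserved along the entire two-sided ray, so the ray stays in your affine set $V$; and (b) a first-crossing of a dominance inequality along a ray inside $V$ would produce a point of $B$ with an additional bordering component, impossible by extremality. The paper goes \emph{forward from partial boundedness}: the two-sided ray must leave $\oD_{f,\bfi}$, and the exit point (which lies in $V$, hence in $B$) has a new bordering component, contradicting extremality. You go \emph{forward from extremality}: each dominance constraint stays strictly positive throughout $V$ (otherwise a first-hit point contradicts extremality), so $V\subseteq\oD_{f,\bfi}$ and then the two-sided ray lying in $V$ contradicts partial boundedness. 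These are contrapositive presentations of the same argument, with yours adding the cosmetic intermediate claim $B=V$. Your closing worry about realizing the first-hit point in $F^{(n)}$ is a fair observation, but note it applies equally to the paper's own terse claim that the ray ``must also intersect some other border''; if anything your setup — hunting for the crossing anywhere in $V$ rather than strictly along $\tgm_{\bfa,\bfb}$ — gives more freedom to pick a rational hitting parameter. So this is not a gap you have introduced; it is a shared subtlety you have merely made explicit.
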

\begin{proof} Otherwise this extremal border contains two points, and thus the path
connecting them. But then the two-sided ray through this path must also
intersect some other border of $\oD_{f,\bfi}$, contrary to the
extremal hypothesis.\end{proof}

In view of this observation, we define an \textbf{extremal point}
of a partially bounded component to be an extremal border.

\begin{Note} The same sort of argument shows that any extremal
border must be the intersection of primitives, but we do not need
that fact.\end{Note}

\begin{prop} Any Laurent monomial $h$ takes on $\nu$-maximal and $\nu$-minimal values in any bounded
closed component $\oD_{f,\bfi}, $ and furthermore, for $h$ non-constant on $\oD_{f,\bfi}, $  the maximal and
minimal $\nu$-values are taken at extremal points.\end{prop}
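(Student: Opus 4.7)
The plan will exploit two facts established earlier: by Remark~\ref{Laurentlin}, any Laurent monomial $h$ is \emph{linear}, so $h(\bfa^t\bfb^{1-t}) = h(\bfa)^t h(\bfb)^{1-t}$ for each $t \in \mathbb Q$, and hence $\nu \circ h$ is affine along every rational path (with values in $\tG$, written additively); and the closed component $\oD_{f,\bfi}$ is convex, cut out inside $\tT_\zero^{(n)}$ by the finitely many monomial inequalities $f_\bfi(\bfa) \nuge f_\bfj(\bfa)$, each of which translates, after dividing by $f_\bfi(\bfa)$ and applying $\nu$, into a linear inequality in $\tG$. Since $F$ is archimedean, $\tG$ embeds order-preservingly into $\mathbb R$; since $F = \clF$, the needed rational exponents are available. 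I will therefore identify $\oD_{f,\bfi}$ with a convex rational polyhedron $P \subseteq \mathbb R^n$, and $\nu \circ h$ with an affine functional $\ell$ on $P$. The hypothesis that $\oD_{f,\bfi}$ contains no one-sided ray translates to $P$ having no recession direction, hence being compact.

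Once this dictionary is set up, the standard polytope argument applies: an affine functional on a compact convex polyhedron attains its maximum and minimum, and, if non-constant, does so only at vertices. The vertices of $P$ correspond (via Remark~\ref{intersect}) to extremal borders of $\oD_{f,\bfi}$, which by the preceding Lemma are exactly the extremal points of the component.

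I will also sketch an intrinsic supertropical version of the same argument that bypasses the embedding into $\mathbb R$, proceeding by induction on $k$. Given $\bfa_0 \in \oD_{f,\bfi}$ lying on a non-extremal $k$-border, pick distinct $\bfb, \bfc$ in that border with $\bfa_0$ between them; linearity gives $\nu(h(\bfa_0)) = \tfrac12\bigl(\nu(h(\bfb)) + \nu(h(\bfc))\bigr)$, so at least one of them, say $\bfb$, satisfies $\nu(h(\bfb)) \nuge \nu(h(\bfa_0))$. Extending the right ray $\rgm_{\bfa_0,\bfb}$, boundedness of $\oD_{f,\bfi}$ will force this ray to exit the component at a point $\bfa_1$ where an additional monomial inequality saturates; hence $\bfa_1$ lies on a $(k{+}1)$-border with $\nu(h(\bfa_1)) \nuge \nu(h(\bfb))$. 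Iterating terminates at an extremal border within finitely many steps, since $k$ is bounded by the number of monomials of $f$ (cf.~Remark~\ref{finitebord}), and the preceding Lemma identifies this border with an extremal point. Replacing right rays by left rays yields the $\nu$-minimum analogously. The hard part will be rigorously exhibiting the exit point $\bfa_1$: this amounts to solving finitely many linear inequalities within $\tG$, and is precisely where the archimedean and divisibly closed hypotheses on $F$ come into play.
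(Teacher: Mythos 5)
Your second, intrinsic sketch is essentially the paper's own proof: from a point on a non-extremal $k$-border, use linearity of $h$ along rational paths to move to a point on a $(k{+}1)$-border of no worse $\nu$-value, iterate, and let Remark~\ref{finitebord} bound the number of steps while boundedness and the preceding Lemma force the terminus to be an extremal point. The paper initiates the iteration slightly differently --- it picks a single $\bfb$ of strictly smaller $\nu$-value and follows the ray from $\bfa$ through $\bfb$ outward to the border, rather than your two-sided averaging --- and treats only the minimum, appealing to symmetry for the maximum; these are cosmetic variants. One small slip: the identity $\nu(h(\bfa_0))=\tfrac12\bigl(\nu(h(\bfb))+\nu(h(\bfc))\bigr)$ presumes $\bfa_0$ is the midpoint of $\bfb,\bfc$; in general you get the convex combination $t\,\nu(h(\bfb))+(1-t)\,\nu(h(\bfc))$, which still yields the one-sided inequality you need. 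Your first approach --- embed the archimedean ordered group $\tG$ into $\mathbb R$ by H\"older's theorem, identify $\oD_{f,\bfi}$ with the $\tG$-rational points of a compact rational polyhedron $P$, and quote the classical fact that an affine functional on $P$ attains its extrema at vertices --- is genuinely different from the paper and has a technical advantage: it sidesteps the one point where both your intrinsic sketch and the paper's proof are a little glib, namely the claim that the ray ``exits at a point of the component,'' which tacitly assumes the critical parameter is rational even though it arises as a ratio of elements of $\tG$. You are right to flag that as the delicate step. The embedding route avoids it, at the modest cost of checking that the vertices of $P$ are $\tG$-rational; since they solve integer-matrix systems (coming from the monomial exponents) with $\tG$-valued right-hand sides and $\tG$ is $\mathbb Q$-divisible, this does go through, so the classical extremal vertices really do correspond, via Remark~\ref{intersect} and the preceding Lemma, to extremal points of $\oD_{f,\bfi}$.
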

\begin{proof}  By symmetry, we need only look for the $\nu$-minimal values on $\oD_{f,\bfi}. $
We claim that for any $\bfa$ in $D_{f,\bfi}$ there is a  point $\bfa'$ on some border of $\oD_{f,\bfi}$
such that $h(\bfa ') < _\nu h(\bfa)$. Indeed, take any point  $\bfb \in D_{f,\bfi} $
with  $h(\bfb) <  _\nu  h(\bfa).$ The path connecting $\bfa$ and $\bfb$ lies in the same
component, and continuing further along the ray until the border produces a
smaller value of $h$.

We continue with the same argument applied to
this border, showing that if $\bfa$ lies in a $k$-border, there is a point $\bfa'$ in a $k+1$-border with  $h(\bfb) <  _\nu  h(\bfa),$ unless the $k$-border is extremal. Thus,
 we reach an extremal border (since the number
of times we can apply this argument is at most the number of
monomials in $f$). By definition, this cannot contain a ray, so is
an extremal point, and since there are only finitely many extremal
points, the $\nu$-minimal value of $h$ on all of these must be the
$\nu$-minimal value on $\oD_{f,\bfi}$.
\end{proof}

Since Remark~\ref{finitebord} shows there are only finitely many
extremal points, we now see that the minimal and maximal
$\nu$-values of a monomial on the various bounded components are
all obtained at finitely many points. This is the key to our proof
of the Nullstellensatz below.

\begin{defn}\label{epst1}
Notation as above, we write $f \preceq _{D_{f,\bfi}} g$ if
$g(D_{f,\bfi}) \subseteq \tT;$ we write $f \epsc S$ for $S
\subseteq F[\lm_1, \dots, \lm_n],$ if for every essential monomial
$f_\bfi$   of $f$ there is some $g\in S$ (depending on
$D_{f,\bfi}$) with $f \preceq _{D_{f,\bfi}} g$.
\end{defn}

\begin{rem}\label{someex} $ $
\begin{enumerate}
\item The use of components is more precise than merely
considering tangible root sets. For example, the tangible root set
of the ideal $A$ of $F[\la]$  generated by $\la +2$ and $\la +3$
is $\{ 2 \} \cap \{ 3 \} = \emptyset.$ However, the constant
polynomial $1$ has no tangible roots, but does not belong to $A$.
On the other hand,   the   component of $1$   is all of $\tT$, and
is not contained in  the   component of any element of $A$, so $1
\not \epsc A.$

This example also shows that $\tI(Z_1 \cap Z_2)$ can be larger
than the radical of the ghost-closed ideal generated by $\tI(Z_1)$
and $\tI(Z_2)$.

\item Sometimes polynomials behave differently when viewed as
Laurent polynomials. Consider the polynomials $f =\la_1+1$ and $g
=(\la_1+1)\la_2$ in $F[\lm_1,\lm_2]$ .  Then $g(a,\fzero) =
\fzero$ whereas $f(a,\fzero) = a+1\ne \fzero,$ and $f \npreceq
_{D_{f,\bfi}} g$. But as Laurent polynomials, $f$ and $g$ take on
precisely the same tangible roots, so $f \preceq _{D_{f,\bfi}} g$.

 Nevertheless, Laurent
polynomials are useful in proving the Nullstellensatz for ordinary
polynomials.
\end{enumerate}
\end{rem}

Let us record some information about the components of a
polynomial.

\begin{rem} For any polynomials $g,h$, any  component of
$g$ contains a component of $gh$, since $\mathcal Z(g) \subseteq
\mathcal Z(gh).$\end{rem}

\begin{rem}\label{dommon} Any polynomial $f$ has a unique dominant monomial on any component $D$ of
~$f$. (Otherwise, there are two dominant monomials $g$ and $h$ of
$f$ on part of $D$, and $\{ \bfa \in \tT : g(\bfa) = h(\bfa)\} $
is contained in $D \cap Z(f) = \emptyset$, a
contradiction.)\end{rem}

\begin{prop}\label{dom2} If $f = \sum g_j$ for polynomials $g_j$,
then any component $D$ of $f$ is contained in some component of
some $g_j$; i.e., $f \preceq _D g_j.$
\end{prop}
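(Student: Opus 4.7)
The plan is to pin down a single index $j_0$ (depending on $f$ and the chosen component $D = D_{f,\bfi}$, but not on individual points of $D$) such that $g_{j_0}(\bfa)$ is tangible for every $\bfa \in D$. Two applications of supertropicality at different scales will do the work: the first extracts $j_0$ from the coefficient sum producing $f_\bfi$, and the second forces strict $\nu$-dominance of one specific monomial of $g_{j_0}$ throughout all of $D$.

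First I would decompose each $g_j = \sum_{\bfi'} c_{j,\bfi'} \Lambda^{\bfi'}$ as its formal sum of monomials. Let $f_\bfi = \a_\bfi \Lambda^\bfi$ with $\a_\bfi \in \tT$ denote the dominant monomial on $D$; it is tangible by the very definition of $D_{f,\bfi}$. Collecting coefficients in the identity $f = \sum_j g_j$, the coefficient of $\Lambda^\bfi$ in $f$ equals $\sum_j c_{j,\bfi}$, and this element of $F$ is $\equiv \a_\bfi$. Because $\a_\bfi$ is tangible, clause (b) of Definition~\ref{def:superTropical} forces this sum to have a unique strictly $\nu$-maximal summand which is itself tangible. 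Thus there is a unique $j_0$ with $c_{j_0,\bfi} \in \tT$ and $c_{j_0,\bfi} \nug c_{j,\bfi}$ for every other $j$ with $c_{j,\bfi} \ne \fzero$. Crucially, $j_0$ is determined by $f$ and $\bfi$ alone, \emph{independently of any particular $\bfa \in D$}.

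Next, I would fix an arbitrary $\bfa \in D$ and argue that within $g_{j_0}$, the monomial $h_{j_0,\bfi}(\bfa) = c_{j_0,\bfi}\bfa^\bfi$ strictly $\nu$-dominates every other monomial $h_{j_0,\bfi'}(\bfa)$, $\bfi' \ne \bfi$. For each exponent $\bfi'$ appearing in $f$, write $f^{\bfi'} := (\sum_j c_{j,\bfi'})\Lambda^{\bfi'}$. Since $\bfa \in D$ means $f(\bfa) = f_\bfi(\bfa) \in \tT$, applying supertropicality to the sum $f(\bfa) = \sum_{\bfi'} f^{\bfi'}(\bfa)$ yields $f_\bfi(\bfa) \nug f^{\bfi'}(\bfa)$ for all $\bfi' \ne \bfi$. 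Combining with the obvious inequality $\nu(f^{\bfi'}(\bfa)) \ge \nu(h_{j_0,\bfi'}(\bfa))$, one obtains $h_{j_0,\bfi}(\bfa) \equiv f_\bfi(\bfa) \nug h_{j_0,\bfi'}(\bfa)$. A final application of supertropicality inside $g_{j_0}$ then gives $g_{j_0}(\bfa) = \sum_{\bfi'} h_{j_0,\bfi'}(\bfa) = h_{j_0,\bfi}(\bfa) \in \tT$, so $g_{j_0}(D) \subseteq \tT$, i.e., $f \preceq_D g_{j_0}$.

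The main subtlety to verify carefully is that strict $\nu$-dominance of $f_\bfi$ extends to \emph{every} formal monomial $f^{\bfi'}$ appearing in $f$, not merely the essential ones. This is exactly where tangibility of $f(\bfa)$ on $D$ is indispensable: supertropicality converts tangibility of a sum into strict uniqueness of its $\nu$-maximal summand, and it is this uniqueness (rather than essentiality) that propagates down to the individual monomials of $g_{j_0}$. Once this point is in place, the argument requires no connectedness, convexity, or topological input — the identification of $j_0$ is purely algebraic.
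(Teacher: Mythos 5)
Your proof is correct and takes essentially the same route as the paper's: identify the unique $g_{j_0}$ whose $\bfi$-monomial coincides with the dominant monomial $h$ of $f$ on $D$, then verify that $h$ dominates within $g_{j_0}$ on all of $D$. The paper states the first step tersely as ``$h$ is a monomial of some $g_j$''; your supertropicality argument on the coefficient sum $\sum_j c_{j,\bfi} = \a_\bfi$ (forcing a unique tangible strictly $\nu$-maximal summand) is precisely the justification that assertion elides.
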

\begin{proof} Suppose $\bfa \in D,$ and $h$ is the dominant monomial
of $f$ on $D$. Then $h$ is a monomial of some $g_j$. By
Remark~\ref{dommon}, $h$ is the dominant monomial of $f $ on all
of $D$, and thus is the dominant monomial of $g_j$ on all of $D$.
 Hence the
 component of $g_j$ contains $D$.
\end{proof}

\begin{cor} If, for some $k\ge 1,$ the polynomial $f^k$ belongs to the ideal $A$ generated by the set of polynomials $S = \{g_i : i \in I \},$
 then $f \epsc S.$\end{cor}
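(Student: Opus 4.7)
The plan is to peel the required witness for $f \epsc S$ directly out of an equation expressing $f^k \in A$. Fix a component $D := D_{f,\bfi}$ of $f$; we must produce some $g \in S$ with $g(D) \subseteq \tT$. Since $A$ is the ideal generated by $S$, we may write
\[
f^k \;=\; \sum_j h_j\, g_{i_j}, \qquad h_j \in F[\Lambda],\ g_{i_j} \in S.
\]

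Next, I would identify $D$ as a component of $f^k$ itself. Writing $f = \sum_\bfj f_\bfj$, Corollary~\ref{Prop:powOfPols} gives $f^k = \sum_\bfj f_\bfj^k$ as functions, and by the remark $D_{f,\bfi} = D_{f^k, k\bfi}$. Thus on $D$ the dominant monomial of $f^k$ is $f_\bfi^k$, a tangible monomial. Now apply Proposition~\ref{dom2} to the representation $f^k = \sum_j h_j g_{i_j}$: the component $D$ of $f^k$ must be contained in some component of some $h_j g_{i_j}$; in particular $h_j g_{i_j}$ takes only tangible values on $D$, i.e., $(h_j g_{i_j})(D) \subseteq \tT$.

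Finally, I would pass from the product $h_j g_{i_j}$ to the single factor $g_{i_j}$. Since $\tGz$ is an ideal of $F$, whenever $h_j(\bfa) \in \tGz$ or $g_{i_j}(\bfa) \in \tGz$ the product $h_j(\bfa) g_{i_j}(\bfa)$ would lie in $\tGz$; so tangibility of the product on $D$ forces $g_{i_j}(\bfa) \in \tT$ for every $\bfa \in D$. This is exactly $f \preceq_{D_{f,\bfi}} g_{i_j}$. As $D_{f,\bfi}$ was an arbitrary (essential) component of $f$, we conclude $f \epsc S$.

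The main obstacle, if any, is the bookkeeping step of certifying that $D_{f,\bfi}$ really is a component of $f^k$ on which the dominant monomial is $f_\bfi^k$—this is what allows Proposition~\ref{dom2} to be applied to the sum decomposition of $f^k$ rather than to $f$ itself (which may not lie in $A$ at all). Once that identification is made, the rest is immediate from the ideal property of $\tGz$.
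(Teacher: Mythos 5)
Your proof is correct and takes essentially the same route as the paper's: write $f^k$ as a combination of the $g_i$'s, invoke Proposition~\ref{dom2} to place each component $D$ of $f$ (equivalently of $f^k$, since $D_{f,\bfi}=D_{f^k,k\bfi}$) inside a component of some summand $h_jg_{i_j}$, and then use the ideal property of $\tGz$ to drop the factor $h_j$. The only superficial difference is that the paper first reduces to $k=1$ via this same component identification and then works with $f=\sum q_ig_i$, whereas you keep $f^k$ throughout; that is a cosmetic rearrangement of the same steps.
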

\begin{proof} Since $f$ and $f^k$ have the same components, we may
assume that $k = 1.$ Write $f = \sum q_i g_i.$ At each component
$D$ of $f$ we have suitable $j$ such that $f\preceq _D q_j g_j$,
so $f \preceq _D g_j;$  we conclude that  $f \epsc S.$\end{proof}

Our goal is to prove the following theorem:

\begin{thm}\label{Null2} \textbf{(Supertropical Nullstellensatz)}
Suppose $F$ is a divisibly closed, archimedean, supertropical semifield,  $A
\triangleleft F[\Lambda],$ and $f \in F[\Lambda].$ Then $$f \epsc
A \quad \text{ iff } \quad  f \in \tsqrt{A}.$$
\end{thm}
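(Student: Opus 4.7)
The proof splits cleanly into two directions, one essentially formal and one requiring the component machinery developed in the preceding section.

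For the direction $f \in \tsqrt{A}\Rightarrow f\epsc A$: using the identity $\tsqrt{A}=\sqrt{A+\tGz[\Lambda]}$ from Remark~\ref{surprise1}, the assumption yields some $k\ge1$ with $f^{k}=g+h$ for $g\in A$ and $h\in\tGz[\Lambda]$. Fix a tangible component $D$ of $f$; since $D_{f}=D_{f^{k}}$, for every $\bfa\in D$ we have $f^{k}(\bfa)\in\tT$ while $h(\bfa)\in\tGz$, so bipotence forces $g(\bfa)=f^{k}(\bfa)\nug h(\bfa)$, whence $g(\bfa)\in\tT$. Thus $g\in A$ witnesses $f\preceq_{D}g$ on every component, i.e.\ $f\epsc A$.

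For the harder direction $f\epsc A\Rightarrow f\in\tsqrt{A}$, the plan is to produce some $k$ and some $G\in A$, $h\in\tGz[\Lambda]$ with $f^{k}=qG+h$. First I would pass to $F[\Lambda,\Lambda^{-1}]$ via Proposition~\ref{Laurentembed}, since the components and tangible root sets are preserved there (and $\tGz[\Lambda]$ embeds into $\tGz[\Lambda,\Lambda^{-1}]$), and a final clearing of denominators by a tangible monomial brings the conclusion back into $F[\Lambda]$ by Remark~\ref{cancel5}. Let the essential monomials of $f$ be $f_{\bfi_{1}},\dots,f_{\bfi_{m}}$ with components $D_{1},\dots,D_{m}$; by hypothesis pick $g_{s}\in A$ tangible on $D_{s}$, and let $g_{s,\bfj(s)}$ denote the dominant Laurent monomial of $g_{s}$ on $D_{s}$.

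The construction is to form $G:=\sum_{s=1}^{m} q_{s}g_{s}\in A$, where each $q_{s}$ is a tangible Laurent monomial chosen so that $q_{s}g_{s,\bfj(s)}=f_{\bfi_{s}}^{k}$ for a common large exponent $k$. (Such $k$ and $q_{s}$ exist because $F$ is divisibly closed and $\clF$-cancellation lets us invert monomials.) I would then show $f^{k}\lmodgLa G$ by checking on each closed component $\oD_{s}$: the term $q_{s}g_{s}$ agrees as a function with $f_{\bfi_{s}}^{k}=f^{k}$ on $D_{s}$, so $f^{k}+G\eqR G$ on $D_{s}$ up to ghost contributions coming from the other summands $q_{t}g_{t}$ ($t\ne s$). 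Proving that these cross‑terms are dominated up to ghost is the crux, and this is where the archimedean hypothesis and the analysis of extremal border points enter: by the proposition before Definition~\ref{epst1}, any Laurent monomial attains its $\nu$-maximum on a bounded closed component at one of the finitely many extremal points, and on each such extremal point two or more of the dominant monomials $f_{\bfi_{s}}$ coincide in $\nu$-value, forcing the corresponding cross-terms $q_{t}g_{t}$ to be $\nu$-comparable to $q_{s}g_{s}$ and produce a ghost by supertropicality. One then tunes $k$ upward so that on unbounded components the leading direction of $f_{\bfi_{s}}^{k}$ out-grows every cross-term $q_{t}g_{t}$ there (using archimedeanness and Remark~\ref{dommon}).

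The principal obstacle is this last simultaneous comparison: one must balance the exponent $k$ and the monomial weights $q_{s}$ so that on \emph{every} component the targeted summand $q_{s}g_{s}$ dominates or $\nu$-ties all other $q_{t}g_{t}$. The bounded-component case reduces to a finite check at extremal points (Remark~\ref{finitebord}), while the unbounded case reduces to comparing leading exponents along rays, where Lemma~\ref{convex1} guarantees that $\nu$-inequalities of monomials propagate along paths. Once this is arranged, supertropicality converts every $\nu$-tie into a ghost, so $f^{k}-qG\in\tGz[\Lambda,\Lambda^{-1}]$ in the functional sense, giving $f^{k}\in A+\tGz[\Lambda,\Lambda^{-1}]$ and hence $f\in\tsqrt{A}$ after clearing denominators.
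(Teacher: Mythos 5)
Your overall strategy mirrors the paper's: pass to the Laurent semiring, build a single $G=\sum_s q_sg_s\in A$ whose dominant term on each component $D_s$ matches $f^k$ there, tune the exponent $k$ using the archimedean and divisible-closure hypotheses, and conclude $f^k\lmodgLa G$. Your easy direction ($f\in\tsqrt{A}\Rightarrow f\epsc A$) is essentially the corollary to Proposition~\ref{dom2} with the ghost summand carried along explicitly, which is fine.

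The gap is exactly at the crux you flag. You must show that, for every component $D_s$ and every $t\ne s$, the summand $q_sg_s$ \emph{strictly} dominates $q_tg_t$ on $D_s$: a mere $\nu$-tie would make $G$ ghost in the interior of $D_s$ where $f^k$ is tangible, so $f^k\eqR G+h$ would fail. Your bounded/unbounded dichotomy does not deliver this. Checking the finitely many extremal points of a bounded $\oD_s$ only tells you that the two \emph{normalized dominant} monomials $q_sg_{s,\bfj(s)}$ and $q_tg_{t,\bfj(t)}$ are $\nu$-comparable there; it says nothing about the \emph{other} monomials of $g_t$, which may govern the value of $q_tg_t$ in the interior of $D_s$ and grow faster than $f_{\bfi_s}^k$ along unbounded directions. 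The paper replaces your dichotomy with neighboring versus non-neighboring components and does the work in two substeps: a local box argument (Step~2), which produces an $m_0$ making $f_{D'}^m$ outgrow the dominant monomial $h_{D'}$ of $g_D$ as soon as one steps a fixed $\nu$-distance from the border into a neighboring component $D'$; then a path argument through ``good'' and ``bad'' components (Step~3), using Lemma~\ref{convex1} to propagate the strict inequality from neighbors to all components by traversing a path from $D$ that crosses one border at a time. That propagation, not the extremal-point finiteness you invoke, is the decisive mechanism, and it is missing from your sketch.

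You also omit the non-tangible case, the paper's Step~5. When $f$ has essential ghost monomials, some components of the tangible $\hat f$ having the same $\nu$-values as $f$ are ``fictitious'' (already contained in $\tZ_{\tng}(f)$ because of the ghost coefficients), and the sum $G$ must be taken only over the ``true'' components of $f$, with the remaining ghost contributions absorbed by $\lmodgLa$. Without that reduction, your claim $f^k\lmodgLa G$ is established only for essential-tangible $f$.
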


The rest of this section is devoted to the proof of the Theorem.
Since multiplying a polynomial $f$ by any element of $\tT$ does
not affect the root set $\tZ(f)$, and also does not affect whether
or not $f$ belongs to a given ideal, we often will replace a
polynomial by a scalar multiple.

\subsection{Proof of Theorem~\ref{Null2}}

In view of Proposition~\ref{Laurentembed}, we can embed
$F[\Lambda]= F[\la_1, \dots, \la _n]$ into $F[\Lambda,
\Lambda^{-1}] = F[\la_1,\la_1^{-1} \dots, \la _n,\la_n^{-1}].$ The
interplay between polynomials and Laurent polynomials is quite
useful, since it enables us to divide by monomials. The proof of
the Nullstellensatz is attained according to the following
sequence of steps, writing $f = \sum f_\bfi,$ a sum of monomials:

\begin{itemize}
\item [Step 1.] Take polynomials $g_{D_{\bfi}}\in A$ such that
$f\preceq _{D_{\bfi}} g_{D_{\bfi}}$, and modify them such that
$f_\bfi^m =  g_{D_{\bfi}}$ on $D_{\bfi}$, for each~$\bfi$.

\item [Step 2.] Increasing $m$ if necessary, one may assume for
every component $\bfi$ and all large enough $m$ that $f_{\bfi'}^m $ strictly dominates $
g_{D_{\bfi}}f_{\bfi}^{m-1}$ on $D_{\bfi'}$, for each component
$D_{\bfi'}$ neighboring $D_\bfi$.

\item [Step 3.] Increasing $m$ if necessary, one may assume for
$f$ tangible and all large enough $m$ that $f_{\bfi'}^m $  also strictly dominates
$g_{D_{\bfi}}f_{\bfi}^{m-1}$ on all components $D_{\bfi'}$
 not neighboring $D_\bfi$ (with respect to the same~$m$).

\item [Step 4.] Steps 2 and 3 imply for $f$ tangible that $f^m
\lmodgLa \sum g_{D_{\bfi}}f_{\bfi}^{m-1}$, where the sum is taken
over the components of the tangible essential monomials of $f$.
This means $f \in \tsqrt{\sum _\bfi F[\Lambda] g_{D_{\bfi}}}.$

 \item [Step 5.]
For general $f$, one may assume   that $f_{\bfi'}^m  \lmodgLa
g_{D_{\bfi}}f_{\bfi}^{m-1}$ on all components $D_{\bfi'}$ of the
essential-tangible part of $f$.

\item [Step 6.] Step 5 implies $f^m \lmodgLa \sum
g_{D_{\bfi}}f_{\bfi}^{m-1}$, where the sum is taken over the
components of the tangible essential monomials of $f$. This means
$f \in \tsqrt{\sum _\bfi F[\Lambda] g_{D_{\bfi}}}.$

\end{itemize}

There is a  version of the Nullstellensatz, with somewhat easier
proof, using the Laurent polynomial semiring. Although the Laurent
version is a bit different from the polynomial version, as seen in
Example~\ref{someex}, we end this section with
Proposition~\ref{reduct1}, which could be used to link the two
versions.

\subsubsection*{Step 1 of the proof}

The idea is to match some power $f^m$ at each component with some
polynomial of the ideal $A$.

\begin{lem}\label{leadingcomp} Suppose  $D
= D_{f,  \bfi}$, $f \preceq _{D} g$ in $F[\Lambda],$ and $\la_j $
divides $ g.$ Then $\la_j $ divides $f_\bfi.$\end{lem}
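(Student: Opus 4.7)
The plan is to argue by contradiction: assume $\la_j \nmid f_\bfi$ and produce a point $\bfa' \in D := D_{f,\bfi}$ at which $g$ vanishes, which will violate the hypothesis $g(D) \subseteq \tT$ from Definition~\ref{epst1}.

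First, I would pick any $\bfa = (a_1, \dots, a_n) \in D$ and form $\bfa'$ from $\bfa$ by replacing the $j$-th coordinate with $\rzero$. The decisive observation is that, because $\la_j$ does not occur in the monomial $f_\bfi$, the value $f_\bfi(\bfa') = f_\bfi(\bfa)$ is unchanged; in particular it remains tangible.

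Second, I would verify that $\bfa' \in D$. Split the remaining monomials $f_\bfk$ ($\bfk \ne \bfi$) of $f$ into two classes: those with $\la_j \mid f_\bfk$, which satisfy $f_\bfk(\bfa') = \rzero$ by Remark~\ref{tropprop}(iv); and those with $\la_j \nmid f_\bfk$, which give $f_\bfk(\bfa') = f_\bfk(\bfa)$. In either case, since $f_\bfi$ strictly $\nu$-dominated all of these at $\bfa$ (as $\bfa \in D_{f,\bfi}$), it still strictly $\nu$-dominates them at $\bfa'$. Hence $f(\bfa') = f_\bfi(\bfa') \in \tT$, so $\bfa' \in D_{f,\bfi}$.

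Third, I would use the hypothesis $\la_j \mid g$ to write $g = \la_j h$; evaluating at $\bfa'$ yields $g(\bfa') = \rzero \TrP h(\bfa') = \rzero$, which is not in $\tT$. This contradicts $f \preceq_D g$, forcing $\la_j \mid f_\bfi$.

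The main obstacle is the bookkeeping in the second step — confirming that zeroing out a coordinate keeps us inside the component $D_{f,\bfi}$. This relies crucially on the assumption $\la_j \nmid f_\bfi$ (so that $f_\bfi$'s value is preserved under the modification), and on the convention that $D_{f,\bfi} \subseteq \tT_\zero^{(n)}$, which permits the coordinate $\rzero$. One must also keep in mind the distinction between $\tT$ and $\tT \cup \{\rzero\}$ in the definition of $\preceq_D$: the requirement $g(D) \subseteq \tT$ (strict tangibility, excluding $\rzero$) is precisely what makes the final contradiction land.
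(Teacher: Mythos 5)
Your proof is correct and matches the paper's proof: both specialize the $j$-th coordinate to $\rzero$, observe that $f_\bfi$ is unaffected (since $\la_j \nmid f_\bfi$) so the modified point remains in the component $D_{f,\bfi}$, and then derive a contradiction from $g$ vanishing there. Your case analysis on the remaining monomials is slightly more explicit than the paper's one-line observation $f(\bfb) \le_\nu f(\bfa) = f_\bfi(\bfa) = f_\bfi(\bfb)$, but it is the same argument.
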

\begin{proof} Suppose $\la _j$ does not divide $f_\bfi$, and $\bfa = (a_1, \dots, a_n) \in D .$ Let
$\bfb$ be the point obtained by specializing $a_j \mapsto \fzero.$
Then $f(\bfb)\le_\nu  f(\bfa)  = f_\bfi(\bfa),$ but $f_\bfi(\bfb)
= f_\bfi(\bfa)$, implying $\bfb \in  {D }.$ On the other hand,
$g(\bfb) = \fzero,$ contrary to hypothesis.
\end{proof}

For each component $D_{f,  \bfi}$ we choose a polynomial $
g_{D_{f, \bfi}} \in A$ for which $f \preceq _{D_{f, \bfi}}
g_{D_{f, \bfi}}$. Write $h_\bfi$ for the dominant monomial of
$g_{D_{f, \bfi}}$ on $D_{f, \bfi}$.
 By the lemma, taking $m$ large enough, we have $\deg_j f_\bfi ^m
> \deg _j g$ for each $j$, so $f_\bfi ^m = q_\bfi h_\bfi$ for a suitable monomial
$q_\bfi $. Replacing $g_{D_{f, \bfi}}$ by $q_\bfi g_{D_{f, \bfi}},$ we may
assume that $q_\bfi = \one,$ and $f_\bfi ^m = g_{D_{f, \bfi}}.$ This
achieves Step 1.

\subsubsection*{Step 2 of the proof}

Before proceeding to Step 2 -- Step 5, we note that Step 4 follows
formally from Steps~2 and 3, whereas Step 6 follows formally from
Step 5. Since we may replace $f $ by $f^m = \sum f_\bfi^m$ without
affecting the outcome of the theorem, we thus may assume that
$f_\bfi = h_\bfi$ (notation as in the proof of Step 1); in other
words, the leading monomials of $f$ and $g_{D_{f, \bfi}}$ on
$D_{f, \bfi}$ are the same. It is convenient at this stage to move
to the Laurent polynomial ring, replacing $f,$ $g_{D_{f, \bfi}}$
respectively by
$$ \frac f {f_\bfi},\quad \frac {g_{D_{f, \bfi}}}{f_\bfi}.$$
Thus, we assume that $f_\bfi = h_\bfi = \fone.$

We aim to verify Steps 2 and 5 for all sufficiently large $m$;
since there are only finitely many components, this means that we
need only prove these steps for a given single component $\bfi$,
which we fix for the remainder of the proof. This simplifies the
notation, since we can write $D = D_{f,\bfi},$ and $g = g_D.$ For
each component $D'$, we write $f_{D'}$ for the leading component
of $f$ and $h_{D'}$ for the leading component of $g_{D'}$. Thus
$f_D = h_{D} = \fone,$ which strictly dominates each $f_{D'}$ and
each $h_{D'}$ on~$D$.

\begin{figure}
\begin{minipage}{0.6\textwidth}
\begin{picture}(0,160)(60,70)
\includegraphics[width=4.0 in]{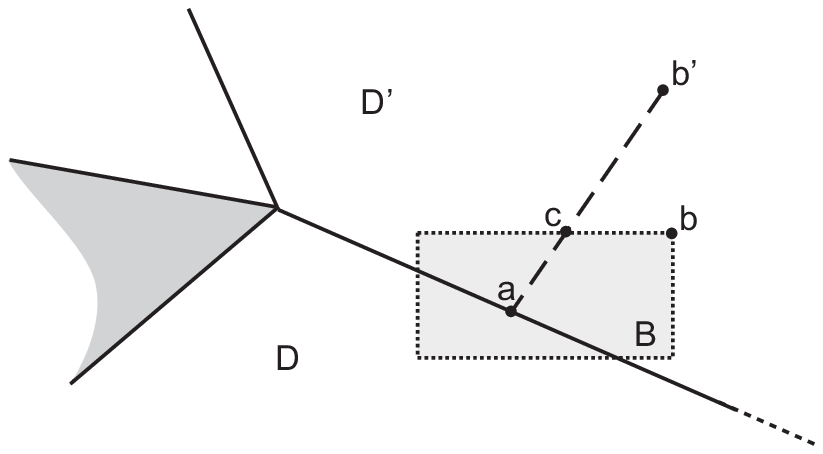}
\end{picture}
\end{minipage}

\caption{\label{fig:Box} }
\end{figure}

We need $m_0$ such that $f_{D'}^m$ strictly dominates  $h_{D'}$ on
$D'$ for all $m\ge m_0$, for each component $D'$ neighboring $D$.
We pick a point $\bfa$ on the border, and take a small enough
closed $\bfa$-box $B$, cf.~Definition~\ref{box}, such
that its extremal points lie in $D' \cup D$. Hence, some extremal
point $\bfb$ of the box $B$ lies in $D'$. By definition, $$
f_{D'}(\bfa) = f_D(\bfa) = \fone = h_D(\bfa)\ge_\nu
h_{D'}(\bfa),$$ whereas
$$f_{D'}(\bfb) >_\nu f_D(\bfb) = \fone.$$
Thus, since $F$ is archimedean, there is $m_0$ such that
\begin{equation}\label{boxed} f_{D'}(\bfb)^{m}
>_\nu h_{D'}(\bfb)\end{equation} for all $m \ge m_0$. Since there are only
finitely many extremal points on the box $B$, we may assume that
\eqref{boxed} holds for all extremal points of $B$ and thus, by
Lemma~\ref{convex1}, for all points in   $B \cap \overline{D'}$.

But now for any point $\bfb' \in D',$ the path from $\bfa$ to
$\bfb'$ passes through $B \cap \overline{D'}$. It starts at
$\bfa,$ where $ f_{D'}(\bfa)^m = \fone = h_{D'}(\bfa),$ and then
passes through some point $\bfc$ where $f_{D'}(\bfc)^{m}
>_\nu h_{D'}(\bfc)$ (see Figure \ref{fig:Box}), so applying Lemma~\ref{convex1} to $\frac
{f_{D'} ^{m}}{h_{D'}},$ we also have \eqref{boxed} at the point
$\bfb',$ as desired.

\subsubsection*{Step 3 of the proof}

This is the subtlest part of the proof. Pick a point $\bfa \in D.$
Then for every monomial $f_{\bfi'} \ne f_\bfi,$
$$f_{\bfi'}(\bfa) < _\nu  f_{D}(\bfa) = \fone,$$ so picking $m$ large
enough, we have \begin{equation}\label{xed} f_{\bfi'}(\bfa)^m <_\nu
h(\bfa)\end{equation} for {\it every} monomial $h$ of $g.$
Furthermore, since there are only finitely many components, we may
assume that \eqref{xed} holds for every monomial $f_{\bfi'}$ of
$f$ (other than $f_\bfi$).

Now we fix $m$ for the remainder of the proof, and consider the
polynomial $\tilde f = f^m +g.$ We take the components with
respect to $\tilde f$; this is just a subdivision of the
components with respect to $f$. We call a component $D_{\bfi'}$
\textbf{good} if $f_{\bfi'}^m $ strictly dominates the polynomial
$g_{D_{\bfi}}f_{\bfi}^{m-1}$ on $D_{\bfi'}$. Otherwise, we say
that  $D_{\bfi'}$  is \textbf{bad}, which means
$g_{D_{\bfi}}f_{\bfi}^{m-1}$ dominates $f_{\bfi'}^m $ on
$D_{\bfi'}$.

Our aim is to prove that all components are good; we assume that
some component $D'$ is bad and reach a contradiction.  Let $L$ be
the set of good components. Take a point $\bfa$ in $D$, and a
point $\bfb $ in $D$; adjusting them if necessary, we may assume
that the path $p$ from $\bfa$ to $\bfb$ passes only from neighbor
to neighbor. (In other words, any point on $p$ not lying inside a
component lies on the common border of two neighbors.)

By definition, $D$ and its neighboring components lie in $L$; we
take the first bad component traversed by our path $p$, and
clearly may replace this component by $D'$. Thus, we may assume
that $p$ contains a point $\bfb'$ on the border of $D'$ with some
component of $L$, which we call $D''$. Say $f_{\bfi'}$ is the
dominant monomial of $f$ on $D'$, and  $f_{\bfi''}$ is the
dominant monomial of $f$ on $D''$. Hence $f_{\bfi'}(\bfb') =
f_{\bfi''}(\bfb')$ since it lies on the border.

We know that $$f_{\bfi'}^m (\bfa)<_\nu
g_{D_{\bfi}}(\bfa)f_{\bfi}^{m-1}(\bfa).$$ By hypothesis, since
$D'$ is bad,
$$f_{\bfi'}^m (\bfb) \le_\nu
g_{D_{\bfi}}(\bfa)f_{\bfi}^{m-1}(\bfa).$$ On the other hand, since $D''$
is good,
$$f_{\bfi'}^m (\bfb') = f_{\bfi''}^m(\bfb') \ge_\nu
g_{D_{\bfi}}(\bfa)f_{\bfi}^{m-1}(\bfa).$$

Since $\bfb'$ lies between   $\bfa$ and $\bfb$ on the path $p$, we
have a contradiction to Lemma~\ref{convex1}.

\subsubsection*{Step 5 of the proof}  Let $\hat f$
denote the tangible polynomial having the same $\nu$-value as $f$.
We relabel the components of $\hat f$   as $D_1, \dots, D_q.$ Some
of these remain as components of $\tZ_{\operatorname{tan}} (f)$;
we call these components ``true''. Other components are in the
root set of $f$ (because of its extra ghost coefficients) and thus
belong to $Z_{\operatorname{tan}} (f)$; and
 we call these components ``fictitious.'' For each true
component $D_{f,  \bfi},$ take a polynomial $g_{D_{f,  \bfi}}\in
A$ with a component   containing $D_{f,  \bfi},$ and let $\bar f$
be the (tangible) sum of these monomials $ f_ \bfi,$ from the true
components. Applying Step 3 to $\bar f,$ we see that $f_{\bfi'}^m
\lmodgLa \hat f_{\bfi'}^m $ strictly dominates
$g_{D_{\bfi}}f_{\bfi}^{m-1}$ on all true components $D_{\bfi'}$.

\subsubsection*{Step 6 of the proof} This is formal: Clearly $f^m \lmodgLa \bar f^m = \sum
g_{D_{\bfi}}f_{\bfi}^{m-1}$ (summed on the true components). This
concludes the proof of the Nullstellensatz.

\begin{example}\label{exdiv} Let $R = F[\lambda]$, and  consider the polynomial $f = \la^2 + 6 ^\nu \la +7$, whose tangible root set is the
interval $[1,6]$.
\begin{enumerate} \eroman
    \item If $g = \la  + 4,$ whose tangible root set is $\{ 4 \},$ then
$$f  = (\la + 3)g + 6^\nu \la,$$  so $f\lmodgLa (\la + 3)g $. \pSkip

    \item $g = \la^2 + 4 ^\nu \la +6$, whose tangible root set is the
interval $[2,4]$, then $$f^2 =  \la^4 + 6 ^\nu \la^3 + 12^\nu
\la^2 +13^\nu \la + 14$$ and $(\la^2+8)g  = \la^4 +4^\nu \la^3 + 8
\la^2 + 12^\nu \la + 14,$ implying $$f^2  \lmodgLa (\la^2+8)g .$$
\end{enumerate}

\end{example}

\begin{example}\label{quad1} Generalizing Example~\ref{exdiv}, suppose $f = \la^2 + a_2 ^\nu \la
+a_1a_2,$ for $a_1, a_2$ tangible. Then, for $a$ tangible, $\la
+a$ supertropically divides $f$ iff $a_1 \nule a  \nule a_2 .$
Indeed, suppose $f \lmodgLa (\la +a)q .$ Then $q = \lm + b$,
where, comparing constant terms, we see that $a b\nucong a_1a_2.$
Now matching the coefficients of $\la$ shows that $\max\{b^\nu,
a^\nu\} \le a_2^\nu,$ and thus $\min\{b^\nu, a^\nu\} \ge
a_1^\nu.$\end{example}

 \subsubsection{An explicit connection to the Laurent polynomial
semiring}

The following result links the Nullstellensatz to Laurent
polynomials, and could be used to provide an alternate proof.

\begin{prop}\label{reduct1} Suppose $hf^m =g = \sum _\bfi h_\bfi g_\bfi,$ for some
monomial $h$ and some $m$, where $g_\bfi$ are polynomials such
that
 $f \preceq _{D_\bfi} g_\bfi$ in $F[\Lambda],$ for each $\bfi$.
 Then there is
some $m'
>m$ for which $f^{m'} \in \sum _\bfi F[\Lambda] g_\bfi.$ \end{prop}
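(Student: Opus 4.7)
The plan is to combine the supertropical ``Frobenius'' identity (Corollary~\ref{Prop:powOfPols}) with a Laurent polynomial descent, using the hypothesis $f \preceq_{D_\bfi} g_\bfi$ to absorb the monomial $h$. First, I apply Corollary~\ref{Prop:powOfPols} to the given equation, raising both sides to the $k$th power for any $k \in \Net$:
$$h^k f^{mk} \;=\; (hf^m)^k \;=\; \Bigl(\sum_\bfi h_\bfi g_\bfi\Bigr)^k \;=\; \sum_\bfi h_\bfi^k g_\bfi^k \;\in\; \sum_\bfi F[\Lambda]\,g_\bfi.$$
Thus $h^k f^{mk}$ already lies in the target ideal for every $k$, and the problem reduces to stripping off the monomial factor $h^k$ at the cost of replacing $f^{mk}$ by some higher power $f^{mk+N}$.

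Second, I work in the Laurent polynomial semiring $F[\Lambda,\Lambda^{-1}]$, into which $F[\Lambda]$ embeds by Proposition~\ref{Laurentembed}; here the tangible monomial $h$ is invertible. Multiplying the displayed equation by $f^N$ and dividing by $h^k$ yields the functional identity on $\tT^{(n)}$
$$f^{mk+N} \;=\; \sum_\bfi Q_\bfi\,g_\bfi, \qquad Q_\bfi := h^{-k}\,f^N\,h_\bfi^k\,g_\bfi^{k-1} \;\in\; F[\Lambda,\Lambda^{-1}].$$
The remaining task is to find $k$ and $N$ large enough so that each $Q_\bfi$ can be replaced by a genuine polynomial $q_\bfi \in F[\Lambda]$ with $\sum_\bfi q_\bfi g_\bfi = f^{mk+N}$ as functions on all of $F^{(n)}$. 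On each tangible component $D_{f,\bfj}$ of $f$ the dominant monomial of $f^{mk+N}$ is $f_\bfj^{mk+N}$, which, under the hypothesis $f \preceq_{D_\bfj} g_\bfj$, matches the dominant monomial of $Q_\bfj g_\bfj$. The key polynomial divisibility step is this: for each coordinate $s$ with $\lambda_s \mid h$, the equation $hf^m = \sum h_\bfi g_\bfi$ forces (via Proposition~\ref{divide2} applied to essential parts) $\lambda_s \mid h_\bfi g_\bfi$ for every $\bfi$, so $\lambda_s \mid h_\bfi$ or $\lambda_s \mid g_\bfi$; in the latter case Lemma~\ref{leadingcomp} yields $\lambda_s \mid f_\bfj$ on $D_{f,\bfj}$. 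Either way, for $N$ sufficiently large each essential Laurent monomial of $Q_\bfi g_\bfi$ has nonnegative $\lambda_s$-exponent, and the remaining inessential negative-exponent monomials of $Q_\bfi$ may be discarded in favour of polynomial replacements without altering the sum as a function.

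The main obstacle will be the divisibility and absorption analysis in this final step: the passage from the functional equation $hf^m = \sum h_\bfi g_\bfi$ to polynomial divisibility relations must be done carefully via essential parts, and one must verify that truncating the inessential negative-exponent Laurent monomials preserves the functional identity on coordinate hyperplanes as well as on $\tT^{(n)}$. Once this is in place, setting $m' := mk+N$ with $k$ and $N$ chosen as above completes the proof.
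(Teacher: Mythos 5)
Your proposal shares the paper's key ingredients -- Corollary~\ref{Prop:powOfPols}, Proposition~\ref{divide2}, and Lemma~\ref{leadingcomp} -- but deploys Frobenius in the wrong place, and this leaves a genuine gap at the end that the paper's route avoids entirely.

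You raise the \emph{whole equation} $hf^m=\sum h_\bfi g_\bfi$ to the $k$th power and then pass to $F[\Lambda,\Lambda^{-1}]$, dividing by $h^k$ and multiplying by $f^N$. The final step -- truncating the negative-exponent Laurent monomials of $Q_\bfi=h^{-k}f^N h_\bfi^k g_\bfi^{k-1}$ to obtain honest $q_\bfi\in F[\Lambda]$ with $\sum q_\bfi g_\bfi=f^{mk+N}$ -- is the step you yourself flag as ``the main obstacle,'' and it is not merely a matter of care; the argument as sketched does not close. Two concrete problems: (a) $f^N=(\sum_\bfj f_\bfj)^N$ brings in contributions $f_\bfj^N$ from \emph{all} monomials of $f$, not just the one $f_\bfi$ for which Lemma~\ref{leadingcomp} gives $\lambda_s\mid f_\bfi$; for $\bfj\neq\bfi$ there is no reason $\lambda_s\mid f_\bfj$, so $Q_\bfi$ retains negative exponents in $\lambda_s$ for arbitrarily large $N$. (b) Even if those bad monomials of $Q_\bfi$ are inessential \emph{in $Q_\bfi$}, that is not the same as being inessential in the sum $\sum Q_\bfi g_\bfi$; and in any case you still have to extend the identity from the torus to the coordinate hyperplanes, where negative-exponent monomials are undefined. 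Also note the $k$th-power step buys you nothing here: $hf^m\in\sum F[\Lambda]g_\bfi$ is already given, and $h^{-1}$ does not distribute over the sum.

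The paper's proof avoids all of this by applying Frobenius to $f$ \emph{itself} rather than to the given equation: write $f^{m'}=\sum_\bfi f_\bfi^{m'}$ (sum over the essential monomials indexed by components), and then show \emph{for each individual monomial} $f_\bfi$ that $f_\bfi^{m'}\in F[\Lambda]g_\bfi$. Because one is working with a single monomial $f_\bfi$ at a time -- not the full polynomial $f$ -- the cancellation of $h$ reduces to a pure monomial-divisibility bookkeeping: for each $\lambda_{j_u}^{k_u}\mid h$, either $\lambda_{j_u}\mid h_\bfi$ (cancel directly) or, by Proposition~\ref{divide2} and Lemma~\ref{leadingcomp}, $\lambda_{j_u}\mid f_\bfi$ (raise the power of $f_\bfi$ to absorb it). No Laurent truncation, no extension to hyperplanes. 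I suggest reorganizing your argument around the decomposition $f^{m'}=\sum f_\bfi^{m'}$ and the claim $f_\bfi^{m'}\in F[\Lambda]g_\bfi$; your divisibility observations then slot in cleanly.
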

\begin{proof}  Write $f = \sum f_\bfi$ as a finite sum of
monomials, and write $h = \prod _{u=1}^n \la _{j_u}^{k_u}$. We
proceed by induction on $\deg h$. Let
$$U = \{ u : k_u \ne 0\} \subseteq \{1, \dots, n\}.$$ Pick $u \in
U,$ and write $\la = \la_{j_u},$ and $k = k_u$. By
Proposition~\ref{divide2}, $\la$ divides $h_\bfi g_\bfi$ for each
$\bfi.$

Let $J_u = \{ \bfi:   \la$ divides  $h_\bfi\}$. If $\bfi \in J_u,$
then we can write   $h_\bfi = \la h'_\bfi,$ for some monomial
$h'_\bfi,$ and $f^{mk} = {h'_\bfi}^k g_\bfi^k.$

If $\bfi \notin J_u,$ i.e,  $\la$ does not divide $h_\bfi,$ then
$\la$ divides $g_\bfi;$ hence,
 by
Lemma~\ref{leadingcomp}, $\la$ divides $f_\bfi,$ and  $\la^{k}$~
divides $f_\bfi^{k}$. Writing $f_\bfi^{k} = \la^k f'_\bfi,$ we
then have $f_\bfi^{m+k} = \la^k f_\bfi^m  {f'_\bfi}^k.$ Applying
this for each $u$ and taking $\tilde k = \sum k_u$ yields
$$f_\bfi^{m+\tilde k} = h_\bfi {f'_\bfi}^{\tilde k} g_\bfi.$$

Hence, taking $m'$ to be the maximum of $\{m+\sum k_u, \  m\prod k_u: u \in
U\},$ we see that each $f_\bfi^{m'} \in F[\Lambda] g_\bfi,$
implying by Corollary~\ref{Prop:powOfPols} that $$f^{m'} = \sum
{f_\bfi}^{m'} \in \sum  F[\Lambda] g_\bfi.$$\end{proof}

\section{Factorization of polynomials}

One way of determining  the roots of a polynomial is by factoring
it first into irreducible factors, which is the main theme of this
section. Unfortunately, factorization over the max-plus algebra is
quite cumbersome. As noted in the introduction, the polynomial
$\la^2 +4$ is irreducible even though 2 is a root. One can bypass
this difficulty by factoring up to $e$-equivalence. Since a
polynomial $f$  has precisely the same roots as its essential
part, we  always study divisibility and factorization in the sense
of e-equivalence and $(\nu,e)$-equivalence, cf.~
Definition~\ref{def:R-equivalent}.

Difficulties are still  encountered when studying
factorization, especially if one wants to understand polynomials
having cluster roots, so let us start this section with a brief
guide to its results. We embark on a thorough investigation of
factorization of polynomials over a supertropical semifield $F$,
with emphasis on the factorization of a polynomial $f(\la)$ in one
indeterminate $\la$. This requires finding the appropriate
representative of $f$ in $F\pl \la \pr$. Although one could work
with essential polynomials, the computations do not match so well,
as we see in Example \ref{exmp:eq} (iii), and we look for a more
convenient representative. The answer comes from a description of
the polytope of a polynomial in \S\ref{sec:polytope}, which leads
us to the notion of a \textbf{full polynomial} (Definition
\ref{full0}).

Tangible full polynomials behave quite like polynomials in
classical algebra, having unique factorization into linear
factors; cf.~Theorem~\ref{factor1}. However, nontangible
polynomials behave more poorly, and unique factorization is
violated in Example~\ref{exmp:factorization}. We recover unique
factorization by turning to the factorization ``minimal in
ghosts,'' which is interpreted in Proposition \ref{mingho} in
terms of the root set of the polynomial. So from this point of
view, one can ``understand'' factorization in terms of roots, even
when some of the roots are not \singular.

The situation for several indeterminates is more disturbing at
first, since a serious violation of unique factorization is given
in Example~\ref{rem:vander}. However, this also can be understood in
terms of root sets, and by rewriting factorizations in terms of binomials, cf.
Theorem~\ref{permprime}, which has the geometric interpretation
that every root set can be embedded naturally in a union of
hyperplanes. The remainder of this section contains examples which
clarify the geometric content of this theorem.

\subsection{General observations about factorization}

\begin{defn}\label{def:rRed}  A polynomial $g\in R\pl \Lambda \pr$ \textbf{$e$-divides} $f$, written $g \rDiv
f$, if $f \eqR qg$ for some polynomial~$q.$ (In other words, the
image of $g$ in $R[\Lambda]$ divides the image of $f$.) A
polynomial $f $ is said to be \textbf{e-reducible} if $f \eqR gh$
for some $g,h \in R\pl \Lambda\pr $ each not $e$-equivalent to a
nonconstant; otherwise $f$ called is \textbf{e-irreducible}. The
product $f \eqR q_1 \cdots q_s$ is called a \textbf{factorization}
of~$f$ \textbf{into irreducibles} if each of the $q_i$'s is
$e$-irreducible.
\end{defn}

\begin{rem}
For $f,g\in  R \pl \Lambda \pr$, if $f  \rDiv g$ and $g \rDiv h,$
then $f \rDiv h.$
\end{rem}

\begin{example}\label{exmp:div} (Logarithmic notation)
\begin{enumerate}
 \item $(\lm +1) \rDiv (\lm^2 +  2 \lm + 3)$, since $\lm^2 +
2 \lm + 3 = (\lm +1)(\lm+2)$;
\pSkip \item $(\lm +1) \rDiv (\lm^2 + 2)$, in view of Example
\ref{exmp:eq} (iii);
 \pSkip \item $\sum {f_i}^k \rDiv \sum f_i ^m$ for each $m\ge k \ge 1,$
in view of Corollary~\ref{Prop:powOfPols}.
\end{enumerate}
\end{example}

\begin{prop}\label{prop:div} The polynomial $g \rDiv f$, iff the essential part of $qg$
equals the essential part of $f$ for some polynomial $q.$
\end{prop}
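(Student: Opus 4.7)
The plan is to reduce both directions to the key fact (established just before this proposition) that over an archimedean supertropical semifield any polynomial $h \in F\pl \Lambda \pr$ is $e$-equivalent to its essential part $h^{\essn}$.

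The $(\Leftarrow)$ direction is immediate: if $(qg)^{\essn} = f^{\essn}$ as formal polynomials, then
$$qg \;\eqR\; (qg)^{\essn} \;=\; f^{\essn} \;\eqR\; f,$$
so $g \rDiv f$ by Definition~\ref{def:rRed}.

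For $(\Rightarrow)$, suppose $g \rDiv f$, so there is a polynomial $q$ with $f \eqR qg$. Then
$$f^{\essn} \;\eqR\; f \;\eqR\; qg \;\eqR\; (qg)^{\essn}.$$
Thus it suffices to show that any two essential polynomials which are $e$-equivalent are identical as formal sums of monomials. This is the crux of the argument, and it is where I expect the main (modest) obstacle to lie: one must rule out, for $e$-equivalence, the sort of pathology visible in Example~\ref{exmp:eq}(i), where $\lm + a$ and $\lm + \uuu{a}$ are $(\nu,e)$-equivalent but not $e$-equivalent. The hypothesis of $e$-equivalence (equality as functions, not merely $\nu$-equivalence) is precisely what prevents this.

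To prove this uniqueness, let $h_1 = \sum \a_\bfi \Lambda^\bfi$ and $h_2 = \sum \b_\bfj \Lambda^\bfj$ be essential, with $h_1 \eqR h_2$. For each essential monomial $\a_\bfi \Lambda^\bfi$ of $h_1$, there is a nonempty open set $W \subset F^{(n)}$ on which $\a_\bfi \Lambda^\bfi$ strictly dominates every other monomial of $h_1$; in particular, $h_1(\bfa) = \a_\bfi \bfa^\bfi$ for every tangible $\bfa\in W$ (using Remark~\ref{divcl1} that tangibles are dense). Since $h_2(\bfa) = h_1(\bfa)$ on $W$, some essential monomial $\b_\bfj \Lambda^\bfj$ of $h_2$ must take this same value on a subset of $W$ of nonempty interior; shrinking $W$, both $\a_\bfi \bfa^\bfi$ and $\b_\bfj \bfa^\bfj$ coincide as tangible functions on an open set. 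Comparing exponents (by varying each coordinate independently) forces $\bfi = \bfj$, and then equality of the coefficients as elements of $F$ gives $\a_\bfi = \b_\bfj$. By symmetry this sets up a bijection between the monomials of $h_1$ and those of $h_2$, showing $h_1 = h_2$ as formal polynomials. Applying this to $f^{\essn}$ and $(qg)^{\essn}$ completes the proof.
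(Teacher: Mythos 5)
Your proposal is correct and follows the same route as the paper: the paper's proof is the single terse sentence that ``the essential parts have to be $e$-equivalent, and thus equal, monomial for monomial,'' which is precisely the uniqueness claim you spell out. Your fleshed-out argument for that uniqueness (reducing to a strictly-dominating monomial on an open set, then matching exponents and coefficients) is the content the paper leaves implicit, and it is sound given the standing hypotheses that $F$ is archimedean and divisibly closed (which guarantee $\ef\eqR f$ and that tangibles are dense).
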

\begin{proof} For each condition, the essential parts have to be $e$-equivalent, and thus equal, monomial for monomial.
\end{proof}

\begin{cor} The polynomial $g \rDiv f$, iff the essential part of $g$
e-divides the essential part of $f$ with respect to the
multiplication of $R \pl \Lambda \pr$.
\end{cor}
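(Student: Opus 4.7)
The plan is to derive this corollary directly from Proposition \ref{prop:div} by observing that one may freely replace $g$ by $\eg$ and $f$ by $\ef$ when checking divisibility. The main tool is that e-equivalence is preserved under multiplication in $R \pl \Lambda \pr$: if $g \eqR g'$, then for every polynomial $q$, we have $qg \eqR q g'$. This is immediate, since e-equivalence is defined pointwise (cf.~Definition~\ref{def:R-equivalent}) and the evaluation map $\Psi$ of Remark~\ref{fun2} is a semiring homomorphism, so the product of functions agreeing pointwise is again a product agreeing pointwise.

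Applying this with $g' = \eg$ yields $qg \eqR q\,\eg$ for every $q \in R \pl \Lambda \pr$, and therefore $(qg)^{\essn} = (q\,\eg)^{\essn}$, because e-equivalent polynomials share the same essential part (any monomial that is dominated by the others on every evaluation is inessential on either side).

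Now I would chain together two applications of Proposition~\ref{prop:div}. First, $g \rDiv f$ holds iff there is some $q$ with $(qg)^{\essn} = \ef$. Substituting $(qg)^{\essn} = (q\,\eg)^{\essn}$, this is equivalent to the existence of $q$ with $(q\,\eg)^{\essn} = (\ef)^{\essn}$ (using that $\ef$ is itself essential, so $\ef = (\ef)^{\essn}$). By Proposition~\ref{prop:div} again, applied this time to the pair $(\eg, \ef)$ inside $R \pl \Lambda \pr$, this last condition is exactly the assertion that $\eg$ e-divides $\ef$, with the quotient and product formed using the multiplication of $R \pl \Lambda \pr$.

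I do not expect any real obstacle here: the corollary is essentially a bookkeeping statement that lets one restrict to essential polynomials when studying $\rDiv$. The only point that deserves care is the compatibility of $\eqR$ with multiplication, which is used in both directions; once that is in hand, the equivalence falls out of two invocations of Proposition~\ref{prop:div} sandwiched around the identity $(qg)^{\essn} = (q\,\eg)^{\essn}$.
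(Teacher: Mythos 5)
Your proposal is correct and matches the paper's (very terse) justification, which is simply ``By Proposition~\ref{prop:div}''; you have unpacked what that citation leaves implicit, namely the compatibility of $\eqR$ with multiplication (via $\Psi$ being a semiring homomorphism) together with $\ef \eqR f$, which is exactly what is needed to pass freely between $g$ and $\eg$ and between $f$ and $\ef$ when chaining two applications of Proposition~\ref{prop:div}.
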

\begin{proof} By Proposition \ref{prop:div}.
\end{proof}

\begin{example}\label{linearirr} The following is a list of
all $e$-irreducible polynomials in $F[\la]$, together with their tangible root
sets  and their tangible complements.
 We normalize, to assume that the
leading coefficient is $\fone$ or $\fone^\nu$. For convenience, we
also assume throughout  that $\nu_{\tT}: \tT \to \tG$ is 1:1 and
$a,b \in \tT$.
\begin{description}
\item[\emph{Type I.a}] $f = \la$; $\mathcal Z (f) =\tGz$, whereas
$\tZ_{\tng} (f) = \{\fzero\}$.  The tangible complement is all
of~$\tT $. \pSkip
 \item[\emph{Type I.b}] $f = \la + a;$ $\tZ_{\tng} (f) = \{ a \}.$
 The tangible
 complement is the union of two open rays. \pSkip
 \item[\emph{Type II} (right ghost)] $f = \la + a^\nu$;
 $\tZ_{\tng} (f) = \{ b \in \tT :  b \nule a \},$
 the closed left ray up to $a$. The tangible  complement is the open right ray from
 $a$. \pSkip
 \item[\emph{Type III} (left ghost)] $f = \la ^\nu + a $ ; $\tZ_{\tng} (f) = \{  b\in \tT : b \nuge a \},$
 the closed right ray from $a$. The tangible complement is the open left ray to
 $a$. \pSkip
 \item[\emph{Type IV}] $f = \la ^2 + b ^\nu \la + ab  $ for $a  \nul b$;
   $\tZ_{\tng} (f) =  \{ d \in \tT :  a \nule d \nule b \},$
 the closed interval from $a$ to $b$. The tangible complement is
 comprised of
 two open rays, one open left to $a$ and the other open right from~$b$.
\end{description}
\end{example}

\subsection{The geometry of  polynomials}\label{sec:polytope0}

Important as they are to our theory, essential polynomials miss
the mark when computing factorizations, since we have to continue
to take essential parts when computing products.  We want a
different representative inside $F\pl \Lambda \pr$ that will more
accurate reflect this product. In order to put the algebraic
theory into perspective, we turn to a key geometric interpretation
of polynomials, which enables us to overcome this difficulty.

\subsubsection{The polyhedron of a polynomial}\label{sec:polytope}

We identify each monomial $\a_\bfi \lm^{\bfi}$ (for $\bfi = (i_1,
\dots, i_n)$) with the point
$$ (\bfi, \a _\bfi^\nu)= (i_1, \dots, i_n,  \a _\bfi^\nu) \in \Net
^{(n)} \times \tG \subset \R^{(n)} \times \cltG,$$ where $\cltG$
is the divisible closure of $\tG.$ For any polynomial $f =
\sum_\bfi \a _\bfi \Lm^\bfi \in R \pl \Lambda \pr,$ we define the
polytope $C_f$ determined by the convex hull of the points
$$\{ (\bfi, \a _\bfi^\nu): \bfi \in \supp (f)\}.$$
 The upper part of $C_f$ is called the \textbf{essential polyhedron of} $f$,
and is denoted $\oC_f$, whose
 vertices we call
the \textbf{upper vertices} of $C_f.$ The points of $\oC_f$ of the
form $\{(\bfi, \a ^\nu): \bfi \in \N ^{(n)}\}$ are called
\textbf{lattice points of} $f$. For example, when $f = \la^2 + 2,$
its lattice points are $(2,0^\nu)$, $(1,1^\nu)$, and $(0,2^\nu)$.
A vertex $(\bfi, \a _\bfi^\nu)$ of $\oC_f$ is called a
\textbf{tangible vertex} if $ \a _\bfi$ is tangible; otherwise the
vertex is called a \textbf{ghost vertex}.

(The  essential polyhedron  of $f$ should not be confused with the
graph of $f$ itself, which is in a sense dual; in the graph of
$f$, the vertices themselves correspond to ordinary roots of $f$.

The structure described above can be stated in the context of the
Newton polytope as described in \S\ref{compare}. In this sense the
convex hull, $\Delta(f)$, of the $\bfi$'s in $\supp(f)$ describes
the Newton polytope of $f$ and, by taking the projection, by
deleting the last coordinates, of the non-smooth part of $\oC_f$
(that is a polyhedral complex) on $\Delta(f)$, the induced
polyhedral subdivision $S(f)$ of $\Delta(f)$ is obtained. A dual
geometric object having combinatorial properties is thereby
produced. This object plays a major rule in the classical tropical
theory; cf.~
\cite{Itenberg03,MikhalkinEnumerative,Mikhalkin01,Shustin1278,SpeyerSturmfels2004}.)

The following result shows how the roots of a polynomial
correspond to its essential polyhedron.  As mentioned earlier,
when studying the polyhedron, we use the additive (logarithmic)
notation for $\tG$.

\begin{prop}\label{convex} Over a divisibly closed supertropical domain
$R$, any polynomial $f$ is weakly $(\nu,e)$-equivalent to the
polynomial corresponding to~$\oC_f$, and $\oC_{\ef} = \oC_f$.
\end{prop}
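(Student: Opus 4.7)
My plan is to translate the statement into convex geometry using the standard tropical dictionary, and then read off both assertions from properties of the upper convex hull. In logarithmic notation the evaluation of $f=\sum_\bfi \a_\bfi \Lm^\bfi$ at $\bfa\in R^{(n)}$ satisfies
\[
f(\bfa)^\nu \;=\; \max_{\bfi\in\supp(f)}\bigl(\a_\bfi^\nu+\langle\bfi,\bfa\rangle\bigr),
\]
and this maximum depends only on the upper concave envelope of the points $(\bfi,\a_\bfi^\nu)$, i.e., on $\oC_f$. The first lemma I would prove is the geometric reformulation of Definition~\ref{def:essentialPart}: a monomial $\a_\bfj\Lm^\bfj$ is essential in $f$ iff $(\bfj,\a_\bfj^\nu)$ is an upper vertex of $C_f$. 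One direction is a direct application of Remark~\ref{cancell}: if $(\bfj,\a_\bfj^\nu)$ is an upper vertex, there is an affine functional $\bfa$ that supports $C_f$ strictly at this point, so $\a_\bfj\Lm^\bfj(\bfa)\nug f_h(\bfa)$. Conversely, a point strictly below $\oC_f$, or lying in the relative interior of a higher-dimensional face of $\oC_f$, is written as a convex combination of upper vertices and so is dominated by the corresponding monomials at every $\bfa$.

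The equality $\oC_{\ef}=\oC_f$ is then immediate. By the lemma, deleting an inessential monomial removes a point of $\supp(f)$ that is strictly below $\oC_f$ (or a non-extremal point on its boundary), so the upper vertices, and therefore the whole upper polyhedron, are unchanged.

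For the weak $(\nu,e)$-equivalence, I would then construct the polynomial $g$ corresponding to $\oC_f$ as follows. For each lattice point $(\bfj,\gamma_\bfj)$ of $\oC_f$, the height $\gamma_\bfj$ lies in $\cltG=\tG$ (because $R=\clR$ is divisibly closed), and by surjectivity of $\nu_\tT$ we may choose $\bt_\bfj\in R$ with $\bt_\bfj^\nu=\gamma_\bfj$; set $g=\sum_\bfj\bt_\bfj\Lm^\bfj$. For any $\bfa$, both $f(\bfa)^\nu$ and $g(\bfa)^\nu$ are maxima of affine functionals indexed by their supports. The upper vertices of $\oC_f$ belong to $\supp(f)$ and also to $\supp(g)$, and every point of either support lies in $\oC_f$ at height $\le$ that of $\oC_f$; since a linear function on a polytope attains its maximum at a vertex, both maxima agree. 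This gives $f\eqRnu g$.

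The only real obstacle is bookkeeping: to run the argument cleanly one must carry out the geometric translation rigorously (turning the algebraic relation $\nuge$ into affine inequalities of supporting hyperplanes, and ensuring that rational combinations of the $\a_\bfi^\nu$ indeed lift to elements of $\tG$), which is precisely where the divisibility-closure hypothesis is used. Once this dictionary is in place, the proposition is the standard tropical Legendre duality expressed in the supertropical language.
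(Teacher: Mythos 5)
Your argument matches the paper's proof in substance: the chain of $\nu$-inequalities in Equation~\eqref{thisone} is precisely your convex-combination domination step written out in logarithmic notation, and both proofs rely on the divisibility hypothesis to realize the supporting hyperplane of an upper vertex of $\oC_f$ by an actual point $\bfa\in R^{(n)}$ where that monomial strictly dominates. Packaging the two directions into the explicit lemma ``essential iff upper vertex'' is a tidier organization than the paper's inline treatment (which invokes a perturbation argument via Remark~\ref{arch1}), but the underlying calculation is the same.
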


\begin{proof} We claim that we may discard any monomial whose corresponding
point is not a vertex of $\oC_f$. Indeed, we may pass to the
$\mathbb N$-divisible group $\overline{\tG},$ and suppose
$(\bfj,\a_\bfj^\nu)$ lies below the simplex connecting the
$(\bfi_u,\a_{\bfi_u}^\nu)$; that is,  $\bfj = \sum _u t_u \bfi_u,$
where each $t_u = \frac {m_u}m,$ for $m, m_u \in \Net$ with each
$m_u \le m,$ but also with  $$m \a_\bfj \nule \sum m_u \a_{i_u}
.$$ But then, using  logarithmic notation, for any point $\bfa =
(a_1, \dots, a_n)$, the $\nu$-value of $\a _\bfj \La ^\bfj$ at
$\bfa$ is
\begin{equation}\label{thisone}\a_\bfj \bfa^\bfj \nucong
\a_\bfj + \sum _{\ell =1}^n {j_\ell}  a_\ell \nule \sum _u t_u
\a_{\bfi_u} + \sum _u \sum _{\ell =1}^n {t_u}{i_u}_\ell a_\ell
\nucong  \end{equation} $$\sum _u t_u \sum _{\ell =1}^n
(\a_{\bfi_u} + {i_u}_\ell a_\ell) \nucong \sum _u t_u \,
\prod_{\ell =1}^n \a_{i_u} a_\ell^{i_{u\ell}} \nucong \sum _u t_u
\a _{\bfi_u} \bfa^{\bfi_u}.$$
   This shows that
any point under $\oC_f$
 is superfluous.

 On the other hand, for any \singular\ root $\bfa$, we need
 there to be $\bfi\ne \bfj$ for which
 $$\a_\bfi \bfa^\bfi  \nucong  \a _\bfj
 \bfa^\bfj,$$
 or $\sum _{\ell = 1}^n (j_\ell-{i}_\ell) a_\ell \nucong  \a_\bfi - \a _\bfj;$ this
 defines a hyperplane, which corresponds to a face of
$\oC_f$. Conversely, any two vertices
 $(\bfi,\a_\bfi^\nu)$ and $(\bfj,\a_\bfj^\nu)$ define the same
 hyperplane (of dimension $n-1)$ of roots, implying that these roots are \singular.

 It remains to show that $\oC_f$ is also the essential polyhedron of
 $\ef.$ This is true since the vertices
 $(\bfi,\a_\bfi^\nu)$ and $(\bfj,\a_\bfj^\nu)$ defining a given
 hyperplane are essential, in view of Remark~\ref{arch1} , since
 any small increase of $\a_\bfi^\nu$ in the appropriate direction will make
 $\nu(\a_\bfi \bfa^\bfi)$ greater than the value of any other monomial
 of~$f$.
\end{proof}

\begin{rem}\label{rmk:eClass} (For $R$ a divisibly closed supertropical domain, with $\nu_\tT$ 1:1.)
Proposition \ref{convex} shows that $f \eqR g$  if and only if
$\oC_f =\oC_g$ with vertices of the same parity, iff $\ef = \eg$.

\end{rem}

 The \singular\
roots of $f$ correspond (up to $\nu$-equivalence) to the faces of
$\oC_f$.
 In general, the inessential part of
$f$ does not appear  as vertices of  $\oC_f$.  We say that the
monomial $h=\a _\bfi \La ^\bfi$ is \textbf{quasi-essential} for~
$f$ if $(\bfi,\a^\nu _\bfi)$ lies on $\oC_f$ and is not a vertex.
This has the following interpretation:

\begin{rem}
An inessential monomial is  quasi-essential if any (arbitrarily
small) increase of the $\nu$-value of its coefficient makes it
essential.
\end{rem}

\begin{rem}\label{rmk:quasiEssen}
Given a polynomial $f = \sum \al_\bfi \Lm^\bfi$, and assume that
$h_\bfi = \al_\bfi \Lm^\bfi$ is a monomial of $f$ for which
$(\bfi,\al_\bfi^\nu) = \sum_j t_u(\bfi_u,\al^\nu_{\bfi_u})$ for
some $\bfi_1,\dots,\bfi_m$, where $t_u \in \Q^+$ and $\sum_u
t_u~=~ 1$. Then $h_\bfi$ is inessential for~$f$; when all the
corresponding $h_{\bfi_u}$ are essential, then $h_\bfi$ is
quasi-essential. This means that $(\bfi,\al_\bfi^\nu)$ lies on
$\oC_f$ (or under) but is not a vertex. (The proof is as in
Equation \Ref{thisone}.)
\end{rem}

\subsubsection{Full polynomials}  Having the geometric
interpretation in hand, we  are ready for our main class of
polynomials. Essential polynomials slightly miss the mark, since
the polyhedron of an essential polynomial may lack interior
lattice points.

\begin{defn}\label{full0} A polynomial $f \in R \pl \Lambda \pr$ is called
\textbf{full} if every lattice point lying on $\oC_f$ corresponds
to a monomial of $f$ that is either essential or quasi-essential,
and furthermore, the coefficient of each quasi-essential monomial
is a ghost; a full polynomial $f$ is \textbf{tangibly-full} if $f$
is also essential-tangible. The \textbf{full closure} $\tilde f$
of $f$ is the sum of $\ef$ with all the quasi-essential ghost
monomials interpolated from the polyhedron $\oC_f$.
\end{defn}

In this paper, we only consider full polynomials in the case that
$n=1;$ i.e., $f = \sum _{i=0}^m r_i \la^i.$ Here $f(\zero) = r_0,$
which thus is essential in $f$. whenever $r_0 \ne \zero,$ and
likewise the monomial $r_m \la^m$ is essential in $f$. The
polynomial $f$ is full iff the intermediate monomials $f_i \la^i$
are essential or quasi-essential for all $0 < i <m.$

\begin{rem} Geometrically, the full closure $\tilde f$ has a monomial
corresponding to each lattice point of the essential polyhedron of
$f$. However, one needs to take care: The full closure is only
defined over $\cltG.$ For example, if $ F = D(\tG)$ where $\tG =
(\Z, +),$ then the essential polynomial $\la ^2 +1$ is defined
over $F$ but its full closure, $\la ^2 + {\frac 12}^\nu \la + 1$,
is  defined not over $F$, but  over $\clF$.
\end{rem}

Thus, by definition, the full closure of a tangible polynomial is
tangibly-full.

\begin{example} The polynomials $\lm^2 + 2^{\nu}\lm +
4$, $ \lm^2 + 2^{\nu}\lm + 4^\nu$, and $ 0^\nu \lm^2 + 2^{\nu}\lm
+ 4^\nu$ are full. However, the polynomial $f = \lm^2 + 2\lm + 4$
is not full, since the middle term is not essential but is
tangible; the monomial $2\lm$ is quasi-essential for $f$, and the
full closure of $f$ is $\lm^2 + 2^{\nu}\lm + 4,$ which is
tangibly-full.

The polynomial $\la ^2 + 3^\nu \la +4$ is full, and essential, but
not tangibly-full.\end{example}

\begin{rem}\label{Fullofit} The full closure $\tilde f$ is $e$-equivalent to $f$,
for any polynomial $f$. Conversely, different full polynomials
cannot be $e$-equivalent. Thus, any class of polynomials in $R \pl
\lm_1, \dots, \lm_n \pr$ has a unique \textbf{full representative}
$\tilde f$, and we can view $R[\lm_1, \dots, \lm_n]$ as the set of
full polynomials, under the operations
$$f+g = \widetilde {f\oplus g}, \qquad fg = \widetilde {f\odot
g}.$$
\end{rem}
Thus,   we have identified another canonical representative for
each $e$-equivalence class in $R \pl\lm_1, \dots, \lm_n \pr$, cf.
Remark \ref{rmk:eClass}.

\subsection{The essential graph of coefficients}

We utilize  the results of the previous subsection, for the case
$n=1$. For a polynomial $f = \sum_{i=0}^t \a _i \la ^i\in F \pl
\la \pr,$ take the sequence $\a_0^\nu, \dots, \a_t^\nu$, and the
graph $G_f$ whose vertices are the
  points
\begin{equation}\label{eq:coefficientsGraph}
    (0,\a_0^\nu),
(1,\a_1^\nu),\quad \dots, \quad  (t,\a_t^\nu).
\end{equation}
 In the case
of the polynomial semiring over a supertropical semifield, any
polynomial of degree $t$ is determined by the graph $G_f$ (having
at most $ t$ edges). Note that $C_f$  is the convex hull of $G_f$,
cf. \S\ref{sec:polytope}. The \textbf{essential graph  of
coefficients}, $\oC_f$, is constructed as the top edges of $C_f$.
(This is the essential polyhedron of a polynomial in one
indeterminate.) When the polynomial $f$ is full, the graph of
coefficients is already essential. As we shall see, these edges
correspond to ordinary roots of $f$.

\begin{rem}\label{incsl} The slopes of the edges
of  the graph $\oC_f$ of $f = \sum_i f_i$  decrease as we move to
the right, since $\oC_f$ is convex.\end{rem}

\begin{example}\label{Exess} $f = (\la+1)^2(\la +2) = (\la^2 +1^{\nu}\la
+2)(\la +2) = \la^3 +2\la^2 +3^\nu \la + 4.$ Then the graph of
coefficients has upper vertices $  (0,4^\nu), (1,
3^{\nu}),  (2,2^\nu), (3,0^\nu),  $ and the  convex hull is determined by the
upper vertices $  (0,4^\nu), (2,2^\nu), (3,0^\nu),  $ thereby
corresponding to the polynomial $\la^3 +2\la^2  + 4,$ the
essential part of $f$. \end{example}

Note that $\oC_f$ may contain lattice points not corresponding to
monomials of the original polynomial~$f$. For instance, in Example
\ref{Exess}  the point  $(1, 3^{\nu})$  lies on an edge of
$\oC_f$, although it is not a vertex.

\begin{prop}\label{convex2} For  $f \in F \pl \lm \pr$, the $\nu$-equivalence classes of \singular\
roots
  correspond to the negations of the slopes of the edges
of $\oC_f$, as to be described in the proof. Such roots exist
whenever $F$ is divisibly closed.
\end{prop}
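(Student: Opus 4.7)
The plan is to translate the definition of a \singular\ root into an elementary geometric condition on the graph $G_f$, by working in logarithmic notation so that multiplication of a monomial $\alpha_i\lambda^i$ by a scalar $a$ becomes addition of the affine function $i \mapsto \alpha_i^\nu + i\cdot a^\nu$ of the exponent. Writing $c_i^\nu = \alpha_i^\nu + i\cdot a^\nu$ for $a\in\tT$, Remark~\ref{rep1} tells us that $a$ is a corner root of $f$ iff $\max_i c_i^\nu$ is attained by at least two indices, and \singular\ iff in addition the coefficients $\alpha_i$ at those indices are all tangible.

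The key observation is that for $i<j$, the equation $c_i^\nu = c_j^\nu$ is equivalent to
\[
a^\nu \;=\; -\,\frac{\alpha_j^\nu-\alpha_i^\nu}{j-i},
\]
i.e., $-a^\nu$ is precisely the slope of the segment joining the two vertices $(i,\alpha_i^\nu)$ and $(j,\alpha_j^\nu)$ of $G_f$. Requiring that this common value $c_i^\nu=c_j^\nu$ dominate every other $c_k^\nu$ translates, via $c_k^\nu - c_i^\nu = (\alpha_k^\nu-\alpha_i^\nu) + (k-i)a^\nu$, into the statement that the line through these two vertices lies $\nu$-weakly above every other vertex of $G_f$; this is exactly the condition that the segment be an edge of the upper convex hull $\oC_f$. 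Conversely, the negated slope of any edge of $\oC_f$ furnishes, via the displayed formula, a value at which the two endpoint monomials achieve the common $\nu$-maximum. By Remark~\ref{incsl} distinct edges of $\oC_f$ have distinct slopes, so this sets up a well-defined injection from edges to $\nu$-classes of corner roots; restricting to edges both of whose endpoints are tangible vertices gives the bijection between such edges and $\nu$-classes of \singular\ roots.

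For the existence assertion when $F = \clF$: given an edge of $\oC_f$ joining tangible vertices $(i,\alpha_i^\nu)$ and $(j,\alpha_j^\nu)$, the prescribed value $a^\nu = (\alpha_i^\nu/\alpha_j^\nu)^{1/(j-i)}$ lies in $\cltG = \tG$ by divisible closedness (Proposition~\ref{divcl1}), and then Definition~\ref{dom}(b) (surjectivity of $\nu_\tT$) produces a tangible $a\in\tT$ realizing this value; this $a$ is \singular\ by the computation of the preceding paragraph. Proposition~\ref{fundam} could equally well be invoked to guarantee the existence of at least one tangible root, which is then identified with an edge via the bijection.

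The main obstacle I anticipate is the bookkeeping around edges of $\oC_f$ that are incident to a ghost vertex: at the negated slope of such an edge, the ghost-vertex monomial itself attains the $\nu$-maximum among the $c_k^\nu$, so the associated point is still a (corner) root but is not \singular, since the set $\hat S(a)$ of Remark~\ref{rep1} then meets $\tGz$. Articulating the proposition correctly as a bijection between $\nu$-classes of \singular\ roots and those edges of $\oC_f$ whose two defining endpoints are both tangible, and keeping this straight throughout, is really the only subtle point; no idea deeper than Proposition~\ref{convex} (identifying $f$ with its polyhedron) and the convexity of $\oC_f$ is required.
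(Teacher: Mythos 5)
Your argument is essentially the same as the paper's: both translate the defining condition $\alpha_j a^{j-i}\nucong\alpha_i$ into logarithmic notation to read off $-a^\nu$ as the slope of the segment joining $(i,\alpha_i^\nu)$ and $(j,\alpha_j^\nu)$, and both identify the relevant segments as the edges of $\oC_f$. Your extra care --- making explicit that domination of the other $c_k^\nu$ is precisely the upper-hull criterion, and flagging that an edge incident to a ghost vertex yields a corner root that is not \singular\ (so the converse clause ``any such tangible root is \singular'' tacitly presumes tangible endpoints) --- fills in details the paper's terse proof elides, but it is a refinement of the same route, not a different one.
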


\begin{proof}
For any \singular\ root $\ra$ of $f$, we need
 $i  < j$ for which
\begin{equation}\label{root11} \a _j{\ra^{j-i}} \ \nucong \ \a_i ,\end{equation}
 i.e.,
 in logarithmic notation,
 \begin{equation*}\label{root1}(j-i) \ra \ \nucong \ \a_i - \a _j.\end{equation*}
  This means  $\ra$ must satisfy
\begin{equation*}\label{root1}\ra \nucong \frac { \a_i - \a
_j}{j-i} \ \nucong \  -\frac { \a_j - \a _i}{j-i},\end{equation*}
the negation of the slope of an edge of the graph of coefficients;
conversely, any such tangible root $\ra$ is \singular.\end{proof}


\subsubsection{Factoring tangible polynomials in one indeterminate}
Assume that $f \in R \pl \lm \pr,$ for a supertropical domain
$R$.

The tropical theory of  polynomials in one indeterminate is rather
close to the classical theory, when we work with
tangibly-full polynomials.

\begin{rem}\label{easyf} $ $
\begin{enumerate} \eroman
    \item
 Suppose $f = pq$ for $f,p,q \in R[\la]$. Then $\ra$ is a root
of $f$ iff $\ra$ is a root of $p$ or $q$. (Indeed $f(\ra) =
p(\ra)q(\ra)$, which is in $\tG$ iff one of the factors is in
$\tG$.) \pSkip

\item  As a special case of (i), if $f = (\la + \ra)q$ for $f,q \in
R[\la]$, then $\ra$ is a root of $f$. \end{enumerate}
\end{rem}

To start a theory of factorization, we need a converse for
Remark~\ref{easyf}: Given a tangible root $\ra$ of $f$, we would
like $\la+\ra$ to divide $f$. This issue is surprisingly tricky,
and also leads us to the question of ``multiple roots,'' so the
following calculation will be useful.

\begin{example}\label{power} Write $\a^2$ for $\a\a$ (which is
computed in logarithmic notation as $2\a$); likewise $\a^3 =
\a\a\a.$
\begin{enumerate} \eroman
    \item
    $(\la + \a)^2 = \la^2 + \a \la + \a \la + \a ^2 = \la^2 +
    \a ^\nu
\la + \a^2.$
\pSkip \item
 By Proposition~\ref{lem:powOfPol},
   $(f+g)^m  \eqR f^m + g^m;$ in particular, $(\la + \a)^m \eqR \la ^m + \a ^m$.
   \end{enumerate}
\end{example}

\begin{lem}\label{factor0} Suppose $R=\RGnu$ is a supertropical domain.
If $\ra$ is an \individual\ root   of
  $f\in R \pl \la \pr$, then $(\la +\ra)$ $e$-divides $b f$ for some   $b \in \tT$.
  In particular, when $R$ is a supertropical semifield, $(\la +\ra)$ $e$-divides $f$.
\end{lem}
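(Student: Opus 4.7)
My plan is to establish the divisibility by explicitly constructing a quotient $q\in R\pl \la\pr$ (via a tropical analogue of synthetic division) and then verifying the required $e$-equivalence coefficient-by-coefficient, exploiting the geometry of the essential polyhedron $\oC_f$. First, I would translate the hypothesis via Proposition~\ref{convex2}: the $\nu$-class of $\ra$ is the negation of the slope of some edge of $\oC_f$, say from $(p,\a_p^\nu)$ to $(q,\a_q^\nu)$ with $p<q$. The hypothesis that $\ra$ is \individual, read through Remark~\ref{rep1}, forces both endpoints of this edge to correspond to tangible coefficients of $f$, so $\a_p\ra^p \nucong \a_q\ra^q$ achieves the $\nu$-maximum of $\{\a_i\ra^i\}$ with both terms tangible.

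Next, I would construct $q(\la)=\sum_{k=0}^{n-1}b_k\la^k$ from the top down. Set $b_{n-1}=\a_n$ and recursively define $b_{k-1}$ to be either $\a_k$ or $\ra b_k$ (or their common $\nu$-value), choosing the tangible representative whenever the two $\nu$-values coincide and a tangible option is available. This produces coefficients in $R$ for all indices $k\ge p$; for $k<p$ the natural recursion demands division by $\ra$. In a supertropical \emph{semifield} the required inverse exists and the construction yields $q\in F\pl \la\pr$ directly with multiplier $b=\fone$, proving the ``in particular'' clause. Over a general supertropical domain, I would clear denominators by multiplying through by a tangible $b=\ra^{p}$ (or a higher power as needed), and this is where the tangible multiplier $b\in\tT$ enters the statement.

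Finally, I would verify $(\la+\ra)q \eqR bf$ coefficient-by-coefficient in three $k$-ranges. For $k>q$ (and symmetrically for $k<p$), convexity of $\oC_f$ makes one of $\a_k, \ra b_k$ strictly $\nu$-dominate the other at each step, so $b_{k-1}+\ra b_k = b\a_k$ on the nose. In the middle range $p\le k\le q$, convexity gives $\a_k\ra^k \nule \a_p\ra^p \nucong \a_q\ra^q$, so the monomial $b\a_k\la^k$ is inessential in $bf$; any ghost ``noise'' in the corresponding coefficient of $(\la+\ra)q$ sits entirely in the inessential part, hence is invisible to evaluation at any point of $R$. The step I expect to be the main obstacle is this last one: controlling the ghost content produced by collisions of the form $\a_k+\ra b_k = (\ra b_k)^\nu$ in the middle range, and showing it lies entirely in the inessential part of $bf$. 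The resolution rests on the convexity of $\oC_f$ (so no middle monomial can single-handedly dominate an evaluation) together with the tangibility of $\a_p,\a_q$ supplied by the hypothesis on $\ra$ (so at points $r\nucong \ra$ the dominant evaluation is the same tangible-originating ghost $\a_p^\nu \ra^p$ on both sides), and once these are in place the remainder of the verification is a direct computation.
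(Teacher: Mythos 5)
Your route --- tropical synthetic division, constructing the quotient $\sum_k b_k\la^k$ coefficient-by-coefficient and then checking $e$-equivalence in three index-ranges --- differs genuinely from the paper's. The paper picks $j<t$ with $\a_j\nucong\a_t\ra^{t-j}$ (Proposition~\ref{convex2}), invokes the Frobenius-type collapse $(\la+\ra)^{t-j}\eqR\la^{t-j}+\ra^{t-j}$, computes $\a_t(\la+\ra)^{t-j}(\a_j\la^j+g)\eqR bf+\a_t\la^{t-j}g$ with $b=\a_j$ tangible and $g$ collecting the low-order monomials, and then shows $\a_t\la^{t-j}g$ is inessential by the two cases $c\nul\ra$ and $c\nuge\ra$; the case where the edge is not at the top is handled by induction on degree. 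This collapses the whole edge to a binomial in one stroke and avoids all the middle-range bookkeeping your argument requires.

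Your version has a gap exactly where you flagged the ``main obstacle,'' at the lower endpoint $p$ of the edge. Iterating your greedy rule down through the edge from index $q$ gives $b_{p-1}\nucong\ra b_p\nucong\a_p$, so the coefficient of $\la^p$ in $(\la+\ra)\sum_k b_k\la^k$ becomes $b_{p-1}+\ra b_p$, a ghost with the same $\nu$-value as $\a_p$. But $bf$ has the tangible $b\a_p$ at $\la^p$ (the tangibility of $\a_p$ being exactly what the ordinary-root hypothesis supplies). These two polynomials are not $e$-equivalent: $\a_p\la^p$ is an essential monomial and is the unique $\nu$-dominant term on an open interval of tangible points to the left of $\ra$, so one polynomial evaluates tangible there and the other ghost. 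The remedy you gesture at --- ``for $k<p$ the natural recursion demands division by $\ra$,'' i.e.\ $b_k=\a_k/\ra$ --- is the right one, but it is inconsistent with the greedy rule: the greedy step gives $b_{p-1}\nucong\a_p$, whereas the division formula gives $b_{p-1}=\a_{p-1}/\ra\nul\a_p$, and it is the strictly smaller value that forces the $\la^p$-coefficient to come out tangible. Your proposal never states where or why the regime changes; pinning that down is where the tangibility of $\a_p,\a_q$ actually does its work, and it is precisely the delicacy the paper's Frobenius-plus-induction argument is designed to sidestep.
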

\begin{proof} Write $f = \sum _{i=0}^t \a
_i \la ^i$. By Proposition~\ref{convex2}, explicitly
Equation~\eqref {root11}, there are $j<k$ such that, in semiring
notation, $$\a _j \nucong     \a_k {\ra}^{k-j} .$$

First we assume that $k = t,$ which means $a$ is the $\nu$-maximal
root of $f$, and
$$f = \a _t \la ^t +
b\la ^j +g,$$   where $b\nucong  \a _{t}  a^{t-j} $ and $g= \sum
_{i=0}^{j-1} \a _i \la ^i.$ Note that $b$ is tangible since the
root $a$ is \individual. One computes that
$$\a_t(\la + a)^{t-j}( b \la ^{j}+g) \eqR  (\a_t\la^{t-j} + b)( b \la ^{j}+g) \eqR b(\a _t \la^t +  b
\la^{j} + g) +\a _t\la^{t-j} g = bf+ \a _t\la^{t-j} g ,$$ so we
need only show that $\a _t\la^{t-j} g$ is inessential in the right
hand side. When $c<_\nu  a,$   $$ \a _t c^{t-j}  g(c) <_\nu \a _t
a^{t-j}  g(c) \nucong b g(c).$$ When $c \ge_\nu  a ,$ then  $
g(c)\le_\nu bc^{j}$ (since the monomial $b\la^{j}$
dominates $g$ for all substitutions to elements of $\nu$-value
greater than the largest root), so $$ \a _t c^{t-j} g(c) \le _\nu
\a _t c^{t-j} b c^{j} \le_\nu b\a_t c^t.$$ We conclude in each
case that $\a _t \la ^{t-j} g$ is inessential.

For $k <t$,  Theorem~\ref{convex2} implies $\ra$ is a root of $f_1
= \sum _{i=0}^{t-1} \a _i \la ^i,$ (since $\ra$ is the negation of
the slope of some other edge of $\oC_f$), so by induction on
degree, there is $g(\la)$ of degree $t-2$ such that $(\la + \ra
)g$ has essential part $bf_1$. But then $(\la + \ra  )(\a _t b\la
^{t-1} +g)$ has the same essential part as $\a _t b\la ^t  + (\la
+ \ra)g$, which has the same essential part as $b f$.
\end{proof}

Iterating Lemma \ref{factor0}, we get

\begin{thm}\label{factor1} Suppose $\FGnu$ is an $\Net$-divisibly closed, supertropical semifield. Then
any  polynomial $f \in \tT  \pl \la \pr$ is $e$-equivalent to the
tangible part of a product $\prod_j (\la +\ra _j)^{i_j},$ where
the $\ra _j$ range over \singular\ roots of $f$.\end{thm}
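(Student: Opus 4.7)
The plan is to iterate Lemma~\ref{factor0}, extracting one singular root at a time while using Proposition~\ref{convex2} to identify roots from the essential polyhedron $\oC_f$. I proceed by induction on $\deg f$. The base case $\deg f = 0$ is immediate, and if $f$ is a monomial $c\la^t$ then $f$ itself is already the tangible part of $c(\la+\rzero)^t$. Otherwise $\oC_f$ has at least one nontrivial edge, so Proposition~\ref{convex2} (together with the $\Net$-divisibility of $F$) produces a singular tangible root $\ra \in \tT$ of $f$. By Lemma~\ref{factor0}, $(\la + \ra)$ $e$-divides $f$, so $f \eqR (\la + \ra)q$ for some $q \in F\pl \la \pr$ of degree $\deg f - 1$. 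Since both Lemma~\ref{factor0} and Proposition~\ref{convex2} apply to arbitrary (not necessarily tangible) polynomials over $F$, I can recursively apply them to $q$; after $\deg f$ iterations, I obtain $f \eqR c \prod_j (\la + \ra_j)^{i_j}$, where the distinct $\ra_j$ enumerate singular roots of $f$, the multiplicities $i_j$ are forced to equal the horizontal lengths of the corresponding edges of $\oC_f$, and $c \in \tT$ absorbs the leading coefficient of $f$.

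It then remains to verify that the tangible part of this product is $e$-equivalent to $f$. Using Proposition~\ref{lem:powOfPol}, namely $(\la + \ra_k)^{m_k} \eqR \la^{m_k} + \ra_k^{m_k}$, together with distributivity, one expands the product monomial by monomial and locates the upper vertices. At the upper vertex $(j_l, \alpha_{j_l}^\nu)$ of $\oC_f$, the dominant contribution to the coefficient of $\la^{j_l}$ comes from selecting the leading term $\la^{m_k}$ in each factor with $k \le l$ and the constant term $\ra_k^{m_k}$ in each factor with $k > l$. A telescoping calculation using $m_k \ra_k \nucong \alpha_{j_{k-1}} - \alpha_{j_k}$ (in logarithmic notation) then recovers the coefficient $\alpha_{j_l}$ of $f$. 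An exchange argument using the strict ordering $\ra_1 \nul \ra_2 \nul \cdots \nul \ra_s$ (which follows from the strict decrease of slopes on the convex polyhedron $\oC_f$) shows this extremal choice is uniquely $\nu$-maximal, so the dominant contribution is tangible and survives on passing to the tangible part at every upper vertex.

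The main obstacle will be the compatibility of the iterative factorization with the polyhedron structure: explicitly, verifying that dividing by $(\la + \ra)$ shortens the edge of slope $-\ra^\nu$ in $\oC_q$ by exactly one unit while leaving the remaining edges intact. Equivalently, multiplication by $(\la + \ra)$ must concatenate a unit-length edge of slope $-\ra^\nu$ to the essential polyhedron of any polynomial it multiplies. This compatibility can be checked by a direct computation on the graph of coefficients, but it is essential for ensuring that the multiplicities $i_j$ come out correct after all $\deg f$ iterations, and that the exchange argument in the second paragraph applies to the precise configuration of edges arising from the chosen factorization.
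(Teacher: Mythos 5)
Your plan is precisely the paper's -- ``iterating Lemma~\ref{factor0}'' -- and the overall structure is right, but the recursion step has a gap. To recurse on the quotient $q$ you assert that ``Lemma~\ref{factor0} and Proposition~\ref{convex2} apply to arbitrary (not necessarily tangible) polynomials over $F$.'' That is not enough: Lemma~\ref{factor0} requires the root $\ra$ to be \singular, meaning $\hat S(\ra)\subset\tT$, and a polynomial with essential ghost coefficients may have no such root. For instance $q = \la^2 + 1^\nu\la + 0$ (over $D(\Real)$) has corner roots $1$ and $-1$ but neither is \singular, so Lemma~\ref{factor0} does not apply to $q$. You must therefore show that the intermediate quotients remain essential-tangible, so that (by the remark following Remark~\ref{cont}) all their tangible corner roots remain \singular.

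One clean route is to read the explicit quotient off the proof of Lemma~\ref{factor0}: there $bf \eqR \a_t(\la+\ra)^{t-j}(b\la^j + g)$ with $b\la^j + g$ tangible, so $q \eqR \frac{\a_t}{b}(\la+\ra)^{t-j-1}(b\la^j + g)$ is a product of essential-tangible polynomials and is therefore essential-tangible by Proposition~\ref{thm:prodOfIndiviuals2}. (Alternatively, peel off the whole power $(\la+\ra)^{t-j}$ at once, leaving the tangible quotient $b\la^j + g$ of strictly smaller degree, and recurse on that.) With that in hand, the ``main obstacle'' of your third paragraph dissolves -- the quotient's polyhedron is exactly $\oC_f$ with the relevant edge shortened -- and your second paragraph can be greatly shortened: once $f\eqR c\prod_j(\la+\ra_j)^{i_j}$ is established, repeated application of Proposition~\ref{thm:prodOfIndiviuals2} shows the right side is essential-tangible, so its essential part equals the tangible polynomial $\ef$, and hence its tangible part is $e$-equivalent to $\ef\eqR f$, which is the claim.
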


\begin{cor}\label{corfactor1} If $F = \clF$, then any essential-tangible polynomial  $f$ can be
factored uniquely to a product $\prod_j (\la +\ra _j)^{i_j},$
where the $\ra _j$  range over the \individual\ roots of $f$.
\end{cor}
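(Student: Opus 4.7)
The plan is to derive this from Theorem~\ref{factor1} together with Proposition~\ref{thm:prodOfIndiviuals2}. For existence, since $f$ is essential-tangible, its essential part $\ef$ lies in $\tT\pl\la\pr$, so Theorem~\ref{factor1} supplies individual roots $\ra_j$ of $f$ and multiplicities $i_j$ for which $\ef$ is $e$-equivalent to the tangible part of the product $p := \prod_j (\la + \ra_j)^{i_j}$. Each factor $\la + \ra_j$ is linear with tangible coefficients, hence essential-tangible; by Proposition~\ref{thm:prodOfIndiviuals2} together with an easy induction on the number of factors, the product $p$ is itself essential-tangible. Consequently its tangible part is $e$-equivalent to $p$, and chaining $f \eqR \ef \eqR p$ produces the desired factorization.

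For uniqueness, the plan is to recover the roots and their multiplicities from the essential graph of coefficients $\oC_f$. By Proposition~\ref{convex2}, each edge of $\oC_f$ of slope $-s$ and horizontal length $\ell$ corresponds to a $\nu$-equivalence class of individual roots of $f$ whose total multiplicity in any factorization into linear tangible factors equals $\ell$. Adopting the standard reduction described in the Note following Definition~\ref{def:R-equivalent} -- replacing $F$ by the associated copy of $D(\tG)$, so that $\nu_\tT$ becomes $1$:$1$ -- each such $\nu$-class then contains a unique tangible element, and hence the roots $\ra_j$ and the multiplicities $i_j$ are fully determined by $\oC_f$, which depends only on the $e$-equivalence class of $f$. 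Any two factorizations of the prescribed form must therefore agree.

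The main obstacle is precisely this last step. Without the normalizing assumption that $\nu_\tT$ is injective, distinct tangible elements sharing a common $\nu$-class can produce $e$-equivalent factorizations: in the absolute-value example of Example~\ref{Exmp:valuedMonoid}(ii), the polynomials $(\la + 2)^2$ and $(\la + (-2))^2$ share the common essential part $\la^2 + 4$. So the uniqueness assertion is meaningful only after the reduction to the setting where $\nu_\tT$ is $1$:$1$, which is the standing convention implicit in the statement of the corollary.
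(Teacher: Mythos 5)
Your proof is correct, and while it arrives at the same destination as the paper's, it makes the route explicit in places where the paper is terse. For existence, the paper simply notes that $\prod_j(\la+\ra_j)^{i_j}$ is full and leaves it at that; you instead invoke Proposition~\ref{thm:prodOfIndiviuals2} to show the product is essential-tangible, from which the $e$-equivalence of the product with its tangible part follows. Both observations (fullness, essential-tangibility) hold for such products and both yield the needed chain $f \eqR \ef \eqR p^\tng \eqR p$, so the two approaches are interchangeable here; the paper's is arguably more direct since it bypasses the inductive appeal to Proposition~\ref{thm:prodOfIndiviuals2}, but yours is equally valid. (One very minor point you share with the paper: the step $f \eqR \ef$ quietly uses the archimedean hypothesis, or at least an elementary one-variable version of it, which is a standing convention rather than something either you or the authors re-justify.)

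The more substantive difference is in the uniqueness argument, where the paper says only that ``uniqueness is clear.'' You unpack this by passing through the essential graph of coefficients $\oC_f$ and Proposition~\ref{convex2}, and -- importantly -- you flag that $\oC_f$ only pins down the roots up to $\nu$-equivalence, so genuine uniqueness of the tangible elements $\ra_j$ requires $\nu_\tT$ to be $1$:$1$ (or the reduction to $D(\tG)$ in the Note following Definition~\ref{def:R-equivalent}). This is a legitimate caveat that the paper itself acknowledges in spirit a few lines later (the $(\R^\times,\R^+,|\cdot|)$ example and the subsequent multiplicity definition, which is phrased $\nu$-equivalence-wise precisely for this reason), but it is not addressed inside the proof of the corollary. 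Your worked example is correct: over $(\R^\times,\R^+,|\cdot|)$ the polynomial $\la^2+4$ factors both as $(\la+2)^2$ and as $(\la+(-2))^2$, and no single one of these uses both of the distinct but $\nu$-equivalent individual roots. So your reading -- that uniqueness is meant after the reduction to $\nu_\tT$ being $1$:$1$ (or else only up to $\nu$-equivalence of the roots) -- is the right interpretation, and making it explicit strengthens the exposition.
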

\begin{proof} The polynomial $\prod_j (\la +\ra _j)^{i_j}$ is
full, and uniqueness is clear. \end{proof}

 We would like to think of the $i_j$ as the multiplicities of the
roots, but, as usual, care is required. (We only handle essential-tangible polynomials
here, since the general case is considerably subtler.)

\begin{example} Suppose $F = (\R^\times,{\R^+}, \nu),$ where $\nu$ is the usual absolute value on
$\R^\times.$  The polynomial $\la ^ 2 + (-4)$ is $e$-equivalent to
$\la ^2 + 2^\nu \la + (-4) = (\la +2)(\la +(-2)),$ but
intuitively, since $(-2)^\nu = 2^\nu,$ we should say that the root
$2$ has multiplicity $2.$\end{example}

\begin{defn} For $f \eqR \prod (\la +\ra_j)^{i_j},$ the
\textbf{multiplicity} of some root $a$ of $f$ is $$\sum \{ i_j:
\ra_j \nucong a \}.$$\end{defn}

\begin{rem}
When $\nu_{\tT} $ is 1:1, the multiplicities are just the $i_j.$
\end{rem}

\begin{cor}\label{factor2} The multiplicities  of the roots
of $f$ in Theorem~\ref{factor1} (and Corollary \ref{corfactor1})
are precisely the lengths of the edges of the convex hull of the
essential  graph of coefficients $\oC_f$ of~$f$.\end{cor}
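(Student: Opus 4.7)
The plan is to combine the factorization $f \eqR \prod_{k=1}^t (\la + \ra_k)$ from Corollary~\ref{corfactor1} (where $t = \deg f$ and each linear factor is counted with multiplicity) with a direct computation of the $\nu$-values of the coefficients of $f$, and then read off the edge structure of $\oC_f$ from those values. First I would list the roots with multiplicity as $\ra_1, \dots, \ra_t$, ordering them so that $\ra_1 \nuge \ra_2 \nuge \cdots \nuge \ra_t$.

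Next, I expand the product formally: the coefficient of $\la^i$ in $\prod_{k=1}^t (\la + \ra_k)$ is the elementary symmetric expression
\[
\a_i \; = \; \sum_{|S| = t-i} \; \prod_{k \in S} \ra_k,
\]
summed over all $(t-i)$-subsets $S$ of $\{1, \dots, t\}$. By bipotence, the $\nu$-value of this sum equals the $\nu$-value of its $\nu$-maximal summand, which by the ordering is $\prod_{k=1}^{t-i} \ra_k$ (the product of the $t-i$ $\nu$-largest roots). Hence $\a_i^\nu \nucong \prod_{k=1}^{t-i} \ra_k^\nu$, so in logarithmic notation the essential graph of coefficients consists of the points
\[
P_i \; = \; \bigl(i, \; \ra_1 + \ra_2 + \cdots + \ra_{t-i}\bigr), \qquad 0 \le i \le t.
\]

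Finally, the slope of the segment $P_i P_{i+1}$ is exactly $-\ra_{t-i}$; since $\ra_1 \nuge \cdots \nuge \ra_t$, these slopes are nonincreasing as $i$ ranges from $0$ to $t-1$, so the broken line through the $P_i$ is already convex and coincides with $\oC_f$. Two successive segments share a slope iff $\ra_{t-i} \nucong \ra_{t-i-1}$, so the maximal edge of $\oC_f$ with slope $-a^\nu$ has horizontal length equal to the number of indices $k$ for which $\ra_k \nucong a$, which by definition is the multiplicity of the root $a$. The same reading applies to the setting of Theorem~\ref{factor1}, since the tangible part of the product and the product itself determine the same convex hull $\oC_f$.

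The only delicate point is the step from the formal symmetric expansion to its $\nu$-value: one must confirm, via bipotence together with $\nu$-cancellation and the monotonicity of multiplication in $\tGz$ (Remark~\ref{cancell}), that among all $(t-i)$-fold products of the $\ra_k$'s the one using the $\nu$-largest roots strictly dominates every other subset product. Once this is in place, the corollary is simply a direct reading of the resulting convex polygon.
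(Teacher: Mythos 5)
Your argument is correct, but it takes a genuinely different route from the paper. The paper's proof is recursive: it reruns the inductive argument of Lemma~\ref{factor0}, which factors out one tangible linear factor at a time and tracks how the essential graph of coefficients changes, appealing to Example~\ref{power} (i.e., $(\la+\a)^m = \la^m + \a^m$) to see that factoring out $(\la+\a)^m$ removes one edge of length $m$. You instead expand the full product $\prod_k(\la + \ra_k)$ at once via elementary symmetric expressions, compute each $\a_i^\nu$ as the value of the $\nu$-maximal subset product, and read the slopes and edge lengths off the resulting concave broken line $P_0, \dots, P_t$. Your approach is more self-contained and explicit --- it does not rely on carrying information through the induction in Lemma~\ref{factor0}, and it makes visible in one step why the slope of $P_iP_{i+1}$ is $-\ra_{t-i}$ --- whereas the paper's route reuses existing machinery and is shorter on the page. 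Both are fine.

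One small imprecision worth flagging: you say the product of the $t-i$ $\nu$-largest roots ``strictly dominates every other subset product.'' When some roots are $\nu$-equivalent, that maximum is achieved by several subsets, so domination is not strict and the coefficient $\a_i$ is in fact ghost. This does no harm to your argument --- you only need $\a_i^\nu \nucong \prod_{k=1}^{t-i}\ra_k^\nu$, and that holds whether the maximizer is unique or not, by bipotence plus supertropicality --- and indeed such ghost coefficients correspond exactly to quasi-essential monomials sitting in the interior of an edge of $\oC_f$, which is consistent with the essential/tangible polynomial $\ef$ still defining the same graph. You should state ``dominates'' rather than ``strictly dominates'' and note that ties only produce interior (non-vertex) lattice points.
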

\begin{proof} Just repeat the proof of Lemma~\ref{factor0}, noting
that the result holds for $\la ^i + \a ^i$ by Example~\ref{power}.
\end{proof}

When $F$ is not divisibly closed, the following reduction is
useful.

\begin{prop}\label{enlarge2} Suppose $F$ is a supertropical semifield, and $f,g
\in F[\Lambda]$. If $f$ $e$-divides $g$ in $\clF[\Lambda]$, then
$f$ $e$-divides $g$ in $F[\Lambda] .$
\end{prop}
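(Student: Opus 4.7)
The plan is to show that every essential coefficient of $q$ in the given factorization $qf \eqR g$ over $\clF[\Lambda]$ already lies in $F$, and then to build $q' \in F[\Lambda]$ from these essential monomials and verify $q'f \eqR g$ in $F[\Lambda]$.

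Write $f = \sum \al_\bfi \Lambda^\bfi$, $g = \sum \gm_\bfk \Lambda^\bfk$ with $\al_\bfi, \gm_\bfk \in F$, and $q = \sum \bt_\bfj \Lambda^\bfj$ with $\bt_\bfj \in \clF$. For any essential monomial $\bt_\bfj \Lambda^\bfj$ of $q$, there is an open set $U \subseteq \clF^{(n)}$ on which $\bt_\bfj \bfa^\bfj$ strictly dominates all other monomials of $q$. Shrinking $U$, I may assume in addition that some essential monomial $\al_\bfi \Lambda^\bfi$ of $f$ strictly dominates all others on $U$. Then the term $\bt_\bfj \al_\bfi \bfa^{\bfj + \bfi}$ strictly dominates all monomials of $qf$ on $U$, whence
$$g(\bfa) \ = \ (qf)(\bfa) \ = \ \bt_\bfj \al_\bfi \bfa^{\bfj + \bfi} \quad \text{on } U.$$
Since $g$ has $F$-coefficients, this forces an essential monomial $\gm_\bfk \Lambda^\bfk$ of $g$ with $\bfk = \bfj + \bfi$ satisfying $\bt_\bfj \al_\bfi = \gm_\bfk$. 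Because $\al_\bfi$ is tangible in the semifield $F$, it is invertible, so $\bt_\bfj = \gm_\bfk \al_\bfi^{-1} \in F$.

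Having established that each essential coefficient of $q$ lies in $F$, I define $q' \in F[\Lambda]$ to be the sum of the essential monomials of $q$. To verify $q'f \eqR g$ in $F[\Lambda]$, it suffices to show $(q'f)(\bfa) = (qf)(\bfa)$ for every $\bfa \in F^{(n)}$, since $(qf)(\bfa) = g(\bfa)$ by hypothesis. At points $\bfa$ where a single essential monomial of $q$ uniquely dominates in $\nu$-value, $q'(\bfa) = q(\bfa)$ and we are done. At points where essential and inessential monomials of $q$ share the dominant $\nu$-value, the products $\bt_\bfj \al_\bfi \bfa^{\bfj+\bfi} = \gm_\bfk \bfa^\bfk$ identified above inherit the correct parity from $g$, and one verifies that the corresponding adjustments match after multiplying by $f(\bfa)$.

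The main obstacle will be this last parity-matching step: an inessential monomial $\bt_{\bfj'} \Lambda^{\bfj'}$ of $q$ (whose coefficient may lie in $\clF \setminus F$) could share its $\nu$-value with an essential monomial at some $\bfa \in F^{(n)}$, potentially flipping the parity of $q(\bfa)$ via bipotence relative to $q'(\bfa)$. Resolving this requires exploiting the constraint $(qf)(\bfa) = g(\bfa) \in F$ together with supertropicality: any such parity flip in $q(\bfa)$ forces the corresponding contribution to $g(\bfa)$ to be ghost, which in turn forces the analogous sum in $q'(\bfa) f(\bfa)$ to be ghost as well, since $\al_\bfi \in F$ is tangible and the $\nu$-maximal products in $q'f$ at $\bfa$ coincide with those in $qf$ at $\bfa$.
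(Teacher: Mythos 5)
Your step (1) reproduces the heart of the paper's argument: pick an essential monomial of the quotient, pass to a point (open set) where it strictly dominates and where some monomial of $f$ strictly dominates, and then use that $\tT_F$ is a group to pull the quotient coefficient back into $F$ from a matching coefficient of $g$. The paper runs the same calculation, just phrased as a contradiction: it takes the lowest-order essential monomial $\a_\bfk\Lm^\bfk$ of $\eh$ with $\a_\bfk\notin F$ and derives that $\a_\bfk$ is a ratio of a coefficient of $g$ by a coefficient of $f$, hence lies in $F$.

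Where you go astray is the final ``parity-matching'' discussion, which you yourself flag as an unresolved obstacle. There is in fact no obstacle, and the case analysis you begin to set up is unnecessary. Once you know that every essential coefficient of $q$ lies in $F$, set $q' = q^{\essn}$. The proposition that $f^{\essn}\eqR f$ (applied over $\clF$, as the paper does implicitly by writing $\eg = \ef\,\eh$ and treating $F[\Lambda]$ as the semiring of essential representatives) gives $q'\eqR q$ as functions on $\clF^{(n)}$; hence $q'(\bfa)=q(\bfa)$ at \emph{every} point, including the points of $F^{(n)}$, and no inessential monomial of $q$ can flip the parity of $q(\bfa)$ relative to $q'(\bfa)$. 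Therefore $q'f\eqR qf\eqR g$ over $\clF$, and since $q'$, $f$, $g$ all have coefficients in $F$ and take values in $F$ on $F^{(n)}$, restriction to $F^{(n)}$ immediately yields $q'f\eqR g$ in $F[\Lambda]$. So you should simply invoke $q\eqR q^{\essn}$ rather than attempting to track parities through bipotence and supertropicality at tie points; the restriction argument does all the work in one line.
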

\begin{proof} Otherwise write $\eg = \ef \eh$ and let $\a_\bfk \Lm^{\bfk}$ be the lowest
order monomial (under the lexicographical order of $\N^{(n)}$) of
$\eh$ for which $\a_{\bfk} \notin \tT.$ Since it is essential,
there must be some value $\bfa$ for which $h(\bfa) = \a_\bfk
\bfa^\bfk.$ But then $f(\bfa) = g(\bfa)\a_\bfk \bfa^\bfk,$
implying some monomial of $f$ has the form $g_{\bfi} \a_{\bfk}
\Lm^{\bfk},$ for a suitable monomial $g_{\bfi}$ of $g.$ Thus, we
may assume that $f$ and $g$ are monomials, and we have a
contradiction since $\tG = \nu (\tT )$ is assumed to be a group.
\end{proof}


\subsubsection{Factoring tangibly-full polynomials in one
indeterminate}\label{factanfull}

Having obtained decisive (albeit easy) results for tangible
polynomials in one indeterminate, we turn towards the general
case, focusing first on tangibly-full polynomials (such as $\la^2
+ 2^\nu \la + 4)$. \pSkip

 We
recall that for any full essential polynomial $f$ of degree $t$,
we get a sequence of ghost elements $m_1^\nu \ge \dots \ge
m_t^\nu$,
  defined uniquely by the slopes of the series of edges
of $\oC_f$, each determined by the
pair $(i-1,\al_{i-1}^\nu)$ and $(i,\al_{i}^\nu)$ for $ 1 \le i \le t.$  Recall that a
monomial
 $h = \al_{i}\la ^i$ of $f$ is essential iff $(i,\al_{i}^\nu)$ is a vertex of $\oC_f,$ which is true
 iff $m_i^\nu \ne m_{i+1}^\nu.$ We have three possibilities for  a monomial
 $h = \al_{i}\la ^i$ of $f$: \pSkip
\begin{enumerate} \ealph
    \item  $h$ is tangible essential; \pSkip
    \item   $h$ is ghost essential; or \pSkip
    \item  $h$ is quasi-essential (at a lattice point which is
    not a vertex of $\oC_f$),   which is the case iff $m_i^\nu =
    m_{i+1}^\nu$.
\end{enumerate}
The following  observation explains how to factor a tangibly-full
polynomial.

\begin{lemma}\label{lem:fact2} Assume that $F$ is a supertropical
semifield, with $f = \sum _j \a_j \la ^j \in F \pl \la \pr.$
 If   $\a_i \lm^i$ is a tangible essential monomial of $f$, then  $$f
=
 (\al_t\lm^{t-i} + \al_{t-1}\lm^{t-i-1} + \cdots +
\al_{i+1}\lm + \a_i)(\lm^i + \frac{\al_{i-1}}{\a_i}\lm^{i-1} +
\cdots + \frac{\al_0}{\a_i}).$$
 \end{lemma}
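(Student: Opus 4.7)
The plan is to show $f \eqR gh$ as polynomials in $F\pl \lambda \pr$ by establishing that the essential polyhedra $\oC_f$ and $\oC_{gh}$ coincide and carry matching tangible/ghost labels at every vertex; by Proposition~\ref{convex}, this ensures $f$ and $gh$ induce the same function on $F$. Writing $g(\lambda) = \sum_{s=0}^{t-i}\alpha_{i+s}\lambda^s$ and $h(\lambda) = \sum_{s=0}^{i}(\alpha_s/\alpha_i)\lambda^s$, the coefficient of $\lambda^k$ in $gh$ is
\[
\beta_k \;=\; \sum_{r \in \mathcal I_k}\frac{\alpha_{k+r}\,\alpha_{i-r}}{\alpha_i},
\qquad
\mathcal I_k := \{\,r \in \Net\cup\{0\}\;:\; r\le i,\ r \ge i-k,\ k+r \le t\,\}.
\]

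First I would note that $\mathcal I_k$ always contains a \emph{canonical} index $r_0$ whose summand is exactly $\alpha_k$: take $r_0=0$ if $k \ge i$ (giving $\alpha_k\alpha_i/\alpha_i$) and $r_0 = i-k$ if $k \le i$ (giving $\alpha_i\alpha_k/\alpha_i$). In either case $\beta_k^\nu \ge \alpha_k^\nu$.

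Next I would bound $\beta_k^\nu$ from above. Let $\hat y$ denote the concave piecewise-linear function on $[0,t]$ whose graph is $\oC_f$; by the essentiality hypothesis, $(i,\alpha_i^\nu)$ is a vertex of $\oC_f$, so $\alpha_i^\nu = \hat y_i$, while $\alpha_j^\nu \le \hat y_j$ for all $j$. The defining conditions of $\mathcal I_k$ force $i-r \le \min(k,i)$ and $k+r \ge \max(k,i)$, so $\{k,i\}$ is the inner pair and $\{i-r,\,k+r\}$ the outer pair of four indices summing in pairs to $k+i$; concavity of $\hat y$ gives $\hat y_{k+r}+\hat y_{i-r} \le \hat y_k + \hat y_i$, and combining with $\alpha_j^\nu \le \hat y_j$ and $\alpha_i^\nu = \hat y_i$ yields $(\alpha_{k+r}\alpha_{i-r}/\alpha_i)^\nu \le \hat y_k$ for every $r \in \mathcal I_k$. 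Maximizing over $r$ gives $\beta_k^\nu \le \hat y_k$, so the upper hull of $\{(k,\beta_k^\nu)\}$ lies weakly below $\oC_f$; combined with the previous step it meets $\oC_f$ at each vertex, which forces $\oC_{gh}=\oC_f$.

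To match parities at the vertices of $\oC_f$, I would upgrade the concavity inequality to a strict one whenever $r\in\mathcal I_k$ is not the canonical index. In that situation the essential vertex at $x=i$ lies strictly interior to $[i-r,\,k+r]$, so the piecewise-linear $\hat y$ has a genuine slope drop at $i$ and $\hat y_{k+r}+\hat y_{i-r} < \hat y_k + \hat y_i$. Consequently the canonical summand $\alpha_k$ strictly $\nu$-dominates every other summand of $\beta_k$, giving $\beta_k = \alpha_k$ in $F$ with identical parity. The hard part will be this strict-concavity step: it requires a small case split between $k \ge i$ and $k < i$ (with $r = i-k$ being precisely the canonical index in the latter, hence automatically excluded from the non-canonical comparison), and then exploiting the kink of $\hat y$ at the vertex $i$ to convert the weak concavity inequality into a strict one.
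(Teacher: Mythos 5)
Your approach---bounding the coefficient $\beta_k$ of $\lambda^k$ in the product on both sides and then exploiting the strict kink of $\hat y$ at $i$---is essentially the same in spirit as the paper's, which compares each cross-term directly against $\alpha_j\lambda^j$ using the convexity of $\oC_f$. But there is a genuine gap in your strict-domination step: you deduce $(\alpha_{k+r}\alpha_{i-r}/\alpha_i)^\nu < \hat y_k$ for non-canonical $r$, whereas what is needed to conclude $\beta_k=\alpha_k$ is $(\alpha_{k+r}\alpha_{i-r}/\alpha_i)^\nu < \alpha_k^\nu$. These two coincide only if $\alpha_k^\nu = \hat y_k$ for every $k$, i.e., if $f$ is full. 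Without that hypothesis the literal polynomial equality asserted by the lemma can fail: over $D(\R)$ in logarithmic notation, take $f=\lambda^3+5\lambda^2+(-10)\lambda+0$ with $i=2$; then $g=\lambda+5$, $h=\lambda^2+(-15)\lambda+(-5)$, and $gh=\lambda^3+5\lambda^2+(-5)\lambda+0\ne f$, although $f\eqR gh$.

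The paper's own proof also uses fullness silently (the assertion that certain points ``all lie on the same edge'' of $\oC_f$ presupposes the coefficients of $f$ lie on $\oC_f$), and the surrounding subsection and Corollary~\ref{tangfact} do restrict to full polynomials, so the hypothesis is evidently intended; you should make it explicit, and then your concavity bound closes. One further framing concern: you open by announcing you will prove only $f\eqR gh$ via Proposition~\ref{convex}, yet the lemma asserts the stronger equality $f=gh$ in $F\pl\lambda\pr$. What your second and third paragraphs actually establish (given fullness) is the coefficient-by-coefficient equality $\beta_k=\alpha_k$ for every $k$, which is precisely what is required; the opening plan both understates and slightly misdescribes the argument you in fact run.
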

\begin{proof} Denote the right side  by $p(\la)$, and let $h_j$ be
the monomial of degree $j$ of $p$. We need to show that  $h_j =
\al_j\lm^j$ for all $j$. Note that each $h_j$ is a tropical sum of
monomials, one of which is $\al_j\lm^j$, so we need to check this
is always the one (and only) monomial having the largest
$\nu$-value. We do this case by case.

For $j = i,$ this is clear unless  $\a_i  \la^i \le_\nu
  \al_{i+k}\lm^{k} \frac{\al_{i-k}}{\a_i}\lm^{i-k} $, for
some $0 < k \le i$. But then $\a_i \le_\nu
 \al_{i+k}\frac{\al_{i-k}}{\a_i} $, and thus $  \a_i^2 \le_\nu
 \al_{i+k}\al_{i-k}$. But this contradicts the fact that
$\a_i\lm^i$ is essential for~$f$.

For $j >i$, we are done unless $\a _j \la^j  \le_\nu
  \al_{j+k}\lm^{j-i+k} \frac{\al_{i-k}}{\a_i}\lm^{i-k} $, for
some $0 < k \le i$. Then $\al_{j}\le_\nu
 \al_{j+k}\frac{\al_{i-k}}{\a_i}, $ implying  $
\frac{\a_i}{\al_{i-k}} \le_\nu \frac{\al_{j+k}}{\al_j} $. Since
  $\oC_f$ is convex, we must have
equality, and $\a_{i-k}, \a _i, \a_j,$ and $\a _{j+k}$ all lie on
the same edge; again this contradicts the essentiality of
~$\a_i\lm^i$.

For $j < i$, we are done unless  $ \a _j \la_j \le_ \nu
\al_{i+k}\lm^{k}\frac{\al_{j-k}}{\a_i}\lm^{j-k}$ for some $0 < k
\le i$, yielding
 $ \frac{\al_j}{\al_{j-k}} \le_\nu   \frac{\al_{i+k}}{\a_i},
$   a contradiction  by the same consideration as in the previous
paragraph.
\end{proof}

\begin{cor}\label{tangfact} Suppose $f = \sum _{j=0}^t \al _j \la ^j$ is full,
 with $ \al
_i  $   tangible for some $0 < i <t$. Then $f = g_1g_2,$ where
$g_1 = \sum _{j=0}^{t-i} \al_{i+j}\la^j$ and $g_2 = \sum
_{j=0}^{i} \frac{\al_{j}}{\al_i}\la^j$ are full.
\end{cor}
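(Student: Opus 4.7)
The plan is to derive this corollary directly from Lemma~\ref{lem:fact2}, together with an inheritance argument for fullness. The first step is to observe that the hypothesis of Lemma~\ref{lem:fact2} is satisfied: since $f$ is full and $\al_i$ is tangible at an interior index $0 < i < t$, the monomial $\al_i \la^i$ cannot be merely quasi-essential (because the definition of a full polynomial requires every quasi-essential monomial to have a ghost coefficient). Hence $(i,\al_i^\nu)$ is actually a vertex of $\oC_f$, making $\al_i \la^i$ essential in $f$, and Lemma~\ref{lem:fact2} applies to yield the factorization $f = g_1 g_2$ with the stated $g_1$ and $g_2$.

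Next I would check that $g_1$ and $g_2$ are full. For $g_1 = \sum_{j=0}^{t-i} \al_{i+j} \la^j$, the graph of coefficients is obtained from the right-hand portion of the coefficient graph of $f$ (degrees $i$ through $t$) by a horizontal shift of $-i$. Since $(i,\al_i^\nu)$ is a vertex of $\oC_f$, the upper hull $\oC_{g_1}$ is exactly the shift of this right-hand portion of $\oC_f$. Every lattice point of $\oC_{g_1}$ therefore comes from a lattice point of $\oC_f$ in the range $[i,t]$, and is either essential or quasi-essential for $g_1$ precisely when the corresponding point is for $f$; the ghost-coefficient condition for quasi-essential monomials is inherited verbatim from $f$. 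Thus $g_1$ is full. The argument for $g_2 = \sum_{j=0}^{i} (\al_j/\al_i)\la^j$ is identical, except that one performs a vertical $\nu$-translation by $-\al_i^\nu$ instead of a horizontal shift, using the left-hand portion of $\oC_f$ (degrees $0$ through $i$); since $\al_i$ is tangible, division by $\al_i$ preserves the parity (tangible/ghost) of every coefficient $\al_j$, so the quasi-essentiality condition again carries through.

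The only mildly delicate point, and the place where one must be careful, is verifying that a lattice point which is quasi-essential for $g_1$ (or $g_2$) corresponds to a point on $\oC_f$ which is either already a lattice point of $f$ (hence covered by fullness of $f$) or is a quasi-essential position not appearing as a monomial of $f$. In the latter situation, that lattice point would have to be quasi-essential for $f$ itself, contradicting fullness of $f$ unless the interpolated coefficient is a ghost — which is exactly what is needed. So the inheritance is automatic, and no new computation is required beyond Lemma~\ref{lem:fact2}.
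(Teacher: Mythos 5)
Your proposal is correct and matches the paper's approach: the paper presents the corollary as an immediate consequence of Lemma~\ref{lem:fact2}, and you have supplied exactly the missing observations needed to justify it, namely that a tangible interior coefficient of a full polynomial must be essential (since quasi-essential monomials of a full polynomial are required to have ghost coefficients, and a full polynomial has a monomial at every lattice point of $\oC_f$), so $(i,\al_i^\nu)$ is a vertex of $\oC_f$, and the two halves of $\oC_f$ cut at this vertex, one shifted horizontally and one rescaled by the tangible $\al_i$, are precisely $\oC_{g_1}$ and $\oC_{g_2}$ with parities preserved.
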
 We  call this a factorization \textbf{along a
tangible vertex}; note in this case that $f$ is tangibly-full iff
$g_1$ and $g_2$ are both tangibly-full. Using Corollary
\ref{tangfact} repeatedly, we have:

\begin{prop}\label{strongfull}  Suppose $F = \clF.$ Any tangibly-full polynomial $f\in F \pl \la \pr$ is
the  product of some power of $\la$ with a product of tangible
binomials.\end{prop}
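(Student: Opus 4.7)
The plan is to induct on $\deg f$, peeling off a tangible-binomial factor at each step via Corollary~\ref{tangfact}, and to dispatch the base case by directly computing $(\la+r)^s$ in $F\pl\la\pr$. First I would extract the maximal power of $\la$ dividing $f$: writing $f = \sum_{j=0}^t \a_j\la^j$ with $\a_t\ne\rzero$ and letting $m$ be the smallest index with $\a_m\ne\rzero$, the leftmost point $(m,\a_m^\nu)$ is automatically a vertex of $\oC_f$, hence essential, and tangibly-fullness then forces $\a_m\in\tT$. So $f = \la^m g$ with $g = \sum_{j=0}^{t-m}\a_{m+j}\la^j$ tangibly-full of degree $s := t-m$ and nonzero tangible constant term~$\a_m$; it suffices to express $g$ as a product of tangible binomials.

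I would then induct on $s$. If $g$ has some tangible essential monomial $\a_i\la^i$ with $0 < i < s$, then Corollary~\ref{tangfact} yields $g = g_1 g_2$ with $g_1, g_2$ tangibly-full of strictly smaller degree and nonzero tangible constant terms (the leftmost and rightmost vertices of each factor being inherited as tangible essential vertices of $g$), so the induction hypothesis applies to each factor.

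The remaining case---and the main obstacle---is when $g$ has only the two essential vertices $(0,\a_m^\nu)$ and $(s,\a_t^\nu)$, all intermediate lattice points of $\oC_g$ being quasi-essential ghost monomials. Here the plan is to set $r = (\a_m\a_t^{-1})^{1/s}\in\tT$, which exists since $F=\clF$, and to verify $g = \a_t(\la+r)^s$ in $F\pl\la\pr$ coefficient-by-coefficient. Supertropicality collapses each middle binomial coefficient $\binom{s}{i}\ge 2$ in the expansion into a ghost, giving
\[
(\la+r)^s \ = \ \la^s \ + \ \sum_{i=1}^{s-1}(r^{s-i})^\nu\la^i \ + \ r^s.
\]
The coefficient of $\la^i$ in $\a_t(\la+r)^s$ for $1 \le i \le s-1$ then has $\nu$-value $(\a_t^\nu)^{i/s}(\a_m^\nu)^{(s-i)/s}$, which is precisely the linear interpolation at position $i$ between $(0,\a_m^\nu)$ and $(s,\a_t^\nu)$, and hence matches the $\nu$-value of the quasi-essential ghost coefficient of $g$ at $\la^i$ forced by fullness; since ghost elements coincide iff their $\nu$-values do, the two polynomials agree coefficient-by-coefficient. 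Absorbing $\a_t$ into one factor as $\a_t\la+\a_t r$ then writes $g$ as a product of $s$ tangible binomials, completing the induction.
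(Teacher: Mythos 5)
Your proposal is correct, and it follows the same basic strategy as the paper: the paper's proof consists of the single sentence ``Using Corollary~\ref{tangfact} repeatedly,'' i.e., peel off factors at the interior tangible vertices of $\oC_f$. Where you go beyond the paper is in the terminal case. Iterating Corollary~\ref{tangfact} at all interior tangible vertices leaves factors $g$ whose polyhedron $\oC_g$ is a single edge --- i.e.\ tangibly-full polynomials all of whose interior lattice points are quasi-essential ghosts --- and Corollary~\ref{tangfact} no longer applies (its hypothesis requires an interior tangible coefficient). For such a $g$ of degree $s\ge 2$ (e.g.\ $\la^2+2^\nu\la+4$ over $D(\R)$), the paper does not spell out why $g$ is a product of tangible binomials; the geometric Note that follows the proposition treats this as visually obvious, and Example~\ref{power} gives the hint. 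Your coefficient-by-coefficient verification that such a $g$ equals $\a_t(\la+r)^s$ with $r=(\a_m\a_t^{-1})^{1/s}\in\tT$ (using supertropicality/Proposition~\ref{lem:powOfPol0} to collapse the middle binomial coefficients to ghosts, and the single-edge condition to match the $\nu$-values of the quasi-essential ghost coefficients) is exactly the missing justification, and it is correct. Two minor points: you implicitly need $s\ge 1$ when forming $(\a_m\a_t^{-1})^{1/s}$, so the degenerate case $f=\a_t\la^t$ should be noted separately (it is a power of $\la$ times a tangible scalar, which the statement tacitly tolerates); and the notational slip of writing $\a_i$ for the $i$-th coefficient of $g$ when it is really $\a_{m+i}$ of $f$ is harmless. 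Overall your write-up is a more complete version of the paper's argument rather than a different one.
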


 \begin{Note} One also could prove Proposition \ref{strongfull} geometrically, which provides the dividend that the factorization
 is unique up to $(\nu,e)$-equivalence: We subdivide the graph of $f$ to its lines of
different slopes. In other words, if $f = \sum _{i=0}^n \a _i \la
^i$ where the slope changes at $\la ^t$, then one sees easily that
$f = gh$ where $g = \sum  _{i=0}^{n-t} {\a _{i+t}} \la ^i$ and $h
= \sum _{j=0}^{t} \frac{\a _{j}}{\a_t} \la ^j.$ Different products
of  binomials clearly produce different graphs, and thus the
factorization is unique. \end{Note}

\begin{cor} When $F = \clF,$ any irreducible tangibly-full
polynomial in one indeterminate must be a binomial.
 \end{cor}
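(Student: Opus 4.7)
The plan is to read the result directly off of Proposition~\ref{strongfull}. Suppose $f \in F\pl\la\pr$ is tangibly-full and $e$-irreducible. First I would invoke Proposition~\ref{strongfull} to obtain a factorization
\[ f \eqR \la^{k}\, b_{1}\, b_{2}\cdots b_{m} \]
in $F[\la]$, where $k\ge 0$ and each $b_{j}$ is a tangible binomial. Each factor on the right (both $\la$ and every $b_{j}$) is nonconstant and, in particular, not $e$-equivalent to a unit of $F[\la]$.

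Next I would appeal to the definition of $e$-irreducibility (Definition~\ref{def:rRed}): any $e$-factorization $f \eqR gh$ must have $g$ or $h$ equivalent to a constant. In particular, the total count $k+m$ of nonconstant factors above must be at most $1$; otherwise I could split the product arbitrarily into two groups, each still nonconstant, giving a nontrivial $e$-factorization of $f$ and contradicting irreducibility. This leaves exactly two possibilities: $(k,m)=(0,1)$, yielding $f\eqR b_{1}$, a tangible binomial; or $(k,m)=(1,0)$, yielding $f\eqR \la$.

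The only delicate point is the conventional one of whether $f\eqR \la$ should itself be counted as a binomial; I would handle this by viewing $\la$ as the degenerate binomial $\la+\fzero$ (or by observing that the interesting content of the corollary concerns the nonmonomial case). Beyond that, the argument is purely bookkeeping on the factor count, since the substantive work--that every tangibly-full polynomial factors into powers of $\la$ and tangible binomials over the divisibly closed supertropical semifield $F$--has already been carried out in Proposition~\ref{strongfull}. For this reason I do not anticipate any genuine obstacle; the proof is essentially a one-line corollary.
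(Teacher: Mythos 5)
Your argument is correct and follows the same route the paper leaves implicit: the corollary is stated without a displayed proof and is clearly intended to be read off Proposition~\ref{strongfull} by exactly the factor-counting observation you give. Your treatment of the boundary case $f \eqR \la$ is the right thing to note; the paper's own list of irreducibles (Example~\ref{linearirr}, Type I.a) shows $\la$ is indeed irreducible and tangibly-full, so either one reads ``binomial'' loosely to include $\la = \la + \fzero$ as you suggest, or one views the corollary as tacitly addressing the nonmonomial case.
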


\subsubsection{Factoring arbitrary full polynomials in one
indeterminate}\label{facfull}

When considering  full polynomials that are not necessarily
tangibly-full, we  must face the fact that not every nonlinear
polynomial $f$ is $e$-reducible; for example, one can easily check
that $f = \lm^2 + \uuu{2}\lm + 3$ is $e$-irreducible. We need an
intermediate notion.

\begin{defn} A polynomial $f = \sum _{i=0}^t \al _i \la
^i$ is \textbf{semitangibly-full} if $f$ is full with $\al_t$ and
$\al _0$ tangible, but $ \al _i$ are ghost for all
$0<i<t$.\end{defn}

 Dividing out by $\a _t,$ we may assume that our
semitangibly-full polynomial is monic.
 We have the following
observation:
\begin{lem}\label{nontangfact} If $f = \la ^ t + \al _0 + \sum _{i=1}^{t-1} \al _i^\nu \la ^i$ is monic
semitangibly-full for $t>2$ (where $\a_i$ are taken tangible),
 then
 taking  $$\delta  = \frac {\a_0}{\a _1}, \qquad \beta _i =  \frac {\a_i}{\a
_{t-1}},$$  (both tangible), we have
\begin{equation}\label{ugh} f = ( \la ^2 + \al _{t-1}^\nu\la +
\delta \al _{t-1}) g,\end{equation} where $g = \la^{t-2} +  \bt_1
+\sum _{i=2}^{t-2} \beta_{i} ^\nu \la^{i-1} $. Thus, we can
 extract a quadratic factor from any monic semitangibly-full
polynomial of degree $\ge 2$.
\end{lem}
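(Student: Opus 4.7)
The plan is to verify the factorization $f = qg$ directly, by expanding the product $qg$ with $q = \la^2 + \a_{t-1}^\nu \la + \delta \a_{t-1}$ and matching coefficients against $f$ one degree at a time. Write $g_k$ for the coefficient of $\la^k$ in $g$: by construction, $g_0 = \beta_1 = \a_1/\a_{t-1}$ and $g_{t-2} = \rone$ are tangible, while $g_k = (\a_{k+1}/\a_{t-1})^\nu$ is ghost for $0 < k < t-2$. The coefficient $c_j$ of $\la^j$ in $qg$ is then $c_j = g_{j-2} + \a_{t-1}^\nu \, g_{j-1} + \delta \a_{t-1} \, g_j$, with the convention $g_k = \rzero$ outside $\{0,\dots,t-2\}$. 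I want to show $c_t = \rone = \a_t$, $c_0 = \a_0$, and $c_j = \a_j^\nu$ for $0 < j < t$.

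The extreme terms are immediate: $c_t = g_{t-2} = \rone$ and $c_0 = \delta \a_{t-1} \beta_1 = (\a_0/\a_1)\a_{t-1}(\a_1/\a_{t-1}) = \a_0$. For $0 < j < t$, the crucial observation is that the middle summand $\a_{t-1}^\nu g_{j-1}$ always evaluates to $\a_j^\nu$, regardless of whether $g_{j-1}$ is the tangible $\beta_1 = \a_1/\a_{t-1}$ (case $j=1$), the ghost $(\a_j/\a_{t-1})^\nu$ (cases $1 < j < t-1$), or the tangible $\rone$ (case $j=t-1$); in each case the multiplication reduces to $\a_{t-1}^\nu$ times something whose product with $\a_{t-1}$ is $\a_j$, yielding $\a_j^\nu$. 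Thus the middle term already produces the desired ghost coefficient of $\la^j$, and the task reduces to showing that the two flanking summands $g_{j-2}$ and $\delta \a_{t-1} g_j$ are $\nu$-dominated by $\a_j^\nu$, so that they are absorbed in the tropical sum.

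After substitution, these dominance conditions reduce in every case to the two inequalities $\a_j \a_{t-1} \nuge \a_{j-1}$ and $\a_1 \a_j \nuge \a_0 \a_{j+1}$. Both are instances of the log-concavity of the coefficient sequence: by Remark \ref{incsl}, the slopes of the edges of $\oC_f$ are non-increasing, which (together with $\a_t = \rone$) yields $\a_j/\a_{j-1} \nuge \a_t/\a_{t-1} = \rone/\a_{t-1}$ and $\a_1/\a_0 \nuge \a_{j+1}/\a_j$ for each $0 < j < t$. Thus fullness of $f$ is precisely what powers the verification. The main obstacle is the bookkeeping in the four boundary cases $j \in \{1, 2, t-2, t-1\}$, where one of $g_{j-2}$, $g_{j-1}$, or $g_j$ is one of the tangible endpoints $\beta_1$ or $\rone$ rather than an interior ghost; but in every such case the same fullness inequality controls the comparison, and whenever a tangible flanking term happens to have $\nu$-value equal to $\a_j^\nu$, supertropicality ensures the ghost $\a_j^\nu$ still absorbs it, so $c_j = \a_j^\nu$ exactly, completing the coefficient-by-coefficient match and establishing $f = qg$.
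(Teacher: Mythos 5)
Your proof is correct and takes essentially the same approach as the paper's: the paper also verifies the factorization coefficient-by-coefficient (``along the same lines as Lemma~\ref{lem:fact2}''), identifies the cross-term $(\al_{t-1}\la)(\bt_i \la^{i-1}) = \al_i$ as the dominant contributor to each intermediate coefficient, and controls the flanking terms via the same decreasing-slope inequalities $\al_{t-1} \nug \frac{\al_{i-1}}{\al_i}$ and $\frac{\al_i}{\al_{i+1}} \nug \frac{\al_0}{\al_1}$. If anything, your account is slightly more careful: the paper asserts \emph{strict} dominance, which can fail when consecutive slopes coincide (interior quasi-essential monomials on a straight edge of $\oC_f$), whereas you correctly note that non-strict dominance suffices because the middle summand is already a ghost and absorbs any $\nu$-equal flanking contribution by supertropicality.
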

\begin{proof}
The verification is along the same lines as Lemma~\ref{lem:fact2}.
Namely, the constant terms match, and in the middle, the term
$(\al _{t-1}\la )(\bt _{i}\la^{i-1})= \al_i$ strictly dominates
$\la^2 (\bt _{i-1}\la^{i-2})$ and $(\delta \al _{t-1})(\bt
_{i+1}\la^{i})$, because the slopes of the graph decrease.
(Explicitly, we see that $\al _{i}$ strictly dominates $\bt _{i-1}
= \frac {\al_{i-1}}{\al_{t-1}}$ since $\al_{t-1}\nug   \frac
{\al_{i-1}}{\al _{i}} ,$ and $\al _{i}$ strictly dominates $\delta
\al _{t-1}\bt _{i+1} = \frac {\a_0}{\al_1} \a_{i+1}$ since $ \frac
{\al_{i}}{\al _{i+1}}  \nug \frac {\a_0}{\al_1} .)$\end{proof}

 A qualitative way of
obtaining Equation~\Ref{ugh} is by taking the $\nu$-equivalent
polynomial $\tilde f$ obtained by making each coefficient
tangible, taking the product $\tilde h$ of two linear factors of
$\tilde f$ (we took the first and the last in descending order of
$\nu$-values), writing $\tilde f = \tilde h \tilde g,$ and then
making the inner coefficients ghosts.

It remains to factor a polynomial to semitangibly-full
polynomials, which we do by means of the following observation.
\begin{rem}\label{nontangfact1} Suppose $f = \sum _{i=0}^t \al _i
\la ^i$, where $\a_t= \rone^\nu$.  Then
$$f = (\la^\nu  +{\al _{t-1}})\sum _{i=0}^{t-1} \frac {\al _i}{\al
_{t-1}} \la ^i.$$  \end{rem}

 We call a linear
polynomial $\la^\nu +a$ a \textbf{linear left ghost}. Thus,
whenever the leading terms are ghost we can use
 Remark \ref{nontangfact1} to factor out linear left ghosts until we reach a tangible leading term. But if we do this
twice, we observe for tangible $a,b$ with $a \ge_\nu b$ that
$$(\la^\nu +a)(\la^\nu + b) = \rone^\nu \la ^2 + a^\nu \la + a  b = (\la+a)(\la^\nu +
b).$$ Thus, we always can adjust our factorization to have at most
one linear left ghost factor $\la ^\nu +b$ for $b$ tangible, and
this is the  $b$ having the minimal $\nu$-value for those factors
$\la^\nu + b$ which can appear as linear left ghosts.

This reduces our considerations to the case where $f$ is monic but
with the constant term  ghost. Now we define a \textbf{linear
right ghost} to have the form
 $\la + a^\nu$.
When the constant term is ghost we can factor out some linear
right ghost, and arrange for $f = (\la + a^\nu)h,$ where $h$ can
be factored along tangible vertices to get semitangibly-full
factors, and we continue as above.

Putting together
Corollary~\ref{tangfact} with Lemma~\ref{nontangfact}, we see that
any irreducible full polynomial $f$ must have no tangible interior
vertices, and at most one interior lattice point (whose
corresponding vertex must be  nontangible); thus, $f$ must either
be linear or quadratic, of the form
\begin{equation}\label{eq:quadCom} \al_2 \la^2 + \al_1^\nu \la +
\al_0,
\end{equation} where $\al_1^\nu \la $ is essential.
 Iterating, we have the following result:
\begin{prop}\label{fullfact} Every full polynomial is the product
of at most one linear factor of the form $(\la +a^\nu)$ (namely
with $a^\nu$ maximal possible), at most one linear factor of the
form $(\la^\nu + b)$ (with $b$ tangible and $b^\nu$ minimal
possible), tangible linear factors, and
 semitangibly-full quadratic polynomials. \end{prop}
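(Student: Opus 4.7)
The plan is to process the full polynomial $f$ from the outside in, peeling off linear left and right ghost factors to reduce to the case that both the leading coefficient and the constant term are tangible, then splitting along tangible interior vertices via Corollary~\ref{tangfact}, and finally handling each remaining factor either by Proposition~\ref{strongfull} (if tangibly-full) or by iterating Lemma~\ref{nontangfact} (if semitangibly-full with no tangible interior vertex).

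First I would deal with the leading coefficient. If $\a_t \in \tG$, apply Remark~\ref{nontangfact1} to split off a factor $\la^\nu + a$. Iterating, one could in principle produce several linear left ghost factors, but using the identity $(\la^\nu + a)(\la^\nu + b) = (\la + a)(\la^\nu + b)$ for $a\nuge b$ (both tangible) -- established just after Remark~\ref{nontangfact1} -- any consecutive pair of linear left ghosts absorbs into a tangible linear factor times a single linear left ghost. Collecting greedily (keeping at each step the factor of minimal $\nu$-value) leaves at most one linear left ghost $\la^\nu + b$, with $b^\nu$ minimal among the admissible choices, and a residual polynomial whose leading coefficient is tangible. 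Dually, if the constant term is a ghost, I would apply the analogous construction on the other end -- factoring out linear right ghosts $\la + a^\nu$ and combining pairs into $(\la + a^\nu)(\la + c) = (\la + a^\nu)(\la^\nu + c)$-style simplifications -- so that at most one linear right ghost survives, with $a^\nu$ maximal among the admissible choices. One should verify here that these reductions preserve fullness of the complementary factor, but this is immediate from matching graphs $\oC_f$ edge by edge as in the proof of Lemma~\ref{lem:fact2}.

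At this stage the remaining factor $h$ is full with both $\a_0$ and $\a_t$ tangible. Now I would run through the interior lattice points of $\oC_h$: whenever an interior vertex corresponds to a tangible coefficient, Corollary~\ref{tangfact} lets me write $h = g_1 g_2$ along that vertex with both $g_1, g_2$ full and both having tangible leading and constant coefficients. Iterating, the process terminates (each split strictly reduces the number of interior tangible vertices), leaving a product of full factors each of which has \emph{no} tangible interior vertex. Any such factor is either tangibly-full (no interior vertex at all, in which case Proposition~\ref{strongfull} writes it as a product of tangible binomials -- powers of $\la$ do not occur since the constant term is tangible), or it has only ghost interior vertices with tangible endpoints -- this is precisely a semitangibly-full polynomial.

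Finally, for each semitangibly-full factor of degree $\ge 3$, I would apply Lemma~\ref{nontangfact} to extract a quadratic factor of the form \eqref{eq:quadCom}, leaving a semitangibly-full quotient of strictly smaller degree; induction on degree completes the decomposition into semitangibly-full quadratics (and, when the residual degree is $\le 2$, into at most one such quadratic or a tangible linear factor). The main obstacle I anticipate is bookkeeping: ensuring that after peeling a linear left ghost on one end and a linear right ghost on the other, no further linear ghost factors can be forced out of the inner part -- equivalently, that the recombination identities really do leave at most one of each type, with the claimed extremal property of $a^\nu$ and $b^\nu$. This is guaranteed by the convexity of $\oC_f$ (Remark~\ref{incsl}), which prevents the slopes at the two endpoints from reappearing inside and thereby fixes the extremal choices uniquely.
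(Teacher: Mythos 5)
Your proposal tracks the paper's own argument (the discussion in \S\ref{facfull} that precedes the proposition) step for step: peel linear left ghosts via Remark~\ref{nontangfact1} and collapse repeated ones with the stated absorption identity, do the dual peeling on the constant end, split along tangible interior vertices via Corollary~\ref{tangfact}, and extract quadratics from semitangibly-full pieces with Lemma~\ref{nontangfact}. One small correction: the identity you write for the constant end, $(\la + a^\nu)(\la + c) = (\la + a^\nu)(\la^\nu + c)$, fails because the two sides have leading coefficients of different parity; the correct dual of the paper's left-ghost identity is $(\la + a^\nu)(\la + b^\nu) = (\la + a^\nu)(\la + b)$ for tangible $a \nuge b$, which is what actually absorbs a second right ghost into a tangible linear factor, and with that fix the argument goes through.
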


\begin{rem} \label{tangfactyy} Suppose both the leading and constant
coefficients of $f$ are ghosts, so that we have extracted the
right ghost $(\la +a^\nu)$ and  left ghost $(\la^\nu + b)$. When $a \ge_\nu b,$ we also have
$$(\la + a^\nu )(\la ^\nu + b) = \rone^\nu \la ^2 + a ^\nu\la + (a
b)^\nu.$$\end{rem}

\subsubsection{Uniqueness(?) of factorizations of polynomials in one
indeterminate}

Assume throughout this subsection that $F = \clF$. Having shown
that any full polynomial in one indeterminate has a factorization
into irreducibles of degree $\le 2$, we turn in earnest to the
companion question, of uniqueness of factorization of a (not
necessarily full)
 polynomial into irreducibles. The answer turns out to be
quite interesting, involving subtleties that do not exist in the
classical theory of polynomials. Although   unique factorization
fails in $F[\la]$, there is a version of unique factorization
``minimal in ghosts,'' which is seen to have a natural connection
to the set of roots of the polynomial.

We immediately encounter new difficulties.
\begin{example}\label{exmp:factorization}$ $
\begin{enumerate} \eroman
    \item
 The  factorization into $e$-irreducibles need not necessarily be
unique, even up to $\eqR$; for example $\lm^2 + \uuu{2} \eqR (\lm
+ \uuu{1})^2$ and at the same time $\lm^2 + \uuu{2} \eqR (\lm
+1)(\lm + \uuu{1})$, whereas $\lm + \uuu{1} \neqR \lm + 1$. \pSkip
\item  Another violation of unique factorization: for $\ra \nuge
\rb,$ we have $(\la + \ra^\nu)(\la + \rb) = \la ^2 + \ra^\nu \la +
(\ra \rb)^\nu = (\la + \ra^\nu )(\la + \rb^\nu).$
\item  The previous examples still have unique
$(\nu,e$)-factorization. A more serious violation of unique
factorization:$$
\begin{array}{lll}
    \la ^4 + 4^\nu\la^3 + 6^\nu\la ^2 +5^\nu\la +3  &= & (\la^2 + 4^\nu \la + 2)(\la ^2 + 2^\nu \la +1) \\
                                                     & =  & (\la^2 + 4^\nu \la + 2)(\la  + 2)(  \la +(-1)) \\
                                                     & = &  (\la ^2 +4 ^\nu \la +3)(\la ^2 + 2^\nu \la + 0), \\
                                                         \end{array}
$$
 all of which are factorizations into $e$-irreducibles. \end{enumerate}
\end{example}

The last example is an illustration that the factorization
procedure  of Lemma~\ref{nontangfact} is not unique; we could
factor out any two tangible roots of $\tilde f$ to produce the
first factor, just so long as their $\nu$-values are not both
maximal or both minimal (in which case this trick does not work).
Since we may permute the factors, we may always assume that the
tangible root of highest $\nu$-value belongs to the first factor.

\begin{example}\label{manyfacts} This method explains the different factorizations in
the   polynomial
$$   f = \la ^4 + 4^\nu \la^3 + 6^\nu \la^2
+5^\nu \la +3 $$
of Example~\ref{exmp:factorization} (iii).
  Clearly $f$ is semitangibly-full and
 has the four corner
  roots $-2,-1,2,$ and $4$, so we can take the first quadratic
  factor to be
$\la^2 + 4^\nu \la + 2$ or $\la^2 + 4^\nu \la + 3$. In the first
case, the second quadratic factor is $\la ^2 + 2^\nu \la +1$, but
we could use $\la ^2 + 2  \la +1$ instead, which factors to $(\la
+ 2)(\la + (-1)).$ (This will be explained in Proposition~
\ref{mingho}.)

Had we tried $\la^2 + 4^\nu \la + 6$ for the first factor, we
would need $\la^2 +(-1)^\nu\la + (-3)$ for the second factor, but then
the product is $$\la ^4 + 4^\nu\la^3 + 6\la ^2 +5^\nu\la +3, $$
which is not quite $f$ (since it has a tangible inner
coefficient).
\end{example}

 Nevertheless, there is a version of
unique factorization in one indeterminate.  In
conjunction with Remark~\ref{nontangfact1} and Proposition
\ref{fullfact}, we have proved the following result concerning
unique factorization:

\begin{thm}\label{fullfacty} Any full polynomial in one indeterminate is the unique product of a full tangible
polynomial (which can be factored uniquely into tangible linear
factors), a linear left ghost, a linear right ghost, and
semitangibly-full
 polynomials of maximal possible degree.\end{thm}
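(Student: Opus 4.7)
The plan is to establish existence by refining the factorization supplied by Proposition~\ref{fullfact}, then derive uniqueness from the geometry of the essential polyhedron $\oC_f$ (which is canonically associated to $f$ by Remark~\ref{Fullofit}).

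For existence, I would start with the factorization produced in Proposition~\ref{fullfact}: at most one linear left ghost, at most one linear right ghost, tangible linear factors, and semitangibly-full quadratic factors. The tangible linear factors collect into a single full tangible polynomial (their product is tangibly-full by Proposition~\ref{strongfull} read in reverse, and it occupies the segment of $\oC_f$ whose vertices are all tangible). Adjacent semitangibly-full quadratic factors are then amalgamated into longer semitangibly-full polynomials whenever the product remains semitangibly-full, i.e.\ whenever the intermediate vertex of $\oC_f$ is a ghost; by Corollary~\ref{tangfact} this amalgamation is blocked precisely at a tangible interior vertex. Iterating yields semitangibly-full factors of maximal possible degree, each corresponding to a maximal run of consecutive edges of $\oC_f$ whose common intermediate lattice points are all ghost, bounded by tangible vertices.

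For uniqueness, given any factorization of $f$ in the claimed form, I would argue that the essential polyhedron $\oC_f$ decomposes as the concatenation of the essential polyhedra of the factors, with the tangible/ghost labels of the intermediate lattice points determined by the factors. The presence of the linear left ghost is then forced by whether the leading coefficient of $f$ is a ghost; when present, the minimality of $b^\nu$ pins down $b$ uniquely (using Remark~\ref{nontangfact1}) as the unique choice compatible with the adjacent factor having a tangible leading coefficient, and symmetrically for the linear right ghost using Remark~\ref{tangfactyy}. Once these extremal ghost factors are removed, the remaining polynomial has tangible leading and constant coefficients, and Corollary~\ref{tangfact} must then be applied at every tangible interior vertex of $\oC_f$ --- otherwise some factor would be semitangibly-full yet contain a tangible interior vertex, contradicting the definition. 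This forces the partition of $\oC_f$ into the semitangibly-full blocks of maximal degree and the intervening all-tangible segments, which combine into the full tangible polynomial; the latter then factors uniquely into tangible linear factors by Corollary~\ref{corfactor1}.

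The main obstacle will be the careful bookkeeping at the two ends, where the linear left ghost (resp.\ right ghost) interacts with the leading (resp.\ constant) coefficient of the adjacent factor: one must verify that one cannot trade ghostness between the extremal ghost factor and the neighbouring tangible-or-semitangibly-full factor while preserving the prescribed form, so that the minimality and maximality conditions on $b^\nu$ and $a^\nu$ really do single out a unique decomposition. Examples~\ref{exmp:factorization} and~\ref{manyfacts} show precisely the sort of ambiguity that the "maximal degree" condition is designed to eliminate, and checking that our grouping procedure exhausts these ambiguities will require a direct comparison of two hypothetical decompositions along the common $\oC_f$.
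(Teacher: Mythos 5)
Your overall strategy matches the paper's: the decomposition of Theorem~\ref{fullfacty} is governed by the tangible interior vertices of $\oC_f$, and uniqueness comes from the fact that a semitangibly-full factor cannot span such a vertex. The paper's proof, though stated in one line, goes there directly: it applies Corollary~\ref{tangfact} once at each tangible interior vertex, which \emph{already} yields the maximal-degree semitangibly-full blocks (as the pieces bounded by consecutive tangible vertices and containing ghost vertices in between), and then collects the purely tangible pieces into $f_t$.

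Your detour via the quadratic factorization of Proposition~\ref{fullfact} is workable in principle but introduces an imprecision you should address: the quadratics produced there (through iterating Lemma~\ref{nontangfact}) pair the \emph{extremal} remaining roots of a semitangibly-full block, so their root intervals are nested, not adjacent. ``Adjacent quadratic factors'' with a shared ``intermediate vertex of $\oC_f$'' is therefore not the right description of what you are amalgamating; indeed, pairing \emph{consecutive} roots would generically produce a tangible coefficient and is not a valid factorization of a semitangibly-full block. If you want to merge, you must merge the whole nested family at once, which is the same thing as saying: take the full piece between two consecutive tangible vertices --- i.e.\ the paper's direct construction. A second phrasing issue: the product of the tangible linear factors does not ``occupy the segment of $\oC_f$ whose vertices are all tangible,'' since those tangible edges are in general scattered over several arcs of $\oC_f$; the polyhedron of $f_t$ is those edges re-sorted by slope.

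Your uniqueness paragraph supplies detail that the paper leaves implicit, and the key observation --- that any factor other than a tangible linear cannot contain a tangible interior vertex --- is exactly right and is what forces the partition. Two small corrections: $\oC_{fg}$ is the Minkowski sum, not the concatenation, of $\oC_f$ and $\oC_g$, and the tangible/ghost labeling of the product's vertices is \emph{not} automatic from the factors' labels --- it holds here precisely because the root intervals of the distinct factors in the claimed decomposition are pairwise disjoint, a point worth stating explicitly. With these adjustments your argument is correct and amounts to the proof the paper intended.
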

\begin{proof} Just factor at each tangible vertex, and multiply
together the full tangible factors.\end{proof}

This brings us back to semitangibly-full polynomials. By Remark
\ref{nontangfact1}, for $F = \bar F,$ any semitangibly-full
polynomial can be factored  into tangible linear and
semitangibly-full quadratic factors. Despite Example
\ref{exmp:factorization}, we also get uniqueness of a sort here,
when we count the number of non-tangible quadratic components, cf.
Equation \Ref{eq:quadCom}. We say a factorization is
\textbf{minimal in ghosts} if it has the minimal number of
irreducible quadratic components having essential ghost terms;
this type of factorization turns out to be unique.

\begin{example} In Example \ref{manyfacts},   the latter is the factorization of
 $f$ which is minimal in ghosts, having only one ghost component.
\end{example}

\begin{lemma}\label{lem:fact3}
Suppose that  $a_i \la ^i$   and $a_{i+1}\la^{i+1}$ are  essential
monomials of
 $f$, such that $\delta a_{i+1} = a_i$ for $\dl$ tangible. (This
 means that $ a_i, a_{i+1}$ are both tangible or both ghost.)
Then  $$f =
 \left(\al_t\lm^{t-1} +  \cdots +
\al_{i+1}\lm^i + \frac{\a_{i-1}}{\dl} \lm^{i-1} +
\frac{\a_{i-2}}{\dl} \lm^{i-2} + \cdots +
\frac{\a_{0}}{\dl}\right)\left(\lm + \dl\right).$$
 \end{lemma}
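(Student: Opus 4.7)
The plan is to verify the factorization by direct expansion of the right-hand side. Let $p(\la)$ denote the first (longer) factor, so the goal is $p(\la)(\la+\delta) \eqR f(\la)$. First I would multiply out: writing $p(\la) = \sum_{k=0}^{t-1} c_k \la^k$ with $c_k = \a_{k+1}$ for $i \le k \le t-1$ and $c_k = \a_k/\delta$ for $0 \le k \le i-1$, the coefficient of $\la^j$ in the product is $q_j = c_{j-1} + \delta c_j$. Using the hypothesis $\delta\a_{i+1}=\a_i$ at the crossover $j = i$, and the tangibility of $\delta$ at $j = 0$, this computes to
\begin{equation*}
q_j \,=\, \begin{cases} \a_0, & j = 0,\\ \a_j + \a_{j-1}/\delta, & 1 \le j \le i,\\ \a_j + \delta\a_{j+1}, & i+1 \le j \le t-1,\\ \a_t, & j = t. \end{cases}
\end{equation*}
In particular $\a_j$ is a summand of every $q_j$, so $q(c) \nuge f(c)$ at every substitution $c \in F$.

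For the reverse $\nu$-inequality, I would show that at every $c \in F$ each ``extra'' monomial $(\a_{j-1}/\delta)c^j$ (for $1 \le j \le i$) or $\delta\a_{j+1}c^j$ (for $i+1 \le j \le t-1$) is $\nu$-dominated by some genuine monomial $\a_k c^k$ of $f(c)$. The tool is the essential polyhedron $\oC_f$: by hypothesis, $\a_i\la^i$ and $\a_{i+1}\la^{i+1}$ are essential, so $(i,\a_i^\nu)$ and $(i+1,\a_{i+1}^\nu)$ are consecutive vertices of $\oC_f$, joined by an edge of slope $-\delta^\nu$. Convexity of $\oC_f$ then forces the slopes to the left of $i$ to be $\ge -\delta^\nu$ and those to the right of $i+1$ to be $\le -\delta^\nu$, yielding the estimates
\[
\a_{j-1}^\nu - \a_i^\nu \,\le\, (i-j+1)\delta^\nu \quad (j \le i), \qquad \a_{i+1}^\nu - \a_{j+1}^\nu \,\ge\, (j-i)\delta^\nu \quad (j \ge i+1).
\]

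For the extra term $(\a_{j-1}/\delta)c^j$ with $j \le i$, I would case-split on $c^\nu$ versus $\delta^\nu$: if $c^\nu \le \delta^\nu$, it is dominated by $\a_{j-1}c^{j-1}$ directly; if $c^\nu \ge \delta^\nu$, the first convexity estimate gives domination by $\a_{i+1}c^{i+1}$. Symmetrically, $\delta\a_{j+1}c^j$ for $j \ge i+1$ is dominated by $\a_{j+1}c^{j+1}$ when $c^\nu \ge \delta^\nu$ and by $\a_i c^i$ when $c^\nu \le \delta^\nu$. Either way the extra monomial is absorbed into $f(c)$, so $q(c) \nucong f(c)$ for every $c$, i.e.\ $q \eqR f$.

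The main technical point is arranging the case split on $c^\nu$ against $\delta^\nu$ so that the two convexity estimates on opposite sides of the distinguished edge of $\oC_f$ fit together to cover all substitutions uniformly. This is precisely where the two hypotheses -- essentiality of both $\a_i\la^i$ and $\a_{i+1}\la^{i+1}$, and $\delta\a_{i+1}=\a_i$ with $\delta$ tangible -- enter. The overall strategy parallels the proof of Lemma~\ref{lem:fact2}, but now pivots on two adjacent essential vertices joined by a tangible slope rather than on a single tangible essential vertex.
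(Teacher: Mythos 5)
Your argument is correct in substance but follows a genuinely different route from the paper's. The paper proves the stronger claim of \emph{actual} equality in $F\pl\la\pr$: it writes the coefficient of $\la^j$ in the product as a sum of two terms and shows, coefficient-by-coefficient, that the summand $\a_j\la^j$ strictly $\nu$-dominates the other -- using that the negated slopes $\a_{j-1}^\nu - \a_j^\nu$ sit strictly on the correct side of $\delta^\nu$ because $(i,\a_i^\nu)$ and $(i{+}1,\a_{i+1}^\nu)$ are vertices -- so the product literally reproduces $f$ in $F\pl\la\pr$. You instead treat the product as a function, evaluate at a generic $c$, and case-split on $c^\nu$ against $\delta^\nu$ to absorb the spurious monomials pointwise, which targets $e$-equivalence. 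Both proofs lean on the same geometric input (convexity of $\oC_f$ plus the two essential vertices), but the paper's coefficient comparison avoids the substitution and the case split and reaches the stronger polynomial identity; your version has the merit of making it transparent why each extra monomial of the product is harmless at every substitution.

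One step needs tightening: your final sentence passes from ``$q(c)\nucong f(c)$ for every $c$'' to ``$q\eqR f$.'' This is not automatic -- $\nu$-congruence at every point only gives weak $(\nu,e)$-equivalence, whereas $e$-equivalence requires $q(c) = f(c)$ exactly, including parity. Your estimates in fact deliver this if you note the strictness: for $c^\nu \ne \delta^\nu$ the dominations you invoke are strict (the inequality $(i{+}1-j)c^\nu \ge (i{+}1-j)\delta^\nu$ becomes strict), and when $c^\nu = \delta^\nu$ the two-vertex hypothesis forces $\a_k c^k <_\nu \a_i c^i \nucong \a_{i+1}c^{i+1}$ for every $k \ne i, i{+}1$, so each extra term sits strictly below the value of $f(c)$, which is already ghost. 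Thus $q(c) = f(c)$ holds on the nose, but the appeal to $\nucong$ should be replaced by this explicit strictness/parity check.
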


 \begin{proof} Denote the product by $g$ and let $h_j$ be its monomial of degree
 $j$. To see that $h_j = (\al_j\lm^{j-1})\lm$ for $j>i$, note that if $h_j
 \nucong
  (\al_{j+1} \lm^j)\dl  \ge_\nu  (\al_j\lm^{j-1})\lm$, then
 $ \frac{a_{i}}{a_{i+1 }} \nucong \dl \ge _\nu \frac{a_j}{a_{j+1}} $, which
 contradicts the fact that the sequence of slopes determined by
 the coefficients is descending.

 For $j = i$, we have
 $(a_{i+1} \lm^{i}) \dl = (a_{i+1} \lm^{i})\frac{a_{i}}{a_{i+1 }} =
 a_{i}\lm^i$, which strictly dominates $(\frac{a_{i-1}}{\dl} \lm^{i-1})\lm $ since the slopes of the graph decrease.
 Hence, $h_i = \al_i \lm^i.$

When $j <i $, $h_j = (\frac{\a_{j}}{\dl}
 \lm^{j})\dl$ since otherwise $h_j^\nu \ge_\nu\left(\frac{\a_{j-1}}{\dl}
 \lm^{j-1}\right)\lm$ by the same argument as for $j > i$, which leads
 to the analogous
 contradiction.
 \end{proof}

Putting all these results together yields:

\begin{thm}\label{fullfact2} When $F = \bar F$, any full polynomial in one indeterminate has a factorization into tangible linear
factors, quadratic semitangibly-full factors, at most one linear
left ghost and at most one linear right ghost, and the
factorization which is minimal in ghosts is unique.\end{thm}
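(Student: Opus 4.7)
The plan is to reduce to uniqueness within each semitangibly-full component via Theorem~\ref{fullfacty}, and then to pin down the unique essential-ghost quadratic in each component by analyzing its boundary coefficients.

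First I would apply Theorem~\ref{fullfacty} to decompose $f$ uniquely as $f = f_T \cdot L \cdot R \cdot f_1 \cdots f_k$, where $f_T$ is full tangible, $L$ and $R$ are at most one linear left (resp.\ right) ghost, and each $f_i$ is semitangibly-full of maximal degree. Corollary~\ref{corfactor1} factors $f_T$ uniquely into tangible linears, so the problem reduces to producing and characterizing the minimal-in-ghosts factorization of each semitangibly-full component $f_i$ of degree $t_i \geq 2$, with corner roots $r_1 \nug \cdots \nug r_{t_i}$. For existence I would take $g = \la^2 + r_1^\nu \la + r_1 r_{t_i}$ together with $h = (\la + r_2)(\la + r_3) \cdots (\la + r_{t_i-1})$ (tangibly-full by Proposition~\ref{strongfull}) and verify directly that $gh = f_i$: in logarithmic notation the $j$-th middle coefficient of $gh$ works out to the ghost $(r_1 + r_2 + \cdots + r_{t_i-j})^\nu$, matching the $\la^j$-coefficient of $f_i$. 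That one is the minimum number of essential-ghost quadratics is immediate, since a factorization into only tangible linears would be tangibly-full with tangible middle coefficients, contrary to the ghost middle coefficients of $f_i$.

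The hard part will be uniqueness of the essential-ghost quadratic in $f_i$. Suppose $f_i = g \cdot h$, where $g = \la^2 + c^\nu \la + d$ is irreducible (so $c^2 \nug d$) and $h$ is tangibly-full of degree $t_i - 2$ with corner roots $r'_1 \nug \cdots \nug r'_{t_i-2}$. Since the combined corner roots of $g$ and $h$ must equal the distinct corner roots of $f_i$, the multiset $\{c, d/c\} \cup \{r'_1, \ldots, r'_{t_i-2}\}$ equals $\{r_1, \ldots, r_{t_i}\}$ with no repetitions. Examining the coefficient of $\la^{t_i-1}$ in $gh$, which works out to $c^\nu + r'_1$ (in log, the ghost coming from $c^\nu h_{t_i-2} = c^\nu$ and the tangible from $h_{t_i-3} = r'_1$), and requiring it to be ghost to match the $r_1^\nu$-coefficient of $f_i$, the ghost summand $c^\nu$ must $\nu$-dominate the tangible $r'_1$; by distinctness this forces $c \nug r'_1$, and combined with $c$ being a corner root of $f_i$ exceeding every $r'_i$ this pins down $c = r_1$. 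A symmetric argument on the $\la^1$-coefficient $d h_1 + c^\nu h_0$ forces $d/c = r_{t_i}$. Hence $g$ is uniquely determined, and $h$ is then the unique tangibly-full polynomial with corner roots $r_2, \ldots, r_{t_i-1}$ by Corollary~\ref{corfactor1}. Combined with the uniqueness of the Theorem~\ref{fullfacty} decomposition and of $f_T$'s factorization into tangible linears, this would establish the claimed uniqueness of the minimal-in-ghosts factorization of $f$.
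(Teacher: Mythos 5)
Your proposal takes essentially the same route as the paper's one-line proof: reduce via Theorem~\ref{fullfacty} to the semitangibly-full components and exhibit the ghost quadratic of Proposition~\ref{mingho} in each. You make explicit the uniqueness argument that the paper leaves implicit --- pinning down the quadratic factor by matching the $\la^{t_i-1}$ and $\la$ coefficients to force $c \nucong r_1$ and $d/c \nucong r_{t_i}$ --- and this reasoning is correct, provided one also notes (as both you and the paper tacitly assume) that any minimal-in-ghosts irreducible factorization already splits at each tangible vertex of $\oC_f$, so that the per-component analysis applies.
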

\begin{proof} Just factor at each tangible vertex, then factor
inductively at pairs of adjacent ghost vertices, and multiply
together the full factors.\end{proof}

Here is another way of viewing Theorem \ref{fullfact2}.
\begin{cor}\label{rmk:fact}
  Any full polynomial $f$
 can be written as the product $f = f_t f_m$ where $f_t$
is tangible and $f_m$ is the product in Theorem \ref{fullfact2} of
(perhaps) a linear left ghost, a linear right ghost, and
semitangibly-full
 polynomials; $f_m$ has alternating tangible
and ghost coefficients, seen by applying Lemmas \ref{lem:fact2}
and \ref{lem:fact3} inductively for pairs of adjacent  tangible or
ghost monomials that are essential. We call this procedure
\textbf{extracting a minimal ghost factor}; note the minimality is
in essential ghosts. Accordingly, $f_t$ can be factored into
linear components, and the factorization of $f_m$ has at most two
linear components while all the others are quadratic.
\end{cor}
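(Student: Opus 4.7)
The plan is to apply Theorem~\ref{fullfact2} to obtain an initial factorization of $f$, unique among those minimal in ghosts, into tangible linear factors, quadratic semitangibly-full factors, at most one linear left ghost $\la^\nu + b$, and at most one linear right ghost $\la + a^\nu$. I would then group all the tangible linear factors together to form $f_t$, and let $f_m$ be the product of the remaining factors (the two possible ghost linear factors together with the quadratic semitangibly-full factors). Then $f_t$ is a full tangible polynomial, factorable into linear components by Corollary~\ref{corfactor1}, and $f_m$ has at most two linear components while all its other irreducible factors are quadratic, as required.

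It remains to verify that the essential monomials of $f_m$ alternate in parity between tangible and ghost. I would argue by contradiction: if two adjacent essential monomials $\al_i \la^i$ and $\al_{i+1}\la^{i+1}$ of $f_m$ had the same parity, then $\delta := \al_i/\al_{i+1}$ would be tangible, and Lemma~\ref{lem:fact3} would supply a factorization $f_m \eqR (\la+\delta)\,g$. Similarly, by Lemma~\ref{lem:fact2}, any pair of adjacent tangible essential monomials in $f_m$ would permit a further split along a tangible interior vertex, producing a new tangible linear factor. Either scenario contradicts the construction, since every tangible linear factor available through Theorem~\ref{fullfact2} has already been absorbed into $f_t$. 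Consequently, consecutive essential coefficients of $f_m$ must alternate in parity, and the construction is inductive: each extraction strictly lowers the degree, so the process terminates.

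The main obstacle I anticipate is the bookkeeping needed to legitimately apply Lemmas~\ref{lem:fact2} and~\ref{lem:fact3} \emph{within} $f_m$. One must first know that $f_m$ is itself full, so that the essentiality hypotheses of those lemmas truly hold at the adjacent monomials in question; this follows because multiplication of full polynomials induces a full product (both sides take full closures when reduced in $F[\la]$, cf.\ Remark~\ref{Fullofit}). A clean way to package the whole argument is geometric, via the essential polyhedron $\oC_{f_m}$: the parities assigned to its upper vertices must alternate along successive edges, since any two consecutive same-parity essential vertices would, by the two lemmas, yield an additional tangible linear factor contradicting the maximality of $f_t$. This confirms the alternating pattern of coefficients in $f_m$ and completes the proof.
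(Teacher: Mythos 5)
Your route — apply Theorem~\ref{fullfact2}, group the tangible linear factors into $f_t$, let $f_m$ be the rest, then argue alternation by contradiction — uses the same tools the paper does (Theorem~\ref{fullfact2} together with Lemmas~\ref{lem:fact2} and~\ref{lem:fact3}), but packages the alternation claim as an indirect argument where the paper's version is a direct constructive reduction. That reframing introduces a gap that your current write-up does not close.

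The contradiction you invoke is not, as stated, a contradiction. The factorization of $f$ given by Theorem~\ref{fullfact2} is one particular factorization — the unique one minimal in ghosts. If $f_m$ happens to admit some other factorization $f_m \eqR (\la+\delta)\,g$ with a tangible linear factor, that does not by itself conflict with having placed all the tangible linear factors \emph{of the minimal-in-ghosts factorization} into $f_t$. Nothing you wrote establishes ``maximality of $f_t$'' among tangible divisors. To make the contradiction go through you must appeal to the uniqueness/minimality clause of Theorem~\ref{fullfact2} and check the two cases: when $\al_i,\al_{i+1}$ are both ghost, the quotient $g$ coming from Lemma~\ref{lem:fact3} has one fewer essential ghost coefficient (the coefficients of $g$ are those of $f_m$ with the $i$-th position deleted and the lower ones rescaled by a tangible), so a ghost-minimal factorization of $g$ furnishes a factorization of $f$ with strictly fewer ghost quadratic components, contradicting minimality; when $\al_i,\al_{i+1}$ are both tangible, the alternative factorization has the same ghost count but an extra tangible linear factor, contradicting uniqueness. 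Without these two checks the argument is incomplete.

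Two smaller points. First, $\delta := \al_i/\al_{i+1}$ is \emph{not} tangible when $\al_i$ and $\al_{i+1}$ are both ghost — that quotient is a ghost. What Lemma~\ref{lem:fact3} actually requires is a tangible $\delta$ with $\delta\al_{i+1} = \al_i$; such $\delta$ exists (take any tangible element of the correct $\nu$-value), but it is not given by the literal quotient. Second, Lemma~\ref{lem:fact2} does not produce a linear factor: it splits $f$ into two full factors at a single tangible essential vertex. The lemma that produces the tangible linear factor at a pair of adjacent same-parity essential monomials (whether tangible or ghost) is Lemma~\ref{lem:fact3}, so the division of labor you describe between the two lemmas is off. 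The paper instead proceeds constructively: factor at tangible vertices via Lemma~\ref{lem:fact2}, peel off tangible linear factors at adjacent ghost vertices via Lemma~\ref{lem:fact3}, and observe that the residue automatically alternates. That direct construction avoids the uniqueness/minimality bookkeeping that your contradiction requires and that you have not supplied.
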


%

We can understand Theorem \ref{fullfact2} better, by considering
the tangible roots of a polynomial. In view of Remark~\ref{easyf},
these roots are determined by the tangible roots of its
$e$-irreducible factors. The case of one indeterminate is given in
Example \ref{linearirr}.

\begin{remark} Working backwards in Type IV of Example \ref{linearirr}, given a closed interval (or point)
$W$ in~$\tT,$ one can write the $e$-irreducible polynomial of
degree $\le 2$ whose   set of tangible roots is precisely $W$.

In general, given a closed subset $W$  of $\tT$, we write $W$ as a
finite union $W_1 \cup \cdots \cup W_t$ of disjoint closed
intervals (or points), and take an $e$-irreducible polynomial
$f_k$ of degree $\le 2$ whose set of tangible roots is precisely
$W_k$, for $1 \le k \le t.$ Then taking $  f = f_1 \cdots f_t,$ we
see that $\tZ_{\tng} ( f) = W$.
\end{remark}

Let us apply this process to an arbitrary semitangibly-full
 polynomial $f$.
 \begin{prop}\label{mingho} Suppose  $f$ is a semitangibly-full
 polynomial of degree $t$, and $\a _1, \dots, \a_t$ are
corner roots of $f$, arranged in ascending $\nu$-value. Then $$ f=
(\la^2 + \a_t ^\nu \la + \a_t \a_1)\prod_{k = 2}^{t-1} (\la +
\a_k).$$
 \end{prop}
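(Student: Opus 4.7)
The plan is to verify the asserted factorization by expanding the product on the right-hand side and matching coefficients with those of $f$ term by term. I would begin by normalizing, dividing $f$ by its (tangible) leading coefficient so that $f = \lambda^t + \sum_{i=1}^{t-1} c_i \lambda^i + c_0$ with $c_0$ tangible and each middle $c_i$ a ghost. By Proposition~\ref{convex2} combined with Remark~\ref{incsl}, arranging the corner roots in ascending $\nu$-value identifies them with the negated slopes of successive edges of $\oC_f$, yielding $\alpha_k^\nu = c_{k-1}^\nu / c_k^\nu$ for $1 \le k \le t$, so inductively $c_i^\nu = \prod_{k=i+1}^{t} \alpha_k^\nu$.

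Next, I would expand $h := \prod_{k=2}^{t-1}(\lambda + \alpha_k)$. Since the $\alpha_k$ are tangible and $\nu$-ordered $\alpha_2 \le_\nu \cdots \le_\nu \alpha_{t-1}$, bipotence collapses each elementary symmetric combination to a single product of the $\nu$-largest factors; thus $h = \sum_{i=0}^{t-2} d_i \lambda^i$ with $d_i = \prod_{k=i+2}^{t-1} \alpha_k$ (tangible). Letting $q = \lambda^2 + \alpha_t^\nu \lambda + \alpha_t \alpha_1$, the coefficient of $\lambda^i$ in $qh$ is $d_{i-2} + \alpha_t^\nu d_{i-1} + \alpha_t \alpha_1 d_i$, with $d_j = \zero$ outside $0 \le j \le t-2$.

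The heart of the proof is to show, for each middle $1 \le i \le t-1$, that this coefficient is a ghost of $\nu$-value $\prod_{k=i+1}^t \alpha_k = c_i^\nu$. Computing the $\nu$-values of the three summands gives $\prod_{k=i}^{t-1}\alpha_k$, $\alpha_t \prod_{k=i+1}^{t-1}\alpha_k$, and $\alpha_t \alpha_1 \prod_{k=i+2}^{t-1}\alpha_k$; the pairwise ratios are $\alpha_t/\alpha_i \ge_\nu \one$ and $\alpha_{i+1}/\alpha_1 \ge_\nu \one$, so the middle summand $\nu$-dominates and, being explicitly ghost, forces the sum to be the ghost $\alpha_t^\nu d_{i-1}$ of the required $\nu$-value. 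The boundary cases are immediate: the $\lambda^t$ coefficient is $d_{t-2} = \one$, and the $\lambda^0$ coefficient is $\alpha_t \alpha_1 d_0 = \prod_{k=1}^t \alpha_k$, both tangible and of the right $\nu$-value to equal $c_t$ and $c_0$ respectively. The main potential obstacle is pure bookkeeping: ensuring the correspondence between corner roots in ascending $\nu$-order and the edge slopes of $\oC_f$ is oriented consistently, and keeping the product-index ranges straight across the shift induced by multiplication by $q$.
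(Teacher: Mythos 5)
Your proof is correct, and amounts to a direct ground-up verification of what the paper obtains more quickly by quoting its earlier factorization theory. The paper forms the tangibly-full polynomial $\tilde f$ with the same $\nu$-values as $f$, factors it via Corollary~\ref{corfactor1} as $\prod_{k=1}^t(\la+\a_k)$, regroups two factors into a quadratic, and observes that replacing $\a_t$ by $\a_t^\nu$ makes every interior coefficient ghost (since $\a_t^\nu$ divides the dominant monomial of each one); the $\nu$-value match plus the parity match then give the equality. Your expansion of $h$ and the middle-term dominance computation re-derive the same facts inline, which buys self-containment at the cost of re-proving the bipotence collapse that Corollary~\ref{corfactor1} already packages.

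One small inaccuracy, which does not damage the argument: you state $d_i = \prod_{k=i+2}^{t-1}\a_k$ is tangible. This can fail when consecutive corner roots are tied in $\nu$-value --- which semitangibly-fullness permits, since adjacent edges of $\oC_f$ may share a slope --- in which case bipotence only pins down $d_i^\nu$ and $d_i$ may be a ghost. The middle-coefficient argument is unaffected since it uses only $d_{i-1}^\nu$ and the fact that $\a_t^\nu d_{i-1}$ is ghost regardless; and the two boundary coefficients $d_0 = \prod_{k=2}^{t-1}\a_k$ and $d_{t-2}=\one$ involve no tropical sums and are therefore always tangible, so your identification of $c_0$ and $c_t$ survives. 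It would be cleaner to phrase the claim as $d_i^\nu = \prod_{k=i+2}^{t-1}\a_k^\nu$ and note separately that $d_0$ and $d_{t-2}$ are exactly tangible.
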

 \begin{proof} Consider the tangibly-full polynomial $\tilde f$
 whose coefficients have the same $\nu$-value as those of $f$.
 Then $\a _1, \dots, \a_t$ are the \individual\ roots of $f$, so $$\tilde f =
 \prod _{k = 1}^{t} (\la +
\a_k) = (\la^2 + \a_t  \la + \a_t \a_1)\prod_{k = 2}^{t-1} (\la +
\a_k).$$ It remains to note that all the interior coefficients of
$(\la^2 + \a_t ^\nu \la + \a_t \a_1)\prod_{k = 2}^{t-1} (\la +
\a_k)$ are ghost, since the coefficient of $\la^j$ is $\a_t^\nu
\a_{t-1}\cdots \a _{t-j}.$ \end{proof}

Obviously this is the factorization with the minimal number of
ghosts (namely, just one). Note that the corner roots $\a_2,
\dots, \a_{t-1}$ are interior points in
$\tZ_{\operatorname{tan}}(f)$, and all appear in tangible linear
factors. When $\deg f
>t$, the statement of the result becomes more complicated since
one needs to deal with multiple roots, but the proof is analogous, to be treated in another paper.

\begin{rem}\label{factorback} Reversing the logic of Proposition~\ref{mingho}, take an arbitrary semitangibly-full polynomial $f =
\la^t + \sum _{i=1}^{t-1} \a _i^\nu \la^i +  \a_0\a_1$, where each
$\a_i \in \tT$. The tangible root set of $f$ is the interval
$[\a_0, \a _{t-1}],$ so we can factor
$$f = (\la^2 + \a_{t-1}^\nu \la + \a_0\a_{t-1})g,$$ where $g = \sum _{i=0}^{t} \a _{i+1}^\nu
\la^{i}$ is a tangible polynomial which can thus be factored into
linear factors.\end{rem}

\begin{rem} We are now in a position to explain geometrically the various factorizations of a full
polynomial $f\in F[\la]$ of degree $n$. Namely, we take the set $S
= \{a_1, \dots, a_n\}$ of tangible corner roots of $f$, and
partition $S$ into $n_1$ pairs $(a_{i_1}, a_{i_2})$, $1 \le i \le
n_1$, and $n_2$ single roots, where $2n_1 +n_2 = n,$ such that
$$\cup (a_{i_(a_{i_1}, a_{i_2})1}, a_{i_2}) = \tZ_{\tan}(f).$$
(The union need not be disjoint.) Each of the closed intervals
$\left[a_{i_1}, a_{i_2}\right]$ is the root set of a polynomial
$$f_i = \la ^2 + a_{i_2}^\nu \la + a_{i_1}  a_{i_2},$$ whereas
each single root $a_j$ is the root set of the linear polynomial
$\la + a_j,$ and the product of all of these polynomials can be
seen to be $f$. Each of these subdivisions corresponds to a factorization of $f$
  into irreducibles.

  There will be only one such partition in which each interval $\left[a_{i_1},
  a_{i_2}\right]$ is a connected component of the tangible
  root set of $f$, and this is the (unique) ``preferred''
  factorization.
\end{rem}

\begin{example}
 Let us apply Remark~\ref{factorback}  to explain Example \ref{manyfacts}.
Since $f$ is semitangibly-full of degree 4, and its corner
  roots are $-2,-1,2,$ and $4$, of which $-1$ and $-2$ are in the interior of $\tZ_{\operatorname{tan}}(f)$,
   we have the factorization $$f = (\la^2 + 4^\nu \la + 2)(\la
  + (-1))(\la +2).$$
  This is the ``preferred'' factorization; the other
  factorizations are obtained by taking the partitions
  ${[-2,2],[-1,4]}$ and ${[-2,4],[-1,2]}$. The ``near miss'' of Example \ref{manyfacts} comes
  from $([-2,-1].[2,4]$ which is not quite a subdivision of $[-2,4]$.
\end{example}

Here is a satisfactory numerical algorithm for factoring a
polynomial $f$ into e-irreducibles: First factor out the linear
left ghost and/or  right ghost if necessary, then factor $f$ into
a product of $m$ semitangibly-full factors, and then apply
Proposition~\ref{mingho} (or Remark \ref{factorback}) to obtain
the factorization minimal in ghosts (one ghost for each of the $m$
semitangibly-full factors).

\subsection{Binomial factorization in several indeterminates}

We turn to factorization in $F[\lm_1,\dots, \lm_n]$ for $n>1.$
Although the thrust of this subsection is an analysis of how
unique factorization fails, first we note a positive cancellation
result as consolation. Recall that we write $F[\Lm]$ for
$F[\lm_1,\dots, \lm_n]$ and $F[\Lambda, \Lambda^{-1}]$ for
$F[\lm_1,\lm^{-1}_1\dots, \lm_n,\lm_n^{-1} ]$.

\begin{rem} If $f\in F[\Lambda, \Lambda^{-1}]$ is tangible and $fg
= fh$, then $g=h.$ (Indeed, $g$ and $h$ take on the same values on
a dense subset of $F^{(n)}$ in the $\nu$-topology,  so are
identically equal.)\end{rem}

Note that this topological argument could fail when $f$ is not
tangible, even for one indeterminate, as evidenced by the
factorizations
$$\la^2 + 1^\nu \la + 1^\nu = (\la +1^\nu)(\la +0^\nu) = (\la
+1^\nu)(\la +0);$$
 $$ \begin{array}{lll}
    \la ^4 + 4^\nu\la^3 + 6^\nu\la ^2 +5^\nu\la +3  &= & (\la^2 + 4^\nu \la + 2)(\la ^2 + 2^\nu \la +1) \\
                                                     & =  & (\la^2 + 4^\nu \la + 2)(\la ^2 + 2  \la +1) \\
                                                         \end{array}$$

For several variables, we confront a most severe violation of
unique factorization.

\begin{rem}\label{rem:vander} Suppose $f_1, f_2, f_3 \in
F \pl \Lambda \pr.$
\pSkip
\begin{enumerate} \eroman
    \item
 $f_1+f_2+f_3$ is a factor of $(f_1+f_2)(f_1+f_3)(f_2+f_3).$
Indeed,
$$\begin{array}{l} (f_1+f_2+f_3)(f_1f_2+f_1f_3 + f_2f_3) = \\ f_1^2 f_2+ f_1^2 f_3 + f_1
f_2^2 + f_1 f_3^2 +  f_2^2 f_3 +  f_2 f_3^2 +\nu(f_1f_2f_3) = \\
(f_1+f_2)(f_1+f_3)(f_2+f_3).\end{array}$$ Note that the polynomial
$\nu(f_1f_2f_3)$ is inessential.  Thus, full-tangible polynomials
need not have unique factorization. \pSkip
%
 Two variants of (i), for later use, which one
checks  by matching the tangible parts: \pSkip

\item  $(f_1+f_2
+f_3^\nu)(f_1f_2+f_1f_3 + f_2 f_3^\nu) =
(f_1+f_2)(f_1+f_3^\nu)(f_2+f_3);$

\pSkip \item
$(f_1+f_2^\nu +f_3^\nu)(f_1f_2+f_1f_3 + f_2f_3) =
(f_1+f_2^\nu)(f_1+f_3^\nu)(f_2+f_3).$

\end{enumerate}
\end{rem}

\begin{example}\label{nonuniquefact}
$$\begin{array}{l} (0+\la_1 + \la_2)(\la_1 + \la _2 + \la_1\la_2) =
\\ \la _1 + \la _2 + \la_1^2 + \la _2 ^2 + \nu (\la_1\la_2) + \la_1^2
\la _2 + \la _1 \la _2^2 = \\ (0+\la_1)(0+\la_2)(\la_1 +\la_2).
\end{array}$$
\end{example}

In fact,  any polynomial $f = \sum_{i=1}^m f_i$ $e$-divides
$\prod_{i \neq j} (f_i+f_j),$ which leads us to the main theorem
of this section.

\begin{theorem}\label{permprime} Suppose $f = \sum_{i=1}^m f_i \in \FunR
$, for $m \ge 2$. Then
\begin{equation}\label{twofacts}
 \prod_{i < j } (f_i + f_j)  = g_1\cdots g_{m-1}\,
\end{equation}
where $g_1 = f= \sum_i f_i,$ $g_2 = \sum_{i < j } f_i f_j,$ $
\cdots$, and $g_{m-1} = \sum_{i }\prod_{j \neq i} f_j$.
\end{theorem}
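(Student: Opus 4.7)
Since the claimed identity is to hold in $\operatorname{Fun}(R^{(n)},R)$, it suffices to fix $\bfa\in R^{(n)}$, set $a_i := f_i(\bfa)\in R$, and verify
\[
\prod_{i<j}(a_i + a_j) \;=\; \prod_{k=1}^{m-1} e_k(a_1,\dots,a_m) \quad\text{in }R,
\]
where $e_k$ denotes the $k$-th elementary symmetric polynomial. Both sides are symmetric in the $a_i$'s, so after relabeling I may assume $a_1 \nuge a_2 \nuge \cdots \nuge a_m$. It then suffices to show that the two sides agree in (a) $\nu$-value and in (b) parity (tangible vs.\ ghost), which together forces equality in $R$.

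For (a), bipotence yields $(a_i+a_j)^\nu = a_i^\nu$ for $i<j$, so the LHS has $\nu$-value $\prod_i (a_i^\nu)^{m-i}$. Inside $e_k$, the monomial $a_1\cdots a_k$ is $\nu$-maximal, because any other size-$k$ subset exchanges some index $i\le k$ for an index $j>k$, which cannot raise the $\nu$-value since $a_i^\nu\ge a_j^\nu$ under the sort. Hence $\nu(e_k)=(a_1\cdots a_k)^\nu$, and multiplying over $k=1,\dots,m-1$ again gives $\prod_i (a_i^\nu)^{m-i}$, matching the LHS. For (b), supertropicality shows that $a_i+a_j$ is tangible iff $a_i\in\tT$ and $a_i^\nu>a_j^\nu$; analogously $e_k$ is tangible iff $a_1,\dots,a_k\in\tT$ and $a_k^\nu>a_{k+1}^\nu$, so that the dominant monomial $a_1\cdots a_k$ is tangible and uniquely $\nu$-maximal. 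Conjoining over all pairs $i<j$, respectively over all $k\le m-1$, yields in both cases the identical hypothesis: $a_1,\dots,a_{m-1}\in\tT$ together with pairwise distinctness of $a_1^\nu,\dots,a_m^\nu$. When this holds, both products are tangible and agree by (a); when it fails, a single ghost factor renders each product ghost, and (a) again gives equality.

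The main obstacle is the parity analysis of (b) when ties $a_i^\nu=a_{i+1}^\nu$ occur or when some $a_i$ is itself ghost. I would handle this by partitioning $\{1,\dots,m\}$ into maximal blocks $[j_1,j_2]$ of indices sharing a common $\nu$-value: a block of length $\ge 2$ produces within-block ghost factors $a_i+a_j$ (for $j_1\le i<j\le j_2$) on the LHS and ghost factors $e_k$ (for $k\in[j_1,j_2-1]$) on the RHS, each of which suffices to ghost-ify its product (since $\tGz$ is an ideal). The $\nu$-value matching from (a) then forces equality in~$R$, completing the pointwise verification and hence the identity in $\operatorname{Fun}(R^{(n)},R)$.
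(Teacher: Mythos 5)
Your pointwise strategy (fix $\bfa$, sort the scalars $a_i = f_i(\bfa)$ by $\nu$-value, and verify the resulting scalar identity) is genuinely different from the paper's proof, which instead identifies both sides with the tropical Vandermonde permanent $\per{V_f}=\sum_{\sigma\in S_m}f_1^{\sigma(0)}\cdots f_m^{\sigma(m-1)}$ and shows, via Lemma~\ref{lem:essential}, that the extra monomials appearing in the expansions of $\prod_{i<j}(f_i+f_j)$ and of $\prod_k g_k$ are inessential. But there is a gap at the crux of your argument: you assert that agreement in $\nu$-value and parity ``forces equality in~$R$,'' and you invoke this to close the tangible case (``both products are tangible and agree by (a)''). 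That inference is false over a general supertropical semiring, where $\nu|_{\tT}$ need not be 1:1 --- e.g.\ in the semiring of Example~\ref{Exmp:valuedMonoid}(ii), where $\nu$ is the absolute value, the tangibles $2$ and $-2$ share a $\nu$-value and parity but are distinct. The ghost case is safe, since $\nu$ fixes $\tGz$ and so a ghost is determined by its $\nu$-value, but the tangible case requires an actual identity of elements, not a $\nu$-equivalence.

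The repair is short and stays within your framework. Bipotence yields literal equality of elements, not merely of $\nu$-values: once the tangibility criterion holds (i.e.\ $a_1^\nu>\cdots>a_m^\nu$ strictly and $a_1,\dots,a_{m-1}\in\tT$), each factor $a_i+a_j$ on the left \emph{equals} $a_i$, and by iterated bipotence each $e_k$ on the right collapses to its unique dominant monomial $a_1\cdots a_k$, so both sides are literally $\prod_{i=1}^{m-1}a_i^{m-i}$. Replacing the appeal to $\nu$-value matching with this direct computation in the tangible case closes the gap.
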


Our applications of this theorem are for the sub-semiring
$R[\Lambda, \Lambda^{-1}]$ of $\FunF$, in which this result could
be viewed as the utter collapse of unique factorization, since
\emph{every} polynomial~$f$ which is a sum of at least three
distinct monomials is part of a factorization that is not unique.
Specifically, if $f_i$ are the monomials of $f$, then $f$ is a
factor of  the product $\prod_{i \ne j} (f_i+f_j).$ However,
Theorem~\ref{permprime} casts considerable light on the geometry,
and has a positive geometric interpretation:

\begin{rem}\label{geom1} Every tropical variety~$\Var$  can be ``completed'' to a
variety $\tP(\Var)$ comprised of various hyperplanes, which in
turn can be decomposed into a union $\Var_1 \cup \dots \cup
\Var_{m-1}$, where $\Var_i = \tZ_{\tan}(g_i)$ for $1 \le i \le
m-1$. After proving Theorem ~\ref{permprime}, we shall see how
such geometric decompositions provide an assortment of
factorizations.
\end{rem}

 The factorization in
Theorem ~\ref{permprime} involves many inessential terms, so, to
avoid excessive computation in the proof of the theorem, we
consider
  how inessential terms often  arise.

\begin{lemma}\label{lem:essential} If  $h_2 ^2 = h_1  h _3\in \FunR$, then $h_2$ is inessential for $h_1+h_2+h_3$.
\end{lemma}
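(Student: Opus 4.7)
The proof is essentially an immediate corollary of Lemma~\ref{compaid}(i), applied pointwise. My plan is as follows.

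First I would recall that by Definition~\ref{def:essentialPart}, to show $h_2$ is inessential in the function $h_1 + h_2 + h_3$, it suffices to show that $(h_1 + h_3)(\bfa) \nuge h_2(\bfa)$ for every $\bfa \in R^{(n)}$. Equivalently, it is enough to verify the pointwise identity
\[
(h_1 + h_3)(\bfa) \;=\; (h_1 + h_2 + h_3)(\bfa) \qquad \forall \bfa \in R^{(n)},
\]
since by bipotence a strict inequality $(h_1+h_3)(\bfa) \nul h_2(\bfa)$ at some point would force $(h_1+h_2+h_3)(\bfa) = h_2(\bfa)$, which has $\nu$-value strictly greater than that of $(h_1+h_3)(\bfa)$ and hence differs from it.

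Next I would fix an arbitrary $\bfa \in R^{(n)}$ and set
\[
a := h_1(\bfa), \qquad b := h_2(\bfa), \qquad c := h_3(\bfa).
\]
Since $h_2^2 = h_1 h_3$ as functions, we have the pointwise relation $b^2 = ac$ in $R$. Then I would invoke Lemma~\ref{compaid}(i) directly to conclude $a + c = a + b + c$, i.e.\ $(h_1 + h_3)(\bfa) = (h_1 + h_2 + h_3)(\bfa)$. Since $\bfa$ was arbitrary, this yields $h_1 + h_3 = h_1 + h_2 + h_3$ as elements of $\Fun(R^{(n)}, R)$, and by the observation in the first paragraph this is exactly the statement that $h_2$ is inessential.

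There is really no obstacle here beyond quoting Lemma~\ref{compaid}(i) correctly; the content of the proof already sits in that earlier lemma, which handled the three subcases (namely $a \nug b$, $c \nug b$, or $a \nucong b \nucong c$) via $\nu$-cancellation in a supertropical domain. The present lemma is simply the translation of that pointwise fact into the language of essential/inessential monomials in the function semiring $\Fun(R^{(n)}, R)$, which is why the argument is so short.
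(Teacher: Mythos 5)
Your proof is correct and follows exactly the paper's own (one-line) proof: apply Lemma~\ref{compaid}(i) pointwise at each $\bfa\in R^{(n)}$ to get $(h_1+h_3)(\bfa)=(h_1+h_2+h_3)(\bfa)$, which is precisely the statement that $h_1+h_3$ dominates $h_2$. The extra paragraph explaining why the pointwise identity is equivalent to inessentiality is sound but unnecessary, since dominance follows immediately from $a+c=a+b+c$ together with the definition.
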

\begin{proof}  Apply Lemma~\ref{compaid}(i) to each  $\bfa = (a_1,\dots, a_n) \in R^{(n)}$. \end{proof}

For the next lemma, we let $\mathcal I_m \subset \Net^{(m)}$
denote the set of all $m$-tuples $\bfi = (i_1, \dots, i_{m})$ for
which each $0 \le i_u <m$ and $\sum _{u=1}^{m} i_u = \binom m 2$.
Such $m$-tuples include $(0,1,\dots, m-1)$ or any permutation of
the components. For any $ \bfi = (i_1, \dots, i_{m}) \in \mathcal
I_m$ and $0 \leq j \leq m-1$, we define the $j$-index $\iota
_j(\bfi)$ to be the number of $i_u$'s that equal $j$; define
$\iota (\bfi) = (\iota _{m-1}(\bfi), \dots, \iota _{0}(\bfi))$.

 Let $S_m$
  denote the set of permutations of $(0,1, \dots, m-1).$   Thus, $\bfi \in S_m$ iff $\iota  (\bfi)
= (1,1,\dots, 1)$.
 We say $\bfi $ is \textbf{admissible} if for each number $k$, the
 sum of the largest $k$ components of $\bfi$ is at most $(m-1) + \dots +(m-k) = km - \frac {k(k+1)}2.$
 Thus, all $\bfi \in S_m$ are admissible.

\begin{lemma} Lexicographically,  $\iota
(\bfi) \le (1,1,\dots, 1)$ for each admissible $\bfi \in \mathcal
I_m$.
\end{lemma}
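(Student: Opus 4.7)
The plan is to argue by contradiction: assume $\iota(\bfi) >_{\operatorname{lex}} (1,1,\ldots,1)$ and use the admissibility inequality at a carefully chosen $k$ to derive $2j_0 \le 2j_0 - 1$.

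First I would locate the ``first anomaly.'' If $\iota(\bfi) >_{\operatorname{lex}} (1,1,\ldots,1)$, then there exists a largest index $j_0 \in \{0,1,\ldots,m-1\}$ with $\iota_{j_0}(\bfi) \ne 1$, and since $\iota_j = 1$ for all $j > j_0$, the strict inequality at position $j_0$ forces $\iota_{j_0}(\bfi) \ge 2$. A quick pigeonhole rules out $j_0 = 0$: the assumption $\iota_j = 1$ for $j = 1,\ldots,m-1$ already uses $m-1$ of the $m$ entries, leaving room for only one more, whereas $\iota_0 \ge 2$ demands at least two; hence $j_0 \ge 1$.

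Next, set $k := (m-1-j_0) + 2 = m - j_0 + 1$, noting that $2 \le k \le m$. Under the hypotheses above, the top $k$ components of $\bfi$ (in decreasing order with multiplicity) are exactly $m-1, m-2, \ldots, j_0+1$ each occurring once, followed by two copies of $j_0$, so
\[
\sum_{s=1}^{k}\bigl(\text{top $k$ components of }\bfi\bigr) \;\ge\; \sum_{j=j_0+1}^{m-1} j \;+\; 2 j_0.
\]
On the other hand, the admissibility hypothesis for this $k$ gives the upper bound
\[
\sum_{j=m-k}^{m-1} j \;=\; \sum_{j=j_0-1}^{m-1} j \;=\; \sum_{j=j_0+1}^{m-1} j \;+\; j_0 \;+\; (j_0-1).
\]
Comparing the two displays and cancelling the common tail yields $2 j_0 \le 2 j_0 - 1$, a contradiction. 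Hence $\iota(\bfi) \le_{\operatorname{lex}} (1,1,\ldots,1)$, as claimed.

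The only subtlety is checking that the anomaly index $j_0$ cannot equal $0$ (so that $k \le m$ and the admissibility bound is actually invoked on a legitimate number of coordinates); this is the one step that uses $\sum_j \iota_j = m$ rather than just $\sum_j j\,\iota_j = \binom{m}{2}$. Everything else is bookkeeping on the sorted multiset corresponding to $\bfi$.
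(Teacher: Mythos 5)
Your proof is correct. It takes a somewhat different route from the paper's, which argues as follows: using admissibility with $k=2$, at most one component equals $m-1$, so $\iota_{m-1}(\bfi)\le 1$; if $\iota_{m-1}(\bfi)=0$ the lexicographic comparison is already decided, and if $\iota_{m-1}(\bfi)=1$ one strips off the unique component equal to $m-1$ and applies induction on $m$ (quietly using that the stripped tuple is still admissible for parameter $m-1$). You instead ``unroll'' this induction: you locate directly the most significant position $j_0$ where $\iota_{j_0}(\bfi)\ne 1$, observe it must satisfy $\iota_{j_0}(\bfi)\ge 2$, and invoke admissibility once at $k=m-j_0+1$, where the sum of the top $k$ components is $\sum_{j=j_0+1}^{m-1} j + 2j_0$ while the admissibility cap is $\sum_{j=j_0-1}^{m-1} j$, giving $2j_0\le 2j_0-1$. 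Your check that $j_0\ne 0$ (via $\sum_j\iota_j=m$) is exactly what is needed to ensure $k\le m$ so that the invocation is legitimate. The advantage of your version is that it is a single explicit application of admissibility with no inductive bookkeeping, and in particular it makes visible the role of $\sum_j\iota_j=m$, which the paper's proof uses only implicitly in verifying that the reduced tuple lies in $\mathcal I_{m-1}$. The paper's induction is shorter to state but leaves the inductive verification to the reader.
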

\begin{proof}  For any admissible   $\bfi$, the sum of the largest two components  is
at most $2m-3,$ which means that at most one component is $m-1$,
so $\iota  _{m-1}(\bfi)\le  1$. We are
done unless $\iota  _{m-1}(\bfi) = 1,$  and conclude by induction on $m$.
\end{proof}

 Given $f_1, \dots, f_m \in \FunR,$ for each $\bfi\in \mathcal I_m,$ we define the
function $$h_\bfi = f_1^{i_1}\cdots f_{m}^{i_{m}}.$$ For any permutation $\sig \in S_{m},$ we denote
$$h_\sig =f_1^{\sig (0)}\cdots f_{m-1}^{\sig (m-1)}.$$

\begin{lemma}\label{lem:essential2}  $  \sum _{\sig \in S_m} h_\sig= \sum_{\sig \in \mathcal I_m} h_\sig$.
\end{lemma}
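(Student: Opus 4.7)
The plan is to prove the claim by showing that every monomial $h_\bfi$ with $\bfi \in \mathcal I_m \setminus S_m$ is inessential as a function in $\FunR$, so that it may be discarded from the right-hand sum, leaving precisely $\sum_{\sigma \in S_m} h_\sigma$. Since every permutation vector already appears on both sides, this will establish the equality.

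My main tool will be Lemma \ref{lem:essential}: if I can exhibit $\sigma, \tau \in S_m$ with $h_\sigma \cdot h_\tau = h_\bfi^2$, then $h_\bfi$ is inessential in $h_\sigma + h_\bfi + h_\tau$, and a fortiori in the whole sum. The identity $h_\sigma \cdot h_\tau = h_\bfi^2$ is equivalent to $\sigma_k + \tau_k = 2 i_k$ for every $k$, so the problem reduces to the combinatorial statement: for every $\bfi \in \mathcal I_m \setminus S_m$ there exist permutations $\sigma, \tau \in S_m$ with $\sigma + \tau = 2\bfi$ componentwise.

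To produce this decomposition I will first invoke the preceding lemma, which combined with the identity $\sum_j \iota_j(\bfi) = m$ and the hypothesis $\bfi \notin S_m$ locates (after a short bookkeeping argument) an index $a$ with $\iota_a(\bfi) \ge 2$ and an index $b$ with $\iota_b(\bfi) = 0$. The admissibility of $\bfi$ then supplies the Hall-type compatibility needed to split each requirement $\sigma_k + \tau_k = 2 i_k$ into an ordered pair of values from $\{0, 1, \dots, m-1\}$ such that both halves can be completed to genuine permutations. Concretely, I will model the problem as a bipartite multigraph on positions $\{1, \dots, m\}$ and values $\{0, \dots, m-1\}$, where position $k$ is joined by two edges to values whose sum is $2 i_k$; admissibility produces a $2$-regular subgraph, and, because it is bipartite and $2$-regular, this subgraph decomposes into two edge-disjoint perfect matchings $\sigma$ and $\tau$. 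This is the Birkhoff--von Neumann phenomenon specialized to admissible integer vectors in the permutation polytope.

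Once $\sigma$ and $\tau$ are in hand, the final step is immediate:
\[
 h_\sigma\, h_\tau \;=\; \prod_{k=1}^{m} f_k^{\sigma_k + \tau_k} \;=\; \prod_{k=1}^{m} f_k^{2 i_k} \;=\; h_\bfi^{\,2},
\]
so Lemma \ref{lem:essential} applies and $h_\bfi$ drops out of the essential part of the sum, proving the equality. The principal obstacle is the combinatorial construction in the previous paragraph; the rest is routine bookkeeping.
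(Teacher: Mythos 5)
Your strategy is genuinely different from the paper's. The paper runs a reverse induction on the lexicographic order of $\iota(\bfi)$: for an admissible non-permutation $\bfi$ it finds two equal components $i_s = i_t = j'$ and replaces them by $(j'+1, j'-1)$ and $(j'-1, j'+1)$ to get $\bfi'$, $\bfi''$, applies Lemma~\ref{lem:essential} to conclude $h_\bfi$ is dominated by $h_{\bfi'} + h_{\bfi''}$, and recurses because $\iota(\bfi') = \iota(\bfi'')$ is strictly larger in lex order. So the inessentiality of $h_\bfi$ is only established indirectly, through a chain of such one-step splits. You instead want a single application of Lemma~\ref{lem:essential}: exhibit permutations $\sigma,\tau$ with $\sigma + \tau = 2\bfi$ coordinatewise, so that $h_\sigma h_\tau = h_\bfi^2$ and $h_\bfi$ is inessential in $h_\sigma + h_\bfi + h_\tau$ directly. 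If the decomposition exists, your argument is cleaner and avoids the paper's careful admissibility bookkeeping along the induction.

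The gap is precisely that existence claim, and your sketch doesn't close it. You reduce the problem to finding a $2$-regular bipartite multigraph on positions versus values whose two edges at position $k$ land on values summing to $2 i_k$, and then you say ``admissibility produces a $2$-regular subgraph'' and cite the ``Birkhoff--von Neumann phenomenon.'' Birkhoff--von Neumann gives you a doubly stochastic matrix $D$ with $D\pi_0 = \bfi$ and hence a \emph{rational convex combination} of permutation vectors, not an exact writing of $2\bfi$ as a sum of two of them; passing from $2D$ to an integer $2$-regular multigraph that still respects the weight constraint $\sum_v v\,M_{k,v} = 2 i_k$ is exactly the point at issue and is left unproved. The second half of your sketch (a bipartite $2$-regular multigraph decomposes into two perfect matchings) is standard, but the first half is the whole difficulty. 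A workable route would be to construct $\sigma,\tau$ explicitly from the majorization of $\bfi$ by $(m-1,\dots,1,0)$, e.g.\ by a chain of Robin Hood transfers, but that would amount to an induction similar in spirit to the paper's. One more caution: $\sigma + \tau = 2\bfi$ forces $\bfi$ to be majorized, hence admissible, so your method cannot possibly handle non-admissible $\bfi \in \mathcal I_m$. (The paper's proof also implicitly restricts to the admissible case, which is all that is used in Lemma~\ref{lem:Van}; but for your approach the restriction is a hard obstruction rather than an omission, so it deserves an explicit remark.)
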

\begin{proof} Let $p = \sum _{\sig \in \mathcal I_m}
h_\sig $. We need to show that  $h_\bfi$ is inessential in $p$
 whenever $\iota  (\bfi) <(1,1,\dots, 1).$ The proof is by reverse
induction on the lexicographic order of $\iota (\bfi).$ Since
$\iota (\bfi) <(1,1,\dots, 1),$  some $j$-index $\iota_{j} (\bfi)$  is 0, and we take
the largest such $j$. Then for some $j'<j,$  the $j'$-index
$\iota_{j'} (\bfi)\ge 2$; in other words, $\bfi$ has components $
i_s = i_t = j'$ for suitable $s \ne t$.

Take $\bfi' = (i_1', \dots, i_m')$ to be the $m$-tuple in which  $i'_s = j'+1$ and   $i'_t
= j'-1$ (with all other components the same as for $\bfi$), and likewise
let $\bfi''$ be the $m$-tuple in which $i''_s = j'-1$ and $i''_t  =
j'+1$. By Lemma~\ref{lem:essential}, $h_\bfi $ is inessential in
$h_{\bfi'}+h_{\bfi''}.$

We claim that  $\bfi'$ and $\bfi''$ are admissible and $\le
(1,1,\dots, 1)$. Indeed, this is clear when $j'< j-1,$ since then
$\iota_j (\bfi')= \iota_j (\bfi'') = 0.$ Thus, we may assume that
$j' = j-1.$ Clearly $ \iota(\bfi'')=  \iota(\bfi'),$ since the
roles of $s$ and $t$ are interchanged, so it suffices to prove the
claim for ${\bfi'}$.

First assume that $\iota_{j-1} (\bfi) \ge 3.$ Then $j\ge 2$, and
the
 sum of the largest $k = m-j+2$ components of $\bfi$ is equal to
 $$(m-1) + \dots +(j+1)+0  +3(j-1),$$ which is
 greater than $ km - \frac {k(k+1)}2$ unless $3(j-1) = j + (j-1),$
 which implies $j=2$. But then $\iota(\bfi) =  (1,1,\dots, 0,3,0)$
and so $\iota(\bfi') =  (1,1,\dots, 1,1,1)$, as desired.

Thus, we are done unless $\iota_{j-1} (\bfi) = 2.$ Since $\bfi'$
increases   the component  $i_s$ from $j-1$ to $j$, and decreases
the component  $i_t$ from $j-1$ to $j-2,$ we see that $\iota_j
(\bfi') =1$ and $\iota_{j-1} (\bfi')=0,$ proving $\iota (\bfi')<
(1,1, \dots, 1),$ as desired.

 Clearly, $\iota(\bfi') = \iota(\bfi'')$ is
of higher lexicographic order than $\iota(\bfi)$ (since
$\iota_{j'+1}(\bfi')= \iota_{j'+1}(\bfi)+1 $), so, by reverse
induction, either $\bfi' \in S_m$ or $h_{\bfi'}$ is inessential in
$p$, and likewise for $\bfi'',$  implying that $h_\bfi$ is
inessential in $p$.
\end{proof}

Our main tool in proving Theorem~\ref{permprime} is the
\textbf{tropical Vandermonde  matrix} $V_f$ of  $f = \sum_{i=1}^m
f_i$. Define $V_f$ to the $m \times m$ matrix with entries $v_{i,j}
= f_i^{j-1}$. Since the signed determinant is not available in
tropical algebra (because it involves negative signs), one
substitutes the permanent, which we still notate as
$$\per{{V_f}} = \sum_{\sig \in S_m}  f_1^{\sig(0)} \cdots  f_m^{\sig(m-1)},
$$
 We
can compute the permanent in two ways:

\begin{lemma}\label{lem:Van} If $V_f = (f_i^{j-1})$ is the $m \times m$ Vandermonde matrix
for $f = \sum f_i$, then
\begin{enumerate}
    \item $\per{V_f} =
\prod_{i < j } (f_i + f_j) $ and,\pSkip
    \item $\per{V_f} = (\sum_i f_i) (\sum_{i < j }
f_i f_j) \cdots (\sum_{i }\prod_{j \neq i} f_j).$
\end{enumerate}
\end{lemma}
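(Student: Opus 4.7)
My plan is to exploit the combinatorics of the two expansions and reduce both sides of each identity to the unique tangible form $\sum_{\sigma \in S_m} h_\sigma$ by showing that the ``extra'' monomials produced beyond the permutation terms are inessential, which is exactly the content of Lemma~\ref{lem:essential2}.

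For part~(i), I would expand $\prod_{i<j}(f_i+f_j)$ using distributivity. A generic term comes from a choice function $c$ that picks one of $f_i$ or $f_j$ in each pair $(i,j)$ with $i<j$; the resulting monomial is $h_\bfi$ where $i_k$ equals the number of pairs in which $f_k$ was chosen. Since $f_k$ appears in exactly $m-1$ pairs, each $i_k\le m-1$, and $\sum_k i_k = \binom{m}{2}$, so $\bfi\in\mathcal I_m$. For admissibility, given any subset $U\subseteq\{1,\dots,m\}$ of size $\ell$, the sum $\sum_{k\in U}i_k$ counts (with appropriate multiplicity) only pairs having at least one index in $U$; this number is at most $\binom{\ell}{2}+\ell(m-\ell) = \ell m - \binom{\ell+1}{2}$, so $\bfi$ is admissible. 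Every permutation $\sigma\in S_m$ arises from a unique choice function (decreasing induction on $\sigma$-values: in pair $(i,j)$ pick whichever of $f_i,f_j$ still needs exponent allocation). Thus the expansion gives $\sum_{\sigma\in S_m}h_\sigma+\sum_{\bfi\in T}c_\bfi h_\bfi$ for some $T\subseteq\mathcal I_m\setminus S_m$. By Lemma~\ref{lem:essential2}, all $h_\bfi$ with $\bfi\notin S_m$ are inessential in $\sum_{\mathcal I_m}h_\sigma$; since addition is monotone in the bipotent order, the intermediate sum $\sum_{\sigma\in S_m}h_\sigma\le\sum_T\cdots\le\sum_{\mathcal I_m}h_\sigma$ is squeezed between two equal values, so equals $\per{V_f}$.

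For part~(ii), a term in the expansion of $\prod_{k=1}^{m-1}\bigl(\sum_{|T|=k}\prod_{j\in T}f_j\bigr)$ is obtained by selecting, for each $k\in\{1,\dots,m-1\}$, a subset $T_k\subseteq\{1,\dots,m\}$ of cardinality $k$; the resulting monomial is $h_\bfi$ with $i_j=|\{k:j\in T_k\}|$. Clearly $0\le i_j\le m-1$ and $\sum_j i_j=\sum_k k=\binom{m}{2}$, so $\bfi\in\mathcal I_m$. For admissibility, given $|U|=\ell$,
\[
\sum_{j\in U}i_j=\sum_{k=1}^{m-1}|T_k\cap U|\le\sum_{k=1}^{m-1}\min(k,\ell)=\binom{\ell+1}{2}+\ell(m-1-\ell)=\ell m-\binom{\ell+1}{2},
\]
so $\bfi$ is admissible. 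Every permutation $\sigma\in S_m$ arises from a chain $T_1\subset T_2\subset\cdots\subset T_{m-1}$ in which the unique element of $T_1$ has $\sigma$-value $m-1$, the new element of $T_2\setminus T_1$ has $\sigma$-value $m-2$, and so on. Applying Lemma~\ref{lem:essential2} again (with the same squeeze argument), the expansion equals $\sum_{\sigma\in S_m}h_\sigma=\per{V_f}$.

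The main obstacles, neither especially deep, are the two combinatorial verifications: checking admissibility of every exponent vector produced by each expansion, and checking that every permutation does appear. The first is the careful counting sketched above; the second is a short inductive construction. Once these are in place, Lemma~\ref{lem:essential2} does all the algebraic work, and the two formulas for $\per{V_f}$ follow simultaneously.
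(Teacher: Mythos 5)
Your argument is correct and follows the paper's own strategy: expand each side, check that every summand is $h_\bfi$ for an admissible $\bfi\in\mathcal I_m$, observe that every $h_\sigma$ with $\sigma\in S_m$ occurs, and invoke Lemma~\ref{lem:essential2}. You simply spell out the two combinatorial verifications (the admissibility count and the appearance of each permutation term) that the paper dismisses with ``it is easy to see''; the only imprecision worth flagging is that the phrase ``addition is monotone in the bipotent order'' should really be the stronger pointwise-equality content of Lemma~\ref{lem:essential} (via Lemma~\ref{compaid}(i)), since mere $\nu$-domination of the extra terms would only give $(\nu,e)$-equivalence, not $e$-equivalence.
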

\begin{proof}  Let $p = \per{V_f}= \sum_{\sig \in S_m} h_\sig$,
the function of Lemma~ \ref{lem:essential2}, which says that $p =
\sum_{\sig \in \mathcal I_m} h_\sig$.

But it is easy to see that each summand of   $q_1 =  \prod_{i < j
} (f_i + f_j)$ has the form $h_\bfi$ where $\bfi$ is admissible,
 and thus $h_\bfi$  is dominated by $p$, by
Lemma~\ref{lem:essential2}. Since each summand of $p$ appears in
$q_1$, we get $p = q_1.$
Likewise, expanding  $q_2 = \left(\sum_i f_i\right) \left(\sum_{i
< j }f_i f_j\right) \cdots \left(\sum_{i }\prod_{j \neq i}
f_j\right) $ clearly each term has the form $h_\bfi$ where $\bfi$
is admissible, and each summand of $p$ appears in $q_2$, implying
$p =  q_2.$
\end{proof}

The proof of Theorem \ref{permprime} now becomes quite
transparent:

\begin{proof}[Proof of Theorem \ref{permprime}] 
By parts (i) and (ii) of  Lemma \ref{lem:Van}.
\end{proof}

 Algebraically, Theorem
\ref{permprime} shows that the  factorization of $\per{V_f} \in
R[\Lambda]$ into irreducible polynomials is not unique.

\begin{example} Suppose $f= \lm_1^i + \lm_2^j + \al $, with $\al \in R$. Then $$V_f = \vvMat{0}{\al}{\al^2}{0}{\lm_1^i }{\lm_1^{2i}
}{0}{\lm_2^j }{\lm_2^{2j} } \quad \text{and}$$
$$\per{V_f} \eqR  (\lm_1^i + \lm_2^j + \al)(\al\lm_1^i + \al \lm_2^j + \lm_1^i  \lm_2^j) \eqR
(\lm_1^i + \lm_2^j )(\lm_1^i + \al)(\lm_2^j + \al) \  .
$$
This yields two different tropical factorizations  of $\per{V_f}$
into irreducible polynomials. (The right factorization  is a
binomial factorization.)
\end{example}

Thus, in this version of supertropical algebra, perhaps ``unique
factorization'' is the wrong emphasis, but rather we should
emphasize factorization of
 $\per{V_f} \subset R[\lm_1, \dots, \lm_n]$ into binomials.

\begin{rem}\label{dual2} Lemma~\ref{lem:essential} is clearly self-dual in the sense of Remark \ref{add5}; hence Theorem~\ref{permprime} also holds over the dual
supertropical semifield $\dual{F}$. Explicitly, suppose  $f = \sum
f_i$, written as a sum of monomials. Taking the isomorphism
$\Phi_{\Fun}$ of Remark~\ref{add5} and putting $\bar f = g_1\cdots
g_{m-1}$, we have $\Phi_\Fun (g_i) ={(\bar f)^{-1}}g_{m-i}$ for $1
\le i \le m-1.$

 Thus, $\Phi_\Fun$ also induces an action $\hat \Phi$ on tangible root sets, given by
$$\hat \Phi (\Var_i) =(\tZ_{\tng}( \Phi_\Fun(g_i)) = (\tZ_{\tng}(
\Phi_\Fun(g_{m-i}))= \Var_{m-i}.$$ Likewise, $$\Phi_\Fun (f_i +
f_j) = {f_i}^{-1} + {f_j}^{-1} = {(f_if_j)^{-1}}(f_i+f_j),$$ so
$\hat \Phi$ preserves binomials in the sense that $$\hat \Phi
(\tZ_{\tng}(f_i + f_j)) = \hat \Phi
(\tZ_{\tng}({(f_if_j)^{-1}}(f_i+f_j))) =\tZ_{\tng}(f_i + f_j).$$
Thus, in Remark~\ref{geom1}, $\hat \Phi$ induces a partition into
dual pairs of root sets.
\end{rem}

In this way, the algebraic structure again is reflected in the
geometry.

\begin{rem}\label{power2} We also can  go in the other direction, and illustrate the Nullstellensatz. If
$f= \sum _{i=1}^m  f_i$ then, arguing as before (since $f_if_j$ is
dominated by $f_i^2 + f_j^2$), $$f^2 \eqR \sum f_i ^2 \eqR f_m(f_m
+f_1) + \sum _{i=1}^{m-1}f_i(f_i +f_{i+1}),$$ which is in the
ideal $\langle f_m+f_1, f_i + f_{i+1}: 1 \le i \le m-1
\rangle$.
\end{rem}
\begin{figure}[h]
\setlength{\unitlength}{0.8cm}
\begin{picture}(10,7)(0,0)

\thicklines \put(6,3){\line(-2,1){2.5}}
\put(6,3){\line(0,-1){2.5}}\put(6,3){\line(1,-1){2}}

\dashline{0.2}(6,3)(8.5,1.8)\dashline{0.2}(6,3)(4,5)\dashline{0.2}(6,3)(6,5.5)
%

\put(2.3,4.3){\scriptsize{$\la _1\la
_2^2+0$}}\put(5.5,0.2){\scriptsize{$\la _1+0$}}
\put(7.7,0.6){\scriptsize{$ \la _1\la _2 + 0$}}
\put(5.9,5.6){$h_2$} \put(8.5,1.2){$h_3$}\put(3.7,5.2){$h_1$}

\put(6.1,2.9){\scriptsize{$(0,0)$}}
\end{picture}
\caption{The tangible roots in Example \ref{exm:1}. \label{fig:1}}
\end{figure}
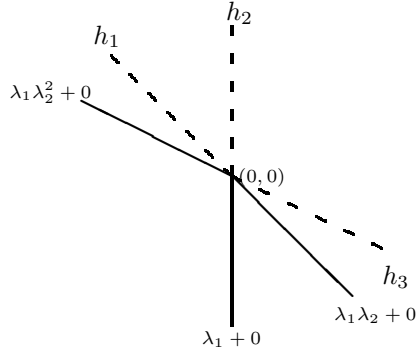

\begin{example}\label{exm:1}
(Illustrating Theorem \ref{permprime}.) Let $f = \la _1^2\la _2 +
\la _1 + 0$ (see Fig. \ref{fig:1}), a polynomial over~$D(\Real)$.
Then, notation as in Theorem~\ref{permprime},  $ g_1= f$ and $g_2
= \la_1(\la _1^2\la _2 + \la _1\la _2 + 0).$   Defining the
binomials
   $q_1 = \la _1^2\la _2 + \la_1,\  q_2 = \la _1^2\la _2 + 0,$ and $ q_3 = \la _1 + 0$,   we have the
   equality
$$ g_1g_2 = q_1 q_2 q_3 = \la _1 + \la _1^2 + \la _1^2\la _2 + 0^\nu \la _1^3 \la _2 + \la _1^4 \la _2 + \la _1^4 \la _2^2 + \la _1^5 \la _2^2.$$
$\Var_2= \tZ_{\tng}(g_2)$ can be viewed as the complement of
$\Var= \tZ_{\tng}(f)$ along $(0,0)$.
\end{example}

In the next example, we can ``improve'' the factorization of
Theorem \ref{permprime}.

\begin{example}\label{exm:2}  Let $f= \la _1^2 + \la
_2^2 + \a \la _1\la _2 + 0$ (see Fig. \ref{fig:2}) be a polynomial
over $D(\Real)$, where $\a
> 0$. Note here that $\tZ_{\operatorname{tan}} (\la_1^2 +
\la_2^2)$ does not affect $\tZ_{\operatorname{tan}} (f)$,
since whenever $(\la_1^2)^\nu = (\la_2^2)^\nu,$ these are both
less than $(\a \la_1 \la_2)^\nu$.

Let $f_i = \la _i^2 + \a \la _1\la _2 + 0$, for $i=1,2.$ Also,
define the binomials $q_1= \a \la _1\la _2 + 0$, $q_2= \la _1^2 +
0$, $q_3 = \la _2^2 +0$, $q_4 = \la _1 + \a \la _2$, and $q_5 = \a
\la _1 + \la _2$. Algebraically, Theorem~\ref{permprime} applied
to $f_1$ and $f_2$ in turn yields
$$f_1g_1 = q_1q_3q_5 ; \qquad f_2g_2 = q_1q_2q_4,$$
where $g_1= \la _2 + \a \la _1\la _2^2 + \a \la _1$ and $g_2= \la
_1 + \a \la _1\la _2^2 + \a \la _2$. Furthermore,
$$q_1f = \a \la_1^3\la_2 + \a \la_1 \la_2^3 + \a
^2 \la_1^2 \la_2^2 + \a^\nu \la _1 \la _2 + \la_1^2 + \la_2^2 + 0
= f_1f_2,$$ since $\la_1^2\la_2^2$ is strictly dominated by
$\a\la_1^2\la_2^2.$ Consequently,
$$q_1fg_1g_2 = f_1g_1f_2g_2 = q_1^2 q_2q_3 q_4 q_5 ,$$
implying $fg_1g_2 =  q_1 q_2q_3 q_4 q_5,$ which is actually an
improvement of Theorem \ref{permprime}.

Taking tangible root sets, we have $$\tZ_{\operatorname{tan}}
(q_1)\cup \tZ_{\operatorname{tan}} (f) = \tZ_{\operatorname{tan}}
(f_1f_2)= \tZ_{\operatorname{tan}} (f_1 )\cup
\tZ_{\operatorname{tan}} (f_2).$$

Geometrically, $\tZ_{\operatorname{tan}} (f)$ is contained in the
five lines which are respectively the tangible root sets of $q_1$,
$q_2$, $q_3$, $q_4$, and $q_5$. The tangible root sets of  $g_1$
and $g_2$ are the complements of $f$ along the respective vertices
$(-\a,0)$ and $(0,-\a)$.
\end{example}

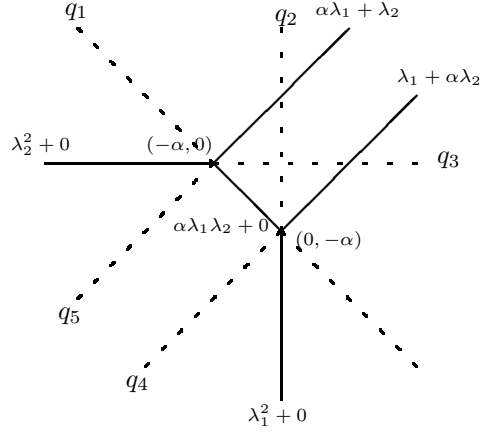
\begin{figure}[!h]
\setlength{\unitlength}{0.9cm}
\begin{picture}(10,7)(0,0)

\thicklines
\put(5,4){\line(-1,0){2.5}}\put(5,4){\line(1,-1){1}}\put(5,4){\line(1,1){2}}
\put(6,3){\line(0,-1){2.5}}\put(6,3){\line(1,1){2}}

\dashline{0.1}(5,4)(3,2)\dashline{0.1}(5,4)(8,4)\dashline{0.1}(5,4)(3,6)
\dashline{0.1}(6,3)(8,1)\dashline{0.1}(6,3)(4,1)\dashline{0.1}(6,3)(6,6)
%

\put(2,4.2){\scriptsize{$\la
_2^2+0$}}\put(5.5,0.2){\scriptsize{$\la _1^2+0$}}
\put(4.4,3){\scriptsize{$\a \la _1\la _2
+0$}}\put(6.5,6.2){\scriptsize{$\a \la _1 +\la _2$}}
\put(7.7,5.2){\scriptsize{$ \la _1 + \a \la _2$}}
\put(2.8,6.2){$q_1$}\put(5.9,6.1){$q_2$}
\put(8.3,4){$q_3$}\put(2.7,1.7){$q_5$}\put(3.7,0.7){$q_4$}

\put(4,4.2){\scriptsize{$(-\a,0)$}}\put(6.2,2.8){\scriptsize{$(0,-\a)$}}
\end{picture}
\caption{Illustration to Example \ref{exm:2} of the tangible
roots. \label{fig:2} }
\end{figure}

The explanation of Example~\ref{exm:2} is that one of the
binomials of $f$ (namely $\la_1^2 + \la_2^2 \eqR (\la_1 +
\la_2)^2$) is ``fictitious,'' since its tangible set does not
exist in the graph (as we showed above). Thus we can separate $f$
into two polynomials whose factorizations do not involve the
fictitious binomial. Continuing this process inductively, one can
find a factorization that displays $f$ as a divisor of a product
of $m$ binomials, where $m$ is the minimal number of hyperplanes
whose union contains the graph of $f$. The precise description of
an algorithm for this process seems to involve an investigation of
the Newton polytope, which we do not pursue here.

\section{Prime ideals of polynomial semirings}

Since the Nullstellensatz translates supertropical geometry to
radical ideals, and every radical ideal is the intersection of
prime ideals, we would like to classify the prime ideals of the
 polynomial semiring $F[\Lambda]$ over a
supertropical semifield $F$.
The factorization in Theorem \ref{permprime} clearly affects prime
ideals.

\begin{example}\label{nonuniquefact2}
 The  ghost-closed ideal  $A = \langle \la _1 + \la _2 + 0\rangle $ of $F [ \la_1, \la_2 ]$
is not prime! Indeed, if $A$ were prime, Example
\ref{nonuniquefact} would imply that $A$ contains one of
$0+\la_1$, $0+\la_2$, and $\la_1 +\la_2$, which is absurd.

Likewise, reading Example \ref{nonuniquefact} from the other
direction shows that the ghost-closed ideal  $A = \langle \la _1 +
\la _2\rangle$ of $F[ \la_1, \la_2 ]$ is not prime.
\end{example}

Given a polynomial $f = \sum _\bfi f_\bfi$ written as a sum of
monomials, we define the set of   \textbf{binomials of} $f$ to be
$\{ f_\bfi + f_\bfj :  \bfi, \bfj \in \supp (f), \bfi \neq \bfj
\}.$ The  role of binomials is found in the following key
observation.

\begin{rem} It follows at once from Theorem \ref{permprime}  that if $P$ is a prime
ideal of $F[\Lambda]$ and $f\in P$, then some binomial of $f$
belongs to $P$. \end{rem}

The key to binomials is found in the following observation, which
is a converse to Example~\ref{exmp:div}; we already treated the
case $n=1$ (twice) in Proposition~\ref{strongfull}.

\begin{prop}\label{factorbin} If $F = \clF$ and $f\in F \pl \Lambda \pr$ is a
 \eBin, then $f$ can be factored as a product of a
monomial times a power of an irreducible binomial.\end{prop}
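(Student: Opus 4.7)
The plan is a three-step reduction. First I would extract the common monomial factor: writing $f = \a\Lambda^{\bfi} + \beta\Lambda^{\bfj}$ with both coefficients nonzero and $\bfi \ne \bfj$ (the degenerate cases being trivial), and setting $\bfk = \min(\bfi,\bfj)$ componentwise, I obtain $f = \Lambda^{\bfk}\bigl(\a\Lambda^{\bfi'} + \beta\Lambda^{\bfj'}\bigr)$ where $\bfi' = \bfi - \bfk$ and $\bfj' = \bfj - \bfk$ have disjoint supports. Second, I would extract the highest available root: let $m$ be the greatest common divisor of the nonzero components of $\bfi'$ and $\bfj'$, set $\mathbf{p} = \bfi'/m$ and $\mathbf{q} = \bfj'/m$, and invoke $F = \clF$ (via Remark~\ref{divcl}) to find $c,d \in F$ with $c^m = \a$ and $d^m = \beta$ (of the same parity as $\a,\beta$ respectively, since the divisible-closure construction preserves parity). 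Applying the Frobenius-type identity of Proposition~\ref{lem:powOfPol0}, $(c\Lambda^{\mathbf{p}} + d\Lambda^{\mathbf{q}})^m = c^m\Lambda^{m\mathbf{p}} + d^m\Lambda^{m\mathbf{q}}$, so setting $g = c\Lambda^{\mathbf{p}} + d\Lambda^{\mathbf{q}}$ yields $f = \Lambda^{\bfk}\cdot g^m$.

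The main obstacle is the third step: proving $g$ is $e$-irreducible. The key features are that $\mathbf{p}$ and $\mathbf{q}$ have disjoint supports and that the gcd of all their components equals $1$, so $\mathbf{p}-\mathbf{q}$ is a primitive lattice vector. Suppose toward contradiction that $g \eqR g_1 g_2$ with neither $g_i$ $e$-equivalent to a constant; the Newton polytope $\Delta(g)$ is the primitive segment $[\mathbf{p},\mathbf{q}]$. I would argue via Minkowski sums: each vertex of $\Delta(g_1) + \Delta(g_2)$ is the unique sum of vertices of $\Delta(g_1)$ and $\Delta(g_2)$, and contributes an essential monomial to $g_1 g_2$, because divisibility of the ghost monoid supplies $\Lambda$-values in $F^{(n)}$ that make the corresponding monomial strictly $\nu$-dominant. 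Since $g$ is a binomial, these extremal vertices must all lie in $\{\mathbf{p},\mathbf{q}\}$, forcing each $\Delta(g_i)$ to be a lattice segment parallel to $\mathbf{p}-\mathbf{q}$ (or a single point).

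Finally I would eliminate both remaining cases. If some $\Delta(g_i)$ is a single point, then $g_i = e\Lambda^{\mathbf{u}}$ is a monomial; divisibility of both monomials of $g$ by $\Lambda^{\mathbf{u}}$ forces $\mathbf{u} \le \mathbf{p}$ and $\mathbf{u} \le \mathbf{q}$ componentwise, but the disjoint-support property gives $\min(\mathbf{p},\mathbf{q}) = \mathbf{0}$ and hence $\mathbf{u} = \mathbf{0}$, contradicting nonconstancy of $g_i$. Otherwise both $\Delta(g_i)$ are proper segments of lengths $k|\mathbf{p}-\mathbf{q}|$ and $l|\mathbf{p}-\mathbf{q}|$ with $k, l \ge 1$ integers (using the primitivity of $\mathbf{p}-\mathbf{q}$), so $\Delta(g_1)+\Delta(g_2)$ has length $(k+l)|\mathbf{p}-\mathbf{q}|$, which must equal $|\mathbf{p}-\mathbf{q}|$, yielding the impossible $k+l = 1$. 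This contradiction establishes $e$-irreducibility and completes the factorization $f = \Lambda^{\bfk}\cdot g^m$.
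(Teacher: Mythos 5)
Your proof is correct and rests on the same underlying decomposition as the paper's: peel off the common monomial factor, write what remains as the $m$-th power of a binomial with primitive exponent difference via $(x+y)^m = x^m + y^m$ (Proposition~\ref{lem:powOfPol0} / Corollary~\ref{Prop:powOfPols}), and conclude that this base binomial is $e$-irreducible. Where you genuinely improve on the paper is in the irreducibility step. The paper passes to $F\pl\Lambda,\Lambda^{-1}\pr$, normalizes to $\a\Lambda^\bfi + \fone$, extracts one root at a time by induction, and then simply asserts (``we are done unless the full closure of $\ef$ has some monomial on the line connecting $\bfi$ to $(0,\dots,0)$'') that a binomial whose exponent vector is primitive is irreducible, with no further argument. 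You take the full $\gcd$ at once, avoiding the induction, and actually prove that irreducibility via the Minkowski-sum decomposition $\Delta(g)=\Delta(g_1)+\Delta(g_2)$ of Newton polytopes: both summands must be (possibly degenerate) lattice segments parallel to $\mathbf{p}-\mathbf{q}$, and primitivity together with the disjoint-support normalization $\min(\mathbf{p},\mathbf{q})=\mathbf{0}$ rules out both the point case and the proper-segment case. This supplies precisely the step the paper elides, and it also makes visible where the hypotheses enter: the no-cancellation property of supertropical semirings (Remark~\ref{tropprop}(iv)) is what gives $\supp(g_1g_2)=\supp(g_1)+\supp(g_2)$, hence $\Delta(g_1g_2)=\Delta(g_1)+\Delta(g_2)$, while $F=\clF$ is what lets you realize the strict dominance that makes each vertex of the Minkowski sum an essential monomial. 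So this is not a different route, but a fully worked-out version of the one the paper sketches.
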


\begin{proof} Let us write $\ef = \a \Lm^\bfi + \beta \Lm^\bfj$.
Factoring out $\beta,$ we may assume that $\beta = \fone.$ It is
convenient to work in $F \pl \Lambda, \Lambda^{-1} \pr$, since
then we may divide by $\Lm^\bfj$ and assume that $\ef$ has the
form $\a \Lm^\bfi +\fone$. We are done unless the full closure of
$\ef$ has some monomial on the line connecting $\bfi$ to
$(0,\dots, 0)$. In other words, $f$ has some monomial $\gamma
\Lm^{\bfk},$  where $\bfi = m\bfk$ for suitable $m$. But then
$\frac {\a}m \La^{\bfk}$ is a monomial of $f$, which is   the
$m$-th power of $h = \frac {\a}m\La^{\bfk}+ \fone$. We are done if
$h$ is $e$-irreducible, and continue by induction if $h$ is
$e$-reducible. (One has to check that the factorization in $F \pl
\Lambda, \Lambda^{-1} \pr$ matches a factorization in $F \pl \la
_1, \dots, \la _n \pr ,$ by clearing denominators.)\end{proof}

\subsection{Ghost-closed prime ideals of polynomials in one
indeterminate  }

The classification of  ghost-closed prime ideals is difficult even
for the case $n=1,$ because there are many more of them than in
classical ring theory. In this paper we content ourselves with the
result that the tangible part of any f.g.~prime ideal of $F[\la]$ is generated by at most
two polynomials. We start with the list of $e$-irreducible
polynomials given in Example~\ref{linearirr}.

 \begin{example}\label{linear2} $ $ Suppose $F$ is a supertropical semifield, and $\a\in F$ and $ \beta \in \tT $
 with $\a  < _\nu  \beta  .$  \pSkip
\begin{enumerate} \eroman
    \item
 The ghost-closed ideal generated by $\la + \a$  contains $\la +
\beta^\nu = (\la + \a) + \beta^\nu $.\pSkip
\item
 Any ideal containing $f_1 =  \la + \a$ and $f_2  = \la +
 \beta$ also contains $\lm + \gamma$ for all $\gamma \in \tT $ with $\a^\nu
 < \gamma ^\nu < \beta^\nu,$ since
 $$\la +\gamma = (\la  + \a) + \frac {\gamma}{\beta}(\la +
 \beta).$$

\item
 Any ideal containing $f_1 =  \la + \a$ and $f_2  = \la^\nu +
 \beta$ also contains $\lm + \gamma$ for all $\gamma \in \tT $ with $\a^\nu
 < \gamma ^\nu < \beta^\nu,$ by the same computation as in (ii).
\pSkip
 \item  If $\a_1^\nu \le \a_2^\nu \le \beta_1^\nu \le \beta_2^\nu,$
 then the polynomial $\la ^2 + \beta_2 ^\nu \la + \a  _1\beta  _2
 $ is contained in
 the  ghost-closed radical ideal generated by  $\la ^2 + \beta_1 ^\nu \la + \a_1\beta_1 $ and
 $\la ^2 + \beta_2 ^\nu \la + \a_2\beta_2  $, as seen by the Nullstellensatz (or by direct computation:
$$(\la ^2 + \beta_2 ^\nu \la + \a_1\beta_2)^2 \lmodgla (\la ^2 + \beta_2 ^\nu \la + \a  _2\beta
_2)(\la^2+ \frac {\a_1\beta_2} {\a_2} \la + \frac
{\a_1^2\beta_2}{\a_2}).)
$$
\end{enumerate}
 \end{example}

Recall the ``types'' of irreducible polynomials given in Example~\ref{linearirr}.

\begin{rem}\label{gen1} (i) It follows from Remark~\ref{linear2}(ii) that any finite set
of type I polynomials is generated by at most two type I
polynomials.

(ii) By Remark~\ref{prime2}, any  ghost-closed prime ideal  $P$
contains a polynomial of type II or type III.

(iii) Applying (ii) for each $\a$ shows that any
  ghost-closed prime ideal either contains all possible polynomials
of type II or type III (and thus is not f.g.), or contains  a
polynomial each of type~II and type~ III.
\end{rem}

\begin{lem}\label{lingen} The linear polynomials of any f.g.~  ghost-closed prime ideal of $F[\la]$ are generated by at most
two linear polynomials, the extreme case being  $\la +\a$ and
either $\la +\beta$ or $\la^\nu  + \beta,$ where $\a\in F$ and
$\beta \in \tT$ and $\a  <_\nu \beta.$
\end{lem}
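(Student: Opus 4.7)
My plan is to classify the linear polynomials of $P$ via normalization, use the closure properties established in Example~\ref{linear2} to show that the collection is parametrized by intervals of $\nu$-values, and finally extract at most two extremal generators.

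First, I would normalize: since $F$ is a supertropical semifield, every linear polynomial in $P$ can, after multiplication by the inverse of its leading coefficient (a tangible unit), be written as $\la + \a$ or $\la^\nu + \a$ with $\a \in F$. The purely ghost case (both terms in $\tGz$) is subsumed by the ghost ideal. So we are reduced to the four types of Example~\ref{linearirr}: type I.a ($\la$), type I.b ($\la + \a$, $\a \in \tT$), type II ($\la + \a^\nu$), and type III ($\la^\nu + \a$, $\a \in \tT$).

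Second, I would record the $\nu$-values of the constant terms occurring in $P$ as subsets $U_{\rm I}, U_{\rm II}, U_{\rm III}$ of $\tGz$, and establish their closure structure. By Example~\ref{linear2}(ii), $U_{\rm I}$ is $\nu$-convex. By Example~\ref{linear2}(i), for any $\a \in U_{\rm I}$ the set $U_{\rm II}$ contains all $\beta^\nu \ge \a^\nu$; moreover $U_{\rm II}$ is itself $\nu$-upward closed, since $(\la + \a^\nu) + \beta^\nu = \la + \beta^\nu$ whenever $\beta^\nu \ge \a^\nu$. A symmetric computation shows that $U_{\rm III}$ is $\nu$-downward closed and contains all $\{\beta^\nu \le \a^\nu\}$ whenever $\a \in U_{\rm I}$. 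Finally, by Example~\ref{linear2}(iii), any $\a \in U_{\rm I}$ and $\beta \in U_{\rm III}$ with $\a^\nu < \beta^\nu$ force the entire open $\nu$-interval between them into $U_{\rm I}$.

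Third, I would invoke finite generation of $P$ to extract attained extrema. The $\nu$-infimum of $U_{\rm I} \cup U_{\rm II} \cup \{\nu(\rzero)\text{ if }\la \in P\}$ and the $\nu$-supremum of $U_{\rm I} \cup U_{\rm III}$, when nonempty, must be realized by elements of $P$: otherwise the relevant $\nu$-interval would be half-open, requiring infinitely many ``independent'' linear polynomials to cover it, contradicting the existence of a finite generating set of $P$. Let $\a$ realize the low extremum (giving a generator of the form $\la$, $\la + \a$ with $\a \in \tT$, or $\la + \a^\nu$, according to type; i.e.\ $\la + \a$ with $\a \in F$) and let $\beta$ realize the high extremum as either $\la + \beta$ or $\la^\nu + \beta$ with $\beta \in \tT$.

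Fourth, I would verify that the pair $(\la + \a, \la + \beta)$ or $(\la + \a, \la^\nu + \beta)$ already generates all linear polynomials of $P$: tangible $\la + \gamma \in P$ with $\a^\nu < \gamma^\nu < \beta^\nu$ comes from Example~\ref{linear2}(ii) or (iii); type II elements $\la + \gamma^\nu \in P$ come from the low generator via Example~\ref{linear2}(i); type III elements $\la^\nu + \gamma \in P$ come from the high generator via the symmetric computation. When only one extremum is present, a single generator suffices. The main obstacle is step three — justifying from finite generation that the relevant infimum and supremum are attained; this should follow by a careful bookkeeping argument on a fixed finite generating set $\{g_1,\dots,g_k\}$ of $P$, showing that any linear polynomial obtainable from their combinations has constant-term $\nu$-value bounded away from an open extremum, together with primality of $P$ to rule out disconnected $\nu$-ranges that would otherwise require additional generators.
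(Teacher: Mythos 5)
Your proposal follows essentially the same route as the paper's (very terse) proof: identify the extremal linear polynomials in $P$ — the one of lowest $\nu$-value of constant term, $\la+\a$, and the one of highest, $\la+\beta$ or $\la^\nu+\beta$ — then use the identities in Example~\ref{linear2} to show they generate all other linear polynomials, and invoke finite generation to ensure the extrema are attained. Your closure analysis in step two is sound, though the phrase ``a symmetric computation'' underplays what is actually required for the downward closure of $U_{\rm III}$: unlike $U_{\rm II}$, you cannot get $\la^\nu+\beta$ from $\la^\nu+\a$ (or $\la+\a$) merely by adding a ghost scalar; you must first rescale by the tangible unit $\beta/\a$ and then add $\la^\nu$, so that $\frac{\beta}{\a}(\la+\a) + \la^\nu = \la^\nu+\beta$. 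That this works is correct, but it is not literally symmetric to the $U_{\rm II}$ argument. The other place where you and the paper run in parallel is the real crux — that finite generation forces the $\nu$-infimum and $\nu$-supremum to be attained and that $P$ cannot contain all polynomials of one of types II or III — which the paper asserts with a pointer to Remark~\ref{gen1} and Example~\ref{counterex}(vi), and which you acknowledge as ``the main obstacle.'' Your step also leaves implicit the use of primality via Remark~\ref{prime2}, which is what guarantees that for each $a\in\tT$ at least one of $\la+a^\nu$, $\la^\nu+a$ lies in $P$, and hence that the ranges of $U_{\rm II}$ and $U_{\rm III}$ cover $\tG$ with no gap; making that explicit would tighten the argument that the two extremal generators already account for all linear members of $P$.
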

\begin{proof}  Since $P$ is f.g., it cannot contain all polynomials of type III.
Thus, in view of Remark \ref{prime2}, we can
 take $\la +\a^\nu$
with $\a^\nu$ minimal possible, and $\la^\nu  + \beta$ or $\la +
\beta$ with $\beta^\nu$ maximal possible. By Example
\ref{linear2}, these generate all other linear factors.
\end{proof}

\begin{prop} The tangible part of any f.g.~ prime ideal $P$ of $F[\la]$ is generated by at most two
linear polynomials.
\end{prop}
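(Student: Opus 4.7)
My plan is to adapt the argument of Lemma~\ref{lingen} to the tangible setting. Let $T = P \cap \tT[\la]$ denote the tangible polynomials in $P$; if $T = \emptyset$ the claim holds vacuously (with zero linear generators), so assume $T \ne \emptyset$. The first step is to reduce to the study of tangible linear polynomials. Given any $f \in T$, Corollary~\ref{corfactor1} (applied in $\clF[\la]$ and descended to $F[\la]$ via Proposition~\ref{enlarge2} when $F$ is not divisibly closed) yields an $e$-equivalent factorization $f \eqR \prod_i (\la + \a_i)$ with each $\a_i \in \tTz$, and primality of $P$ then forces at least one factor $\la + \a_i$ to lie in $P$. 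Consequently $S := \{\a \in \tTz : \la + \a \in P\}$ is nonempty, and every element of $T$ lies in the $F[\la]$-ideal generated by $\{\la + \a : \a \in S\}$.

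Second, Example~\ref{linear2}(ii) shows that $S$ is $\nu$-convex: for $\a, \beta \in S$ with $\a <_\nu \beta$ and any $\gamma \in \tT$ with $\a \le_\nu \gamma \le_\nu \beta$, the identity $\la + \gamma = (\la + \a) + \frac{\gamma}{\beta}(\la + \beta)$ exhibits $\la + \gamma$ as a member of $\langle \la + \a, \la + \beta\rangle \subseteq P$, so $\gamma \in S$. The remaining task is to show that $S$ attains its $\nu$-extrema, i.e., that there exist $\a_{\min}, \a_{\max} \in S$ with $\a_{\min}^\nu \le \gamma^\nu \le \a_{\max}^\nu$ for every $\gamma \in S$. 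Once this is established, $\nu$-convexity combined with Example~\ref{linear2}(ii) implies $\{\la + \a : \a \in S\} \subseteq \langle \la + \a_{\min}, \la + \a_{\max}\rangle$, and the reduction of the first paragraph concludes that $T$ is generated by the two linear polynomials $\la + \a_{\min}$ and $\la + \a_{\max}$.

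To extract the extrema, I would exploit the finite generation of $P$. Writing $P = \langle h_1, \ldots, h_n\rangle$ and factoring each $h_i$ into $e$-irreducibles using the results of \S\ref{facfull}, primality lets me replace each $h_i$ by one of its irreducible factors that lies in $P$; by Example~\ref{linearirr}, each such factor is of type I.a, I.b, II, III, or IV. The hard part, and the main obstacle of the proof, is to control the tangible linear polynomials in $P$ produced by combinations involving the non-tangible linear irreducibles (types II $\la + \a^\nu$ and III $\la^\nu + \beta$): a priori, a combination $\sum q_i g_i = \la + \gamma$ could yield $\gamma$ outside the $\nu$-range determined by the tangible linear $g_i$. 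This is resolved by a degree-and-domination analysis along the lines of Lemma~\ref{lingen}: in the expression $\la + \gamma = \sum q_i g_i$, the $q_i$ attached to any $g_i$ of degree $\ge 2$ must vanish, and the constraints on the tangible and ghost parts of the linear and constant coefficients pin $\gamma^\nu$ to a closed $\nu$-interval $[\a_{\min}^\nu, \a_{\max}^\nu]$ whose endpoints are themselves in $S$, completing the argument.
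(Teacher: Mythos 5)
Your approach tracks the paper's: factor tangible elements of $P$ into linear factors (the paper invokes Theorem~\ref{factor1}; you use Corollary~\ref{corfactor1}), use primality to conclude that a linear factor lies in $P$, and then bound the linear polynomials of $P$ by two generators. The paper dispatches this last step by citing Lemma~\ref{lingen} directly, whereas you re-derive pieces of that lemma by hand. Two cautions are in order. First, your appeal to Proposition~\ref{enlarge2} to ``descend'' the factorization from $\clF[\la]$ to $F[\la]$ is misplaced: that proposition concerns $e$-divisibility of two polynomials that are \emph{already} defined over $F$, while the linear factors $\la + \a_i$ produced over $\clF$ need not lie in $F[\la]$ at all, so there is nothing for that proposition to descend. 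The paper avoids this by implicitly working under the standing hypothesis $F = \clF$ for this part of the theory (the hypothesis required in Theorem~\ref{factor1} and Corollary~\ref{corfactor1}), and you should do the same rather than claim a descent that doesn't go through. Second, the closing paragraph is a sketch rather than a proof: the assertion that the constraints on tangible and ghost parts of the coefficients ``pin $\gamma^\nu$ to a closed $\nu$-interval whose endpoints are themselves in $S$'' is exactly the nontrivial content, and you give no argument for it. The paper's route --- use Remark~\ref{prime2} to see that a ghost-closed prime must contain a type~II or type~III linear polynomial, then exploit finite generation to extract extremal ones, then apply Example~\ref{linear2} --- is what Lemma~\ref{lingen} packages, and citing it directly (as the paper does) is both shorter and actually complete.
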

\begin{proof} In view of Theorem~\ref{factor1}, the tangible part of $P$ is generated by linear polynomials, and so we conclude by the
lemma.
\end{proof}

\begin{prop} Any f.g.~ prime ghost-closed ideal of $F[\la]$ is supertropically generated by at most two polynomials.
\end{prop}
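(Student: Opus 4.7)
The plan is to reduce the problem to the linear generators supplied by Lemma~\ref{lingen} via factorization and primality, and then to show that every quadratic (type~IV) irreducible factor in $P$ is supertropically subsumed by means of the Nullstellensatz.

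Step~1 is to reduce $P$ to an ideal generated by irreducibles. Writing $P = \langle h_1,\dots,h_k\rangle$, Theorem~\ref{fullfact2} factors each $h_i$ into irreducibles of the four types listed in Example~\ref{linearirr}, and primality forces some irreducible factor $q_i$ of $h_i$ to lie in $P$; since $q_i$ divides $h_i$, we have $h_i\in\langle q_i\rangle$, and hence $P = \langle q_1,\dots,q_k\rangle$ is generated by irreducibles. Lemma~\ref{lingen} furnishes at most two linear polynomials $f_1 = \lm+\alpha$ and $f_2\in\{\lm+\beta,\,\lm^\nu+\beta\}$ (with $\alpha^\nu$ minimal and $\beta^\nu$ maximal among the linear polynomials in $P$) that classically generate, together with $\tGz$, every type~I--III irreducible of $P$; it remains to pass every type~IV irreducible $g = \lm^2+d^\nu\lm+cd$ (with $c<_\nu d$) in $P$ into the supertropical radical of $\langle f_1,f_2\rangle$.

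Step~2 is to extract from $g$ two auxiliary linear polynomials that already lie in $P$. On the one hand, since $cd$ is dominated by $d^{2\nu}$ and, relative to $\{\lm^2,d^{2\nu}\}$, the ghost monomial $d^\nu\lm$ is inessential (as $\lm^2\ge_\nu d^\nu\lm$ whenever $\lm\ge_\nu d$ and $d^{2\nu}\ge_\nu d^\nu\lm$ whenever $\lm\le_\nu d$), one has in $F[\lm]$ the equality $g+d^{2\nu} = \lm^2+d^{2\nu} = (\lm+d^\nu)^2$ by Proposition~\ref{lem:powOfPol}; since $g\in P$ and $d^{2\nu}\in\tGz\subseteq P$, this places $(\lm+d^\nu)^2$ in $P$, and primality yields $\lm+d^\nu\in P$. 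On the other hand, the unique tangible component of $\lm^\nu+c$ is $\{\lm<_\nu c\}$, on which $g$ takes the tangible value $cd$; thus $\lm^\nu+c\epsc\{g\}$, and the supertropical Nullstellensatz (Theorem~\ref{Null2}) gives $\lm^\nu+c\in\tsqrt{\langle g\rangle}\subseteq\tsqrt P = P$, using that $P$ is a ghost-closed prime ideal and is therefore supertropically radical by Remark~\ref{surprise1}. The minimality of $\alpha^\nu$ and the maximality of $\beta^\nu$ from Lemma~\ref{lingen} now force $\alpha\le_\nu d$ and $\beta\ge_\nu c$.

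Step~3 applies the Nullstellensatz to $g$ itself. Its components are $D_+ = \{\lm>_\nu d\}$ and $D_- = \{\lm<_\nu c\}$. On $D_+$ the strict inequality $\lm>_\nu d\ge_\nu\alpha^\nu$ makes $f_1$ take the tangible value $\lm$; on $D_-$ the inequality $\lm<_\nu c\le_\nu\beta^\nu$ makes $f_2$ take the tangible value $\beta$. Hence $g\epsc\{f_1,f_2\}$, and Theorem~\ref{Null2} yields $g\in\tsqrt{\langle f_1,f_2\rangle}$. Combined with Step~1, every irreducible generator of $P$ lies in $\tsqrt{\langle f_1,f_2\rangle}$, whence $P\subseteq\tsqrt{\langle f_1,f_2\rangle}$; the reverse inclusion is immediate from $f_1,f_2\in P$ and $P$ being supertropically radical, giving $P = \tsqrt{\langle f_1,f_2\rangle}$. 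The main obstacle is the implicit hypothesis (inherited from Theorem~\ref{Null2}) that $F$ be divisibly closed and archimedean, without which the passage from $\epsc$ to $\tsqrt{\,\cdot\,}$ is unavailable; the algebraic mechanics (the squaring trick, primality, and the simple component calculations) otherwise run smoothly.
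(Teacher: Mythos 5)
Your proof is correct and runs along the same main track as the paper's: both reduce to Lemma~\ref{lingen} for the pair of linear bounds and then invoke the supertropical Nullstellensatz to absorb the quadratic (type~IV) factors. The one genuine refinement you bring is the algebraic observation $g+d^{2\nu}=(\lm+d^\nu)^2$, from which primality yields $\lm+d^\nu\in P$ directly; this, together with the component calculation giving $\lm^\nu+c\in P$, shows that \emph{every} quadratic $g=\lm^2+d^\nu\lm+cd$ in $P$ automatically satisfies $\alpha\le_\nu d$ and $\beta\ge_\nu c$, so that the two linear generators $f_1,f_2$ of Lemma~\ref{lingen} already suffice. The paper's proof is vaguer here: it contemplates ``discarding'' certain quadratics and replacing $\lm+\a$ or $\lm^\nu+\beta$ by a ``leftmost'' or ``rightmost'' quadratic, whereas your squaring argument makes that case distinction unnecessary and fixes the generator pair to be linear in all cases. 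You also correctly flag that the appeal to Theorem~\ref{Null2} imports the standing hypotheses that $F$ be divisibly closed and archimedean, which the paper leaves implicit in this proposition.
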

\begin{proof} Suppose  $P$ is a prime ideal of $F[\la]$.  By Proposition~ \ref{fullfact},  any polynomial in $P$
factors as a product of linear and irreducible quadratic factors $
\la ^2 + \gamma^\nu \la + \gamma\delta,$ where $\gamma, \delta \in
\tT$ with $ \delta^\nu < \gamma^\nu$. $P$ is tropically generated
by at most four polynomials.  In view of
 Lemma \ref{lingen}, the
  linear polynomials in~$P$
  are bounded by $\la+\a$ and $\la + \beta$ (or $\la^\nu + \beta$), and it
remains to consider irreducible quadratic factors.

One could continue using direct computations, but it is easiest to
apply the Nullstellensatz. Each quadratic polynomial has a
tangible root set consisting of a tangible interval, and the
nonempty intersections of these intervals give us root sets
corresponding to quadratic polynomials in $P$. Thus, the quadratic
factors of $P$ are generated by finitely many quadratic factors
whose roots sets are $\nu$-disjoint. We can discard $f=\la ^2 +
\gamma^\nu \la + \gamma\delta$ unless $\gamma^\nu \ge a^\nu$ or
$\beta^\nu \le \delta^\nu,$ as seen by checking the tangible
complements. But then we replace $\la +\a$ by the leftmost such
quadratic factor, and $\la^\nu  + \beta$ (or $\la^\nu  +
\beta^\nu$ by the rightmost such quadratic factor), where
appropriate, and these generate an ideal whose radical is $P$.
\end{proof}

Unfortunately, this  kind of argument shows that any chain of
prime ideals is contained in a chain of infinite length. Indeed,
the prime ideal generated by  $\{\la + \a, \la + \bt \}$ (for
$\a^\nu < \bt^\nu$) is contained in the prime ideal generated by
$\{\la + \a_1, \la + \bt_1\}$, whenever ${\a_1}^\nu \le \a^\nu$
and $\bt^\nu \le {\bt_1}^\nu$.



\end{document}